\renewcommand{\bar}{\overline}
\newtheorem{lemma}{Lemma}
\newtheorem{theorem}{Theorem}
\newtheorem{proposition}{Proposition}
\newtheorem{corollary}{Corollary}
\newcommand{\rd}{{d}}
\newcommand{\ve}{\epsilon}
\newcommand{\e}{\varepsilon}
\newcommand{\cA}{\mathcal{A}}
\newcommand{\cE}{\mathcal{E}}
\newcommand{\cF}{\mathcal{F}}
\newcommand{\cH}{\mathcal{H}}
\newcommand{\cI}{\mathcal{I}}
\newcommand{\cJ}{\mathcal{J}}
\newcommand{\cK}{\mathcal{K}}
\newcommand{\cL}{L}
\newcommand{\cT}{\mathcal{T}}
\newcommand{\cV}{\mathcal{V}}
\newcommand{\cW}{\mathcal{W}}
\newcommand{\cX}{\mathcal{X}}
\newcommand{\cZ}{Z}
\newcommand{\bB}{\mathbb B}
\newcommand{\bE}{\mathbb E}
\newcommand{\bH}{\mathbb H}
\newcommand{\bL}{{\mathbb L}}
\newcommand{\bN}{{\mathbb N}}
\newcommand{\bP}{{\mathbb P}}
\newcommand{\bR}{{\mathbb R}}
\newcommand{\bZ}{{\mathbb Z}}
\newcommand{\mA}{\mathfrak{A}}
\newcommand{\mH}{\mathfrak{H}}
\newcommand{\mS}{\mathfrak{S}}
\newcommand{\mL}{\mathfrak{L}}
\newcommand{\ms}{\mathfrak{s}}
\newcommand{\mh}{\mathfrak{h}}
\newcommand{\mT}{\mathfrak{T}}
\newcommand{\mX}{\mathfrak{X}}
\newcommand{\mZ}{\mathfrak{Z}}
\newcommand{\sign}{\operatorname{sign}}
\begin{document}
\begin{frontmatter}

\title{Uniform bounds for norms of sums of independent random functions}
\runtitle{Uniform bounds for norms}

\begin{aug}
\author[A]{\fnms{Alexander} \snm{Goldenshluger}\thanksref{t2}\ead[label=e1]{goldensh@stat.haifa.ac.il}} and
\author[B]{\fnms{Oleg} \snm{Lepski}\corref{}\thanksref{t3}\ead[label=e2]{lepski@cmi.univ-mrs.fr}}
\runauthor{A. Goldenshluger and O. Lepski}
\affiliation{University of Haifa and Universit\'e Aix--Marseille I}
\address[A]{Department of Statistics\\
University of Haifa\\
Haifa 31905\\
Israel\\
\printead{e1}}
\address[B]{Laboratoire d'Analyse, Topologie\\
\quad and Probabilit\'es\\
Universit\'e Aix-Marseille 1\\
39, rue F. Joliot-Curie \\
13453 Marseille\\
France\\
\printead{e2}}
\end{aug}

\thankstext{t2}{Supported by the ISF Grant 389/07.}
\thankstext{t3}{Supported by the Grant ANR-07-BLAN-0234.}

\received{\smonth{4} \syear{2009}}
\revised{\smonth{2} \syear{2010}}

%
\begin{abstract}
In this paper, we develop a general machinery
for finding explicit uniform probability and moment bounds on
sub-additive positive functionals of random processes.
Using the developed general technique, we derive
uniform bounds
on the $\bL_s$-norms
of empirical and regression-type processes.
Usefulness of the obtained results is illustrated
by application
to the processes
appearing in kernel density estimation and in nonparametric
estimation
of regression functions.
\end{abstract}

%
\begin{keyword}[class=AMS]
\kwd[Primary ]{60E15}
\kwd[; secondary ]{62G07, 62G08}.
\end{keyword}
\begin{keyword}
\kwd{Empirical processes}
\kwd{concentration inequalities}
\kwd{kernel density estimation}
\kwd{regression}.
\end{keyword}

\end{frontmatter}

\section{Introduction}
\label{sec:introduction}
\subsection{General setting}\label{subsec:general}
Let $\mS$ and $\mH$ be linear topological spaces, $(\Omega,\mA,\mathrm
{P})$ be a complete probability space,
and let $\xi_\theta\dvtx\Omega\to\mS$, $\theta\in\mH$
be a family of random mappings.
In the sequel, $\xi_\bullet(\omega)$ is assumed
linear and continuous
on~$\mH$ for any $\omega\in\Omega$.
Let $\Psi\dvtx\mS\to\bR_+$ be a given {\sl sub-additive} functional.
Suppose that
there exist functions $A\dvtx\mH\to\bR_+$, $B\dvtx\mH\to\bR_+$ and
$U\dvtx\mH\to\bR_+$
such that\looseness=-1
%
%
\begin{equation}\label{eq:individual-prob}
\mathrm{P}\{\Psi(\xi_\theta)-U(\theta)\geq z\}\leq g\biggl(\frac
{z^2}{A^{2}(\theta)+B(\theta)z}\biggr)\qquad\forall\theta\in\mH,
\end{equation}\looseness=0
where $g\dvtx\bR_+\to\bR_+$ is a monotone decreasing to zero function.

Let $\Theta$ be a fixed subset of $\mH$.
In this paper, under rather general assumptions on $U,A,B$ and $\Theta$,
we establish uniform probability and moment bounds
of the following type:
for any $\ve\in(0,1)$, $y>0$ and some $q\geq1$
%
%
\begin{eqnarray}
\label{eq:1}
\mathrm{P}\Bigl\{\sup_{\theta\in\Theta}\bigl[\Psi(\xi_\theta)-
u_\ve
\bigl(1+\sqrt{y}\lambda_A+y\lambda_B\bigr)U(\theta)\bigr]\geq0\Bigr\}
&\leq& P_{\ve,g}(y),
\\
\label{eq:2}
\mathrm{E} \sup_{\theta\in\Theta}\bigl[\Psi(\xi_\theta)-u_\ve
\bigl(1+\sqrt{y}\lambda_A+y\lambda_B\bigr)U(\theta)\bigr]_+^q
&\leq& E_{\ve,g}(y).
\end{eqnarray}
Here $\lambda_A$ and $\lambda_B$ are the quantities completely
determined by $U,A,\Theta$ and $U,B,\Theta$, respectively, and the inequalities
(\ref{eq:1}) and (\ref{eq:2}) hold if these quantities are finite;
$P_{\ve,g}(\cdot)$ and $E_{\ve,g}(\cdot)$ are continuous
decreasing to zero functions
completely determined by $\ve$ and $g$; and
the factor $u_\ve$ is such that $u_\ve\to1, \ve\to0$.
We present explicit expressions for all quantities appearing in~(\ref{eq:1}) and (\ref{eq:2}).

In order to derive
(\ref{eq:1}) and (\ref{eq:2}) from (\ref{eq:individual-prob}),
we assume that the set $\Theta$ is the image of a totally bounded
set in some metric space under a continuous mapping. Namely,
if $(\mZ,\mathrm{d})$ is a metric space, and
$\bZ$ is a totally bounded subset of $(\mZ,\mathrm{d})$ then we assume that
there exists a \textit{continuous} mapping $\phi$ from $\mZ$ to $\mH$
such that
%
%
\begin{equation}\label{eq:param}
\Theta=\{\theta\in\mH\dvtx\theta=\phi[\zeta], \zeta\in\bZ\}.
\end{equation}
Let $N_{\bZ,\mathrm{d}}(\delta)$, $\delta>0$
be the minimal number of balls of radius $\delta$ in
the metric $\mathrm{d}$ needed to
cover $\bZ$.
The inequalities (\ref{eq:1}) and (\ref{eq:2}) are proved under some
condition that relates $N_{\bZ,\mathrm{d}}(\cdot)$ and $g(\cdot)$.
It is worth mentioning that in particular examples the parametrization
$\Theta=\phi[\bZ]$ is often natural, while the metric $\mathrm{d}$ may have
a rather unusual form.

Inequalities
(\ref{eq:1}) and (\ref{eq:2}) can be considered as a
refinement of usual bounds
on the tail distribution of suprema
of random functions.
In particular, probability and moment bounds for
$\sup_{\theta\in\Theta}\Psi(\xi_\theta)$
can be easily derived from (\ref{eq:1}) and (\ref{eq:2}).
The well-known concentration results deal with deviation
of the supremum of a random process from the expectation
of this supremum, and estimation of the expectation is a separate
rather difficult problem. In contrast, in this paper we develop
\textit{explicit} uniform bounds on the whole trajectory $\{\Psi(\xi
_\theta
), \theta\in\Theta\}$.
The inequality in (\ref{eq:individual-prob})
provides the basic step in the development
of such uniform probability bounds.
The usual technique is based on the \textit{chaining argument}
that repeatedly applies inequality in (\ref{eq:individual-prob}) to
increments of the considered random process
[see, e.g., \citet{Ledoux-Tal} and
\citet{wellner}, Section 2.2].

The most interesting phenomena can be observed when a sequence of
random mappings $\{\xi^{(n)}_\theta, \theta\in\mH\}$, $n\in\bN^{*}$ is
considered.
There exists
a class of problems where
the quantities $\lambda_{A}$ and $\lambda_{B}$ depend on $n$, and
$\lambda_{A}\to0$,
$\lambda_{B}\to0$ as $n\to\infty$. Under these circumstances, one can
choose $y=y_n\to\infty$ and $\ve=\ve_n\to0$ such that
%
%
\begin{equation}
\label{eq1:introduction-new}
P_{\ve,g}(y_n)\to0,\qquad E_{\ve,g}(y_n)\to0,\qquad n\to\infty
\end{equation}
and, at the same time,
%
%
\begin{equation}
\label{eq2:introduction-new}
u_{\ve_n}\bigl(1+\sqrt{y_n}\lambda_A+y_n\lambda_B\bigr)U(\cdot)\to
U(\cdot),\qquad n\to
\infty.
\end{equation}
The relation in (\ref{eq1:introduction-new}) means that
$u_{\ve_n}(1+\sqrt{y_n}\lambda_A+y_n\lambda_B)U(\cdot)$ is indeed a~uniform
upper bound for $\Psi(\xi^{(n)}_\theta)$ on $\Theta$, while
(\ref{eq2:introduction-new})
indicates that for large $n$
this uniform bound is nearly as good as a nonuniform bound
$U(\cdot)$ given in~(\ref{eq:individual-prob}).
Typically\vadjust{\goodbreak} for a fixed $y>0$, we have
$P_{\ve,g}(y)\to\infty$ and $E_{\ve,g}(y)\to\infty$ as $\ve\to0$;
therefore, in order to get (\ref{eq1:introduction-new}) and
(\ref{eq2:introduction-new}), $\ve_n\to0$ and $y_n\to\infty$ should be
calibrated in an appropriate way.

The general setting outlined above includes important specific
problems that are in the focus of the present paper.
We consider
sequences of random mappings that are sums of real-valued
random functions defined on some measurable space (here the parameter
$n\in\bN^{*}$ is the number of summands).
We are interested in uniform bounds on the norms of such random functions;
thus the sub-additive functional of interest $\Psi$
is the $\bL_s$-norm, $s\geq1$.
First, the nonuniform bound (\ref{eq:individual-prob}) is established,
and then the inequalities of
the type (\ref{eq:1}) and (\ref{eq:2}) are derived.
It is shown that (\ref{eq1:introduction-new}) and (\ref{eq2:introduction-new})
hold under
mild assumptions on the parametric set $\Theta$.
We also discuss sharpness of the nonuniform inequality in
(\ref{eq:individual-prob}).
%
\subsection{Norms of sums of independent random functions}
Let $(\cT,\mT,\tau)$ and $(\cX,\mX,\nu)$
be $\sigma$-finite spaces, and let
$\cX$ be a separable Banach space.
Consider an $\cX$-valued random element $X$ defined on
the complete probability space $(\Omega,\mA, \mathrm{P})$
and having the density $f$ with respect to the measure $\nu$.
Let $\e$ be a real random variable defined on the same probability space,
independent of $X$ and having a symmetric
distribution.

For any $(\mT\times\mX)$-measurable function $w$ on $\cT\times\cX$ and
for any $t\in\cT$, $n\in\bN^*$, define the random functions
%
%
\begin{equation}\label{eq:processes}
\xi_w(t):=\sum_{i=1}^{n}[w(t,X_i)-\bE w(t,X)],\qquad
\eta_w(t):=\sum_{i=1}^{n}w(t,X_i)\e_i,
\end{equation}
where $(X_i,\e_i)$, $i\,{=}\,1,\ldots, n$, are independent copies of $(X,\e)$.
Put for \mbox{$1\,{\leq}\,s\,{<}\,\infty$}
\[
\|\xi_w\|_{s,\tau}=\biggl[\int|\xi_w(t)|^s
\tau(\rd t)\biggr]^{1/s},\qquad
\|\eta_w\|_{s,\tau}=\biggl[\int|\eta_w(t)|^s\tau(\rd t)
\biggr]^{1/s}.
\]

We are interested in uniform bounds of the type
(\ref{eq:1}) and (\ref{eq:2}) for $\|\xi_w\|_{s,\tau}$
and $\|\eta_w\|_{s,\tau}$ when $w\in\cW$, where $\cW$ is a given set of
$(\mT\times\mX)$-measurable functions. This setup
is a specific case of the general framework with
$\Psi(\cdot)=\mbox{$\|\cdot\|$}_{s,\tau}$, $\theta=w$ and $\Theta=\cW$.
More precisely,
if $\psi_w$ denotes either $\xi_w$ or
$\eta_w$, and
if
$\bP$ is the probability
law of $X_1,\ldots, X_n$ (when $\xi_w$ is studied) or of $(X_1,\e
_1),\ldots,(X_n,\e_n)$
(when $\eta_w$ is studied) then
we want to find a functional $U(\psi_w)=U_\psi(w,f)$
such that (\ref{eq:individual-prob}) holds and
%
%
\begin{eqnarray}\label{eq:11}\qquad
\bP\Bigl\{\sup_{w\in\cW}\bigl[\|\psi_w\|_{s,\tau} -
u_\ve\bigl(1+\sqrt{y}\lambda_A+y\lambda_B\bigr)U_\psi(w,f)\bigr]\geq
0\Bigr\} &\leq& P_{\ve,g}(y),
\\
\label{eq:22}
\bE\sup_{w\in\cW}
\bigl[\|\psi_w\|_{s,\tau}-u_\ve\bigl(1+\sqrt{y}\lambda_A+
y\lambda_B\bigr)U_\psi(w,f)\bigr]_+^q
&\leq& E_{\ve,g}(y),\nonumber\\[-8pt]\\[-8pt]
&&\eqntext{q\geq1.}
\end{eqnarray}

Note that $\{\xi_w, w\in\cW\}$ is the \textit{empirical process}.
In the sequel, we refer to
$\{\eta_w, w\in\cW\}$ as the
\textit{regression-type process} as it\vadjust{\goodbreak} naturally appears in
nonparametric estimation of regression functions.
In the regression context,~$X_i$ are the design variables,
$\e_i$ are the random noise variables.

Uniform probability and moment bounds
for empirical processes are a~subject of vast literature; see, for example,
\citet{alexander}, \citet{Talagrand}, \citet{wellner},
\citet{massart}, \citet{bousquet},
\citet{gine-kolt} among many others.
Such bounds play an important role in establishing
the laws of iterated logarithm and central limit
theorems [see, e.g., \citet{alexander} and
\citet{gine-zinn}].
However,
we are not aware of works studying
uniform bounds of the type~(\ref{eq:11}) and~(\ref{eq:22}) satisfying~(\ref{eq1:introduction-new})
and~(\ref{eq2:introduction-new})
for the
$\bL_s$-norms of such processes.

Apart from the pure probabilistic interest,
development of uniform bounds on the $\bL_s$-norms of processes
$\{\xi_w, w\in\cW\}$ and $\{\eta_w, w\in\cW\}$
is motivated by
problems of adaptive estimation arising in nonparametric statistics.
In particular, the processes $\{\xi_w, w\in\cW\}$ and $\{\eta_w, w\in
\cW\}$
represent stochastic errors of linear estimators with the weight $w$
in the density estimation and nonparametric regression models,
respectively.
Uniform bounds on the error process are key technical tools
in development of virtually all adaptive estimation procedures
[see, e.g., \citet{Barron-Birge}, \citet{devroye}
\citet{golubev}, \citet{GL1} and \citet{spok-golubev}].

The \textit{kernel density estimator process} is
a particular case of the empirical process
$\{\xi_w, w\in\cW\}$ that was frequently studied
in the probabilistic literature. It is
associated with the weight function $w$ given by
%
%
\begin{equation}\label{eq:w}
w(t, x)= \frac{1}{n \prod_{i=1}^d h_i} K\biggl(\frac{t-x}{h}\biggr
),\qquad x\in\cX=\bR
^{d}, t\in\cT=\bR^d,
\end{equation}
where $K\dvtx\bR^d\to\bR$ is a kernel, $h=(h_1, \ldots, h_d)$ is the
bandwidth vector,
and $u/v$ denotes the coordinate-wise division for $u, v\in\bR^d$.
Limit laws
for the
$\bL_s$-norms of the kernel density estimators were derived in
\citet{beir-mason}; \citet{dumbgen}
study exact asymptotics for the large/moderate deviation probabilities.
\citet{gine-mason-zaitsev} investigate weak convergence of
the $\bL_1$-norm kernel density estimator process
indexed by a class of kernels under entropy conditions.
For other closely related work, see
\citet{mason-ein},
\citet{gine-kolt-zinn}, \citet{gine-nikl}
and references therein.
We remark that the kernel density estimator process is
naturally parametrized by $\cW=\cK\times\cH$, where $\cH$ is a set
of bandwidths and $\cK$ is a family of kernels.
The \textit{convolution kernel density estimator process}
will be also studied in Section \ref{sec_examples}.

The inequalities (\ref{eq:11}) and (\ref{eq:22}) are useful for constructing
statistical procedures provided that the following requirements are met.
\begin{enumerate}[(iii)]
\item[(i)] \textit{Explicit expression for $U_\psi(w,f)$.}
Typically, the bound $U_\psi(w,f)$ is directly involved in the
construction of statistical procedures; thus, it should be explicitly given.
\item[(ii)] \textit{Minimal assumptions on $\cW$.} This condition is
dictated by a
variety of problems where the inequalities (\ref{eq:11}) and (\ref{eq:22})
can be applied. In particular, the sets $\cW$ may have a complicated
structure
(see, e.g., examples in Section \ref{sec_examples}).
\item[(iii)] \textit{Minimal assumptions on $f$}.
The probability measure $\bP$ (and the expectation~$\bE$) as well as
the right-hand sides of (\ref{eq:11}) and (\ref{eq:22})
are determined by the density $f$. Therefore, we want to establish
(\ref{eq:11}) and (\ref{eq:22}) under weak assumptions on $f$.
In particular, we would like to emphasize
that all our results are established
for the set of all probability densities
uniformly bounded by a given constant. No regularity conditions are supposed.
\item[(iv)] \textit{Minimal assumptions on the distribution of $\e$.}
If the process $\{\eta_w, w\in\cW\}$ is considered, then the
probability measure
$\bP$ (and the expectation~$\bE$) is also determined
by the distribution of $\e$. Therefore, we would like to have
(\ref{eq:11}) and (\ref{eq:22}) under mild assumptions on this distribution.
We will see that the function $g$ given in (\ref{eq:individual-prob})
depends on the distribution tail of $\e$.
\end{enumerate}

Let us briefly discuss some consequences of requirement
(i) for the process $\{\xi_w, w\in\cW\}$.
Using the Talagrand concentration inequality, we prove that
(\ref{eq:individual-prob})
holds with
$U_\xi(w,f)=\bE\|\xi_w\|_{s,\tau}$,
on the space of functions $\mH=\{w\dvtx\sup_{x\in\cX}\|w(\cdot,x)\|
_{s,\tau
}<\infty\}$.
However, this bound cannot be used in statistical problems
at least for two reasons.

First, it is implicit and a reasonably sharp explicit upper bound
$\bar{U}_\xi(w,f)$ on $U_\xi(w,f)$
should be used instead. Sometimes if the class $\cW$ is not so complex
(e.g., $\cW=\cK\times\cH$) one can find a constant
$c$ independent of $w$, $f$ and $n$
such that
\[
c\bar{U}_\xi(w,f) \leq U_{\xi}(w,f) \leq\bar{U}_\xi(w,f).
\]
In such cases, $\bar{U}_\xi(w,f)$ can be regarded as a sharp bound on
$U_\xi(w,f)$.
We note, however, that establishing the above inequalities
requires additional assumptions on $\cW$ and $f$ and
nontrivial technical work. It seems that for more complex classes $\cW$
the problem of finding an ``optimal'' upper estimate
for $U_\xi(w,f)$ cannot be solved in the framework of probability theory.
Contrary to that, theory of adaptive
nonparametric estimation is equipped with
the optimality criterion, and an upper bound $\bar{U}_\xi(w,f)$ can be regarded
as sharp if it
leads to the optimal statistical procedure.
Thus, sharpness of $\bar{U}_\xi(w,f)$
can be assessed through accuracy analysis
of the resulting statistical procedure.

Second, $U_\xi(w,f)$ [and presumably its sharp upper bound $\bar
{U}_\xi(w,f)$]
depends on $f$.
In the density estimation context where the process
$\{\xi_w, w\in\cW\}$ appears, $f$ is the parameter to be estimated.
Therefore, bounds depending on $f$ cannot be used in construction of
estimation procedures.
A natural idea is to replace $U_\xi(w,f)$ by its
empirical counterpart $\hat{U}_\xi(w)$
whose construction is based only on the observations $X_1,\ldots, X_n$.
We adopt this strategy and establish
the corresponding inequality
%
%
\begin{equation}\label{eq:3}
\bE
\sup_{w\in\cW}\bigl[\|\xi_w\|_{s,\tau}-v_\ve
\bigl(1+\sqrt{y}\lambda_A+y\lambda_B\bigr)\hat{U}_\xi(w)\bigr]^q_+
\leq\tilde{E}_{\ve,g}(y),\qquad
q\geq1,\hspace*{-37pt}
\end{equation}
where $\tilde{E}_{\ve,g}(\cdot)$ differs
from $E_{\ve,g}(\cdot)$ in (\ref{eq:22}) only
by some absolute multiplicative factor, and,
therefore, satisfies (\ref{eq1:introduction-new})
if (\ref{eq2:introduction-new}) holds for $\bar{U}_\xi(w,f)$.
Here $v_\ve$
is bounded by some absolute constant and completely determined by $\ve$
and~$\cW$.
We provide an explicit expression for $v_\ve$.

Thus, requirement (i)
leads to a new type of uniform bounds that are~ran\-dom.
A natural question about sharpness of
these bounds arises.
In order to give an answer to this question,
we prove that
under mild assumptions on the class of weights
$\cW$ one can choose
$\ve\,{=}\,\ve_n\,{\to}\,0$ and $y_n\,{\to}\,\infty$ as $n\,{\to}\,\infty$~so~that
\[
\lim_{n\to\infty}
v_{\ve_n}\bigl(1+\sqrt{y_n}\lambda_A+y_n\lambda_B\bigr)=1,\qquad
\lim_{n\to\infty}
\tilde{E}_{\ve_n,g}(y_n)=0,
\]
and there exists $\tilde{\ve}_n\to0$, $n\to\infty$ such that for any
subset $\cW_0\subseteq\cW$ and any $q\geq1$
\[
\bE\Bigl[\sup_{w\in\cW_0}\hat{U}_\xi(w)\Bigr]^{q} \leq\Bigl[(1+\tilde
{\ve}_n)
\sup_{w\in\cW_0}
\bar{U}_\xi(w,f)\Bigr]^{q}+
R_n(\cW_0),
\]
where the remainder term $R_n(\cW_0)$ is asymptotically negligible
in the sense that
for any $\ell>0$
one has $\limsup_{n\to\infty}\sup_{f\in\cF}\sup_{\cW_0\subseteq\cW}
[ n^{\ell} R_n(\cW_0)]=0$. Here $\cF$ denotes the set of all
probability densities uniformly bounded by a given constant
[see (\ref{eq:cF})]. These results show
that in asymptotic terms the random uniform bound is almost as good as
the nonrandom
one, and there is no loss of sharpness due to the use of the random uniform
bound.
%
%
\subsection{Summary of results and organization of the paper}
In this paper, we develop a general machinery
for finding uniform upper bounds on sub-additive positive
functionals of sums of independent random functions.
We start with the general setting as outlined in Section \ref{subsec:general}
above, and establish
inequalities of the type
(\ref{eq:1}) and (\ref{eq:2}) (see Proposition \ref{l_uniform-2}).
Proofs of these results are based
on the chaining and
slicing/peeling techniques.
The distinctive feature of
our approach is that
$\Theta$ is assumed to be an image of
a subset $\bZ$, of a metric space under some continuous mapping $\phi$,
that is, $\Theta=\phi(\bZ)$ as in (\ref{eq:param}).
Then chaining on $\Theta$
is performed according to the distance induced on $\Theta$
by the mapping $\phi$.

Section \ref{sec:process-xi} is devoted to a
systematical study of the $\bL_s$-norm of
the empirical process
$\{\xi_w, w\in\cW\}$.
First, we derive an inequality
on the tail probability of $\|\xi_w\|_{s,\tau}$ for an individual
function $w\in\cW$
(see Theorem \ref{fixed_w1} in Section~\ref{sec:fixed}).
Here we use the Bernstein inequality
for empirical processes proved by \citet{bousquet} and inequalities
for norms of integral operators.
Then in Section~\ref{sec:uniform} we proceed with establishing
uniform bounds.
In Theorem \ref{t_uniform}
of Section \ref{sec:uniform-non-random},
we derive
uniform nonrandom bounds
for\vadjust{\goodbreak} $\|\xi_w\|_{s,\tau}$, $w\in\cW$ that hold for all $s\geq1$.
In the case $s>2$, the nonrandom bound depends on the density $f$; therefore,
for $s>2$ we construct a random bound and present
the corresponding result in Theorem \ref{t_random}.
Theorems \ref{t_uniform} and \ref{t_random} hold for classes of weights
$\cW$ satisfying rather general conditions.
In Section \ref{sec:proc-xi_assW},
we specialize results of Theorems \ref{t_uniform} and \ref{t_random}
to the classes $\cW$ of weights depending on the difference
of their arguments. This allows us to derive explicit both nonrandom
and random uniform bounds
on $\|\xi_w\|_{s,\tau}$ under conditions on the weights which can be easily
interpreted. The corresponding results are given in
Theorems \ref{t:consec1_new} and \ref{t_random_case}.
We also present some asymptotic corollaries which demonstrate
sharpness of the derived uniform bounds.
Section \ref{sec_examples} applies the results
of Theorems \ref{t:consec1_new} and \ref{t_random_case}
to special examples of the set $\cW$. In particular,
we consider the kernel density estimator process given by (\ref{eq:w}),
and the convolution kernel density estimator processes.
It turns out that corresponding results can be formulated
in a unified way, and they are presented in Theorem \ref{t_example_1}.

In Section \ref{sec:proc-eta}, we study $\bL_s$-norm
of the regression-type processes
$\{\eta_w, w\in\cW\}$ given in (\ref{eq:processes}).
First, we
derive an inequality
on the tail probability of $\|\eta_w\|_{s,\tau}$ for an individual
function $w\in\cW$
(Theorem \ref{th:eta_fixed_w} in Section \ref{sec:proc-eta-fixed}).
This theorem is proved under two different types of conditions
on the tail
probability of the random variable $\e$.
In Section \ref{sec:proc-eta_uniformW}, we present a
nonrandom uniform bound for
$\|\eta_w\|_{s,\tau}$ for all $s\geq1$ over
the class of weights depending on the difference of their
arguments. The corresponding result is given in Theorem \ref{t:proc-eta-assW},
and some asymptotic results that follow from Theorem \ref{t:proc-eta-assW}
are formulated in Corollary \ref{cor:concec-eta}.
Sections \ref{sec:proofs-prop}--\ref{sec:proofs-eta} contain
proofs of main results of this paper. Proofs of auxiliary
lemmas are given in the \hyperref[app]{Appendix}.
%
\section{Uniform bounds in general setting}\label{sec:key}
In this section, we
establish uniform probability bounds for the supremum of a general
sub-additive functional of a random process from the probability
inequality for
the individual process.

Let $\mS$ and $\mH$ be linear topological spaces, $(\Omega,\mA,\mathrm
{P})$ be a complete probabi\-lity space,
and let $\xi_\theta\dvtx\Omega\,{\to}\,\mS, \theta\,{\in}\,\mH$
be a family of random mappings
such~that:\looseness=-1
\begin{itemize}
\item
$\xi_{\bullet}(\omega)$ is linear and continuous on $\mH$
for any $\omega\in\Omega$;
\item
$\xi_{\theta}(\cdot)$ is $\mA$-measurable for any $\theta\in\mH$.
\end{itemize}
Let $\Psi\dvtx\mS\to\bR_+$ be a given {\sl sub-additive} functional, and
$\Theta$
be a fixed subset of $\mH$.
\begin{assumption}
\label{fixed_theta}
There exist functions $A\dvtx\mH\to\bR_+$, $B\dvtx\mH\to\bR_+$, $U\dvtx
\mH\to\bR_+$ and
$g\dvtx\bR_+\to\bR_+$
such that:
\begin{enumerate}[(iii)]
\item[(i)]
for any $z>0$
\[
{\mathrm{P}\{\Psi(\xi_\theta)-U(\theta)\geq z\} \leq g\biggl(\frac
{z^2}{A^{2}(\theta)+B(\theta)z}\biggr)\qquad
\forall\theta\in\mH;}\vadjust{\goodbreak}
\]
\item[(ii)] the function
$g$ is monotonically decreasing to $0$;

\item[(iii)]
$0< r:= \inf_{\theta\in\Theta}U(\theta) \leq
\sup_{\theta\in\Theta}U(\theta)=:R \leq\infty$.
\end{enumerate}
\end{assumption}

Condition (i) is a Bernstein-type probability inequality on $\Psi(\xi
_\theta)$
for a fixed \mbox{$\theta\in\mathfrak{H}$}. In particular, in examples of
Sections \ref{sec:process-xi} and \ref{sec:proc-eta} we have
$g(x)=ce^{-x^{\alpha}}$ and $g(x)=c x^{-p}$ for some $c,\alpha, p>0$.
Based on Assumption \ref{fixed_theta}, our goal is to derive
uniform probability and moment bounds of the type
(\ref{eq:1}) and (\ref{eq:2}).
For this purpose, we suppose that the set $\Theta$ is
parametrized in a special way; this assumption facilitates the use of the
standard chaining technique and leads to quite natural conditions on
the functions $U, A$ and $B$.
\begin{assumption}
\label{parameter}
Let $(\mZ,\mathrm{d})$ be a metric space, and let
$\bZ$ be a totally bounded subset of $(\mZ,\mathrm{d})$.
There exists a \textit{continuous} mapping $\phi$ from $\mZ$ to $\mH$
such that
\[
\Theta=\{\theta\in\mH\dvtx\theta=\phi[\zeta], \zeta\in\bZ\}.
\]
\end{assumption}

\begin{remark}
In statistical applications the set $\Theta$ is
parametrized in a~natural way. For instance,
if, as in the introduction section,
$\Psi(\cdot)=\|\cdot\|_{s, \tau}$ and
$\xi_\theta=\xi_w$ with
$w$
given by (\ref{eq:w}), then $\Theta$ is
parametrized by the kernel and bandwidth $(K, h)\in\cK\times\cH$.
The distance $\mathrm{d}$ on $\cK\times\cH$ may have a rather special form.
\end{remark}

Let $Z$ be a subset of $\bZ$. Define the following quantities:
%
%
\begin{eqnarray}
\label{eq2:assuption_parameter}
\varkappa_U(\cZ)&:=&\sup_{\zeta_1,\zeta_2\in\cZ}\frac{U(\phi[\zeta
_1]-\phi[\zeta_2])}{
\mathrm{d}(\zeta_1,\zeta_2)} \vee
\sup_{\zeta\in\cZ}U(\phi[\zeta]),\\
\label{eq:assumption_parameter-2}
\Lambda_A(\cZ)&:=&\sup_{\zeta_1,\zeta_2\in\cZ}\frac{A(\phi[\zeta
_1]-\phi[\zeta_2])}{
\mathrm{d}(\zeta_1,\zeta_2)} \vee
\sup_{\zeta\in\cZ}A(\phi[\zeta]),\\
\label{eq2:assuption_parameter-3}
\Lambda_B(\cZ)&:=&\sup_{\zeta_1,\zeta_2\in\cZ}\frac{B(\phi[\zeta
_1]-\phi[\zeta_2])}{
\mathrm{d}(\zeta_1,\zeta_2)} \vee
\sup_{\zeta\in\cZ}B(\phi[\zeta]).
\end{eqnarray}
Let $N_{\cZ,\mathrm{d}}(\delta)$ denote
the minimal number of balls
of radius $\delta$ in the metric
$\mathrm{d}$ needed to cover the set $\cZ$, and let
$\cE_{\cZ,\mathrm{d}}(\delta)=
\ln[N_{\cZ,\mathrm{d}}(\delta)]$ be the $\delta$-entropy
of~$Z$.
For any $y>0$ and $\ve>0$, put
\[
\cL^{(\ve)}_{g, Z}(y)=g(y
)+\sum_{k=1}^{\infty}[N_{\cZ,\mathrm{d}}
(\ve2^{-k})]^{2}
g(9y 2^{k-3} k^{-2}).
\]
\subsection*{Key propositions}
The next two statements are the main results of this section.
Define
%
%
\begin{equation}\label{eq:C-*}
C^{*}(y,Z):=\sqrt{y}\Lambda_A(Z)+y\Lambda_B(Z),\qquad y>0,
\end{equation}
where $\Lambda_A$ and $\Lambda_B$ are given in (\ref{eq:assumption_parameter-2})
and (\ref{eq2:assuption_parameter-3}).
\begin{proposition}
\label{l_uniform}
Suppose that Assumptions \ref{fixed_theta} and \ref{parameter} hold,
and let
$Z$ be a subset of $\bZ$ such that
$\sup_{\zeta,\zeta^\prime\in Z}\mathrm{d}(\zeta, \zeta^\prime)\leq\ve/4$
and $\varkappa_U(Z)<\infty$.
Then for all $y>0$ and $\ve>0$ one has
\[
\mathrm{P}\Bigl\{\sup_{\zeta\in\cZ}\Psi\bigl(\xi_{\phi[\zeta]}\bigr)\geq(1+\ve
)[\varkappa_U(Z)+C^*(y,\cZ)]\Bigr\}
\leq\cL^{(\ve)}_{g, Z}(y).
\]
\end{proposition}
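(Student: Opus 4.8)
The plan is to run the classical chaining argument, but organized so that every increment of the process is controlled through the \emph{induced metric} on $\Theta$ rather than through an intrinsic pseudo-metric. First I would fix a sequence of successively finer coverings of $Z$: for each $k\geq 0$ let $Z$ be covered by $N_{Z,\mathrm d}(\ve 2^{-k})$ balls of radius $\ve 2^{-k}$, pick a center in each, and let $\pi_k:Z\to Z_k$ denote the nearest-center map onto the resulting finite net $Z_k$. Since $\mathrm{diam}_{\mathrm d}(Z)\leq\ve/4$, we may take $Z_0$ to be a single point $\zeta_0$. For $\zeta\in Z$ write the telescoping identity $\phi[\zeta]=\phi[\zeta_0]+\sum_{k\geq 1}\big(\phi[\pi_k\zeta]-\phi[\pi_{k-1}\zeta]\big)$, which converges in $\mH$ by continuity of $\phi$ and totally boundedness of $Z$; by linearity of $\xi_\bullet$ and sub-additivity of $\Psi$,
\[
\Psi\big(\xi_{\phi[\zeta]}\big)\;\leq\;\Psi\big(\xi_{\phi[\zeta_0]}\big)+\sum_{k=1}^{\infty}\Psi\Big(\xi_{\phi[\pi_k\zeta]-\phi[\pi_{k-1}\zeta]}\Big).
\]
Here I am implicitly using that $\xi$ is linear and continuous on $\mH$ a.s.\ to pass the series through $\xi$, and continuity of $\Psi$-compatible estimates only at the level of the finite partial sums, taking limits at the end.

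Next I would bound each chaining link in probability via Assumption~\ref{fixed_theta}(i). For the base term, $\phi[\zeta_0]\in\Theta$ so $U(\phi[\zeta_0])\leq\kappa_U(Z)$, $A(\phi[\zeta_0])\leq\Lambda_A(Z)$, $B(\phi[\zeta_0])\leq\Lambda_B(Z)$, and applying (i) with $z=\sqrt{y}\,\Lambda_A(Z)+y\,\Lambda_B(Z)=C^*(y,Z)$ gives, after the elementary bound $z^2/(A^2+Bz)\geq z^2/(2\max\{A^2,Bz\})\geq \tfrac12\min\{z^2/A^2,z/B\}\geq y/2$ (and using monotonicity of $g$, absorbing the constant into the choice $9y\,2^{-3}$ when $k$ is interpreted as $0$), that $\Psi(\xi_{\phi[\zeta_0]})\leq\kappa_U(Z)+C^*(y,Z)$ off an event of probability at most $g(y)$. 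For the $k$-th link, for each pair of net points $(\zeta',\zeta'')$ with $\zeta''=\pi_{k-1}\zeta$, $\zeta'=\pi_k\zeta$ one has $\mathrm d(\zeta',\zeta'')\leq \mathrm d(\zeta',\zeta)+\mathrm d(\zeta,\zeta'')\leq \ve 2^{-k}+\ve 2^{-(k-1)}=3\ve 2^{-k}$, hence by the definitions \eqref{eq2:assuption_parameter}--\eqref{eq2:assuption_parameter-3}
\[
U\big(\phi[\zeta']-\phi[\zeta'']\big)\leq 3\ve 2^{-k}\kappa_U(Z),\quad
A\big(\phi[\zeta']-\phi[\zeta'']\big)\leq 3\ve 2^{-k}\Lambda_A(Z),\quad
B\big(\phi[\zeta']-\phi[\zeta'']\big)\leq 3\ve 2^{-k}\Lambda_B(Z).
\]
Choosing the link-level deviation $z_k=3\ve 2^{-k}\big(\sqrt{y_k}\,\Lambda_A(Z)+y_k\Lambda_B(Z)\big)$ with $y_k=9y\,2^{k-3}k^{-2}$ and invoking (i) together with the same quadratic-to-linear bound shows that $\Psi\big(\xi_{\phi[\zeta']-\phi[\zeta'']}\big)\leq z_k+U(\phi[\zeta']-\phi[\zeta''])$ fails with probability at most $g(y_k)$; taking a union bound over all $\leq N_{Z,\mathrm d}(\ve 2^{-k})\cdot N_{Z,\mathrm d}(\ve 2^{-(k-1)})\leq [N_{Z,\mathrm d}(\ve 2^{-k})]^2$ relevant pairs accounts for the squared covering number in $\cL^{(\ve)}_{g,Z}$.

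On the complement of the union of all these exceptional events — whose total probability is $g(y)+\sum_{k\geq 1}[N_{Z,\mathrm d}(\ve 2^{-k})]^2 g(9y\,2^{k-3}k^{-2})=\cL^{(\ve)}_{g,Z}(y)$ — I would add up the bounds along the chain: for every $\zeta\in Z$,
\[
\Psi\big(\xi_{\phi[\zeta]}\big)\leq \kappa_U(Z)+C^*(y,Z)+\sum_{k=1}^{\infty}\Big[3\ve 2^{-k}\kappa_U(Z)+3\ve 2^{-k}\big(\sqrt{y_k}\,\Lambda_A(Z)+y_k\Lambda_B(Z)\big)\Big].
\]
It remains the purely arithmetic check that $3\ve\sum_{k\geq1}2^{-k}=3\ve\leq\ve$ is \emph{not} quite what is needed — in fact one needs $\sum_{k\geq1}3\ve 2^{-k}\le\ve$, so the constant $9$ and the factor $2^{-3}$ inside $y_k$ are precisely calibrated: $3\ve 2^{-k}\sqrt{y_k}=3\ve 2^{-k}\sqrt{9y}\,2^{(k-3)/2}k^{-1}=\tfrac{9}{8}\ve\,\sqrt{y}\,2^{-k/2}k^{-1}$ and $\sum_{k\geq1}2^{-k/2}k^{-1}<\tfrac{8}{9}$, while similarly $\sum_{k\geq1}3\ve 2^{-k}y_k=\tfrac{27}{8}\ve y\sum_{k\geq1}k^{-2}=\tfrac{27}{8}\cdot\tfrac{\pi^2}{6}\ve y$, which the authors presumably absorb by a slightly different numerical constant; granting the stated constants, the geometric and $\sum k^{-2}$, $\sum 2^{-k/2}k^{-1}$ series sum to at most $\ve$ times $\kappa_U(Z)+C^*(y,Z)$, giving $\Psi(\xi_{\phi[\zeta]})\leq(1+\ve)[\kappa_U(Z)+C^*(y,Z)]$ simultaneously over $\zeta\in Z$, which is the claim after taking the supremum. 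The main obstacle is the bookkeeping of these numerical constants — making the link-level deviations $z_k$ both summable to $\ve\kappa_U$-order and compatible with the specific argument $9y\,2^{k-3}k^{-2}$ appearing in $\cL^{(\ve)}_{g,Z}$ — together with justifying the a.s.\ interchange of $\xi_\bullet$ with the infinite telescoping series using only linearity and continuity on $\mH$.
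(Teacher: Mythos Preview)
Your chaining skeleton is correct, but the arithmetic failure you flag is not a bookkeeping issue you can wave away: with direct summation of the link contributions you \emph{cannot} recover the stated bound $\cL^{(\ve)}_{g,Z}(y)$. You compute yourself that the $U$-contributions sum to $3\ve\kappa_U(Z)$, the $\Lambda_B$-contributions to roughly $\tfrac{27\pi^2}{48}\ve y\Lambda_B$, and the $\Lambda_A$-contributions to about $3.9\,\ve\sqrt{y}\,\Lambda_A$, all of which exceed the target $\ve[\kappa_U+C^*]$ by fixed factors $\geq 3$. The constants $9\cdot 2^{k-3}k^{-2}$ in $\cL^{(\ve)}_{g,Z}$ are not arbitrary; they come from a different organization of the chain.

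The missing idea is to replace the sum along the chain by a weighted \emph{supremum}. The paper writes
\[
\sum_{k\geq 0}\Psi\big(\xi_{\phi[z_{k+1}(\zeta)]-\phi[z_k(\zeta)]}\big)
=\frac{\pi^2}{6}\sum_{k\geq 0}p_k\Big[(k+1)^2\Psi\big(\cdots\big)\Big]
\leq\frac{\pi^2}{6}\sup_{k\geq 0}(k+1)^2\Psi\big(\cdots\big),
\]
with $p_k=6\pi^{-2}(k+1)^{-2}$, $\sum p_k=1$. Then for each link one needs only
$(k+1)^2\Psi(\cdots)<\tfrac{6}{\pi^2}\ve[\kappa_U(Z)+C^*(y,Z)]$, i.e.\ the per-link threshold is $\tfrac{6\ve}{\pi^2(k+1)^2}[\kappa_U(Z)+C^*(y,Z)]$. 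After subtracting $U(\phi[z]-\phi[z'])\leq \ve 2^{-k-2}\kappa_U(Z)$ (using nets of radius $\ve 2^{-k-3}$ so that link distances are $\leq \ve 2^{-k-2}$, not $3\ve 2^{-k}$), the fact that $\min_{k\geq 0}\big(6\pi^{-2}(k+1)^{-2}-2^{-k-2}\big)>0$ and $9/16<6/\pi^2$ leaves a residual threshold $\tfrac{9\ve}{16(k+1)^2}C^*(y,Z)$; rescaling by the link distance $\ve 2^{-k-2}$ produces exactly the argument $9y\cdot 2^{k-2}(k+1)^{-2}$ inside $g$, which after the index shift $k\mapsto k-1$ is $9y\cdot 2^{k-3}k^{-2}$.

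One further point: your base-term bound via $z^2/(A^2+Bz)\geq\tfrac12\min\{z^2/A^2,z/B\}$ gives only $g(y/2)$. The paper gets $g(y)$ sharp by checking directly that for $C^*=\sqrt{y}\,\Lambda_A+y\Lambda_B$ one has $(C^*)^2/(\Lambda_A^2+\Lambda_B C^*)\geq y$, which reduces to the trivial inequality $2y^{3/2}\Lambda_A\Lambda_B\geq y^{3/2}\Lambda_A\Lambda_B$.
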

\begin{remark}
\label{rem:after-prop1}
Inspection of the proof of Proposition \ref{l_uniform}
shows that
continuity of $\xi_{\bullet}$ on $\mH$ can be replaced
by the assumption that $\Psi(\xi_\bullet)$ is continuous
$\mathrm{P}$-almost surely on
$\phi[\bZ]$ in the distance $\mathrm{d}$.
The latter assumption is often easier to verify in specific problems.
\end{remark}

Define
%
%
\begin{equation}
\label{eq:psi-new}
\Psi_u^*(y, Z):=\sup_{\zeta\in Z}\bigl\{\Psi\bigl(\xi_{\phi[\zeta]}\bigr)-
u C^*(y)U(\phi[\zeta])\bigr\},\qquad y>0,
\end{equation}
where $Z\subseteq\bZ$ is a subset of $\bZ$,
$u\geq1$ is a constant,
and $C^{*}(\cdot)$ is the function defined below in
(\ref{eq:definitions-new-new}).
We derive bounds on the tail probability and $q$th moment
of the random variable $\Psi_u^*(y,\bZ)$.
Note that $\Psi_u^*(y,\bZ)$ is $\mA$-measurable for given $y$ and $u$ because
the mapping $\zeta\mapsto\xi_{\phi[\zeta]}$ is $\mathrm{P}$-almost surely
continuous, and $\bZ$ is a totally bounded set.
By the same reason
the supremum taken over any subset of $\bZ$ will be measurable as well.

With $r$ and $R$ defined in Assumption \ref{fixed_theta}(iii), for
any $a\in[r, R]$
consider the following subsets of $\bZ$:
%
%
\begin{equation}\label{eq:Z-a}
\bZ_a:=\{\zeta\in\bZ\dvtx a/2<U(\phi[\zeta])\leq a\}.
\end{equation}
In words, for given $a\in[r,R]$, $\bZ_a$ is the \textit{slice} of the parameter
values $\zeta\in\bZ$ for which the function $U(\phi[\zeta])$ takes
values between
$a/2$ and $a$.

In what follows, the quantities $\varkappa_U(\bZ_a)$, $\Lambda_A(\bZ_a)$,
$\Lambda_B(\bZ_a)$ and $L_{g, \bZ_a}^{(\epsilon)}(y)$ will be considered
as functions of $a\in[r, R]$. That is why,
with slight abuse of notation,
we will write
%
%
\begin{equation}\label{eq:definitions}
\varkappa_U(a):=\varkappa_U(\bZ_a),\qquad
L_{g}^{(\epsilon)}(y, a):=L_{g, \bZ_a}^{(\epsilon)}(y).
\end{equation}
Put also
%
%
\begin{equation}
\label{eq:definitions-new}
\Lambda_A:=\sup_{a\in[r, R]}a^{-1}\Lambda_A(\bZ_a);\qquad
\Lambda_B:=\sup_{a\in[r, R]}a^{-1}\Lambda_B(\bZ_a),
\end{equation}
and let the function $C^{*}(\cdot)$ in (\ref{eq:psi-new}) be defined as
%
%
\begin{equation}
\label{eq:definitions-new-new}
C^{*}(y):= 1+2\sqrt{y}\Lambda_A+2y\Lambda_B,\qquad y>0.
\end{equation}

\begin{proposition}\label{l_uniform-2}
Suppose that Assumptions \ref{fixed_theta} and \ref{parameter} hold, and
let\break \mbox{$\varkappa_U(\bZ)<\infty$}.
If
%
%
\begin{equation}\label{eq:<a}
\varkappa_U(a)\leq a\qquad \forall a\in[r,R],
\end{equation}
and if $u_{\ve}=2^{\ve}(1+\ve)$ then
for any $\epsilon\in(0,1]$, $y>0$ and any $q\geq1$
one has
%
%
\begin{eqnarray}\qquad
\label{eq:P}
\mathrm{P}\{\Psi_{u_{\ve}}^*(y, \bZ)\geq0\}&\leq& N_{\bZ,
\mathrm{d}}(\epsilon/8) \sum_{j=0}^{[\ve^{-1}\log_2(R/r)-1]_+}\cL
^{(\ve)}_{g}\bigl(y,r2^{\ve(j+1)}\bigr),
\\
\label{eq:E}
\mathrm{E}[\Psi_{u_{\ve}}^*(y, \bZ)]^q_+
&\leq& N_{\bZ,\mathrm{d}}(\epsilon/8) \bigl[u_{\ve}
C^*(y)\bigr]^q \nonumber\\[-8pt]\\[-8pt]
&&{}\times\sum_{j=0}^{[\ve^{-1}\log_2(R/r)-1]_+}
\bigl[r2^{\ve(j+1)}\bigr]^q
J^{(\ve)}_{g}\bigl(y,r2^{\ve(j+1)}\bigr),\nonumber
\end{eqnarray}
where $J^{(\ve)}_{g}(z,a):=q\int_1^\infty(x-1)^{q-1}\cL^{(\ve
)}_{g}(zx, a) \,\rd x$.\vspace*{-3pt}
\end{proposition}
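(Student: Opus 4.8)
The strategy is to reduce the uniform bound over all of $\bZ$ to a sum of uniform bounds over the slices $\bZ_a$ defined in (\ref{eq:Z-a}), and on each slice to cover it by small $\mathrm{d}$-balls and apply Proposition~\ref{l_uniform}. First I would discretize the range $[r,R]$ of $U(\phi[\cdot])$ geometrically: set $a_j := r\,2^{\ve(j+1)}$ for $j=0,1,\dots,M-1$ with $M := \lceil \ve^{-1}\log_2(R/r)\rceil$, so that the slices $\bZ_{a_0},\dots,\bZ_{a_{M-1}}$ cover $\bZ$ (every $\zeta\in\bZ$ has $U(\phi[\zeta])\in[r,R]$, hence lies in some $\bZ_{a_j}$ since consecutive $a_j$ differ by a factor $2^\ve$ and each slice spans a factor of $2$). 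On the slice $\bZ_{a_j}$ we have $a_j/2 < U(\phi[\zeta]) \le a_j$, so $U(\phi[\zeta]) \ge a_j/2$, which is the key lower bound that lets us absorb the deterministic normalization into the event. Concretely, on $\bZ_{a_j}$,
\[
\Psi\big(\xi_{\phi[\zeta]}\big) - u_\ve C^*(y) U(\phi[\zeta]) \ge 0
\quad\Longrightarrow\quad
\Psi\big(\xi_{\phi[\zeta]}\big) \ge u_\ve\big(1 + 2\sqrt{y}\,\Lambda_A + 2y\,\Lambda_B\big)\,\tfrac{a_j}{2},
\]
and since $\Lambda_A \ge a_j^{-1}\Lambda_A(\bZ_{a_j})$ and $\Lambda_B \ge a_j^{-1}\Lambda_B(\bZ_{a_j})$ by (\ref{eq:definitions-new}), the right-hand side dominates $u_\ve 2^{-1}\big[a_j + 2\sqrt{y}\,\Lambda_A(\bZ_{a_j}) + 2y\,\Lambda_B(\bZ_{a_j})\big]$; using $\kappa_U(a_j)\le a_j$ from (\ref{eq:<a}) and $u_\ve/2 = 2^{\ve-1}(1+\ve)$, one checks this is at least $(1+\ve)\big[\kappa_U(\bZ_{a_j}) + C^*(y,\bZ_{a_j})\big]$ after a further cover of $\bZ_{a_j}$ by balls of radius $\ve/8$ (the factor $2^{\ve-1}\cdot 2 = 2^\ve$ accounting for the $\ve$-slack in chaining and the fact that Proposition~\ref{l_uniform} requires diameter $\le \ve/4$, i.e. radius $\le \ve/8$).

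**Covering each slice.** For each $j$, cover $\bZ_{a_j}$ by $N_{\bZ,\mathrm{d}}(\ve/8)$ balls of $\mathrm{d}$-radius $\ve/8$ (one may use a cover of all of $\bZ$, which contains $\bZ_{a_j}$). On each such ball $Z$, the diameter is at most $\ve/4$, and $\kappa_U(Z) \le \kappa_U(\bZ_{a_j}) \le a_j < \infty$, so Proposition~\ref{l_uniform} (with its $\ve$ taken to be our $\ve$) applies and gives
\[
\mathrm{P}\Big\{\sup_{\zeta\in Z}\Psi\big(\xi_{\phi[\zeta]}\big) \ge (1+\ve)\big[\kappa_U(Z) + C^*(y, Z)\big]\Big\} \le \cL^{(\ve)}_{g, Z}(y) \le \cL^{(\ve)}_g(y, a_j),
\]
where the last inequality follows because $\cL^{(\ve)}_{g,Z}(\cdot)$ is monotone in $Z$ through the covering numbers and the quantities it depends on are all controlled by those of $\bZ_{a_j}$. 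Taking the union bound over the $N_{\bZ,\mathrm{d}}(\ve/8)$ balls covering $\bZ_{a_j}$ and then over $j=0,\dots,M-1$ yields (\ref{eq:P}); here I must be careful that the events on the individual balls, combined with the slice bound $U(\phi[\zeta])\ge a_j/2$, genuinely cover the event $\{\Psi^*_{u_\ve}(y,\bZ)\ge 0\}$, which is exactly the arithmetic verified above.

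**The moment bound.** For (\ref{eq:E}) I would integrate the tail estimate. Writing $\Psi^*_{u_\ve}(y,\bZ) = \sup_j \sup_{\zeta\in\bZ_{a_j}}\{\Psi(\xi_{\phi[\zeta]}) - u_\ve C^*(y)U(\phi[\zeta])\}$ and using $U(\phi[\zeta])\le a_j$ on $\bZ_{a_j}$, on each slice the positive part is at most $\sup_{\zeta\in\bZ_{a_j}}\Psi(\xi_{\phi[\zeta]})$, and we apply the tail bound with $y$ replaced by $yx$ for $x\ge 1$: the event $\{\sup_{\zeta\in\bZ_{a_j}}\Psi(\xi_{\phi[\zeta]}) - u_\ve C^*(y) a_j \ge (x-1)u_\ve C^*(y) a_j\}$ is contained in the corresponding event of Proposition~\ref{l_uniform} at level $yx$ (because $C^*(yx)\ge x C^*(y)$ up to the linear-in-$y$ term — more precisely one uses $1+2\sqrt{yx}\Lambda_A + 2yx\Lambda_B \ge$ a suitable multiple, and the $9y2^{k-3}k^{-2}$ arguments inside $\cL$ scale correctly, which is why the precise constant $9/8$ appears there). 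Then
\[
\mathrm{E}\big[\Psi^*_{u_\ve}(y,\bZ)\big]^q_+ \le \sum_{j=0}^{M-1} N_{\bZ,\mathrm{d}}(\ve/8)\,\big[u_\ve C^*(y) a_j\big]^q\, q\int_1^\infty (x-1)^{q-1}\cL^{(\ve)}_g(yx, a_j)\,\rd x,
\]
which is (\ref{eq:E}) after recognizing the integral as $J^{(\ve)}_g(y, a_j)$ with $a_j = r2^{\ve(j+1)}$.

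**Main obstacle.** The delicate point is the bookkeeping of constants in the first paragraph: one must verify that $u_\ve = 2^\ve(1+\ve)$ is exactly the factor for which the slice argument ($U(\phi[\zeta])\ge a_j/2$, factor $2$), the entropy-cover-of-slices refinement (radius $\ve/8$ so diameter $\ve/4$, as Proposition~\ref{l_uniform} demands), the normalization $\Lambda_A \ge a_j^{-1}\Lambda_A(\bZ_{a_j})$, and the $(1+\ve)$ slack from Proposition~\ref{l_uniform} all compose to give a valid inclusion of events without any leftover constant $>1$ — and likewise that in the moment bound the rescaling $y\mapsto yx$ propagates correctly through every argument of $g$ inside $\cL^{(\ve)}_g$. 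Everything else (geometric discretization of $[r,R]$, union bounds, the layer-cake integration $\mathrm{E} W^q_+ = q\int_0^\infty t^{q-1}\mathrm{P}(W\ge t)\,\rd t$) is routine.
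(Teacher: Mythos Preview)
Your overall strategy is the right one and matches the paper's approach (slicing in the value of $U$, covering by $\ve/8$-balls, Proposition~\ref{l_uniform} on each piece, then layer-cake for the moment bound). However, there is a genuine gap in the constant bookkeeping you flag as ``the delicate point'' --- and it is not just bookkeeping, it actually fails as written.

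You slice with the sets $\bZ_{a_j}=\{a_j/2<U(\phi[\zeta])\le a_j\}$ themselves, so on each slice the only lower bound available is $U\ge a_j/2$. The threshold then becomes
\[
u_\ve C^*(y)\,\tfrac{a_j}{2}
=2^{\ve-1}(1+\ve)\big(1+2\sqrt{y}\,\Lambda_A+2y\,\Lambda_B\big)\,a_j,
\]
and you need this to dominate $(1+\ve)\big[\kappa_U(\bZ_{a_j})+C^*(y,\bZ_{a_j})\big]$. But $\kappa_U(\bZ_{a_j})$ is bounded only by $a_j$ (not $a_j/2$), so the $\kappa_U$-part of the comparison requires $2^{\ve-1}\ge 1$, i.e.\ $\ve\ge 1$. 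For $\ve<1$ the inclusion of events fails and the argument breaks down.

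The fix (and this is what the paper does) is to slice more finely: partition $\bZ$ by the sets
\[
\tilde{\bZ}_{\delta_{j+1}}=\{\zeta:\ \delta_j<U(\phi[\zeta])\le \delta_{j+1}\},
\qquad \delta_j=r\,2^{\ve j},
\]
which span only a factor $2^\ve$. Since $\ve\le 1$ one has $\tilde{\bZ}_{\delta_{j+1}}\subseteq \bZ_{\delta_{j+1}}$, so all the slice-based quantities $\kappa_U,\Lambda_A,\Lambda_B$ are still controlled through $\bZ_{\delta_{j+1}}$ and hence by $\delta_{j+1}$. But now the lower bound on $U$ is $\delta_j=2^{-\ve}\delta_{j+1}$, and one checks
\[
C^*(y)\,\delta_j
\ \ge\ \delta_j+\sqrt{y}\,\Lambda_A(\tilde{\bZ}_{\delta_{j+1}})+y\,\Lambda_B(\tilde{\bZ}_{\delta_{j+1}})
\ \ge\ 2^{-\ve}\kappa_U(\tilde{\bZ}_{\delta_{j+1}})+C^*(y,\tilde{\bZ}_{\delta_{j+1}}),
\]
so that multiplying by $u_\ve=2^\ve(1+\ve)$ gives exactly the $(1+\ve)[\kappa_U+C^*(y,\cdot)]$ needed for Proposition~\ref{l_uniform}. (The factor $2$ in the definition $C^*(y)=1+2\sqrt{y}\Lambda_A+2y\Lambda_B$ is what absorbs the ratio $\delta_{j+1}/\delta_j\le 2$ on the $\Lambda$-terms.) In short: the $\bZ_a$'s are used to \emph{define} $\Lambda_A,\Lambda_B,\kappa_U$, but the slicing for the union bound must be at the finer scale $2^\ve$.

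A minor point on the moment bound: the scaling you need is $z\,C^*(y)\ge C^*(yz)$ for $z\ge 1$ (since $z\ge 1$ and $z\sqrt{y}\ge\sqrt{zy}$), which is the reverse of what you wrote; with this, the event $\{\sup\Psi\ge z\,u_\ve C^*(y)\delta_j\}$ is contained in $\{\sup\Psi\ge u_\ve C^*(yz)\delta_j\}$ and the previously established probability bound applies with $y$ replaced by $yz$, giving $J^{(\ve)}_g(y,a_j)$ after integration.
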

\begin{remark}
\begin{enumerate}
\item
Proposition \ref{l_uniform}
establishes an upper bound on the tail probability of the supremum of
$\Psi(\xi_{\phi[\zeta]})$ over an arbitrary subset of $\bZ$
contained in a ball of radius $\ve/8$ in the metric $\mathrm{d}$.
The proof of Proposition \ref{l_uniform-2}
uses this bound for balls $Z$ of the radius $\ve/8$ that form
a covering of $\bZ$. Each ball $Z$ is divided on
\textit{slices} on which the value of $U(\phi[\zeta])$ is roughly
the same. Then the supremum
over $\bZ$ is bounded by the sum of suprema over the
\textit{slices}. This simple
technique is often used in the literature on
empirical processes
where it is referred to as \textit{peeling} or \textit{slicing}
[see, e.g.,
\citet{van-de-Geer}, Section 5.3, and
\citet{gine-kolt}].
\item Note that Proposition \ref{l_uniform-2}
holds for any distance $\mathrm{d}$ on $\mathfrak{Z}$. Therefore, if~$\varkappa_U(a)$ is proportional to $a$, condition
(\ref{eq:<a}) can be enforced by rescaling the distance~$\mathrm{d}$.\vspace*{-3pt}
\end{enumerate}
\end{remark}

We now present a useful bound that can be easily
derived from (\ref{eq:P}) and~(\ref{eq:E}).
Let
%
%
\begin{equation}\label{eq:L-g}
L^{(\ve)}_{g} := \sum_{k=1}^{\infty}[N_{\bZ,\mathrm{d}}(\ve2^{-k})]^{2}
\sqrt{g(9\cdot2^{k-3} k^{-2})}.
\end{equation}
Note that for all $\cZ\subseteq\bZ$ and $y\geq1$
\[
L^{(\ve)}_{g,\cZ}(y)\leq g(y)+L^{(\ve)}_{g}\sqrt{g(y)},
\]
because $\inf_{k\geq1}2^{k}(k)^{-2}=8/9$ and $g$ is monotone decreasing.
Therefore,
we arrive to the following corollary to Proposition \ref{l_uniform-2}.\vspace*{-3pt}
\begin{corollary}
\label{cor:prop2_new}
If the assumptions of Proposition \ref{l_uniform-2} hold,
and $L^{(\ve)}_g<\infty$ then for all $y\geq1$ and $\ve\in(0,1]$
\begin{eqnarray*}
\mathrm{P}\{\Psi_{u_{\ve}}^*(y, \bZ)\geq0\}
&\leq&N_{\bZ,\mathrm{d}}(\ve/8)
[1\vee\ve^{-1}\log_2(R/r)]\bigl[g(y)+L^{(\ve)}_g\sqrt{g(y)} \bigr],
\\
\mathrm{E}[\Psi_{u_{\ve}}^*(y, \bZ)]^q_+
&\leq& N_{\bZ,\mathrm{d}}(\ve/8) [2^{2\ve}R(1+\ve)
C^*(y)]^q [2^{q\ve}-1]^{-1}J^{(\ve)}_{g}(y),
\end{eqnarray*}
where\vadjust{\goodbreak} $J^{(\ve)}_{g}(z)=q\int_{1}^\infty(x-1)^{q-1}[g(zx)+L^{(\ve
)}_g\sqrt{g(zx)} ]\,\rd x$.
\end{corollary}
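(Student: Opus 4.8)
The plan is to read the corollary off Proposition~\ref{l_uniform-2} by replacing the two sums on the right-hand sides of (\ref{eq:P}) and (\ref{eq:E}) with closed-form upper bounds, using the elementary pointwise estimate on $\cL^{(\ve)}_{g,\cZ}$ recorded just before the corollary. So first I would prove that, for every subset $\cZ\subseteq\bZ$ and every $t\geq 1$, $\cL^{(\ve)}_{g,\cZ}(t)\leq g(t)+L^{(\ve)}_g\sqrt{g(t)}$. Since $2^{k}k^{-2}\geq 8/9$ for all $k\geq 1$ (the minimum being attained at $k=3$), one has $9\cdot 2^{k-3}k^{-2}\geq 1$, and hence the argument $9t\,2^{k-3}k^{-2}$ appearing in the $k$-th summand of $\cL^{(\ve)}_{g,\cZ}(t)$ is $\geq t$ always and, when $t\geq 1$, also $\geq 9\cdot 2^{k-3}k^{-2}$. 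Writing $g=\sqrt g\cdot\sqrt g$ and bounding one square root via $9t\,2^{k-3}k^{-2}\geq t$ and the other via $9t\,2^{k-3}k^{-2}\geq 9\cdot 2^{k-3}k^{-2}$, monotonicity of $g$ gives $g(9t\,2^{k-3}k^{-2})\leq\sqrt{g(t)}\,\sqrt{g(9\cdot 2^{k-3}k^{-2})}$; summing against $[N_{\cZ,\mathrm d}(\ve 2^{-k})]^2\leq[N_{\bZ,\mathrm d}(\ve 2^{-k})]^2$ (from $\cZ\subseteq\bZ$) and recalling the definition (\ref{eq:L-g}) of $L^{(\ve)}_g$ yields the claim.

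For the probability bound I would apply this estimate with $\cZ=\bZ_{r2^{\ve(j+1)}}$ and $t=y\geq 1$ to each of the $\ve^{-1}\log_2(R/r)$ summands of (\ref{eq:P}), each of which is then at most $g(y)+L^{(\ve)}_g\sqrt{g(y)}$; multiplying the resulting bound by $N_{\bZ,\mathrm d}(\ve/8)$ gives the first assertion.

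For the moment bound I would note that in $J^{(\ve)}_g(y,a)=q\int_1^\infty(x-1)^{q-1}\cL^{(\ve)}_g(yx,a)\,\rd x$ each argument satisfies $yx\geq y\geq 1$, so the same estimate gives $\cL^{(\ve)}_g(yx,a)\leq g(yx)+L^{(\ve)}_g\sqrt{g(yx)}$ uniformly in $a$, hence $J^{(\ve)}_g(y,a)\leq J^{(\ve)}_g(y)$ for every $a$, with $J^{(\ve)}_g(y)$ as defined in the statement. Pulling this $a$-free factor out of the sum in (\ref{eq:E}), it remains to evaluate $\sum_{j=0}^{m-1}[r2^{\ve(j+1)}]^q=r^q2^{\ve q}(2^{\ve qm}-1)(2^{\ve q}-1)^{-1}$ with $m=\ve^{-1}\log_2(R/r)$, so that $2^{\ve qm}=(R/r)^q$ and the sum is at most $R^q2^{\ve q}(2^{\ve q}-1)^{-1}$. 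Multiplying by $N_{\bZ,\mathrm d}(\ve/8)$, by $[u_\ve C^*(y)]^q=2^{\ve q}(1+\ve)^qC^*(y)^q$ (recall $u_\ve=2^\ve(1+\ve)$) and by $J^{(\ve)}_g(y)$, and collecting $2^{\ve q}\cdot 2^{\ve q}=2^{2\ve q}$, produces exactly the second assertion.

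Everything here is routine bookkeeping with a geometric series and the explicit constant $u_\ve$; the only step that takes a moment of thought is the factorization producing $\sqrt{g(y)}$ — recognizing that the two independent lower bounds $9\cdot 2^{k-3}k^{-2}\geq 1$ and $y\geq 1$ can be exploited simultaneously on $g(9y\,2^{k-3}k^{-2})$, one to reproduce $\sqrt{g(y)}$ and the other to extract the summable constant $\sqrt{g(9\cdot 2^{k-3}k^{-2})}$ of (\ref{eq:L-g}), which is precisely what decouples the entropy sum from $y$ and makes the bound $y$-uniform.
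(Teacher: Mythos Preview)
Your proof is correct and follows exactly the approach indicated in the paper: the key pointwise bound $\cL^{(\ve)}_{g,\cZ}(t)\leq g(t)+L^{(\ve)}_g\sqrt{g(t)}$ for $t\geq 1$ (via $\inf_{k\geq 1}2^{k}k^{-2}=8/9$ and the $\sqrt{g}\cdot\sqrt{g}$ factorization) is plugged into the two sums in Proposition~\ref{l_uniform-2}, and the geometric series for the moment bound is summed just as you do. Your write-up is in fact more detailed than the paper's one-line justification.
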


\section{Uniform bounds for norms of empirical processes} 
\label{sec:process-xi}
Based on the results obtained in Propositions \ref{l_uniform} and \ref
{l_uniform-2},
in this section we develop
uniform bounds for the family
$\{\|\xi_w\|_{s,\tau}, w\in\cW\}$, where
$\xi_w$ is defined in (\ref{eq:processes}).
The first step here is to check Assumption \ref{fixed_theta}. For this purpose,
we establish an exponential inequality for $\|\xi_w\|_{s,\tau}$ when
the function $w\in\cW$ is fixed. Next, using
Corollary \ref{cor:prop2_new} we derive
a \textit{nonrandom} uniform bound and establish
corresponding inequalities of the type (\ref{eq:11}) and (\ref{eq:22})
satisfying requirements (i)--(iv) of the \hyperref
[sec:introduction]{Introduction}.
We develop also a \textit{random} uniform bound
based on $X_1,\ldots, X_n$
and derive an inequality of the type (\ref{eq:3}).

To proceed, we need the following assumption.

\renewcommand{\theassm}{(A\arabic{assm})}

\begin{assm}\label{assumptionA1}
Let $\bar{\cX}$ be a countable dense subset of
$\cX$.
For any $\e>0$ and
any $x\in\cX$, there exists $\bar{x}\in\bar{\cX}$ such that
\[
\|w(\cdot,x)-w(\cdot, \bar{x})\|_{s,\tau}\leq\e.
\]
\end{assm}

In the sequel, we
consider only the sets $\cW$ of $(\mT\times\mX)$-measurable functions
satisfying Assumption \ref{assumptionA1}.
Let
\[
\nu^\prime(\rd x)=f(x)\nu(\rd x),
\]
and
for any $s\in[1,\infty]$ define
%
%
\begin{eqnarray}\label{eq:M-s}
\Sigma_s(w,f) :\!&=&
\biggl[\int\|w(t, \cdot)\|^s_{2,\nu^\prime} \tau(\rd t)\biggr]^{1/s}\nonumber\\
&=&
\biggl[\int\biggl(\int|w(t, x)|^2 f(x)\nu(\rd x)\biggr)^{s/2}
\tau(\rd t)\biggr]^{1/s},
\nonumber\\[-8pt]\\[-8pt]
M_{s,\tau,\nu^\prime}(w) :\!&=&
\sup_{x\in\cX} \|w(\cdot, x)\|_{s,\tau} \vee
\sup_{t\in\cT} \|w(t,\cdot)\|_{s,\nu^\prime},\nonumber\\
M_s(w):\!&=& M_{s,\tau,\nu}(w).\nonumber
\end{eqnarray}
Let $c_1(s):=15s/\ln s$, $s>2$, $c_2(s)$ be the constant
appearing below in inequality~(\ref{eq:folland-2}) of Lemma \ref
{folland}, and define
%
%
\begin{eqnarray}\label{eq:c-*}
c_3(s)&:=&c_1(s)\vee c_2\bigl(s/(s-1)\bigr)\qquad \forall s>2,\nonumber\\[-8pt]\\[-8pt]
c_*(s)&:=&\cases{
0, &\quad$1\leq s<2$,\cr
1, &\quad$s=2$,\cr
c_3(s), &\quad$s>2$.}\nonumber
\end{eqnarray}
It is worth mentioning that
$c_1(s)$ is the best known constant in the Rosenthal inequality [see
\citet{Johnson}],
and in many particular examples $c_2(s)=1$ (see Lemma \ref{folland}
below).
Although $c_1(s)$ is defined for $s>2$ only, it will be convenient
to set $c_1(s)=1$ if $s\in[1,2]$. We use this convention
in what follows without further\vadjust{\goodbreak} mention.
%
\subsection{Probability bounds for fixed weight function}
\label{sec:fixed}
For any $w\in\cW$, we define
%
%
\begin{eqnarray}\label{eq:rho}
\rho_s(w,f)&:=&\cases{
\bigl[\sqrt{n}
\Sigma_s(w,f)\bigr]\wedge[4n^{1/s}M_s(w)], &\quad
$s<2$,\vspace*{2pt}\cr
\sqrt{n} M_2(w), &\quad
$s=2$,\vspace*{2pt}\cr
c_1(s)\bigl[ \sqrt{n} \Sigma_s(w,f)
+ 2n^{1/s}M_s(w)\bigr], &\quad
$s>2$,}
\nonumber\\[-8pt]\\[-8pt]
\omega^2_s(w,f)&:=&\cases{
M^2_{s}(w)[14n +96n^{1/s}], &\quad
$s<2$,
\vspace*{2pt}\cr
6 nM^2_{1,\tau,\nu^\prime}(w)+24\sqrt{n}M^2_2(w),
&\quad$s=2$,}
\nonumber
\end{eqnarray}
and if $s> 2$ then we set
%
%
\begin{eqnarray}\label{eq:omega-s>2}
\omega^2_s(w,f)&:=& 6c_3(s)\bigl[n M^2_{{2s}/({s+2}),\tau,\nu^\prime}(w)\nonumber\\[-8pt]\\[-8pt]
&&\hspace*{30.6pt}{}+
4 \sqrt{n}\Sigma_s(w,f)M_s(w)+ 8n^{1/s}M^2_s(w)\bigr].\nonumber
\end{eqnarray}

\begin{theorem}
\label{fixed_w1}
Let $s\in[1,\infty)$ be fixed, and suppose that Assumption \ref
{assumptionA1} holds.
If $M_s(w)<\infty$, then for any $z>0$
%
%
\begin{eqnarray}\label{eq:11111}
&&\bP\{\|\xi_w\|_{s,\tau}\geq\rho_s(w,f)+z \}\nonumber\\[-8pt]\\[-8pt]
&&\qquad\leq\exp\biggl\{-\frac{z^2}{
({1}/{3})\omega_s^2(w, f) + ({4}/{3}) c_*(s) M_s(w)
z}\biggr\},\nonumber
\end{eqnarray}
where $c_*(s)$ is given in (\ref{eq:c-*}).
\end{theorem}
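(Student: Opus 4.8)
The plan is to obtain the claimed Bernstein-type bound by viewing $\|\xi_w\|_{s,\tau}$ as the supremum of a centered empirical process indexed by a suitable unit ball, and then applying Bousquet's version of Talagrand's inequality. First I would identify $\|\xi_w\|_{s,\tau}$ with $\sup_{g\in\mathcal G}\sum_{i=1}^n [g(X_i)-\bE g(X)]$ for an appropriate countable class $\mathcal G$ of functions. Concretely, by duality in $\bL_s(\tau)$, $\|\xi_w(\cdot)\|_{s,\tau} = \sup\{\int \xi_w(t) a(t)\,\tau(\rd t): \|a\|_{s/(s-1),\tau}\le 1\}$, and writing $\int \xi_w(t)a(t)\tau(\rd t) = \sum_{i=1}^n[g_a(X_i)-\bE g_a(X)]$ with $g_a(x):=\int w(t,x)a(t)\,\tau(\rd t)$, one gets the empirical-process representation. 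Assumption~(A1) guarantees that this supremum can be realized over a countable subclass (via approximation on $\bar{\mathcal X}$ together with separability of the dual unit ball), so measurability and the applicability of Bousquet's inequality are not an issue.

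Next, to feed Bousquet's inequality I would need two ingredients: the expectation $\bE\|\xi_w\|_{s,\tau}$, which I would bound above by $\rho_s(w,f)$, and the ``weak variance'' plus the uniform envelope, which together produce $\omega_s^2(w,f)$ and the linear coefficient $c_*(s)M_s(w)$. For the envelope: $|g_a(x)|\le \|w(\cdot,x)\|_{s,\tau}\|a\|_{s/(s-1),\tau}\le M_s(w)$, explaining the factor $M_s(w)$ multiplying $z$. For the variance term, $\sup_a \bE g_a(X)^2$ is controlled by $\Sigma_s(w,f)^2$ and related quantities through Jensen/Minkowski and the integral-operator inequalities of Lemma~\ref{folland}. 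The expectation bound $\bE\|\xi_w\|_{s,\tau}\le\rho_s(w,f)$ is the case-dependent heart of the argument: for $s=2$ it is a direct moment computation (Fubini plus independence gives $\bE\|\xi_w\|_{2,\tau}^2 = n\int\mathrm{Var}(w(t,X))\tau(\rd t)\le n\,\Sigma_2(w,f)^2$, hence $\bE\|\xi_w\|_{2,\tau}\le\sqrt n\,M_2(w)$); for $1\le s<2$ one uses Jensen to pass from $\bL_s$ to $\bL_2$ on $\tau$ (when $\tau$ is finite on the relevant set) or a direct $\bL_s$-moment bound, together with a crude $n^{1/s}M_s(w)$ bound from the triangle inequality, taking the minimum; for $s>2$ one invokes the Rosenthal inequality with the best constant $c_1(s)=15s/\ln s$ [see~\cite{Johnson}] applied to $\bE|\xi_w(t)|^s$ pointwise in $t$, then integrates over $\tau$ and uses Minkowski to split into the $\sqrt n\,\Sigma_s$ and $n^{1/s}M_s$ contributions, which gives the prefactor $c_1(s)$ in $\rho_s$ and, after squaring and re-expanding, the structure of $\omega_s^2$ in (\ref{eq:omega-s>2}).

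Finally, with $\bE\|\xi_w\|_{s,\tau}\le\rho_s(w,f)$, with a bound on the supremum of the individual variances by (a constant times) the pieces appearing in $\omega_s^2(w,f)$, and with envelope $M_s(w)$, Bousquet's inequality yields
\[
\bP\bigl\{\|\xi_w\|_{s,\tau}\ge \rho_s(w,f)+z\bigr\}\le \exp\Bigl\{-\tfrac{z^2}{2v + \tfrac23 b z}\Bigr\},
\]
for suitable $v$ and $b$; matching $v$ against $\tfrac16\omega_s^2(w,f)$ and $b$ against $2c_*(s)M_s(w)$ gives exactly the denominator $\tfrac13\omega_s^2(w,f)+\tfrac43 c_*(s)M_s(w)z$. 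The main obstacle is bookkeeping in the case $s>2$: one must carefully combine the Rosenthal bound, the bound on $\sup_a\mathrm{Var}(g_a)$, and the envelope into precisely the constants $6c_3(s)$, the three additive terms of $\omega_s^2$, and $c_*(s)=c_3(s)$, keeping track of where $c_2(s/(s-1))$ (the constant in Lemma~\ref{folland}'s inequality (\ref{eq:folland-2})) enters via the integral-operator estimate for $M_{\frac{2s}{s+2},\tau,\nu'}(w)$ and for bounding $\sup_a\|g_a\|_{2,\nu'}$. The cases $s\le 2$ are comparatively routine; $s=2$ requires nothing beyond the second-moment identity and Bousquet's inequality with $v = n\sup_t\mathrm{Var}(w(t,X))$ and $b=M_2(w)$, re-expressed through $M_{1,\tau,\nu'}(w)$ and $M_2(w)$.
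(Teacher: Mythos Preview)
Your overall approach coincides with the paper's: dualize $\|\xi_w\|_{s,\tau}$ into the supremum of a countable empirical process, bound the envelope by $2M_s(w)$ (note the factor $2$ --- the class consists of the \emph{centered} functions $g_a-\bE g_a$, cf.\ Lemma~\ref{lepski}), the dual variance $\sigma^2=\sup_a\bE\,g_a(X)^2$ via Lemma~\ref{folland}, and $\bE\|\xi_w\|_{s,\tau}$ via Rosenthal/Bahr--Esseen, then apply Bousquet. But there is one genuine gap.

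The case $s<2$ is not routine. Bousquet's inequality always produces a linear term $\tfrac23 bz$ in the denominator with $b\le 2M_s(w)$, whereas the theorem asserts $c_*(s)=0$, i.e.\ a purely sub-Gaussian tail. You cannot simply ``match $b$ against $2c_*(s)M_s(w)=0$'' when $b\neq 0$. The missing ingredient is the deterministic a~priori bound $\|\xi_w\|_{s,\tau}\le 2nM_s(w)$ (triangle inequality plus Lemma~\ref{lepski}(a)): the probability in question is zero whenever $z>2nM_s(w)$, so in the denominator one may replace $z$ by $2nM_s(w)$, converting the linear piece into a quadratic one. This is exactly what generates the form of $\omega_s^2(w,f)=M_s^2(w)[14n+96n^{1/s}]$: the raw Bousquet denominator $2M_s^2(w)[n+16n^{1/s}]+\tfrac43 M_s(w)z$ becomes, after the substitution, $\tfrac13 M_s^2(w)[14n+96n^{1/s}]=\tfrac13\omega_s^2(w,f)$. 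A smaller correction: for $s=2$ the variance you need is the dual one, $\sigma^2=\sup_a\bE\,g_a(X)^2$, not $n\sup_t\mathrm{Var}(w(t,X))$; it is bounded by $M_{1,\tau,\nu'}^2(w)$ via (\ref{eq:folland-1}) with $p=2$, and for $s>2$ by $c_2\big(\tfrac{s}{s-1}\big)M_{2s/(s+2),\tau,\nu'}^2(w)$ via (\ref{eq:folland-2}) with $r=2$, $p=\tfrac{s}{s-1}$.
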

\begin{remark}
Because $c_*(s)=0$ for $s\in[1,2)$,
the distribution of the random variable
$\|\xi_w\|_{s, \tau}$ has a sub-Gaussian tail.
In this case,
similar bounds can be obtained from the inequalities given in
\citet{pinelis}, Theorem 2.1, \citet{pinelis-94}, Theorems
3.3--3.5, and
\citet{Ledoux-Tal}, Section 6.3.
In particular,
Theorem 1.2 of \citet{pinelis}
gives the upper bound
$\exp\{-z^2/2nM_s^2(w)\}$ which is
better by a constant factor
than our
upper bound in (\ref{eq:11111}) whenever $s\in[1,2)$.
However,
if $s\geq2$ then the cited results are not accurate enough
in the sense
that the corresponding bounds do not satisfy relations
(\ref{eq1:introduction-new}) and (\ref{eq2:introduction-new}) of
the \hyperref[sec:introduction]{Introduction}.
It seems that only concentration principle
leads to tight upper bounds; that is why we use this unified method
in our derivation.
\end{remark}

It is obvious that
the upper bound
of Theorem \ref{fixed_w1} remains valid
if we replace $\rho_s(w,f)$, $\omega^2_s(w,f)$
and $M_s(w)$ by their upper bounds.
The next result can be derived from
Theorem \ref{fixed_w1} in the case $s\in[1,2)$.
\begin{corollary}
\label{cor1:fixed_w}
Let $s\in[1,2)$ be fixed, and suppose that
Assumption \ref{assumptionA1} holds. If $M_s(w)<\infty$
then for every $z>0$ and for all $n\geq1$
\[
\bP\{\|\xi_w\|_{s,\tau} \geq4n^{1/s}M_s(w)+z \}
\leq\exp\biggl\{-\frac{z^2}{37nM^2_{s}(w)}\biggr\}.
\]
\end{corollary}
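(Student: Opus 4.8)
The plan is to obtain Corollary~\ref{cor1:fixed_w} as a direct specialization of Theorem~\ref{fixed_w1} to the range $s\in[1,2)$, followed by a crude simplification of the quantities involved. First I would note that, by the definition of $\rho_s(w,f)$ in (\ref{eq:rho}) for $s<2$, one has $\rho_s(w,f)=\big[\sqrt{n}\Sigma_s(w,f)\big]\wedge\big[4n^{1/s}M_s(w)\big]\le 4n^{1/s}M_s(w)$. Hence for every $z>0$ the inclusion
\[
\big\{\|\xi_w\|_{s,\tau}\ge 4n^{1/s}M_s(w)+z\big\}\subseteq\big\{\|\xi_w\|_{s,\tau}\ge \rho_s(w,f)+z\big\}
\]
holds, so the probability to be bounded does not exceed the right-hand side of (\ref{eq:11111}).

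Next I would simplify the exponential factor in (\ref{eq:11111}). By (\ref{eq:c-*}) we have $c_*(s)=0$ for $s\in[1,2)$, so the term $\tfrac{4}{3}c_*(s)M_s(w)z$ in the denominator of the exponent disappears and
\[
\bP\big\{\|\xi_w\|_{s,\tau}\ge \rho_s(w,f)+z\big\}\le\exp\Big\{-\frac{3z^2}{\omega_s^2(w,f)}\Big\}.
\]
It then remains to bound $\omega_s^2(w,f)=M_s^2(w)\big[14n+96n^{1/s}\big]$. Since $s\ge1$ gives $1/s\le1$ and therefore $n^{1/s}\le n$ for all $n\ge1$, we get $\omega_s^2(w,f)\le 110\,nM_s^2(w)$. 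Because $3\cdot37=111>110$, this implies $3z^2/\omega_s^2(w,f)\ge z^2/(37nM_s^2(w))$, and combining the three displays yields the stated inequality.

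There is no genuine obstacle here: the entire argument is one chain of inclusions plus an arithmetic comparison, and the proof of Theorem~\ref{fixed_w1} carries all the analytic content. The only point demanding a little care is the constant bookkeeping — verifying that replacing $n^{1/s}$ by $n$ is legitimate (it is, precisely because $s\ge1$) and that the resulting numerical loss is still absorbed by the clean constant $37$; one could retain $n^{1/s}$ explicitly for a marginally sharper bound, but this is not worth the extra notation.
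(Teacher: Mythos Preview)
Your argument is correct and matches exactly what the paper intends: the corollary is obtained from Theorem~\ref{fixed_w1} by noting that $\rho_s(w,f)\le 4n^{1/s}M_s(w)$, that $c_*(s)=0$ for $s\in[1,2)$, and then bounding $14n+96n^{1/s}\le 110n$ so that $\tfrac13\omega_s^2(w,f)\le 37nM_s^2(w)$. The paper does not spell out these steps, but your derivation is precisely the computation it has in mind.
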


The result of the corollary is valid without any conditions on the
density~$f$. Moreover, neither the bound for $\|\xi_w\|_{s,\tau}$, nor
the right-hand side of the inequality depend on $f$. It is important to
realize that the probability inequality of Corollary~\ref{cor1:fixed_w}
is sharp in some cases. In particular, it is not too difficult to
construct a density $f$ such that $\Sigma_s(w,f)=+\infty$ for any
function $w$ satisfying rather general assumptions. In this case, the
established inequality seems to be sharp. On the other hand, for any
density $f$ satisfying a moment condition $\sqrt{n}\Sigma_s(w,f)$ can
be bounded from above, up to a numerical constant, by $\sqrt{n}M_2(w)$
which is typically much smaller than $n^{1/s}M_s(w)$.

Several useful bounds can be derived from Theorem \ref{fixed_w1}.
In particular, it is shown at the end of the proof of
Theorem \ref{fixed_w1}
that for all $s\geq2$ and $p\geq1$
%
%
\begin{equation}\label{eq:sigg}\qquad
\Sigma_s(w,f) \leq M_2(w)\bigl\|\sqrt{f}\bigr\|_{s,\nu},\qquad
M_{p,\tau,\nu^\prime}(w)\leq
[1\vee\|f\|_\infty]^{1/p} M_{p}(w).\vadjust{\goodbreak}
\end{equation}
Using these inequalities,
we arrive to the following result.
%
%
\begin{corollary}
\label{cor1:fixed_w3}
Let $s>2$ be fixed, and suppose that Assumption \ref{assumptionA1}
holds. If
$M_s(w)<\infty$, then for every $z>0$ and for all $n\geq1$
\begin{eqnarray*}
&&\bP\{\|\xi_w\|_{s,\tau}\geq\tilde{\rho}_s(w,f)+z \}\\
&&\qquad\leq\exp\biggl\{-\frac{z^2}{({1}/{3})\tilde{\omega}^2_s(w,f)+
({4}/{3})c_3(s)M_s(w)z}\biggr\},
\end{eqnarray*}
where $\tilde{\rho}_s(w,f):=c_1(s)
[ \sqrt{n} M_2(w)\|\sqrt{f}\|_{s,\nu}
+ 2n^{1/s}M_s(w)]$ and
\begin{eqnarray*}
\tilde{\omega}^2_s(w,f)&:=&6c_3(s)\bigl\{n[1\vee\|f\|_\infty]^{({s+2})/{s}}
M^2_{{2s}/({s+2})}(w)\\
&&\hspace*{30.9pt}{}+ 4 \sqrt{n}M_2(w)M_s(w)\bigl\|\sqrt{f}\bigr\|_{s,\nu} +
8n^{1/s}M^2_s(w)\bigr\}.
\end{eqnarray*}
\end{corollary}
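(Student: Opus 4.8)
The plan is to deduce the statement directly from Theorem~\ref{fixed_w1} by replacing the quantities $\rho_s(w,f)$ and $\omega_s^2(w,f)$ appearing there with the larger, still $f$-dependent but more explicit, quantities $\tilde\rho_s(w,f)$ and $\tilde\omega_s^2(w,f)$. Recall that for $s>2$ Theorem~\ref{fixed_w1} already gives, for every $z>0$ and all $n\geq 1$,
\begin{equation*}
\bP\big\{\|\xi_w\|_{s,\tau}\geq \rho_s(w,f)+z\big\}\;\leq\;\exp\Big\{-\tfrac{z^2}{\frac13\omega_s^2(w,f)+\frac43 c_*(s)M_s(w)z}\Big\},
\end{equation*}
where, by (\ref{eq:rho}), (\ref{eq:omega-s>2}) and (\ref{eq:c-*}), $\rho_s(w,f)=c_1(s)\big[\sqrt n\,\Sigma_s(w,f)+2n^{1/s}M_s(w)\big]$, $c_*(s)=c_3(s)$, and $\omega_s^2(w,f)=6c_3(s)\big[n M_{\frac{2s}{s+2},\tau,\nu^\prime}^2(w)+4\sqrt n\,\Sigma_s(w,f)M_s(w)+8n^{1/s}M_s^2(w)\big]$.

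First I would invoke the two inequalities in (\ref{eq:sigg}), established at the end of the proof of Theorem~\ref{fixed_w1}: namely $\Sigma_s(w,f)\leq M_2(w)\|\sqrt f\|_{s,\nu}$ and $M_{p,\tau,\nu^\prime}(w)\leq [1\vee\|f\|_\infty]^{1/p}M_p(w)$ for every $p\geq1$. Applying the first gives $\sqrt n\,\Sigma_s(w,f)\leq \sqrt n\,M_2(w)\|\sqrt f\|_{s,\nu}$, hence $\rho_s(w,f)\leq\tilde\rho_s(w,f)$, and likewise $\Sigma_s(w,f)M_s(w)\leq M_2(w)M_s(w)\|\sqrt f\|_{s,\nu}$. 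Applying the second with $p=2s/(s+2)$ — which is $\geq 1$ precisely because $s>2$, so the hypothesis of (\ref{eq:sigg}) is met — and squaring yields $M_{\frac{2s}{s+2},\tau,\nu^\prime}^2(w)\leq [1\vee\|f\|_\infty]^{(s+2)/s}M_{\frac{2s}{s+2}}^2(w)$. Substituting these three termwise bounds into the expression for $\omega_s^2(w,f)$ gives $\omega_s^2(w,f)\leq\tilde\omega_s^2(w,f)$.

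To conclude I would use the elementary monotonicity observation recorded in the text preceding the corollary: the inequality of Theorem~\ref{fixed_w1} is preserved under enlargement of $\rho_s$ and of $\omega_s^2$. Enlarging $\rho_s$ to $\tilde\rho_s$ shrinks the event, so $\bP\{\|\xi_w\|_{s,\tau}\geq \tilde\rho_s(w,f)+z\}\leq\bP\{\|\xi_w\|_{s,\tau}\geq \rho_s(w,f)+z\}$; enlarging $\omega_s^2$ to $\tilde\omega_s^2$ only increases the right-hand exponential bound. Chaining these two comparisons, together with $c_*(s)=c_3(s)$ for $s>2$, produces exactly the asserted inequality, uniformly in $n\geq1$.

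There is essentially no genuine obstacle here: the corollary is a bookkeeping consequence of Theorem~\ref{fixed_w1} and the auxiliary estimates (\ref{eq:sigg}). The only point that needs a moment's care is verifying the exponent $(s+2)/s$ arising from squaring $1/p$ with $p=2s/(s+2)$, and checking $p\geq1$ so that (\ref{eq:sigg}) applies — both are immediate for $s>2$. If one wished to control constants more tightly one could instead feed (\ref{eq:sigg}) into the concentration step inside the proof of Theorem~\ref{fixed_w1} rather than into its conclusion, but this buys nothing for the statement as phrased.
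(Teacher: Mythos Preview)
Your proposal is correct and matches the paper's own (very terse) derivation: the paper simply says ``Using these inequalities [i.e., (\ref{eq:sigg})] we arrive to the following result,'' and you have spelled out exactly this substitution, including the check that $p=2s/(s+2)\geq 1$ and the monotonicity step. There is nothing to add.
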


\subsection{Uniform bounds}
\label{sec:uniform}
Theorem \ref{fixed_w1} together with Corollaries \ref{cor1:fixed_w}
and \ref{cor1:fixed_w3} ensures that Assumption \ref{fixed_theta} is
fulfilled for
$\|\xi_w\|_{s,\tau}$.
In this section, we use Proposition \ref{l_uniform-2} together with
Theorem \ref{fixed_w1}
in order to derive a uniform over $\cW$ bounds on
$\|\xi_w\|_{s,\tau}$.

Following the general setting of Section \ref{sec:key},
we assume that $\cW$ is a parame\-trized
set of weights, that is,
%
%
\begin{equation}\label{eq:W}
\cW=\{w: w=\phi[\zeta], \zeta\in\bZ\},
\end{equation}
where
$\bZ$ is a totally bounded subset of some metric space $(\mZ, \mathrm{d})$.
Thus, any $w\in\cW$ can be represented as $w=\phi[\zeta]$ for
some
$\zeta\in\bZ$. Recall that $N_{\bZ, \mathrm{d}}(\delta)$, $\delta>0$
stands for the minimal
number of balls of radius $\delta$ in the metric $\mathrm{d}$
needed to cover the set~$\bZ$, and $\cE_{\bZ, \mathrm{d}}(\delta)=\ln
[N_{\bZ, \mathrm{d}}(\delta)]$ is the $\delta$-entropy of $\bZ$.

The next assumption requires that the mapping
$\zeta\mapsto\phi[\zeta]=w$ be continuous in the\vadjust{\goodbreak} supremum norm.
\begin{assm}\label{assumptionA2}
For every $\e>0$, there exists $\gamma>0$ such that
for all $\zeta_1, \zeta_2\in\bZ$ satisfying
$\mathrm{d}(\zeta_1,\zeta_2)\leq\gamma$ one has
\[
{\sup_t\sup_x} |w_1(t,x)-w_2(t,x)|\leq\e,
\]
where $w_1(t,x)=\phi[\zeta_1](t,x)$ and $w_2(t,x)=\phi[\zeta_2](t,x)$.
\end{assm}

Because $\xi_w$ is linear in $w$, this assumption
along with Assumption \ref{assumptionA1}
guarantees that all the considered objects are measurable.

Let $\cF$ be the class of all probability densities uniformly bounded
by constant~$\mathrm{f}_\infty$,
%
%
\begin{equation}\label{eq:cF}
\cF:=\biggl\{p\dvtx\bR^{d}\to\bR\dvtx p\geq0, \int p=1, \|p\|_\infty\leq
\mathrm
{f}_\infty<\infty\biggr\}.\vadjust{\goodbreak}
\end{equation}
It is easily seen that
the inequalities of Theorem \ref{fixed_theta} and Corollary \ref{cor1:fixed_w3}
can be made uniform with respect to the class $\cF$.
Indeed, the bound of Corollary \ref{cor1:fixed_w3}
remains valid if one replaces
$\|f\|_\infty$ and $\|\sqrt{f}\|_{s,\nu}$ by $\mathrm{f}_\infty$ and
$\mathrm{f}_\infty^{1/2 -1/s}$, respectively.
From now on, we suppose
without loss of generality that
$\mathrm{f}_\infty\geq1$.
%
\subsubsection{Uniform nonrandom bound}
\label{sec:uniform-non-random}
Theorem \ref{fixed_w1} together with Corollaries~\ref{cor1:fixed_w}
and~\ref{cor1:fixed_w3} show
that Assumption \ref{fixed_theta} is fulfilled
for $\|\xi_w\|_{s,\tau}$ with $g(x)=e^{-x}$,
%
%
\begin{eqnarray}\label{eq:U-xi-f}
U(w)&=&U_{\xi}(w,f)\nonumber\\
:\!&=&
\cases{4n^{1/s}M_s(w), &\quad
$s\in[1, 2)$,\vspace*{2pt}\cr
\sqrt{n} M_2(w), &\quad$s=2$,\vspace*{2pt}\cr
c_1(s)\bigl[ \sqrt{n} \Sigma_s(w,f)
+ 2n^{1/s}M_s(w)\bigr], &\quad
$s>2$;}
\nonumber\\[-8pt]\\[-8pt]
A^2(w)&=&A_{\xi}^2(w)\nonumber\\
:\!&=&\cases{
37nM^2_{s}(w), &\quad
$s<2$,\vspace*{2pt}\cr
2\mathrm{f}^{2}_\infty nM^2_{1}(w)+8\sqrt{n} M^2_2(w),
&\quad
$s=2$,\vspace*{2pt}\cr
2c_3(s)\mathrm{f}^{2}_\infty\bigl[nM^2_{{2s}/({s+2})}(w)+
4 \sqrt{n}M_2(w)M_s(w)\vspace*{2pt}\cr
\hspace*{141pt}+\, 8n^{1/s}M^2_s(w)\bigr], &\quad
$s> 2$;}\hspace*{-34pt}
\nonumber
\end{eqnarray}
and
$B(w)=B_\xi(w):=\frac{4}{3}c_*(s)M_s(w)$,
where $c_*(s)$ is defined in (\ref{eq:c-*}).

Put
%
%
\begin{equation}\label{eq:r-R-xi}
r_\xi:=\inf_{w\in\cW} U_{\xi}(w,f),\qquad
R_\xi:=\sup_{w\in\cW} U_{\xi}(w,f).
\end{equation}
Let $\varkappa_{U_\xi}(\cdot)$ be given
by (\ref{eq:definitions}) with $U=U_\xi$, and
\[
C_\xi^*(y)=1+2\sqrt{y}\Lambda_{A_\xi} + 2y\Lambda_{B_\xi},\qquad y>0,
\]
where $\Lambda_A$ and $\Lambda_B$ are defined in (\ref{eq:definitions-new});
see also (\ref{eq:definitions-new-new}).
\begin{theorem} \label{t_uniform}
Let $s\geq1$ be fixed, (\ref{eq:W}) hold,
and let $f\in\cF$ if $s\geq2$.
Let Assumption \ref{assumptionA2} be fulfilled.
If $\varkappa_{U_\xi}(a)\leq a$ for all $a\in[r_\xi, R_\xi]$
then\vadjust{\goodbreak} for any $y\geq1$ and $\ve\in(0,1]$ one has\vspace*{-3pt}
\begin{eqnarray*}
&&\bP\Bigl\{
\sup_{w \in\cW}
[\|\xi_{w}\|_{s,\tau}- u_\ve C_\xi^*(y)U_\xi(w, f)]
\geq0\Bigr\} \\[-2pt]
&&\qquad\leq\frac{1}{\ve}N_{\bZ,\mathrm{d}}(\ve/8)
[1\vee\log_2(R_\xi/r_\xi)]
\bigl[1+L^{(\ve)}_{\exp}\bigr]e^{-y/2} ,
\\[-2pt]
&&\bE\sup_{w\in\cW}
[\|\xi_w\|_{s,\tau}- u_\ve C_\xi^*(y)U_\xi(w, f)]_+^q
\\[-2pt]
&&\qquad\leq
\frac{2^{q(\ve+1)}u_{\ve}^q}{2^{q\ve}-1} \Gamma(q+1)N_{\bZ,\mathrm
{d}}(\ve/8)
[R_\xi
C_\xi^*(1)]^q
\bigl[1+L^{(\ve)}_{\exp}\bigr]e^{-y/2} ,\vspace*{-3pt}
\end{eqnarray*}
where $u_\ve=2^\ve(1+\ve)$, $\Gamma(\cdot)$ is the gamma-function and\vspace*{-3pt}
%
%
\begin{equation}\label{eq:L-exp}
L^{(\ve)}_{\exp}=\sum_{k=1}^{\infty}\exp\{2\cE_{\bZ, \mathrm{d}}(\ve
2^{-k})-(9/16) 2^{k} k^{-2}\}.\vspace*{-3pt}
\end{equation}
\end{theorem}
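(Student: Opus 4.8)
The plan is to obtain Theorem~\ref{t_uniform} as a direct specialization of Corollary~\ref{cor:prop2_new} (hence of Proposition~\ref{l_uniform-2}) to the family $\{\|\xi_w\|_{s,\tau},\;w\in\cW\}$, taking $\Psi(\cdot)=\|\cdot\|_{s,\tau}$, $\Theta=\cW$, the parametrization $\cW=\phi[\bZ]$ of (\ref{eq:W}), and $g(x)=e^{-x}$.

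First I would verify the hypotheses. For Assumption~\ref{fixed_theta}: Theorem~\ref{fixed_w1} provides the Bernstein-type inequality of part~(i) for $\Psi(\xi_w)=\|\xi_w\|_{s,\tau}$, with $g(x)=e^{-x}$ and with the triple $\big(\rho_s(w,f),\ \tfrac13\omega_s^2(w,f),\ \tfrac43 c_*(s)M_s(w)\big)$ in the roles of $(U,A^2,B)$; since that inequality stays valid after enlarging $\rho_s,\omega_s^2,M_s$, it suffices to check $\rho_s(w,f)\le U_\xi(w,f)$ and $\tfrac13\omega_s^2(w,f)\le A_\xi^2(w)$ for all $s\ge1$, $n\ge1$, $f\in\cF$, with $U_\xi,A_\xi^2$ as in (\ref{eq:U-xi-f}) (and then $B_\xi(w)=\tfrac43 c_*(s)M_s(w)$ is exactly the remaining quantity). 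For $s\in[1,2)$ these comparisons follow from $n^{1/s}\le n$ and $c_*(s)=0$; for $s=2$ and $s>2$ they follow from the norm inequalities (\ref{eq:sigg}) together with $\|f\|_\infty\le\mathrm{f}_\infty$, $\|\sqrt f\|_{s,\nu}\le\mathrm{f}_\infty^{1/2-1/s}$, and the normalization $\mathrm{f}_\infty\ge1$ (which also gives $\mathrm{f}_\infty^{(s+2)/s}\le\mathrm{f}_\infty^2$ when $s>2$). Parts~(ii)--(iii) are trivial, with $0<r_\xi\le R_\xi\le\infty$ as in (\ref{eq:r-R-xi}). Assumption~\ref{parameter} is exactly (\ref{eq:W}) with $\bZ$ totally bounded; Assumptions~(A1)--(A2) and the linearity of $w\mapsto\xi_w$ make $w\mapsto\|\xi_w\|_{s,\tau}$ almost surely continuous on $\cW$ in the metric $\mathrm{d}$, so all suprema below are measurable (cf.\ Remark~\ref{rem:after-prop1}); the hypothesis $\kappa_{U_\xi}(a)\le a$ for $a\in[r_\xi,R_\xi]$ is condition (\ref{eq:<a}), which also yields finiteness of the $\kappa_{U_\xi}$-type quantities on the subsets used in the proof of Proposition~\ref{l_uniform-2}. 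Hence Corollary~\ref{cor:prop2_new} applies with $C^{*}(y)=C_\xi^{*}(y)$, $r=r_\xi$, $R=R_\xi$, $u_\ve=2^\ve(1+\ve)$, and, since $g(x)=e^{-x}$ and $\tfrac12\cdot 9\cdot 2^{k-3}=\tfrac{9}{16}2^k$, with $L^{(\ve)}_g=L^{(\ve)}_{\exp}$ from (\ref{eq:L-exp}); moreover $\Psi_{u_\ve}^{*}(y,\bZ)=\sup_{w\in\cW}\big[\|\xi_w\|_{s,\tau}-u_\ve C_\xi^{*}(y)U_\xi(w,f)\big]$, so $\{\Psi_{u_\ve}^{*}(y,\bZ)\ge0\}$ is precisely the event in the theorem.

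It then remains to evaluate the right-hand sides of Corollary~\ref{cor:prop2_new} when $g(x)=e^{-x}$. For the probability bound, for $y\ge1$ one has $g(y)+L^{(\ve)}_g\sqrt{g(y)}=e^{-y}+L^{(\ve)}_{\exp}e^{-y/2}\le(1+L^{(\ve)}_{\exp})e^{-y/2}$, which is the first display of the theorem. For the moment bound, the substitution $x=1+u/y$ and $\int_0^{\infty}u^{q-1}e^{-u/2}\,\rd u=2^q\Gamma(q)$ give $J^{(\ve)}_g(y)=\Gamma(q+1)y^{-q}\big[e^{-y}+2^q L^{(\ve)}_{\exp}e^{-y/2}\big]\le\Gamma(q+1)2^q(1+L^{(\ve)}_{\exp})\,y^{-q}e^{-y/2}$; inserting this into the second display of Corollary~\ref{cor:prop2_new}, the factor $y^{-q}$ is absorbed by $[C_\xi^{*}(y)]^q y^{-q}\le[C_\xi^{*}(1)]^q$ (valid for $y\ge1$ since $1\le\sqrt y\le y$), and the identity $2^{q(\ve+1)}u_\ve^q=2^q 2^{2q\ve}(1+\ve)^q$ reorganizes the constants into the stated prefactor $2^{q(\ve+1)}u_\ve^q/(2^{q\ve}-1)$ and the factor $[R_\xi C_\xi^{*}(1)]^q$.

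The argument is essentially assembly from the already established Theorem~\ref{fixed_w1} and Proposition~\ref{l_uniform-2}, so there is no deep obstacle; the two spots that require attention are the uniform-in-$f$ domination $\rho_s\le U_\xi$, $\tfrac13\omega_s^2\le A_\xi^2$ of the first step (which relies on the norm inequalities (\ref{eq:sigg}) and on $\mathrm{f}_\infty\ge1$) and the constant bookkeeping of the last step, in particular the absorption of the $y^{-q}$ factor produced by the Gamma integral via $C_\xi^{*}(y)\le y\,C_\xi^{*}(1)$.
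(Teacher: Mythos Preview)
Your proposal is correct and follows essentially the same route as the paper: apply Corollary~\ref{cor:prop2_new} with $g(x)=e^{-x}$, identify $L^{(\ve)}_g=L^{(\ve)}_{\exp}$, and bound $J^{(\ve)}_g(y)$ by the Gamma integral. You supply more detail than the paper on verifying that Theorem~\ref{fixed_w1} yields Assumption~\ref{fixed_theta} with the stated $U_\xi,A_\xi,B_\xi$ (the paper takes this as already established in the paragraph preceding the theorem) and on the absorption $[C_\xi^{*}(y)]^q y^{-q}\le[C_\xi^{*}(1)]^q$, but the argument is the same.
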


The proof follows immediately by application
of Corollary \ref{cor:prop2_new}, and \mbox{noting} that for $g(x)=e^{-x}$
the quantity $L_g^{(\ve)}$ is given by the above formula
[cf.~(\ref{eq:L-g})], while
$J_g^{(\ve)}(\cdot)$ for $g(x)=e^{-x}$
is bounded as follows\vspace*{-3pt}
\begin{eqnarray*}
J_g^{(\ve)}(z)&=&q\int_{1}^\infty(x-1)^{q-1}\bigl[e^{-zx}+L_g^{(\ve)}
\sqrt{e^{-zx}} \bigr]
\,\rd x \\[-2pt]
&\leq&\Gamma(q+1)\bigl[1+L^{(\ve)}_{\exp}\bigr](2/z)^{q}e^{-z/2}.\vspace*{-3pt}
\end{eqnarray*}

\begin{remark}
It is instructive to compare
the results of Theorem \ref{t_uniform}
with those of Theorem \ref{fixed_w1} (and
Corollaries \ref{cor1:fixed_w} and \ref{cor1:fixed_w3}).
The uniform bound on $\|\xi_w\|_{s,\tau}$ in Theorem~\ref{t_uniform}
is determined by the individual
bound $U_\xi(w,f)$ for a fixed weight
$w\in\cW$, and by the function $C_\xi^*(\cdot)$\vspace*{1pt} which, in its turn,
is computed on the basis of $A_\xi(w)$, $B_\xi(w)$ and $U_\xi(w,f)$.
The function $C_\xi^*(\cdot)$
depends on the
parametrization (\ref{eq:W}) and on
the distance $\mathrm{d}$ on $\mathfrak{Z}$ via the quantities $\Lambda_{A}$
and~$\Lambda_B$ [see (\ref{eq:definitions-new})].
The right-hand sides of the inequalities in
Theorem \ref{t_uniform}
depend on massiveness of the set of weights $\cW$ as measured
by the entro\-py~$\cE_{\bZ,\mathrm{d}}(\cdot)$. Note also that these
bounds decrease exponentially in $y$.\vspace*{-1pt}
\end{remark}

%
\subsubsection{Uniform random bound}
\label{s_random}
$\!\!\!$The uniform nonrandom bounds on $\|\xi_w\|_{s,\tau}$
given in Theorem \ref{t_uniform}
depend on the density $f$
via $U_\xi(w, f)$.
As discussed in the \hyperref[sec:introduction]{Introduction}, this
does not allow one to use this bound in statistical problems.
Our goal is to recover the statement of Theorem \ref{t_uniform}
(up to some numerical constants)
with the unknown quantity $U_\xi(w,f)$ replaced by
its estimator $\hat{U}_\xi(w)$. Note also that $U_\xi(w, f)$
of Theorem \ref{t_uniform} depends on $f$ only if $s>2$; here
the quantity depending on $f$ is $\Sigma_s(w,f)$.

Assume that the conditions of Theorem \ref{t_uniform}
are satisfied, and let $s>2$.
For any $t\in\cT$ define\vspace*{-3pt}
%
%
\begin{eqnarray}
\label{eq:rho-hat-1}
\hat{\Sigma}_s(w)&:=&\|S_w\|_{s,\tau},\qquad
S^2_w(t) := \frac{1}{n}\sum_{i=1}^{n}w^2(t,X_i),\vadjust{\goodbreak}
\\[-2pt]
\label{eq:rho-hat-2}
\hat{U}_\xi(w) &:=&
c_1(s)\bigl[\sqrt{n} \hat{\Sigma}_s(w)+2n^{1/s}M_s(w)\bigr].
\end{eqnarray}
It is easily seen that $\hat{U}_\xi(w)$ is a reasonable estimate of
$U_\xi(w,f)$
because under mild assumptions
for any fixed $t\in\cT$ by the law of large numbers
\[
S^2_w(t)-\|w(t,\cdot)\|^2_{2,\nu^\prime}\to0,\qquad n\to\infty\qquad \mbox{in
probability}.
\]
Moreover,
\begin{eqnarray*}
|\hat{\Sigma}_s(w) - \Sigma_s(w,f)|^2
&\leq&\bigl\|S_w-\|w(\cdot,\cdot)\|_{2,\nu^\prime}\bigr\|^2_{s,\tau}\\
&\leq&\bigl\|\sqrt{\bigl|S^2_w-\|w(\cdot,\cdot)\|^2_{2,\nu^\prime}\bigr|}
\bigr\|^2_{s,\tau}\\
&=&\bigl\|S^2_w-\|w(\cdot,\cdot)\|^2_{2,\nu^\prime}
\bigr\|_{{s}/{2},\tau}\\
&=&
\Biggl\|\frac{1}{n}\sum_{i=1}^{n}[w^2(\cdot,X_i)-\bE w^2(\cdot,X)]\Biggr\|_{
{s}/{2},\tau}.
\end{eqnarray*}
Thus, for any $s>2$ we have
%
%
\begin{equation}
\label{eq:r1}
|\hat{\Sigma}_s(w)- \Sigma_s(w,f)|\leq\sqrt{\frac{\|\xi_{w^2}\|_{{s}/{2},\tau}}{n} },
\end{equation}
that is, the difference between
$\hat{\Sigma}_s(w)$ and $\Sigma_s(w,f)$
is controlled in terms of $\|\xi_{w^2}\|_{s/2,\tau}$.
The idea now is to use Theorem \ref{t_uniform} in order to find
a nonrandom upper bound
on $\|\xi_{w^2}\|_{s/2,\tau}$.
One can expect that this bound will be much smaller than $\Sigma_s(w,f)$
provided that the function $w$ is small enough.
If this is true then $\hat{\Sigma}_s(w)$ approximates well
$\Sigma_s(w,f)$, and it can be used
instead of $\Sigma_s(w,f)$
in the definition of the uniform over $\cW$
upper bound on $\|\xi_w\|_{s,\tau}$.

In order to control uniformly $\|\xi_{w^2}\|_{s/2,\tau}$ by applying
Theorem \ref{fixed_w1} and Corollary~\ref{cor:prop2_new}, we need the
following definitions. Put
\begin{eqnarray*}
\tilde{U}(w^{2})&:=&\cases{
4n^{2/s}M_{s/2}(w^{2}), &\quad$s\in(2,4)$,\vspace*{2pt}\cr
c_1(s/2)\bigl[ \mathrm{f}^{1/2}_{\infty}\sqrt{n} M_2(w^{2})
+ 2n^{2/s}M_{s/2}(w^{2})\bigr], &\quad$s\geq4$;}
\\
\tilde{A}^2(w^{2})&:=&\cases{
37n M^2_{s/2}(w^{2}),&\quad
$s\in(2,4)$,\vspace*{2pt}\cr
2c_3(s/2)\mathrm{f}^{2}_\infty\bigl[nM^2_{{2s}/({s+4})}(w^{2})\vspace*{2pt}\cr
\hspace*{55.2pt}+\,
4 \sqrt{n}M_2(w^{2})M_{s/2}(w^{2})\vspace*{2pt}\cr
\hspace*{88.8pt}+\,
8n^{2/s}M^2_{s/2}(w^{2})\bigr],&\quad
$s \geq4$;}
\nonumber
\end{eqnarray*}
and $\tilde{B}(w^{2}):=\frac{4}{3}c_*(s/2)M_{s/2}(w^{2})$,
where $c_*(\cdot)$ is given in\vadjust{\goodbreak} (\ref{eq:c-*}).

For any subset $\cZ\subseteq\bZ$, let $\varkappa_{\tilde{U}}(\cZ)$,
$\Lambda_{\tilde{A}}(\cZ)$, and $\Lambda_{\tilde{B}}(\cZ)$ be given by
\mbox{(\ref{eq2:assuption_parameter})--(\ref{eq2:assuption_parameter-3})}
with $U=\tilde{U}$, $A=\tilde{A}$ and $B=\tilde{B}$.
With $r_\xi$ and $R_\xi$ defined in (\ref{eq:r-R-xi}), let
%
%
\begin{equation}\label{eq:Z-aa}
\bZ_a=\{\zeta\in\bZ\dvtx a/2 <U_\xi(w,f)\leq a\},\qquad
a\in[r_\xi,R_\xi],\vadjust{\goodbreak}
\end{equation}
and
we set
%
%
\begin{eqnarray}
\label{eq:definitions-tilde-new}
\varkappa_{\tilde{U}}(a):\!&=&\varkappa_{\tilde{U}}(\bZ_a),\qquad
\lambda_{\tilde{A}}=\sup_{a\in[r_\xi, R_\xi]}
a^{-2}\Lambda_{\tilde{A}}(\bZ_a),\nonumber\\[-8pt]\\[-8pt]
\lambda_{\tilde{B}}&=&\sup_{a\in[r_\xi, R_\xi]}a^{-2}\Lambda_{\tilde
{B}}(\bZ_a),\nonumber
\end{eqnarray}
[cf. (\ref{eq:definitions}) and (\ref{eq:definitions-new})].
It is important to emphasize here that in the definition of $\varkappa
_{\tilde{U}}$, $\lambda_{\tilde{A}}$ and $\lambda_{\tilde{B}}$
we use the same set $\bZ_a$ as in the definition of
$\varkappa_{U_\xi}(\cdot)$, $\Lambda_{A_\xi}(\cdot)$ and $\Lambda_{B_\xi
}(\cdot)$.

The next result establishes a random uniform bound on $\|\xi_w\|_{s,\tau}$.
\begin{theorem}
\label{t_random}
Let $s> 2$ be fixed, (\ref{eq:W}) hold, Assumption \ref{assumptionA2}
be fulfilled, and
%
%
\begin{equation}\label{eq:condition-kappa}
\varkappa_{U_\xi}(a)\leq a\qquad \forall a\in[r_\xi, R_\xi].
\end{equation}
Let $\ve\in(0,1]$ be fixed, and suppose that there exists a positive
number $\gamma<[4c_1(s)(1+\ve)]^{-1} $ such that
%
%
\begin{equation}\label{eq:condition-kappa-tild}
\varkappa_{\tilde{U}}(a)\leq(\gamma a)^2\qquad
\forall a\in[r_\xi, R_\xi].
\end{equation}
If $y_{\gamma}$ denotes the root of the equation
%
%
\begin{equation}\label{eq:y-gamma}
\sqrt{y}\lambda_{\tilde{A}}+y\lambda_{\tilde{B}}=\gamma^{2},
\end{equation}
and if $y_\gamma> 1$ then:
\begin{enumerate}[(ii)]
\item[(i)]
For every $y\in[1,y_\gamma]$ one has
\[
\bE\sup_{w\in\cW}
\{\|\xi_{w}\|_{s,\tau}-
\bar{u}_\ve(\gamma) C_\xi^*(y) \hat{U}_\xi(w)\}^q_+\leq T_{1,\ve}[C_\xi
^*(y)]^{q} \exp\{-y/2\} ,
\]
where
$\bar{u}_\ve(\gamma):=u_\ve[1-4c_1(s)(1+\ve)\gamma]^{-1}$ and $u_\ve
=2^\ve(1+\ve)$.
\item[(ii)] For any subset $\cW_0\subseteq\cW$, one has
%
%
\begin{eqnarray}\label{eq:rem_after_th_random}
\bE\Bigl[\sup_{w\in\cW_0}\hat{U}_\xi(w)\Bigr]^{q}&\leq&
[1+4c_1(s)(1+\ve)\gamma]^q \sup_{w\in\cW_0}[ U_\xi(w,f)]^{q}
\nonumber\\[-8pt]\\[-8pt]
&&{} + T_{2,\ve}\Bigl[\sqrt{n} \sup_{w\in\cW_0} M_s(w)\Bigr]^{q}
\exp\{-y_{\gamma}/2\} .\nonumber
\end{eqnarray}
%
\end{enumerate}
The explicit expressions for the
constants $T_{1,\ve}$ and $T_{2,\ve}$
are given in the beginning of proof of the theorem.
\end{theorem}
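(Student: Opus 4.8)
The plan is to obtain Theorem~\ref{t_random} as a two-sided bootstrap of the non-random bound of Theorem~\ref{t_uniform}, applied simultaneously to $\xi_w$ (with the $\bL_{s,\tau}$-norm) and to the auxiliary empirical process $\xi_{w^2}$ (with the $\bL_{s/2,\tau}$-norm). The bridge is inequality~\eqref{eq:r1}: by \eqref{eq:rho-hat-2} and \eqref{eq:U-xi-f} one has $\hat U_\xi(w)-U_\xi(w,f)=c_1(s)\sqrt n\,[\hat\Sigma_s(w)-\Sigma_s(w,f)]$, hence
\[
\bigl|\hat U_\xi(w)-U_\xi(w,f)\bigr|\;\le\;c_1(s)\sqrt{\|\xi_{w^2}\|_{s/2,\tau}}\,,\qquad w\in\cW.
\]
So one has to control $\|\xi_{w^2}\|_{s/2,\tau}$ uniformly over $\cW$, and for this the fixed-weight theory of Theorem~\ref{fixed_w1} transfers to $\xi_{w^2}$: Assumption~(A1) for $\cW$ gives it for $\{w^2:w\in\cW\}$ via $\|w^2(\cdot,x)-w^2(\cdot,\bar x)\|_{s/2,\tau}\le\|w(\cdot,x)-w(\cdot,\bar x)\|_{s,\tau}\,\|w(\cdot,x)+w(\cdot,\bar x)\|_{s,\tau}$, and substituting $(w^2,s/2)$ for $(w,s)$ in \eqref{eq:U-xi-f} and \eqref{eq:sigg} shows that Assumption~\ref{fixed_theta}(i) holds for $\|\xi_{w^2}\|_{s/2,\tau}$ with $g(x)=e^{-x}$ and with the triple $U,A,B$ dominated by $\tilde U(w^2),\tilde A(w^2),\tilde B(w^2)$.

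I would then isolate the relevant deviation event. Re-running the covering-and-peeling argument behind Proposition~\ref{l_uniform-2} and Corollary~\ref{cor:prop2_new} for $\xi_{w^2}$, but peeling along the slices $\bZ_a=\{\zeta\in\bZ:a/2<U_\xi(w,f)\le a\}$ of \eqref{eq:Z-aa} rather than along the level sets of $\tilde U$, the definitions in \eqref{eq:definitions-tilde-new} and the hypothesis \eqref{eq:condition-kappa-tild} give, on each slice, $\kappa_{\tilde U}(\bZ_a)\le(\gamma a)^2$, $\Lambda_{\tilde A}(\bZ_a)\le\lambda_{\tilde A}a^2$ and $\Lambda_{\tilde B}(\bZ_a)\le\lambda_{\tilde B}a^2$. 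Hence Proposition~\ref{l_uniform} applied to balls of radius $\ve/8$ inside $\bZ_a$ yields, for every $y\le y_\gamma$ and using the defining equation \eqref{eq:y-gamma} of $y_\gamma$,
\[
\|\xi_{w^2}\|_{s/2,\tau}\;\le\;(1+\ve)\bigl[(\gamma a)^2+a^2(\sqrt y\,\lambda_{\tilde A}+y\,\lambda_{\tilde B})\bigr]\;\le\;2(1+\ve)\gamma^2 a^2
\]
uniformly over $w=\phi[\zeta]$, $\zeta\in\bZ_a$, outside an event $G_2(y)^c$ whose probability --- by the same union bound over $N_{\bZ,\mathrm{d}}(\ve/8)$ balls and $\ve^{-1}\log_2(R_\xi/r_\xi)$ slices as in Theorem~\ref{t_uniform} --- is at most $\ve^{-1}N_{\bZ,\mathrm{d}}(\ve/8)\log_2(R_\xi/r_\xi)\,[1+L^{(\ve)}_{\exp}]\,e^{-y/2}$. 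On $G_2(y)$, since $a<2U_\xi(w,f)$ on $\bZ_a$, the bridge inequality gives $c_1(s)\sqrt{\|\xi_{w^2}\|_{s/2,\tau}}\le c_1(s)\sqrt{8(1+\ve)}\,\gamma\,U_\xi(w,f)\le 4c_1(s)(1+\ve)\gamma\,U_\xi(w,f)$, so that for all $w\in\cW$
\[
\bigl(1-4c_1(s)(1+\ve)\gamma\bigr)U_\xi(w,f)\;\le\;\hat U_\xi(w)\;\le\;\bigl(1+4c_1(s)(1+\ve)\gamma\bigr)U_\xi(w,f),
\]
the left factor being positive since $\gamma<[4c_1(s)(1+\ve)]^{-1}$, which also makes $\bar u_\ve(\gamma)=u_\ve[1-4c_1(s)(1+\ve)\gamma]^{-1}$ well defined.

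For part~(i), fix $y\in[1,y_\gamma]$ and let $G_1(y)$ be the event of Theorem~\ref{t_uniform} on which $\|\xi_w\|_{s,\tau}\le u_\ve C_\xi^*(y)U_\xi(w,f)$ for all $w$. On $G_1(y)\cap G_2(y)$ the left inequality above gives $\bar u_\ve(\gamma)C_\xi^*(y)\hat U_\xi(w)\ge u_\ve C_\xi^*(y)U_\xi(w,f)\ge\|\xi_w\|_{s,\tau}$, so the supremum in part~(i) vanishes; on the complement I would use the splitting
\[
\sup_{w\in\cW}\bigl\{\|\xi_w\|_{s,\tau}-\bar u_\ve(\gamma)C_\xi^*(y)\hat U_\xi(w)\bigr\}_+\;\le\;\sup_{w\in\cW}\bigl\{\|\xi_w\|_{s,\tau}-u_\ve C_\xi^*(y)U_\xi(w,f)\bigr\}_+\;+\;C_\xi^*(y)\sup_{w\in\cW}\bigl\{u_\ve U_\xi(w,f)-\bar u_\ve(\gamma)\hat U_\xi(w)\bigr\}_+,
\]
bounding the $q$-th moment of the first term by the moment inequality of Theorem~\ref{t_uniform} (with $C_\xi^*(1)\le C_\xi^*(y)$ for $y\ge1$), and observing that the second term is $0$ on $G_2(y)$ and is $\le u_\ve C_\xi^*(y)R_\xi$ on $G_2(y)^c$, so that its $q$-th moment is at most $[u_\ve C_\xi^*(y)R_\xi]^q\,\bP(G_2(y)^c)$; collecting the numerical, entropy and $R_\xi^q$ factors into a constant $T_{1,\ve}$ then gives the asserted bound $T_{1,\ve}[C_\xi^*(y)]^q e^{-y/2}$. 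For part~(ii) I would work on $G_2(y_\gamma)$, where the right inequality above gives $\sup_{w\in\cW_0}\hat U_\xi(w)\le[1+4c_1(s)(1+\ve)\gamma]\sup_{w\in\cW_0}U_\xi(w,f)$, while on $G_2(y_\gamma)^c$ I would use the deterministic bound $\hat\Sigma_s(w)=\|S_w\|_{s,\tau}\le M_s(w)$, a consequence of the power-mean inequality $\bigl(\tfrac1n\sum_i w^2(t,X_i)\bigr)^{s/2}\le\tfrac1n\sum_i|w(t,X_i)|^s$ and \eqref{eq:M-s}, which gives $\hat U_\xi(w)\le 3c_1(s)\sqrt n\,M_s(w)$ for $n\ge1$; taking expectations and using $\bP(G_2(y_\gamma)^c)\le(\text{entropy factor})\,e^{-y_\gamma/2}$ yields \eqref{eq:rem_after_th_random} with $T_{2,\ve}=(3c_1(s))^q\times(\text{entropy factor})$. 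The explicit values of $T_{1,\ve}$ and $T_{2,\ve}$ are then obtained by tracking constants through Theorem~\ref{t_uniform} and Corollary~\ref{cor:prop2_new}.

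The step I expect to be the main obstacle is the second paragraph: Corollary~\ref{cor:prop2_new} cannot be quoted verbatim for $\xi_{w^2}$, because its peeling is carried out along the level sets of the bound function, whereas here the peeling must be carried out along the level sets of $U_\xi$ --- this is precisely what makes the control of $\|\xi_{w^2}\|_{s/2,\tau}$ on a slice comparable to the square of the local value of $U_\xi$, and hence couples the two processes. One therefore has to reopen the proof of Proposition~\ref{l_uniform-2} and verify that the same covering and union-bound bookkeeping goes through with this modified slicing, using exactly the normalisations $a^{-2}\Lambda_{\tilde A}(\bZ_a)$, $a^{-2}\Lambda_{\tilde B}(\bZ_a)$ of \eqref{eq:definitions-tilde-new} and the condition \eqref{eq:condition-kappa-tild}. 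Everything else --- the moment splittings, the power-mean estimate and the extraction of the explicit constants --- is routine once this is done.
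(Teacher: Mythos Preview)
Your proposal is correct and follows essentially the same route as the paper. The paper also defines the good event $\mathcal A=\bigcap_j\mathcal A_j$ with $\mathcal A_j=\{\sup_{\zeta\in\bZ_{\delta_j}}\|\xi_{\phi^2[\zeta]}\|_{s/2,\tau}\le[2(1+\ve)\gamma\delta_j]^2\}$, proves the two-sided inequality $(1\pm 4c_1(s)(1+\ve)\gamma)U_\xi$ for $\hat U_\xi$ on $\mathcal A$ via the bridge inequality \eqref{eq:r1}, bounds $\bP(\bar{\mathcal A}_j)$ by applying Proposition~\ref{l_uniform} to $\xi_{w^2}$ on ball--slice intersections $Z_l\cap\bZ_{\delta_j}$, and on the complement uses the same trivial splitting and the deterministic bound $\hat\Sigma_s(w)\le M_s(w)$ you describe.

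One small simplification relative to your last paragraph: you need not ``reopen'' Proposition~\ref{l_uniform-2} at all. Since the goal is only to bound the \emph{probability} of $\bar{\mathcal A}_j$ (not a moment), the paper applies Proposition~\ref{l_uniform} directly to each ball $Z_l$ of radius $\ve/8$ intersected with the fixed slice $\bZ_{\delta_j}$, checking that $[2\gamma\delta_j]^2\ge\kappa_{\tilde U}(Z_l\cap\bZ_{\delta_j})+\tilde C^*(y,Z_l\cap\bZ_{\delta_j})$ from \eqref{eq:condition-kappa-tild}, \eqref{eq:definitions-tilde-new} and \eqref{eq:y-gamma}. No modified peeling is required; the slices are those of $U_\xi$ by construction of $\mathcal A_j$, exactly as you anticipated.
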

%

\begin{remark}
\label{rem:after_th_random}
\begin{enumerate}
\item
Theorem \ref{t_random} requires two sets of conditions:
conditions of Theorem \ref{t_uniform}, and
conditions on behavior of the functions $\varkappa_{\tilde{U}}(\cdot)$,
$\Lambda_{\tilde{A}}(\cdot)$ and
$\Lambda_{\tilde{B}}(\cdot)$ on
the slices $\bZ_a$ defined through $U_\xi(w,f)$.
\item
The parameter $\gamma$ controls closeness of $\hat{U}_\xi(\cdot)$
to $U_\xi(\cdot,f)$: the smaller $\gamma$,
the closer the random bound $\hat{U}_\xi(\cdot)$
to the nonrandom one $U_\xi(\cdot,f)$ [see~(\ref{eq:rem_after_th_random})].
In this case,\vadjust{\goodbreak} we do not loose much if $U_\xi(\cdot,f)$ is replaced
by its empirical counterpart $\hat{U}_\xi(w)$.
Clearly,
it is possible to choose
$\gamma$ small and simultaneously to keep $y_\gamma$ large only if
$\lambda_{\tilde{A}}$ and $\lambda_{\tilde{B}}$
are small enough. Fortunately, this is the case in 
many examples.
\item
Note also that when $\gamma$ approaches $[4c_1(s)(1+\ve)]^{-1}$
the parameter $\bar{u}_\ve(\gamma)$ increases to infinity\vadjust{\goodbreak}
[clearly, we want to keep $\bar{u}_\ve(\gamma)$ as close to one as possible].
Thus, the assumption $\gamma<[4c_1(s)(1+\ve)]^{-1}$ is important;
this poses a restriction
on the parameter set $\cW$. We conjecture that
the following condition
is necessary: for given $s\geq2$ there exists a
universal constant, say, $c(s)$,
such that $\gamma< c(s)$.
\end{enumerate}
\end{remark}

%
The next corollary to Theorem \ref{t_random}
will be useful in what follows.
\begin{corollary}
\label{cor:th_random}
The statements of Theorem \ref{t_random} remain valid if $U_\xi(w,f)$ and
$\hat{U}_\xi(w)$ are redefined as
$\max\{U_\xi(w,f), \sqrt{n}M_2(w)\}$ and
$\max\{\hat{U}_\xi(w),\break \sqrt{n}M_2(w)\}$,
respectively.
\end{corollary}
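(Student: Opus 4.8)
The plan is to verify that redefining $U_\xi$ and $\hat U_\xi$ by taking the pointwise maximum with $\sqrt n\,M_2(w)$ does not destroy any of the hypotheses used in the proof of Theorem~\ref{t_random}, and then simply re-run that proof verbatim. Write $\bar U_\xi(w,f):=\max\{U_\xi(w,f),\sqrt n M_2(w)\}$ and $\bar{\hat U}_\xi(w):=\max\{\hat U_\xi(w),\sqrt n M_2(w)\}$. The first thing I would check is that $\bar U_\xi$ still serves in Assumption~\ref{fixed_theta}(i): since $U_\xi(w,f)\le\bar U_\xi(w,f)$, the Bernstein-type bound of Theorem~\ref{fixed_w1} continues to hold with $U$ replaced by the larger quantity $\bar U_\xi(w,f)$ (enlarging the centering term only decreases the left-hand probability), and $A_\xi,B_\xi$ are unchanged. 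So Assumption~\ref{fixed_theta} holds for $\bar U_\xi$ with the same $g(x)=e^{-x}$, $A_\xi$, $B_\xi$.

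Next I would examine the chaining/peeling quantities. The key point is that the slices are defined through $U_\xi(w,f)$ (see \eqref{eq:Z-a}, \eqref{eq:Z-aa}), and I claim it is harmless to keep exactly the same slices $\bZ_a$, $a\in[r_\xi,R_\xi]$, when working with $\bar U_\xi$. One must re-examine the condition $\kappa_{U}(a)\le a$: for the max-modification, $\kappa_{\bar U_\xi}(a)$ involves $\sup_\zeta \bar U_\xi(\phi[\zeta])\le a$ on $\bZ_a$ automatically (for $s>2$ one has $\sqrt n M_2(w)\le U_\xi(w,f)\le a$ on $\bZ_a$ since $U_\xi(w,f)\ge c_1(s)\sqrt n\Sigma_s(w,f)\ge c_1(s)\sqrt n M_2(w)\|\sqrt f\|_{s,\nu}/\cdots$ — more simply $U_\xi(w,f)\ge 2n^{1/s}c_1(s)M_s(w)\ge \sqrt n M_2(w)$ under the standing normalisations, or one absorbs the difference into a harmless constant), and the Lipschitz part only improves because $|\,\bar U_\xi(\phi[\zeta_1])-\bar U_\xi(\phi[\zeta_2])\,|\le |U_\xi(\phi[\zeta_1],f)-U_\xi(\phi[\zeta_2],f)|\vee |\sqrt n M_2(\phi[\zeta_1])-\sqrt n M_2(\phi[\zeta_2])|$, and $M_2(w)\le M_s(w)$-type bounds show the new Lipschitz constant is dominated by $\Lambda_{A_\xi}$-type quantities already controlled. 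Hence \eqref{eq:condition-kappa} and \eqref{eq:condition-kappa-tild} carry over, possibly after enlarging the absolute constants hidden in $T_{1,\ve},T_{2,\ve}$.

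The only genuinely new ingredient is the random bound: I must check that the inequality \eqref{eq:r1} controlling $|\hat\Sigma_s(w)-\Sigma_s(w,f)|$ still yields control of $\bar{\hat U}_\xi(w)-\bar U_\xi(w,f)$. Here one uses the elementary inequality $|\max\{a,c\}-\max\{b,c\}|\le|a-b|$, which gives $|\bar{\hat U}_\xi(w)-\bar U_\xi(w,f)|\le|\hat U_\xi(w)-U_\xi(w,f)|\le c_1(s)\sqrt n\,|\hat\Sigma_s(w)-\Sigma_s(w,f)|$, exactly the bound exploited in the proof of Theorem~\ref{t_random}. Consequently the decomposition of $\|\xi_w\|_{s,\tau}-\bar u_\ve(\gamma)C_\xi^*(y)\bar{\hat U}_\xi(w)$ on the event where $\bar{\hat U}_\xi(w)$ is close to $\bar U_\xi(w,f)$, and on its complement (controlled via Theorem~\ref{t_uniform} applied to $\xi_{w^2}$ through \eqref{eq:condition-kappa-tild}), goes through word for word.

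The main obstacle, and the place where I would be most careful, is the interplay between the slice structure and the two modified functionals: I want the \emph{same} family $\{\bZ_a\}$ to simultaneously witness $\kappa_{\bar U_\xi}(a)\le a$ and $\kappa_{\tilde U}(a)\le(\gamma a)^2$. Since the slices are unchanged and the maximum operation can only decrease oscillation while the pointwise bound $\sqrt n M_2(w)\le U_\xi(w,f)$ (up to the standing constant $c_1(s)\ge1$ and $n\ge1$) holds on each slice, both conditions reduce to those already assumed in Theorem~\ref{t_random}; the verification is routine but must be spelled out so that the constants $T_{1,\ve},T_{2,\ve}$ are seen to change only by absolute factors. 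Once this is in place, the statement of Theorem~\ref{t_random} transfers verbatim to the redefined $\bar U_\xi,\bar{\hat U}_\xi$, which is the content of the corollary.
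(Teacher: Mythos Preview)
Your use of the elementary inequality $|\max\{a,c\}-\max\{b,c\}|\le|a-b|$ to transfer the sandwich bound \eqref{q2} from $(\hat U_\xi,U_\xi)$ to the max--modified pair is exactly the device the paper uses (phrased there as: $A(1-\delta)\le B\le A(1+\delta)\Rightarrow (A\vee C)(1-\delta)\le B\vee C\le (A\vee C)(1+\delta)$), and your observation that enlarging $U_\xi$ to $\bar U_\xi$ preserves the Bernstein--type bound is correct.

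Where your argument diverges from the paper, and acquires a genuine gap, is in the handling of the slices. You try to keep the \emph{original} slices $\bZ_a=\{a/2<U_\xi(\phi[\zeta],f)\le a\}$ and then argue that $\sup_{\bZ_a}\bar U_\xi(\phi[\zeta],f)\le a$ via the claim $\sqrt n\,M_2(w)\le U_\xi(w,f)$. This inequality is false in general: for $s>2$ one has $U_\xi(w,f)=c_1(s)[\sqrt n\,\Sigma_s(w,f)+2n^{1/s}M_s(w)]$, and $\Sigma_s(w,f)$ can be made arbitrarily small relative to $M_2(w)$ by concentrating $f$ where $w(t,\cdot)$ is small, while $2c_1(s)\,n^{1/s}M_s(w)$ need not dominate $\sqrt n\,M_2(w)$ for large $n$. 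The maximum with $\sqrt n\,M_2(w)$ is introduced precisely because $U_\xi(w,f)$ can fall below it; your ``standing normalisations'' do not rescue this.

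The paper reads the corollary differently, and this is the point you are missing: the phrase ``the statements of Theorem~\ref{t_random} remain valid if $U_\xi$ and $\hat U_\xi$ are redefined\ldots'' means that the \emph{hypotheses} of Theorem~\ref{t_random} are now imposed on $\bar U_\xi$ rather than on $U_\xi$. In particular the slices $\bZ_a$, the range $[r_\xi,R_\xi]$, the conditions \eqref{eq:condition-kappa}--\eqref{eq:condition-kappa-tild}, and the quantities $\lambda_{\tilde A},\lambda_{\tilde B}$ are all redefined through $\bar U_\xi$. With that reading nothing needs to be deduced from the old hypotheses: one simply re-runs the proof of Theorem~\ref{t_random}, using your max--inequality for the analogue of \eqref{q2} and $U_\xi\le\bar U_\xi$ for \eqref{q4}. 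That is the paper's two--line argument, and it sidesteps the issue you ran into.
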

%

\subsection{Unifrom bounds for classes of weights depending
on the difference of arguments}
\label{sec:proc-xi_assW}
As we have seen, the results and assumptions
in Theorems \ref{t_uniform} and \ref{t_random}
are stated in terms of the
quantities (such as $\lambda_{\tilde{A}}$,
$\lambda_{\tilde{B}}$, $y_{\gamma}$) that are given implicitly.
In particular, additional computations are still necessary in order to
apply Theorems~\ref{t_uniform} and \ref{t_random}
in specific problems.
In this section, we specialize the results
of Theorems \ref{t_uniform} and \ref{t_random}
for the classes of weights $\cW$ depending on the difference
of arguments.
Under natural and easily interpretable assumptions
on the class of such weights, we derive explicit uniform bounds
on the norms of empirical processes.

Throughout this section,
$\cX=\cT=\bR^{d}$,
$\tau=\nu=\operatorname{mes}$ is the Lebesgue measure and
we write $\|\cdot\|_s$ instead of $\|\cdot\|_{s,\tau}$.
In this section, the class of weights~$\cW$ is a set of functions from
$\bR^d\times\bR^d$ to $\bR$ of the following
form
%
%
\begin{equation}\label{eq:W-V}
\cW=\{w(t-x), w\in\cV\},
\end{equation}
where $\cV$ is a given set
of $d$-variate functions.
For the sake of
notational convenience, we will identify the weight $w\in\cW$ with the
$d$-variate function $w\in\cV$ in the definition of the process $\xi
_w$ and
the quantities such as $U_\xi$, $A_\xi$, $B_\xi$ etc.
Thus, when we write
$w\in\cW$ we mean the weight $w(\cdot-\cdot)$ while
$w\in\cV$ denotes the corresponding $d$-variate function;
this should not lead to a confusion.

Let $(\mathfrak{Z}, \mathrm{d})$ be a fixed metric space; as before,
we suppose that
$\cV$ is
parame\-trized by the parameter $\zeta\in\mathfrak{Z}$, that is,
%
%
\begin{equation}\label{eq:W-new}
\cV=\{w\dvtx w=\phi[\zeta], \zeta\in\bZ\},
\end{equation}
where
$\bZ$ is a totally bounded subset of the metric space $(\mZ, \mathrm{d})$.
Recall that
$N_{\bZ,\mathrm{d}}(\delta)$, $\delta>0$ is the number of the balls
of radius $\delta$ in the metric $\mathrm{d}$
that form a minimal covering of the set $\bZ$.
%

We need the following assumptions on the class of weights $\cW$ (the functional
set\vadjust{\goodbreak} $\cV$).
\renewcommand{\theassW}{(W)}
\begin{assW}\label{assW}
\begin{enumerate}[(W1)]
\item[(W1)]\hypertarget{assW1} The Lebesgue measure of support of all
functions from $\cV$
is finite, that is,
%
%
\begin{equation}\label{eq:mu-*}
\mu_{*}:=\sup_{w\in\cV} \operatorname{mes}\{\operatorname{supp}(w)\}
<\infty.
\end{equation}
%
%
\item[(W2)]\hypertarget{assW2}
There exist real numbers $\alpha_1\in(0,1)$ and $\alpha_2\in(0,1)$
such that
\[
\operatorname{mes}\{x\in\bR^{d}\dvtx|w(x)|\geq
\alpha_1\|w\|_\infty\}\geq\alpha_2 \operatorname{mes}
\{\operatorname{supp}(w)\}\qquad \forall w\in\cV.
\]
\item[(W3)]\hypertarget{assW3}
There exists a real number $\mu\geq1$ such that
\[
n \operatorname{mes}\{\operatorname{supp}(w)\} \geq\mu\qquad \forall w\in\cV.
\]
\item[(W4)]\hypertarget{assW4}
There exists a real number $\beta\in(0,1)$ such that
\[
\sup_{\delta\in(0,1)}\{\ln[N_{\bZ,\mathrm{d}}(\delta)] - \delta^{-\beta
}\}=:C_{\bZ}(\beta)<\infty.
\]
\end{enumerate}
\end{assW}

\begin{remark}
We will show that Assumption \hyperlink{assW2}{(W2)} is fulfilled
if $\cV$ is a~set of smooth functions.
Assumption \hyperlink{assW3}{(W3)} together with \hyperlink
{assW2}{(W2)} allows one to establish relations between
$\bL_p$-norms of functions from $\cV$; this will be extensively
used in what follows. Assumption \hyperlink{assW4}{(W4)} is a usual
entropy condition. In particular, \hyperlink{assW4}{(W4)}~ensures
that the quantity $L^{(\ve)}_{\exp}$ in (\ref{eq:L-exp}) is finite.
\end{remark}

In addition to Assumption \ref{assW}, we will need the following
assumption on the properties of the mapping $\phi$ in (\ref{eq:W-new}).
For $p\geq1$, put
%
%
\begin{equation}\label{eq:w-p}
0<\underline{\mathrm{w}}_{p}:=n^{1/p}\inf_{w\in\cV}
\|w\|_{p}\leq n^{1/p}\sup_{w\in\cV}
\|w\|_{p}=:
\bar{\mathrm{w}}_{p}<\infty
\end{equation}
and define
%
%
\begin{equation}\label{eq:Z-ss}
\bZ_{p}(b):=
\{\zeta\in\bZ\dvtx n^{1/p} \|\phi[\zeta]\|_{p} \leq b\},\qquad
b\in[\underline{\mathrm{w}}_{p},\bar{\mathrm{w}}_{p}].
\end{equation}

\renewcommand{\theassL}{(L)}

\begin{assL}\label{assL}
The mapping $\phi$ in (\ref{eq:W-new}) satisfies the following conditions:
\begin{itemize}
\item if $s\in[1,2)$ then
\[
\sup_{\zeta_1,\zeta_2\in\bZ_{s}(b)}
\frac{n^{1/s} \|\phi[\zeta_1]-\phi[\zeta_2]\|_{s}}
{\mathrm{d}(\zeta_1,\zeta_2)} \leq b \qquad \forall
b\in[\underline{\mathrm{w}}_{s}, \bar{\mathrm{w}}_{s}],
\]
\item if $s\geq2$ then
%
%
\begin{equation}\label{eq:L-s>2}
\sup_{\zeta_1,\zeta_2\in\bZ_{2}(b)}
\frac{\sqrt{n} \|\phi[\zeta_1]-\phi[\zeta_2]\|_2}
{\mathrm{d}(\zeta_1,\zeta_2)} \leq b \qquad \forall
b\in[\underline{\mathrm{w}}_{2}, \bar{\mathrm{w}}_{2}].
\end{equation}
\end{itemize}
\end{assL}
%

We note that Assumption \ref{assL}
guarantees continuity of $\|\xi_w\|_s$ on $\phi[\bZ]$ for any $s\leq2$.
The same property for $s>2$ follows from Lemma \ref{lem:before_th-random-case}.
This, in view of Remark \ref{rem:after-prop1}, replaces Assumption \ref
{assumptionA2}.

The next
statement presents the uniform moment bound on $\|\xi_w\|_s$
when $s\in[1,2]$, and $\cW$ is given by (\ref{eq:W-V}).
\begin{theorem}
\label{t:consec1_new}
$\!\!\!\!$Let the class of weights $\cW$ be defined by (\ref{eq:W-V}), and let~(\ref{eq:W-new}) and
Assumptions \textup{\hyperlink{assW1}{(W1)}}, \textup{\hyperlink{assW4}{(W4)}} and \ref
{assL} hold.
\begin{enumerate}[(ii)]
\item[(i)] If $s\in[1,2)$ then for all $n\geq1$,
$z\geq[\sqrt{37}/2] n^{1/2-1/s}$, and $\ve\in(0,1]$ one has
\[
\bE\sup_{w\in\cW}
[\|\xi_w\|_{s}- 4 u_\ve(1+z) n^{1/s}\|w\|_{s}]_+^q
\leq T_{3,\ve} n^{q/s}\exp\biggl\{-\frac{2z^2}{37}n^{(2/s)-1}\biggr\}.
\]
\item[(ii)]
If $f\in\cF$ then
for all $n\geq1$,
$z\geq\sqrt{8[\mu_*\mathrm{f}^2_\infty+ 4n^{-1/2}]}$,
and
$\ve\in(0,1]$ one has
\begin{eqnarray*}
&&\bE\sup_{w\in\cW}
\bigl[\|\xi_w\|_{2}- u_\ve(1+z+ z^{2}/12)\sqrt{n}\|
w\|_{2}\bigr]_+^q\\
&&\qquad\leq T_{4,\ve} n^{q/2} \exp\biggl\{-\frac{z^{2}}
{16[\mathrm{f}_\infty^{2}\mu_* + 4n^{-1/2}]}\biggr\}.
\end{eqnarray*}
\end{enumerate}
The explicit expressions for the constants $T_{3,\ve}$ and $T_{4,\ve}$
are given in the beginning of the proof of the theorem.
\end{theorem}

The bound of Theorem \ref{t:consec1_new}
is nonrandom because $U_\xi(w,f)$ does not depend
on $f$ whenever $s\in[1,2]$. The proof of this statement is based
on application of Theorem~\ref{t_uniform}.

Now we proceed with the case $s>2$.
Here we need some further notation. Given $p\geq2$, let $m_p\in(0,1]$
be such that
%
%
\begin{equation}
\label{eq:before-t-random-case}
\sup_{b\in[\underline{\mathrm{w}}_{2},\bar{\mathrm{w}}_{2}]}
b^{-1} \sup_{\zeta_1,\zeta_2\in\bZ_2(b)}
\frac{n^{1/p}\|\phi[\zeta_1]-\phi[\zeta_2]\|_p}
{[\mathrm{d}(\zeta_1,\zeta_2)]^{m_p}}=:C_p<\infty.
\end{equation}
Existence of $m_p\in(0,1]$ such that
(\ref{eq:before-t-random-case}) holds
is ensured by Lemma \ref{lem:before_th-random-case}
given in Section \ref{sec:proof-th-5}. In particular, it is shown there that
if Assumptions \ref{assW} and~\ref{assL} hold then $m_p$ can be taken
equal to $2/p$.
We note also that $m_2=1$ and $C_2=1$ by Assumption \ref{assL}.

Following (\ref{eq:rho-hat-1}), (\ref{eq:rho-hat-2}) and
Corollary \ref{cor:th_random}, we set
%
%
\begin{eqnarray}\label{eq:U-bar}\quad
\hat{U}_\xi(w)&=&c_1(s)\Biggl\{\sqrt{n}\Biggl\|\Biggl[\frac{1}{n}
\sum_{i=1}^n w^2(\cdot-X_i)\Biggr]^{1/2}\Biggr\|_s + 2n^{1/s}\|w\|_s\Biggr\},
\nonumber\\
\breve{U}_\xi(w):\!&=&\max\bigl\{\hat{U}_\xi(w),
\sqrt{n}\|w\|_2\bigr\},\\
\bar{U}_\xi(w):\!&=&\max\bigl\{U_\xi(w,f), \sqrt{n}\|w\|_2\bigr\}.
\nonumber
\end{eqnarray}
Put also
%
%
\begin{eqnarray}\label{eq:C-*-def}
C_\xi^*(y)&=&1+2\vartheta_0\bigl\{\sqrt{y} \bigl[\mu_*^{1/s} +n^{-1/(2s)}\bigr]
+ yn^{-1/s}\bigr\},\nonumber\\[-8pt]\\[-8pt]
m:\!&=&\cases{
1\wedge m_s, &\quad$s\in(2,4)$,\cr
1\wedge m_s\wedge m_{s/2}, &\quad$s\geq4$,}\nonumber
\end{eqnarray}
where
$\vartheta_0:=5c_1(s)[C_s\vee1] \mathrm{f}_\infty\alpha_1^{-1}\alpha
_2^{-1/2}$,
$\alpha_1$ and $\alpha_2$ are given in Assumption~\hyperlink
{assW2}{(W2)}, and $m_p$ and $C_p$
are defined
in (\ref{eq:before-t-random-case}).\vspace*{-3pt}

\begin{theorem}\label{t_random_case}
Let Assumptions \ref{assW} and \ref{assL} hold, and assume that
\mbox{$f\in\cF$}. Suppose that \textup{\hyperlink{assW3}{(W3)}} is fulfilled with
$\mu> [64c^2_1(s)]^{({s\wedge4})/({s\wedge4-2})}$, and
\textup{\hyperlink{assW4}{(W4)}}
is fulfilled with $\beta<m$.
Let $\gamma=\mu^{1/(s\wedge4)-1/2}$, and
%
%
\begin{equation}\label{eq:y-**}
y_*:=\cases{
\vartheta_1 n^{4/s-1}, &\quad$s\in(2,4)$,\cr
\vartheta_2 \mu^{-1/2}
[\mu_*^{2/s}+n^{-1/s}]^{-2}, &\quad$s\geq4$,}
\end{equation}
with constants $\vartheta_1$ and $\vartheta_2$ specified
explicitly in the proof; then for any $s>2$ and $y\in[1,y^*]$ one has
\[
\bE\sup_{w\in\cW}
\{\|\xi_{w}\|_{s}-
\bar{u}_\ve(\gamma)
C_\xi^*(y)\breve{U}_\xi(w)\}^q_+
\leq T_{5,\ve} n^{q/2}[C_\xi^*(y)]^{q}\exp\{-y/2\},
\]
where $\bar{u}_\epsilon(\cdot)$ is defined in Theorem \ref{t_random}.
In addition, if $\cW_0\subseteq\cW$ is an arbitrary subset
of $\cW$ then
\begin{eqnarray*}
\bE\Bigl[\sup_{w\in\cW_0}\breve{U}_\xi(w)\Bigr]^{q} &\leq&
\Bigl\{\bigl[1+4c_1(s)(1+\ve)\mu^{{1}/({s\wedge4})-{1}/{2}}\bigr]\sup_{w\in\cW
_0}\bar{U}_\xi(w)\Bigr\}^{q}\\
&&{} +T_{6,\ve} n^{{q(s-2)}/({2s})}
\exp\{-y_*/2\}.
\end{eqnarray*}
The explicit expressions for the constants $T_{5,\ve}$
and $T_{6,\ve}$ are given in the proof.\vspace*{-3pt}
\end{theorem}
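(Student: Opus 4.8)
The plan is to obtain Theorem~\ref{t_random_case} by specializing Theorem~\ref{t_random} (more precisely, its Corollary~\ref{cor:th_random}) to the translation--invariant class $\cW=\{w(t-x):w\in\cV\}$, and to translate each of the abstract quantities $\kappa_{U_\xi}$, $\kappa_{\tilde U}$, $\lambda_{\tilde A}$, $\lambda_{\tilde B}$, $y_\gamma$ appearing there into the explicit quantities built from $\mu_*$, $\mu$, $\mathrm{f}_\infty$, $C_p$, $m_p$. First I would record the key structural fact that, for weights depending on the difference of arguments, $M_p(w)$ and $M_{p,\tau,\nu'}(w)$ reduce (up to powers of $\mathrm{f}_\infty$) to $\|w\|_p$, and that under Assumptions~(W1)--(W3) all the $\bL_p$--norms of a given $w\in\cV$ are comparable; this is the content of the auxiliary lemma (Lemma~\ref{lem:before_th-random-case}) quoted before the theorem, which also supplies the H\"older exponents $m_p=2/p$. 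This comparability is what lets one pass from the slice $\bZ_a$ defined through $U_\xi(w,f)$ to the slice $\bZ_2(b)$ defined through $\sqrt n\|w\|_2$, on which Assumption~(L) and \eqref{eq:before-t-random-case} give direct Lipschitz/H\"older control.

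Next I would carry out the verification of the hypotheses of Theorem~\ref{t_random} (in the form of Corollary~\ref{cor:th_random}, i.e.\ with $\bar U_\xi=\max\{U_\xi(w,f),\sqrt n\|w\|_2\}$ and $\breve U_\xi=\max\{\hat U_\xi(w),\sqrt n\|w\|_2\}$). The condition $\kappa_{U_\xi}(a)\le a$ follows from Assumption~(L) together with the norm comparisons, exactly as in the proof of Theorem~\ref{t:consec1_new}. For $\kappa_{\tilde U}(a)\le(\gamma a)^2$ one bounds $\tilde U(w^2)$ via $M_{s/2}(w^2)=M_s^2(w)$ and $M_2(w^2)=\|w\|_4^2$ and uses (W2)--(W3) to dominate these by $(\mu_*^{\,\cdot}+n^{-\cdot})\,(\sqrt n\|w\|_2)^2$ times a power of $\mathrm{f}_\infty$, forcing the choice $\gamma=\mu^{1/(s\wedge4)-1/2}$; the hypothesis $\mu>[64c_1^2(s)]^{(s\wedge4)/(s\wedge4-1)}$ is precisely what makes $\gamma<[4c_1(s)(1+\ve)]^{-1}$. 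Then one estimates $\lambda_{\tilde A}$ and $\lambda_{\tilde B}$ on the slices using \eqref{eq:before-t-random-case}, the entropy bound $\cE_{\bZ,\mathrm d}(\delta)\le\delta^{-\beta}+C_{\bZ}(\beta)$ from (W4), and the elementary estimate of a convergent sum of the form $\sum_k 2^{k m}\exp\{-c\,2^k k^{-2}\}$ (finite because $\beta<m$), thereby identifying $y_\gamma$ with the explicit $y_*$ of \eqref{eq:y-**}. Finally, $C_\xi^*(y)$ is read off from $\Lambda_{A_\xi}$, $\Lambda_{B_\xi}$ on the slices, which by the norm comparisons are controlled by $\mu_*^{1/s}+n^{-1/(2s)}$ and $n^{-1/s}$ up to the constant $\vartheta_0$; substituting everything into parts (i) and (ii) of Theorem~\ref{t_random} yields the two displayed inequalities, with $T_{5,\ve}$ and $T_{6,\ve}$ obtained from $T_{1,\ve}$, $T_{2,\ve}$ by absorbing the entropy constant $N_{\bZ,\mathrm d}(\ve/8)$ (finite by (W4)) and the powers of $n^{1/2}M_s(w)\le(\text{const})\,n^{1/2}\|w\|_2\cdot(\dots)$.

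The main obstacle, and the step requiring the most care, is the bookkeeping in verifying $\kappa_{\tilde U}(a)\le(\gamma a)^2$ and computing $\lambda_{\tilde A},\lambda_{\tilde B}$: one must pass between the slice $\bZ_a$ (cut by $U_\xi(w,f)$) and the slice $\bZ_2(b)$ (cut by $\sqrt n\|w\|_2$) on which Assumption~(L) and \eqref{eq:before-t-random-case} are phrased, and this re-slicing is only approximate because $U_\xi(w,f)$ and $\sqrt n\|w\|_2$ differ by bounded factors depending on $\mathrm{f}_\infty$, $\mu_*$, $\mu$. Getting the exponents of $\mu$ right in $\gamma=\mu^{1/(s\wedge4)-1/2}$ and in $y_*$, and checking that $\beta<m$ is exactly the condition making $L^{(\ve)}_{\exp}$ and the H\"older-chaining sums finite, is where the real work lies; the rest is substitution into Theorem~\ref{t_random} and renaming constants. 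The case split $s\in(2,4)$ versus $s\ge4$ is forced by the definition of $\tilde U$, $\tilde A$ (which themselves branch at $s=4$ because $s/2$ crosses $2$), and must be carried through both the $\gamma$ computation and the definition of $y_*$.
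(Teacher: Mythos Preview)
Your overall plan is the right one and matches the paper's: reduce to Theorem~\ref{t_random} (via Corollary~\ref{cor:th_random}), verify $\kappa_{U_\xi}(a)\le a$ and $\kappa_{\tilde U}(a)\le(\gamma a)^2$ on the slices $\bZ_a$, compute $\Lambda_{A_\xi},\Lambda_{B_\xi},\lambda_{\tilde A},\lambda_{\tilde B}$, and read off $y_\gamma\ge y_*$. You also correctly identify the inclusion $\bZ_a\subseteq\bZ_2(a)$, the role of Lemma~\ref{lem:before_th-random-case}, and the case split at $s=4$.

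There is, however, one genuine missing idea. All of the abstract quantities $\kappa_{U_\xi},\kappa_{\tilde U},\Lambda_{A_\xi},\Lambda_{B_\xi},\lambda_{\tilde A},\lambda_{\tilde B}$ are \emph{linear} Lipschitz ratios of the form $F(\phi[\zeta_1]-\phi[\zeta_2])/\mathrm{d}(\zeta_1,\zeta_2)$. But for $p>2$ the only control you have on $n^{1/p}\|\phi[\zeta_1]-\phi[\zeta_2]\|_p$ is the H\"older estimate \eqref{eq:before-t-random-case} with exponent $m_p\le 1$; dividing by $\mathrm{d}(\zeta_1,\zeta_2)$ instead of $[\mathrm{d}(\zeta_1,\zeta_2)]^{m_p}$ gives a ratio that blows up as $\mathrm{d}\to 0$, so none of the conditions $\kappa_{U_\xi}(a)\le a$, $\kappa_{\tilde U}(a)\le(\gamma a)^2$ can be verified with the original metric $\mathrm{d}$. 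The paper resolves this by \emph{changing the metric}: it introduces
\[
\mathrm{d}_*(\zeta_1,\zeta_2)=k_*\times
\begin{cases}
\mathrm{d}(\zeta_1,\zeta_2)\vee[\mathrm{d}(\zeta_1,\zeta_2)]^{m_s},& s\in(2,4),\\
\mathrm{d}(\zeta_1,\zeta_2)\vee[\mathrm{d}(\zeta_1,\zeta_2)]^{m_s}\vee[\mathrm{d}(\zeta_1,\zeta_2)]^{m_{s/2}},& s\ge 4,
\end{cases}
\]
with $k_*$ an explicit constant, and applies Theorem~\ref{t_random} with this $\mathrm{d}_*$. With respect to $\mathrm{d}_*$ all the needed ratios are bounded by construction. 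The hypothesis $\beta<m$ (where $m=1\wedge m_s$ or $1\wedge m_s\wedge m_{s/2}$) then enters exactly here: $N_{\bZ,\mathrm{d}_*}(\delta)\le N_{\bZ,\mathrm{d}}\big((k_*^{-1}\delta)^{1/m}\big)$, so the entropy with respect to $\mathrm{d}_*$ behaves like $\delta^{-\beta/m}$, and $\beta/m<1$ is what makes the sum $L^{(\ve)}_{\exp}$ (computed with $\mathrm{d}_*$) finite. Your sentence ``checking that $\beta<m$ is exactly the condition making $L^{(\ve)}_{\exp}$ and the H\"older-chaining sums finite'' is morally right, but without the re-metrization step there is no mechanism by which $m$ enters the entropy at all. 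Once you add the definition of $\mathrm{d}_*$ and redo the entropy calculation for it, the rest of your sketch goes through essentially as in the paper.
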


Theorem \ref{t_random_case} establishes random uniform bounds
on the norms of empirical processes in terms of the parameters
determining the class $\cW$. In particular, the parameters
$\mu$ and $\mu_*$ play an important role.
Theorem \ref{t_random_case} leads
to a~number of
powerful asymptotic results
that demonstrate sharpness
of the proposed random bound.\vspace*{-3pt}
\begin{corollary}
\label{cor:asym-after-thW-random}
Let assumptions of Theorem \ref{t_random_case} hold, and let
$s>2$ be fixed.
There exist
positive constants
$k_{i}=k_i(s)$, $i=1,2,3$ such that if
\begin{eqnarray*}
\mu&=&\mu_n\asymp[\ln n]^{k_{1}},\qquad
\mu_*=\mu_{*,n}\asymp[\ln n]^{-k_{2}},\\
\ve&=&\ve_n\asymp[\ln n]^{-k_{3}},\qquad n\to\infty,
\end{eqnarray*}
then for all $\ell>0$ and $q\geq1$
\begin{eqnarray*}
&\displaystyle \lim_{n\to\infty}\sup_{f\in\cF} n^{\ell} \bE\sup_{w\in\cW} [\|\xi
_w\|_{s} -
(1+ 3\ve_n)\breve{U}_\xi(w)]_+^q=0,&
\\
&\displaystyle
\bE\Bigl[\sup_{w\in\cW_0}\breve{U}_\xi(w)\Bigr]^{q}\leq\Bigl[(1+\ve_n)\sup_{w\in\cW
_0}\bar{U}_\xi(w,f)\Bigr]^{q}
+ R_n(\cW_0),&
\end{eqnarray*}
%
where\vadjust{\goodbreak}
$\limsup_{n\to\infty}\sup_{f\in\cF}\sup_{\cW_0 \subseteq\cW}
[n^\ell R_n(\cW_0)]=0$.
\end{corollary}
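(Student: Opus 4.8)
\emph{Proof outline.} The plan is to derive both assertions from Theorem~\ref{t_random_case} by choosing the free parameter $y=y_n$ in $[1,y_*]$ and then calibrating the exponents $k_1,k_2,k_3$. For the first assertion, recall that $[\|\xi_w\|_s-c'\breve{U}_\xi(w)]_+\le[\|\xi_w\|_s-c\,\breve{U}_\xi(w)]_+$ whenever $0\le c\le c'$, since $\breve{U}_\xi(w)\ge0$; hence, writing $\gamma_n:=\mu_n^{1/(s\wedge4)-1/2}$ as in Theorem~\ref{t_random_case}, it suffices to show that $\bar{u}_{\ve_n}(\gamma_n)\,C_\xi^*(y_n)\le 1+3\ve_n$ for all large $n$ and that $n^{\ell}T_{5,\ve_n}n^{q/2}[C_\xi^*(y_n)]^q e^{-y_n/2}\to0$ for every $\ell>0$ and $q\ge1$; the claim is then exactly the first display of Theorem~\ref{t_random_case}. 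Since $\gamma_n\to0$ and $\ve_n\to0$ one has $\bar{u}_{\ve_n}(\gamma_n)=1+(1+\ln2)\ve_n+4c_1(s)\gamma_n+o(\ve_n+\gamma_n)$, while $C_\xi^*(y_n)-1$ is a fixed multiple of $\sqrt{y_n}\mu_{*,n}^{1/s}+\sqrt{y_n}n^{-1/(2s)}+y_n n^{-1/s}$; so the first inequality holds once $\gamma_n=o(\ve_n)$ and $C_\xi^*(y_n)=1+o(\ve_n)$, because then $\bar{u}_{\ve_n}(\gamma_n)C_\xi^*(y_n)\le(1+2\ve_n)(1+o(\ve_n))\le1+3\ve_n$.

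For the calibration I would take $y_n=(\ln n)^2$ if $s\in(2,4)$ (so $y_n\le y_*\asymp n^{4/s-1}$ for $n$ large), and $y_n=y_*$ if $s\ge4$, in which case, under the postulated scalings, $y_*\asymp[\ln n]^{4k_2/s-k_1/2}$. In both cases $e^{-y_n/2}$ tends to $0$ faster than any negative power of $n$; under Assumption~(W4) the prefactors $T_{5,\ve_n}$, $L^{(\ve_n)}_{\exp}$ and $N_{\bZ,\mathrm{d}}(\ve_n/8)$ are all of order $\exp\{O([\ln n]^{\beta k_3})\}=n^{o(1)}$ provided $\beta k_3<1$, while $n^{q/2}[C_\xi^*(y_n)]^q$ is at most polynomial, so the right-hand side of Theorem~\ref{t_random_case} is $o(n^{-\ell})$ for every $\ell$, uniformly in $f\in\cF$ because none of these quantities involves $f$. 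It remains to impose that $\gamma_n=o(\ve_n)$, that $\sqrt{y_n}\mu_{*,n}^{1/s}=o(\ve_n)$ (the other two contributions to $C_\xi^*(y_n)-1$ are $n^{-\Omega(1)}$ up to polylogarithmic factors, hence negligible), and that $y_n/\ln n\to\infty$. For $s\in(2,4)$ this reduces to $k_1>k_3/(\tfrac12-\tfrac1s)$ and $k_2>s(1+k_3)$; for $s\ge4$ it becomes $k_2>s(k_3+\tfrac12)$ together with $4k_2/s+4k_3<k_1<8k_2/s-2$. Taking $k_3>0$ small (in particular $\beta k_3<1$), then $k_2$ large, then $k_1$ in the indicated window, all requirements are met, which proves the first assertion.

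The second assertion follows by applying the second display of Theorem~\ref{t_random_case} with the same $\gamma_n$ and $y_*$. Under the above choices $4c_1(s)(1+\ve_n)\gamma_n=o(\ve_n)$, so $[1+4c_1(s)(1+\ve_n)\gamma_n]\le1+\ve_n$ for $n$ large and the leading term is bounded by $[(1+\ve_n)\sup_{w\in\cW_0}\bar{U}_\xi(w,f)]^q$; one then sets $R_n(\cW_0):=T_{6,\ve_n}n^{q(s-2)/(2s)}e^{-y_*/2}$. Since $R_n(\cW_0)$ depends neither on $\cW_0$ nor on $f$, and $y_*$ grows strictly faster than $\ln n$ (polynomially when $s\in(2,4)$, and as $[\ln n]^{a}$ with $a>1$ when $s\ge4$ under the chosen exponents), one gets $\limsup_{n\to\infty}\sup_{f\in\cF}\sup_{\cW_0\subseteq\cW}[n^{\ell}R_n(\cW_0)]=0$ for every $\ell>0$.

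The main obstacle is the coupled bookkeeping of the exponents: $y_n$, hence the exponential gain $e^{-y_n/2}$, must be large enough to dominate the polynomial prefactor $n^{q/2}$, yet $C_\xi^*(y_n)-1$ and $\gamma_n$ must both be $o(\ve_n)$, and for $s\ge4$ enlarging $\mu_n$ simultaneously improves $\gamma_n$ and shrinks $y_*$, so the admissible interval for $k_1$ is genuinely two-sided. Verifying that this interval is non-empty for every $s>2$ and is compatible with $\beta k_3<1$ (needed to keep the entropy-type prefactors subpolynomial) is the only delicate point; the rest is substitution into Theorem~\ref{t_random_case}.
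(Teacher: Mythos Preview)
Your approach is exactly what the paper intends: the authors give no proof and simply remark that the constants $k_i$ ``are easily computed from Theorem~\ref{t_random_case},'' and your derivation carries out precisely that computation by choosing $y_n$ appropriately in $[1,y_*]$ and then calibrating $k_1,k_2,k_3$ so that simultaneously $\bar{u}_{\ve_n}(\gamma_n)C_\xi^*(y_n)\le 1+3\ve_n$ and the right-hand sides of Theorem~\ref{t_random_case} are $o(n^{-\ell})$. One small bookkeeping point: the entropy factors in $T_{5,\ve}$ and $T_{6,\ve}$ involve $N_{\bZ,\mathrm{d}}\big([k_*^{-1}\ve/8]^{1/m}\big)$ and $L^{(\ve)}_*(\beta)$ (not $N_{\bZ,\mathrm{d}}(\ve/8)$ and $L^{(\ve)}_{\exp}$), so the subpolynomial-growth condition you need is $k_3\beta/m<1$ rather than $\beta k_3<1$; since $m\le 1$ this is slightly stronger, but it is still satisfied by taking $k_3$ small, so the argument goes through unchanged.
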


The explicit expressions for the constants
$k_{i}>0$, $i=1,2,3$ are easily computed from
Theorem \ref{t_random_case}.
\begin{remark}
Corollary \ref{cor:asym-after-thW-random} shows that if the class of weights
$\cW$ is such that $\mu=\mu_n$ and $\mu_*=\mu_{*,n}$, and if $\ve$ is
set to be $\ve=\ve_n$,
then $(1+3\ve_n) \breve{U}_n(w)$
is a~uniform random bound on $\|\xi_w\|_s$
which is asymptotically almost as good as the nonrandom bound $\bar
{U}_\xi(w,f)$
depending on $f$.
Thus, in asymptotic terms, there is no loss in sharpness
of the random uniform bound in comparison with the nonrandom bound
that depends on $f$.
\end{remark}

\subsection{Specific problems}
\label{sec_examples}
In this section, we
consider process $\xi_w$ corresponding to
special classes of weights $\cW$ that arise in kernel density estimation.
Using results of
Theorems \ref{t:consec1_new} and \ref{t_random_case},
we derive uniform bounds on the norms of these processes.
As in Section \ref{sec:proc-xi_assW},
here $\cX=\cT=\bR^d$, and $\nu$ and~$\tau$ are both the Lebesgue measure.

Let $\cK$ be a given set of real functions defined on
$\bR^d$ and suppose that $\cK$ is a~\textit{totally bounded} set with respect to the $\bL_\infty$-norm.
Let $\cH:= \bigotimes_{i=1}^d [h^{\min}_{i}, h^{\max}_{i}]$,
where the vectors
$h^{\min}=(h^{\min}_1,\ldots, h^{\min}_d)$,
$h^{\max}=(h^{\max}_1,\ldots, h^{\max}_d)$,
$0<h^{\min}_{i}\leq h^{\max}_{i}\leq1$, $\forall i=1, \ldots, d$
are fixed.

For any $h\in\cH$ define
$V_h:=\prod_{i=1}^d h_{i}$, and
endow the set $\cH$ with the following distance:
%
%
\begin{equation}\label{eq:distance-Delta}
\Delta_\cH(h,h^\prime)
=\max_{i=1,\ldots,d} \ln\biggl(\frac{h_i\vee h_i^\prime}{h_i\wedge
h_i^\prime}\biggr).
\end{equation}
In order to
verify
that $\Delta_\cH$ is indeed a distance on
$\cH$ it suffices to note that
the function $(x,y) \mapsto\ln(x\vee y)-\ln(x\wedge y)$, $x>0, y>0$
satisfies all axioms of distance on
$\bR_+\setminus\{0\}$.

We will be interested in the following classes of weights $\cW$ and the
corresponding processes $\xi_w$.
\subsubsection*{Kernel density estimator process}
With any $K\in\cK$ and $h\in\cH$, we associate the weight function
\[
w(t-x)=n^{-1}K_h(t-x):= (nV_h)^{-1}K[(t-x)/h].
\]
As before,
$u/v$, $u, v\in\bR^d$, stands for
the coordinate-wise division $(u_1 /v_1,\ldots,\allowbreak u_d/v_d)$.

The weight $w$ is naturally parametrized by $K$ and $h$ so that
we put
%
%
\begin{equation}\label{eq:phi-1}
\bZ^{(1)}:=\cK\times\cH,\qquad \zeta=(K,h),\qquad w=\phi_1[\zeta]:=n^{-1}K_h.
\end{equation}
We equip $\bZ^{(1)}$ with
the family of distances $\{\mathrm{d}^{(1)}_\vartheta(\cdot, \cdot),
\vartheta>0\}$
defined by
\begin{eqnarray*}
\mathrm{d}_\vartheta^{(1)}(\zeta,\zeta^\prime)&=&\vartheta\max\{\|
K-K^\prime\|_\infty, \Delta_\cH(h,h^\prime)\},\\
\zeta&=&(K,h),\qquad
\zeta^\prime=(K^\prime,h^\prime),\qquad \vartheta>0.
\end{eqnarray*}
Obviously, $\bZ^{(1)}$ is a totally bounded set with\vadjust{\goodbreak} respect to
$\mathrm{d}^{(1)}_\vartheta$ for any $\vartheta>0$.

The corresponding family of random fields is
%
%
\begin{equation}\label{eq:process-ex-1}
\xi^{(1)}_{w}(t):=\xi_{\phi_1[\zeta]}(t)=\frac{1}{n} \sum_{i=1}^n \{
K_h(t-X_i) -\bE K_h(t-X)\},\qquad \zeta\in\bZ^{(1)},\hspace*{-45pt}
\end{equation}
and we are interested in bounds on the $\bL_s$-norm of this process
uniform
over the class of weights
\[
\cW^{(1)}:=\bigl\{w(\cdot-\cdot)=n^{-1}K_h(\cdot-\cdot)\dvtx(K,h)\in\bZ^{(1)}\bigr\}.
\]

We note that $\xi^{(1)}_w$ is the stochastic error
of the kernel density estimator associated with the kernel $K\in\cK$
and bandwidth $h\in\cH$.
According to Theorems \ref{t:consec1_new} and \ref{t_random_case},
for the process $\{\xi_w, w\in\cW^{(1)}\}$, the uniform
bounds on $\|\xi_w\|_s$ should be based on the following functionals. Define
\[
U_\xi^{(1)}(w):=\cases{ 4(nV_h)^{1/s-1}\|K\|_s, &\quad
$s\in[1,2)$,\vspace*{2pt}\cr
(nV_h)^{-1/2} \|K\|_2,&\quad
$s=2$.}
\]
For $s>2$, we put
\begin{eqnarray*}
U^{(1)}_\xi(w,f) &:=& c_1(s)\biggl[n^{-1/2} \biggl(\int\biggl[\int K^{2}_h(t-x)f(x)\,\rd
x\biggr]^{s/2}\,\rd t\biggr)^{1/s}\\
&&\hspace*{125.19pt}{}+ 2(nV_h)^{1/s-1}\|K\|_s\biggr],
\\
\hat{U}^{(1)}_\xi(w)&:=& c_1(s)\Biggl[n^{-1/2} \Biggl(\int\Biggl[n^{-1}\sum
_{i=1}^{n}K^{2}_h(t-X_i)\Biggr]^{s/2}\,\rd
t\Biggr)^{1/s}\\
&&\hspace*{120.04pt}{}+ 2(nV_h)^{1/s-1}\|K\|_s\Biggr],\\
\end{eqnarray*}
and finally
%
%
\begin{eqnarray}\label{eq:U-1}
\bar{U}{}^{(1)}_\xi(w,f)&:=& \max\bigl[U^{(1)}_\xi(w,f), (nV_h)^{-1/2}
\|K\|_2\bigr]
\nonumber\\[-8pt]\\[-8pt]
\breve{U}^{(1)}_\xi(w)&:=&
\max\bigl[\hat{U}^{(1)}_\xi(w), (nV_h)^{-1/2}\|K\|_2\bigr].\nonumber
\end{eqnarray}
\subsubsection*{Convolution kernel density estimator process}
For any $(K,h)\in\bZ^{(1)}$ and $(Q,\mh)\in\bZ^{(1)}$, we define
%
%
\begin{equation}\label{eq:KK-hh}
w(t-x)=n^{-1}[K_h\ast Q_{\mh}](t-x),
\end{equation}
where $\bZ^{(1)}$ is defined in (\ref{eq:phi-1}), and
$\ast$ stands for the convolution on $\bR^d$.
Put
\[
\bZ^{(2)}:=\bZ^{(1)}\times\bZ^{(1)},\qquad
z=[(K,h),(Q,\mh)],\qquad
w=\phi_2[z]=n^{-1} (K_h*Q_\mh),
\]
and define the family of distances on $\bZ^{(2)}$ as
\begin{eqnarray}
\mathrm{d}^{(2)}_\vartheta(z,z^\prime)=
\vartheta\max\{
\| K-K^\prime\|_\infty\vee\| Q-Q^\prime\|_\infty,
\Delta_\cH(h,h^\prime)
\vee\Delta_\cH(\mh,\mh^\prime)\},\nonumber\\
\eqntext{\vartheta>0,}
\end{eqnarray}
where
$z=[(K,h),(Q,\mh)]$,
$z^\prime=[(K^\prime,h^\prime),(Q^\prime,\mh^\prime)]$,
$z, z^\prime\in\bZ^{(2)}$.
Obviously, $\bZ^{(2)}$~is a totally bounded set with respect to
the distance
$\mathrm{d}_\vartheta^{(2)}$ for any $\vartheta>0$.

The corresponding family of random fields is
%
%
\begin{eqnarray}\label{eq:process-ex-2}
\xi^{(2)}_{w}(t):\!&=&\xi_{\phi_2[z]}(t)\nonumber\\
&=&
\frac{1}{n}\sum_{i=1}^n \{[K_h\ast Q_{\mh}] (t-X_i) -
\bE[K_h\ast Q_{\mh}](t-X)\}, \\
&&\eqntext{\qquad\zeta\in\bZ^{(2)},}
\end{eqnarray}
and we are interested in a uniform bound on $\|\xi^{(2)}_w\|_s$ over
\[
\cW^{(2)}:=\bigl\{w(\cdot- \cdot)=n^{-1} K_h*Q_\mh(\cdot-\cdot),
[(K,h), (Q,\mh)]\in\bZ^{(2)}\bigr\}.
\]

The random field $\xi_{w}$ with $w$ given by (\ref{eq:KK-hh})
appears
in the context of multivariate density estimation.
In particular, the uniform bounds
on $\|\xi_w\|_s$ are instrumental in
construction of a selection rule
for
the family of kernel estimators parametrized by $\cK\times\cH$
[see \citet{GL2}].
%
Theorems \ref{t:consec1_new} and \ref{t_random_case}
suggest to base the uniform bounds on the following
quantities. Define
\begin{eqnarray*}
U^{(2)}_\xi(w)&:=&
\cases{4n^{1/s-1}\|K_h\ast Q_\mh\|_s,
&\quad
$s\in[1,2)$,\cr
n^{-1/2}\|K_h\ast Q_\mh\|_2,&\quad$s=2$.}
\end{eqnarray*}
For $s>2$, we put
\begin{eqnarray*}
U^{(2)}_\xi(w,f)&:=& c_1(s)\biggl[n^{-1/2} \biggl(\int\biggl[\int[K_h\ast Q_\mh
]^{2}(t-x)f(x)\,\rd x\biggr]^{s/2}\,\rd t\biggr)^{1/s}
\\
&&\hspace*{151.2pt}{}+ 2n^{1/s-1}\|K_h\ast Q_\mh\|_s\biggr],
\\
\hat{U}^{(2)}_\xi(w)&:=& c_1(s)\Biggl[n^{-1/2} \Biggl(\int\Biggl[n^{-1}\sum
_{i=1}^{n}[K_h\ast Q_\mh]^{2}(t-X_i)
\Biggr]^{s/2}\,\rd t\Biggr)^{1/s}\\
&&\hspace*{146.7pt}{} + 2n^{1/s-1}\|K_h\ast Q_\mh\|_s\Biggr];
\end{eqnarray*}
and finally
%
%
\begin{eqnarray}\label{eq:U-2}
\bar{U}{}^{(2)}_\xi(w,f) &:=& \max\bigl[U^{(2)}_\xi(w,f),
n^{-1/2}\|K_h\ast Q_\mh\|_2\bigr],
\nonumber\\[-8pt]\\[-8pt]
\breve{U}^{(2)}_\xi(w)&:=&\max\bigl[\hat{U}^{(2)}_\xi(w), n^{-{1/2}}
\|K_h\ast Q_\mh\|_2\bigr].
\nonumber
\end{eqnarray}
Theorems \ref{t:consec1_new} and \ref{t_random_case}
can be used in order to establish upper bounds on the norms of
the processes $\xi^{(i)}_w$, $i=1,2$. For this purpose, Assumptions \ref{assW}
and~\ref{assL}
should be verified for the classes of weights $\cW^{(i)}$, $i=1,2$,
defined above. To this end,
we introduce conditions on the family of kernels $\cK$
that imply Assumptions \ref{assW} and~\ref{assL}.
These conditions are rather natural and easily verifiable;
they can be weakened in several ways, but
we do not pursue
this issue here and try to minimize cumbersome calculations to be done.
\renewcommand{\theassK}{(K)}
\begin{assK}\label{assK}
\begin{enumerate}[(K2)]
\item[(K1)]\hypertarget{assK1}
The family
$\cK$ is a subset of the
isotropic H\"older ball of functions~$\bH_d(1,\allowbreak L_\cK)$
with the exponent $1$ and the
Lipschitz constant $L_\cK$, that is,
\[
|K(x)-K(y)|\leq L_\cK|x-y|\qquad \forall x,y\in\bR^d,
\]
where \mbox{$|\cdot|$} denotes the Euclidean distance.
Moreover, any function~$K$ from $\cK$ is compactly supported and,
without loss of generality,\break
$\operatorname{supp}(K)\subseteq[-1/2,1/2]^d$ for all $K\in\cK$.
\item[(K2)]\hypertarget{assK2}
There exist real numbers $\mathrm{k}_{1}>0$ and
$\mathrm{k}_{\infty}<\infty$ such that
\[
\mathrm{k}_{1}\leq\biggl|\int K(t) \,\rd t\biggr| \leq
\|K\|_\infty\leq
\mathrm{k}_{\infty}\qquad \forall K\in\cK.
\]
Without loss of generality, we will assume that $\mathrm{k}_{\infty
}\geq1$ and $\mathrm{k}_1\leq1$.
\item[(K3)]\hypertarget{assK3}
The set
$\cK$ is a totally bounded set with respect to the $\bL_\infty$-norm,
and there exists a real number $\beta_\cK\in(0,1)$
such that
the entropy $\cE_\cK(\cdot)$ of~$\cK$ satisfies
\[
\sup_{\delta\in(0,1)}[\cE_{\cK}(\delta)- \delta^{-\beta_{\cK}}]=:C_\cK
<\infty.
\]
\end{enumerate}
\end{assK}

Several remarks on the above assumptions are in order.
First, we note that Assumptions \hyperlink{assK1}{(K1)} and \hyperlink
{assK3}{(K3)}
are not completely independent. In fact, if we suppose that $\cK\subset
\bH_d(\alpha,L_\cK)$ with
some $\alpha>d$ then Assumption \hyperlink{assK3}{(K3)} is
automatically fulfilled with $\beta_\cK=\alpha/d$.
On the other hand, all our results remain valid
if $\cK\subset\bH_d(\alpha,L_\cK)$ with some $\alpha>0$.
Observe also that the condition $ |{\int K(t)\,\rd t} |\geq\mathrm{k}_{1}$
of Assumption \hyperlink{assK2}{(K2)}
is not restrictive at all because
for kernel estimators $\int K(t)\,\rd t=1$. Therefore,
the first inequality in \hyperlink{assK2}{(K2)} is satisfied with
$\mathrm{k}_{1}=1$.
\begin{remark}
\label{r_verification_assump}
It is easy to check that Assumption \hyperlink{assK1}{(K1)} implies
Assumption~\ref{assumptionA2} in Section \ref{sec:uniform}
and Assumption \ref{parameter} in Section \ref{sec:key}.
\end{remark}

Now we apply Theorems~\ref{t:consec1_new} and~\ref{t_random_case}
to the families of random fields given
by~(\ref{eq:process-ex-1}) and~(\ref{eq:process-ex-2}).
We present the
results for the processes
$\{\xi_{\phi_1[\zeta]},\zeta\in\bZ^{(1)}\}$ and $\{\xi_{\phi_2[z]},z\in
\bZ^{(2)}\}$
in a unified way.
%
\subsubsection{\texorpdfstring{Case $s\in[1,2]$. Uniform nonrandom bounds}{Case s in [1,2]. Uniform nonrandom bounds}}
In order to derive the uniform upper bounds for $s\in[1,2]$,
we use Theorem \ref{t:consec1_new}.
Obviously,
Assumption \ref{assK} implies Assumptions \hyperlink{assW1}{(W1)} and
\hyperlink{assW4}{(W4)}. Thus, in order to apply
Theorem \ref{t:consec1_new}, we need to verify Assumption \ref{assL}.
This is done in Lemma \ref{lem:as-L} given in Section \ref{proof:theorem7}.
Thus, Theorem \ref{t:consec1_new} is directly applicable, and
nonasymptotic bounds can be straightforwardly derived from this theorem;
one needs only to recalculate the constants appearing in the
statements of the theorem.

We note that the quantity $\mu_*$ defined in (\ref{eq:mu-*})
satisfies $\mu_*\leq V_{h^{\max}}$ for the set of weights $\cW^{(1)}$
and $\mu_*\leq2^dV_{h^{\max}}$ for the set of weights $\cW^{(2)}$.
If we assume that $V_{h^{\max}}\to0$ as $n\to\infty$,
then we can establish
some asymptotic results, one of which is given in the next
theorem.\vspace*{-3pt}
\begin{theorem}
\label{t:concec-part-s-less2}
If Assumption \ref{assK} holds, then
for all $s\in[1,2)$, $\ell>0$ and $\ve\in(0,1)$
\[
\lim_{n\to\infty}n^{\ell} \sup_{f\in\cF}\bE\sup_{w\in\cW^{(i)}}
\bigl[\bigl\|\xi^{(i)}_w\bigr\|_{s}- (1+\ve)U^{(i)}_\xi(w)\bigr]_+^q=0,\qquad i=1,2.
\]
If Assumption \ref{assK} holds and $V_{h^{\max}}=o(1/\ln n)$ as $n\to
\infty$,
then
for all $\ell>0$ and $\ve\in(0,1)$
\[
\lim_{n\to\infty} n^{\ell} \sup_{f\in\cF}\bE\sup_{w\in\cW^{(i)}}
\bigl[\bigl\|\xi^{(i)}_w\bigr\|_{2}-
(1+\ve)U^{(i)}_\xi(w)\bigr]_+^q=0,\qquad i=1,2.
\]
\end{theorem}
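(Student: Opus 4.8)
Both assertions are derived from Theorem~\ref{t:consec1_new}, applied to the classes $\cW^{(1)}$ and $\cW^{(2)}$, with its two free parameters $\ve$ and $z$ chosen to depend on $n$. By Assumption~(K) the classes $\cW^{(i)}$, $i=1,2$, satisfy Assumptions~(W1) and~(W4), while Assumption~(L) holds by Lemma~\ref{lem:as-L}; hence Theorem~\ref{t:consec1_new} is applicable to both. Writing $w\in\cW^{(i)}$ as a $d$-variate function (cf.\ Section~\ref{sec:proc-xi_assW}), i.e.\ $w=n^{-1}K_h$ when $i=1$ and $w=n^{-1}(K_h*Q_\mh)$ when $i=2$, a direct change of variables gives $4n^{1/s}\|w\|_s=U^{(i)}_\xi(w)$ for $s\in[1,2)$ and $\sqrt n\,\|w\|_2=U^{(i)}_\xi(w)$ for $s=2$. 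Hence, renaming the parameter of Theorem~\ref{t:consec1_new} as $\eta$, its part~(i) reads, for $s\in[1,2)$, $n\ge1$, $z\ge\tfrac{\sqrt{37}}{2}n^{1/2-1/s}$ and $\eta\in(0,1]$,
\[
\bE\sup_{w\in\cW^{(i)}}\big[\|\xi^{(i)}_w\|_s-u_\eta(1+z)\,U^{(i)}_\xi(w)\big]_+^q\;\le\;T_{3,\eta}\,n^{q/s}\exp\Big\{-\tfrac{2z^2}{37}\,n^{(2/s)-1}\Big\},
\]
and its part~(ii) reads, for $s=2$, $f\in\cF$, $n\ge1$, $z\ge\sqrt{8(\mathrm{f}_\infty^2\mu_*+4n^{-1/2})}$ and $\eta\in(0,1]$,
\[
\bE\sup_{w\in\cW^{(i)}}\big[\|\xi^{(i)}_w\|_2-u_\eta(1+z+z^2/12)\,U^{(i)}_\xi(w)\big]_+^q\;\le\;T_{4,\eta}\,n^{q/2}\exp\Big\{-\tfrac{z^2}{16(\mathrm{f}_\infty^2\mu_*+4n^{-1/2})}\Big\},
\]
with $u_\eta=2^\eta(1+\eta)$, where $T_{3,\eta},T_{4,\eta}$ do not depend on $f$ or $n$. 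Since $x\mapsto[x]_+^q$ is nondecreasing and $U^{(i)}_\xi\ge0$, as soon as $u_\eta(1+z)\le1+\ve$ (resp.\ $u_\eta(1+z+z^2/12)\le1+\ve$) the left-hand sides above bound $\bE\sup_w[\|\xi^{(i)}_w\|_s-(1+\ve)U^{(i)}_\xi(w)]_+^q$ from above. So, for a fixed target $\ve\in(0,1)$, it remains to pick $\eta$ and $z=z_n$ making these comparisons valid for large $n$ and making $n^\ell$ times the right-hand side tend to $0$.

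\emph{Case $s\in[1,2)$.} Fix $\eta\in(0,1]$ with $u_\eta\le1+\ve/2$ (possible, since $u_\eta\to1$ as $\eta\to0$), and take $z_n=1/\ln n$. Then $z_n\to0$, so $u_\eta(1+z_n)\le(1+\ve/2)(1+z_n)\le1+\ve$ for $n$ large; and $1/\ln n$ decays slower than any power of $n$, so $z_n\ge\tfrac{\sqrt{37}}{2}n^{1/2-1/s}$ for $n$ large. Hence, for $n$ large,
\[
n^\ell\,\sup_{f\in\cF}\bE\sup_{w\in\cW^{(i)}}\big[\|\xi^{(i)}_w\|_s-(1+\ve)U^{(i)}_\xi(w)\big]_+^q\;\le\;T_{3,\eta}\,n^{\ell+q/s}\exp\Big\{-\tfrac{2}{37}(\ln n)^{-2}n^{(2/s)-1}\Big\}.
\]
Since $2/s-1>0$, the quantity $(\ln n)^{-2}n^{2/s-1}$ grows faster than any power of $\ln n$, so the exponential beats the polynomial factor and the bound tends to $0$; this gives the first claim for $i=1,2$.

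\emph{Case $s=2$.} Set $\delta_n:=\mathrm{f}_\infty^2\mu_*+4n^{-1/2}$. Because the support of $w\in\cW^{(i)}$ is contained in a box of side at most $2h^{\max}_j$ in the $j$-th coordinate (Minkowski sum of the supports of $K_h$ and $Q_\mh$ when $i=2$), one has $\mu_*\le2^dV_{h^{\max}}$; thus $\delta_n>0$, $\delta_n\to0$, and $\delta_n\ln n\to0$ under the hypothesis $V_{h^{\max}}=o(1/\ln n)$. Fix $\eta$ with $u_\eta\le1+\ve/2$ and take $z_n:=(\delta_n\ln n)^{1/4}$. Then $z_n\to0$, so $1+z_n+z_n^2/12\to1$ and $u_\eta(1+z_n+z_n^2/12)\le1+\ve$ for $n$ large; moreover $z_n^2=\sqrt{\delta_n\ln n}\ge8\delta_n$ for $n$ large (equivalently $\ln n\ge64\delta_n$), so the admissibility constraint $z_n\ge\sqrt{8\delta_n}$ holds. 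Consequently, for $n$ large,
\[
n^\ell\,\sup_{f\in\cF}\bE\sup_{w\in\cW^{(i)}}\big[\|\xi^{(i)}_w\|_2-(1+\ve)U^{(i)}_\xi(w)\big]_+^q\;\le\;T_{4,\eta}\,n^{\ell+q/2}\exp\Big\{-\tfrac{1}{16}\sqrt{\ln n/\delta_n}\Big\},
\]
the right-hand side being $f$-free (it depends on $f$ only through the fixed constant $\mathrm{f}_\infty$), hence uniform over $\cF$. Since $\sqrt{\ln n/\delta_n}/\ln n=1/\sqrt{\delta_n\ln n}\to\infty$, the exponent dominates $(\ell+q/2)\ln n$ and the bound tends to $0$; this gives the second claim for $i=1,2$.

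\emph{Main obstacle.} The only delicate point is the case $s=2$: unlike for $s<2$, the exponent in Theorem~\ref{t:consec1_new}(ii) carries no polynomial-in-$n$ gain (here $n^{2/s-1}=1$), so one must use $V_{h^{\max}}=o(1/\ln n)$ to force the exponent to grow faster than $\ln n$, while simultaneously keeping $z_n\to0$ so that $u_\eta(1+z_n+z_n^2/12)$ can be pushed below $1+\ve$; the choice $z_n=(\delta_n\ln n)^{1/4}$ is exactly the balance between these two competing requirements. Everything else is the routine bookkeeping of matching $U^{(i)}_\xi$ to the $\bL_s$-norms and checking the admissible ranges of $z$ in Theorem~\ref{t:consec1_new}.
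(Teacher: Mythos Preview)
Your proof is correct and follows exactly the route the paper indicates (``a straightforward consequence of Theorem~\ref{t:consec1_new} and Lemma~\ref{lem:as-L}''): verify Assumptions~(W1), (W4), (L) for $\cW^{(i)}$ via Assumption~(K) and Lemma~\ref{lem:as-L}, then tune the free parameters $\eta$ and $z=z_n$ in Theorem~\ref{t:consec1_new} so that $u_\eta(1+z_n)\le 1+\ve$ (resp.\ $u_\eta(1+z_n+z_n^2/12)\le 1+\ve$) while the exponential term kills any polynomial prefactor.

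One small inaccuracy worth noting: the constants $T_{3,\eta}$ and $T_{4,\eta}$ are \emph{not} independent of $n$. Looking at their explicit form in the proof of Theorem~\ref{t:consec1_new}, they contain $\bar{\mathrm{w}}_s$, $N_{\bZ,\mathrm{d}}(\eta/8)$ and $L^{(\eta)}_{\exp}$, all of which may vary with $n$ through $\cH$ (e.g.\ $\bar{\mathrm{w}}_2\le \mathrm{k}_\infty^2(nV_{h^{\min}})^{-1/2}$, and the entropy of $\cH$ carries the $\ln\ln(h_j^{\max}/h_j^{\min})$ terms). This does not damage your argument, because under Assumption~(K) and the usual bandwidth regime these quantities grow at most polynomially in $n$ (in fact at most like powers of $\ln n$), and both of your exponents---$\frac{2}{37}(\ln n)^{-2}n^{2/s-1}$ for $s<2$ and $\frac{1}{16}\sqrt{\ln n/\delta_n}$ for $s=2$---dominate any polynomial. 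Just replace the claim ``do not depend on $n$'' by ``grow at most polynomially in $n$'' and the proof goes through verbatim.
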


Proof of the theorem is omitted; it is a straightforward consequence
of Theorem~\ref{t:consec1_new} and Lemma \ref{lem:as-L} given below in
Section \ref{proof:theorem7}.\vspace*{-3pt}
%
\subsubsection{Case $s>2$. Uniform random bounds}
In the case $s>2$, the uniform bounds are derived
from Theorem \ref{t_random_case}.
To state these results, we need the following notation.
Define
%
%
\begin{eqnarray}\label{eq:vartheta0-1-2}
\vartheta_{0}^{(1)}&:=& 10c_1(s)\mathrm{f}_\infty
\bigl[L_\cK\sqrt{d}/\mathrm{k}_1\bigr]^{d/2},\nonumber\\[-8pt]\\[-8pt]
\vartheta_0^{(2)}&:=&10c_1(s)\mathrm{f}_\infty\bigl[2^{d+2}\sqrt{d}
L_\cK\mathrm{k}_\infty/\mathrm{k}_1^2\bigr]^{d/2}.\nonumber
\end{eqnarray}
The next two quantities, $A_\cH$ and $B_\cH$, are
completely determined by the bandwidth
set $\cH$:
%
%
\begin{eqnarray}\label{eq:AH-BH}
A_\cH&:=&\prod_{j=1}^d \ln(h_j^{\max}/h_j^{\min}),\nonumber\\[-10pt]\\[-10pt]
B_\cH&:=& \log_2 (V_{h^{\max}}/V_{h^{\min}})=\sum_{j=1}^d
\log_2 (h_j^{\max}/h_j^{\min}).\nonumber
\end{eqnarray}
For $y>0$ put
%
%
\begin{eqnarray}\label{eq:C-*-i}\quad
C_{\xi,i}^*(y)&:=&1+2\vartheta_0^{(i)}
\bigl\{\sqrt{y}\bigl(\bigl[ 2^{d(i-1)}V_{h^{\max}}\bigr]^{1/s} + n^{-1/2s}\bigr)
+yn^{-1/s}\bigr\},\nonumber\\[-10pt]\\[-10pt]
&&\eqntext{i=1,2.}\vadjust{\goodbreak}
\end{eqnarray}
Define also
\[
y_*^{(i)}:=\cases{
\vartheta_1^{(i)} n^{4/s-1}, &\quad$s\in(2,4)$,\vspace*{2pt}\cr
\vartheta_2^{(i)}(nV_{h^{\min}})^{-1/2}
\bigl[ \bigl(2^{d(i-1)}V_{h^{\max}}\bigr)^{2/s} + n^{-1/s}\bigr]^{-2}, &\quad$s\geq4$,}
\]
where explicit expressions for the
constants $\vartheta_1^{(i)}, \vartheta_2^{(i)}$, $i=1,2$ are given in the
proof of Theorem \ref{t_example_1}.
\begin{theorem}\label{t_example_1}
$\!\!\!$Let Assumption \ref{assK} hold, $f\!\in\!\cF$, and let
\mbox{${\max_{j=1,\ldots,d}}|h^{\max}_j|\!\leq\!1$}.
For $i=1,2$ assume that
%
%
\begin{equation}
\label{eq:cond-bandwidth}
nV_{h^{\min}} > [64 c^2_1(s)]^{({s\wedge4})/({s\wedge4-2})}
\bigl[2^{d+2}\sqrt{d}L_\cK\mathrm{k}_\infty/\mathrm{k}_1^2\bigr]^{d(i-1)}.
\end{equation}
If $\gamma:=(nV_{h^{\min}})^{1/(s\wedge4) -1/2}$, then for
any $s>2$, $y\in[1, y_*^{(i)}]$ and for $i=1,2$
one has
\begin{eqnarray*}
&&\bE\sup_{w\in\cW^{(i)}}
\bigl\{\bigl\|\xi_{w}^{(i)}\bigr\|_{s}-
\bar{u}_\ve(\gamma)
C_{\xi,i}^*(y)\breve{U}^{(i)}_\xi(w)\bigr\}^q_+\\
&&\qquad\leq\tilde{T}^{(i)}_{1,\ve}(1+A_\cH)^{2i}(1+B_\cH) n^{q/2}[C_{\xi,i}^*(y)]^{q}
e^{-y/2},
\end{eqnarray*}
where $\bar{u}_\epsilon(\cdot)$ is defined in Theorem \ref{t_random},
and $\breve{U}_\xi^{(i)}(w)$ are defined in (\ref{eq:U-1}) and~(\ref{eq:U-2}).

In addition, for any subset $\cW_0\subseteq\cW^{(i)}$, any $s>2$ and for
$i=1,2$ one has
\begin{eqnarray*}
\bE\Bigl[\sup_{w\in\cW_0}\breve{U}^{(i)}_\xi(w)\Bigr]^{q}&\leq&
\bigl[1+4c_1(s)(1+\ve)(nV_{h^{\min}})^{{1}/({s\wedge4})-
{1}/{2}}\bigr]^q\sup_{w\in\cW_0}\bigl\{\bar{U}{}^{(i)}_\xi(w)\bigr\}^{q}
\\
&&{} + \tilde{T}_{2,\ve}^{(i)} (1+A_\cH)^{2i}(1+B_\cH)
n^{{q(s-2)}/({2s})}
\exp\bigl\{-y_*^{(i)}/2\bigr\}.
\end{eqnarray*}
The explicit expressions for the constants $\tilde{T}_{1,\ve}^{(i)}$
and $\tilde{T}_{2,\ve}^{(i)}$ are given in the proof.
\end{theorem}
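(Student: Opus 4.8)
The plan is to apply Theorem~\ref{t_random_case} separately to the class $\cW^{(1)}$ of dilated kernels $\phi_1[\zeta]=n^{-1}K_h$ and to the class $\cW^{(2)}$ of convolved dilated kernels $\phi_2[z]=n^{-1}(K_h\ast Q_\mh)$, after (a) verifying Assumptions~(W) and~(L) for each class under Assumption~(K), and (b) computing, in closed form, every structural constant that enters the statement of Theorem~\ref{t_random_case}. We have already noted (Remark~\ref{r_verification_assump} and the discussion before Theorem~\ref{t:concec-part-s-less2}) that (K) implies Assumptions~\ref{parameter}, (A2), (W1) and (W4), so the real work is (W2), (W3), (L), together with the bookkeeping that turns the implicit quantities $\mu_*,\mu,\gamma,y_*,\vartheta_0,m_p,C_p$ and the entropy $N_{\bZ,\mathrm{d}}(\cdot)$ into the explicit objects of the theorem.

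First I would record the elementary dilation/convolution identities $\|K_h\|_p=V_h^{1/p-1}\|K\|_p$, $\mathrm{supp}(K_h)\subseteq hV\!\cdot[-1/2,1/2]^d$ (up to scaling in each coordinate), $\mathrm{supp}(K_h\ast Q_\mh)\subseteq\mathrm{supp}(K_h)+\mathrm{supp}(Q_\mh)$ and Young's inequality $\|K_h\ast Q_\mh\|_p\le V_h^{1/p-1}\|K\|_p\,\|Q\|_1$. By (K1) these give $\mathrm{mes}\{\mathrm{supp}(w)\}\le 2^{di}V_{h^{\max}}$ for $\cW^{(i)}$, hence $\mu_*=2^{di}V_{h^{\max}}$ up to absolute constants, and a matching lower bound $\mathrm{mes}\{\mathrm{supp}(w)\}\gtrsim V_{h^{\min}}$ (with the implied constant controlled through $\mathrm{k}_1$), so $\mu\asymp nV_{h^{\min}}$ and (W3) holds; the hypothesis (\ref{eq:cond-bandwidth}) is precisely what forces $\mu>[64c_1^2(s)]^{(s\wedge4)/(s\wedge4-1)}$ and makes $\gamma=(nV_{h^{\min}})^{1/(s\wedge4)-1/2}$ coincide with $\mu^{1/(s\wedge4)-1/2}$ as required. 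For (W2) I would use the Lipschitz bound of (K1): at a point where $|K|$ is within a fixed factor of $\|K\|_\infty$, the Lipschitz estimate forces $|K|\ge\alpha_1\|K\|_\infty$ on a ball of radius $\propto\|K\|_\infty/L_\cK$, and the lower bound $|\int K|\ge\mathrm{k}_1$ prevents $\|K\|_\infty$ from being too small relative to $\mathrm{mes}\{\mathrm{supp}(K)\}$; this yields fixed $\alpha_1,\alpha_2$, and the same argument applied to $K_h\ast Q_\mh$ (still Lipschitz, with constant controlled by $L_\cK,\mathrm{k}_\infty,\mathrm{k}_1$) gives (W2) for $\cW^{(2)}$. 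Tracking these constants through $\vartheta_0=5c_1(s)[C_s\vee1]\mathrm{f}_\infty\alpha_1^{-1}\alpha_2^{-1/2}$ produces the explicit $\vartheta_0^{(i)}$ of (\ref{eq:vartheta0-1-2}) and hence $C^*_{\xi,i}$ of (\ref{eq:C-*-i}).

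Next I would verify Assumption~(L) and compute $m_p,C_p$ of (\ref{eq:before-t-random-case}). The distance $\Delta_\cH$ in (\ref{eq:distance-Delta}) is tailored so that the dilation map $h\mapsto K_h$ is Lipschitz in the (normalized) $\bL_p$--sense with respect to $\Delta_\cH$, while $K\mapsto(K-K')_h$ satisfies $\|(K-K')_h\|_p=V_h^{1/p-1}\|K-K'\|_p\le V_h^{1/p-1}\mu_*^{1/p}\|K-K'\|_\infty$; choosing the scaling factor $\vartheta$ in $\mathrm{d}^{(i)}_\vartheta$ large absorbs all numerical constants and gives the $\bL_2$ statement (\ref{eq:L-s>2}) needed for $s>2$, which is the content of (the extension of) Lemma~\ref{lem:as-L}; for $\cW^{(2)}$ one additionally invokes Young's inequality in each convolution factor. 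By Lemma~\ref{lem:before_th-random-case} one may then take $m_p=2/p$ (with $m_2=1$, $C_2=1$), so $m=1\wedge m_s$ for $s\in(2,4)$ and $m=1\wedge m_s\wedge m_{s/2}$ for $s\ge4$, and the hypothesis $\beta<m$ of Theorem~\ref{t_random_case} reduces to a condition on $\beta_\cK$ that holds under~(K3) (the bandwidth part of the entropy being polynomial in $\delta^{-1}$, hence $\delta^{-\beta}$--negligible for every $\beta>0$). Finally I would assemble the entropy bound: a $\mathrm{d}^{(1)}_\vartheta$--covering of $\bZ^{(1)}=\cK\times\cH$ is a product of an $\bL_\infty$--covering of $\cK$ (size $\le e^{\delta^{-\beta_\cK}+C_\cK}$ by (K3)) and a $\Delta_\cH$--covering of $\cH$ (size $\le c\,\delta^{-d}A_\cH$), and $\bZ^{(2)}=\bZ^{(1)}\times\bZ^{(1)}$ squares the $\cK$--entropy and the $A_\cH$ factor; meanwhile $R_\xi/r_\xi$ is controlled by $B_\cH$ up to absolute constants since $U_\xi(w)$ is, up to constants, a power of $V_h$. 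Substituting these into the right-hand sides of Theorem~\ref{t_random_case} — into $N_{\bZ,\mathrm{d}}(\ve/8)$, $\log_2(R_\xi/r_\xi)$, $L^{(\ve)}_{\exp}$ (finite because $\beta_\cK<1$) and the definition (\ref{eq:y-**}) of $y_*$, with $\mu\asymp nV_{h^{\min}}$ and $\mu_*\asymp 2^{di}V_{h^{\max}}$ — yields the claimed inequalities with $y_*^{(i)}$ as stated and the prefactors $(1+A_\cH)^{2i}(1+B_\cH)$, the remaining $n$- and $\ve$-dependent factors being collected into $\tilde T^{(i)}_{1,\ve}$ and $\tilde T^{(i)}_{2,\ve}$; the second inequality comes identically from part~(ii) of Theorem~\ref{t_random_case}, with $\breve U^{(i)}_\xi,\bar U^{(i)}_\xi$ as defined in (\ref{eq:U-1})--(\ref{eq:U-2}).

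The hard part will be the verification of Assumption~(L), and the attendant computation of the exponents $m_p$ and constants $C_p$, for the convolution class $\cW^{(2)}$ equipped with the product-of-$\Delta_\cH$ distance: one must show that a single bandwidth parameter simultaneously controls, with the correct normalization, the $\bL_p$--oscillation of $K_h\ast Q_\mh$ in all four arguments $(K,h,Q,\mh)$, which forces one to combine the dilation-Lipschitz estimate for $\Delta_\cH$, Young's inequality, and the compact-support bookkeeping of~(K1), all the while keeping every constant explicit so that $\vartheta_0^{(i)},\vartheta_1^{(i)},\vartheta_2^{(i)}$ emerge in closed form. Once Lemmas~\ref{lem:before_th-random-case} and~\ref{lem:as-L} are available, everything else is routine substitution.
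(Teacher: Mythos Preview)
Your overall strategy is exactly the paper's: verify Assumptions~(W) and~(L) for $\cW^{(1)}$ and $\cW^{(2)}$ under Assumption~(K), compute the structural constants, and plug into Theorem~\ref{t_random_case}. Most of your verifications are in the right spirit. There is, however, one genuine gap.

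You propose to obtain $m_p$ from Lemma~\ref{lem:before_th-random-case}, i.e.\ $m_p=2/p$, and then assert that ``the hypothesis $\beta<m$ of Theorem~\ref{t_random_case} reduces to a condition on $\beta_\cK$ that holds under~(K3)''. This does not follow. With $m_p=2/p$ one gets $m=2/s$ (both for $s\in(2,4)$ and for $s\ge 4$), so the requirement $\beta<m$ becomes $\beta_\cK<2/s$, which for large $s$ is strictly stronger than the $\beta_\cK<1$ guaranteed by~(K3). Your route therefore does not prove the theorem under the stated hypotheses.

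The paper closes this gap by showing that, under~(K), one may in fact take $m_p=1$ (with $C_p\le 1$) for \emph{all} $p\ge 2$, for both parametrizations. This is done not through Lemma~\ref{lem:before_th-random-case} but via the explicit $\bL_p$--increment bounds of Lemma~\ref{lem:tech1}: since $\|\phi_i[\zeta_1]-\phi_i[\zeta_2]\|_p$ is controlled by $D(\mathrm{d}^{(i)}_1(\zeta_1,\zeta_2))$ times the appropriate power of the larger bandwidth volume, and $D$ is smooth with $D(0)=0$, the increment is linear in the distance (not merely H\"older with exponent $2/p$), once the distance is rescaled by the constant $\theta_i$. With $m=1$ the entropy condition $\beta<m$ is exactly $\beta_\cK<1$, which is~(K3). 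This sharper computation of $m_p$ is the one step you are missing; once it is in place, the rest of your plan (including the $(1+A_\cH)^{2i}(1+B_\cH)$ bookkeeping and the identification of $\mu$, $\mu_*$, $\gamma$, $y_*^{(i)}$) goes through essentially as you describe.
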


We emphasize that the upper bounds of Theorem \ref{t_example_1}
are nonasymptotic. The constants $\vartheta_1^{(i)}$, $\vartheta_2^{(i)}$,
$\tilde{T}_{1,\ve}^{(i)}$ and $\tilde{T}_{2,\ve}^{(i)}$
are written down explicitly in the proof of the theorem; they
are completely determined through
the quantities $L_\cK$,
$\mathrm{k}_1$, $\mathrm{k}_\infty$, $C_\cK$ and $\beta_\cK$
appearing in Assumption \ref{assK}, and the constant $c_1(s)$
in the Rosenthal inequality.
\begin{remark}\label{rem:1}
Condition (\ref{eq:cond-bandwidth}) is not restrictive
because the standard assumption on the bandwidth set $\cH$
in the kernel density estimation is
that
\[
nV_{h^{\min}} \to\infty,\qquad V_{h^{\max}}\to0,\qquad n\to\infty.
\]
\end{remark}

The bounds established in Theorem \ref{t_example_1} can be used
in order to derive
asymptotic (as $n\to\infty$) results under general assumptions on the
set of bandwidths $\cH$. One of such results
is given in the next corollary.
\begin{corollary}
\label{cor:asymp-proc-xi-example}
Let $s>2$ be fixed, Assumption \ref{assK} hold, and $f\in\cF$.
There exist positive constants $k_{1,i}=k_{1,i}(s)$,
$k_{2,i}=k_{2,i}(s)$ and $k_{3,i}=k_{3,i}(s)$, $i=1,2$, such that
if
\begin{eqnarray*}
V_{h^{\max}} &\asymp&[\ln n]^{-k_{1,i}},\qquad
nV_{h^{\min}} \asymp
[\ln n]^{k_{2,i}},\\
\ve&=&\ve_n\asymp[\ln n]^{-k_{3,i}},\qquad n\to\infty,
\end{eqnarray*}
then for all
$\ell>0$, $q\geq1$
\[
\lim_{n\to\infty}\sup_{f\in\cF} n^{\ell}
\bE\sup_{w\in\cW^{(i)}} \bigl[
\bigl\|\xi^{(i)}_w\bigr\|_{s} -
(1+3\ve_n)\breve{U}^{(i)}_\xi(w)\bigr]_+^q=0.
\]
In addition, for any subset $\cW_0\in\cW^{(i)}$
one has
\[
\bE\Bigl[
\sup_{w\in\cW_0}\breve{U}^{(i)}_\xi(w)\Bigr]^{q} \leq
\Bigl[(1+\ve_n)
\sup_{w\in\cW_0}\bar{U}{}^{(i)}_\xi(w,f)\Bigr]^{q} + R_n^{(i)}(\cW_0),
\]
where
$\limsup_{n\to\infty}\sup_{f\in\cF}\sup_{\cW_0\subseteq\cW^{(i)}}
[n^{\ell}R_n^{(i)}(\cW_0)]=0$, $i=1,2$.
\end{corollary}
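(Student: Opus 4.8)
The plan is to obtain the corollary as an asymptotic consequence of Theorem~\ref{t_example_1}: I substitute the assumed polylogarithmic scalings of $V_{h^{\max}}$, $nV_{h^{\min}}$ and $\ve_n$ into the two inequalities of that theorem, take the free parameter $y$ there to be a slowly growing sequence $y_n\to\infty$, and then check that all the quantities entering the bounds have the right behaviour. Since $nV_{h^{\min}}\to\infty$ under the stated scaling, condition (\ref{eq:cond-bandwidth}) holds for all $n$ large, so Theorem~\ref{t_example_1} is applicable.

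First I would record the asymptotics of the ingredients of Theorem~\ref{t_example_1}. With $V_{h^{\max}}\asymp[\ln n]^{-k_{1,i}}$ and $nV_{h^{\min}}\asymp[\ln n]^{k_{2,i}}$ one has $V_{h^{\max}}/V_{h^{\min}}\asymp n[\ln n]^{-k_{1,i}-k_{2,i}}$, hence $B_\cH\le\log_2 n$ and $A_\cH\le[\ln(V_{h^{\max}}/V_{h^{\min}})]^d$ in (\ref{eq:AH-BH}), so the prefactor $(1+A_\cH)^{2i}(1+B_\cH)$ is at most polylogarithmic in $n$. Next, $\gamma=(nV_{h^{\min}})^{1/(s\wedge4)-1/2}\asymp[\ln n]^{-k_{2,i}(1/2-1/(s\wedge4))}\to0$, and in $C^*_{\xi,i}(y)=1+2\vartheta_0^{(i)}\{\sqrt y([2^{di}V_{h^{\max}}]^{1/s}+n^{-1/2s})+yn^{-1/s}\}$ the dominant error term is of order $\sqrt y\,[\ln n]^{-k_{1,i}/s}$. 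The number $y_*^{(i)}$ is a positive power of $n$ when $s\in(2,4)$ and is $\asymp[\ln n]^{4k_{1,i}/s-k_{2,i}/2}$ otherwise. Finally $\bar u_{\ve_n}(\gamma)=2^{\ve_n}(1+\ve_n)[1-4c_1(s)(1+\ve_n)\gamma]^{-1}=1+O(\ve_n+\gamma)$, and the constants $\tilde T^{(i)}_{1,\ve_n}$, $\tilde T^{(i)}_{2,\ve_n}$ depend on $n$ only through $\ve_n$, at worst polynomially in $\ve_n^{-1}$; none of these constants depend on $f$ except through the fixed bound $\mathrm{f}_\infty$ defining $\cF$.

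I would then take $y_n\asymp[\ln n]^2$ (with a small enough leading constant) and choose the exponents $k_{1,i},k_{2,i},k_{3,i}=k_{j,i}(s)>0$ so that the following hold simultaneously for all large $n$: $1\le y_n\le y_*^{(i)}$ (this forces $4k_{1,i}/s-k_{2,i}/2>2$ in the case $s\ge4$); $\sqrt{y_n}\,[\ln n]^{-k_{1,i}/s}=o(\ve_n)$, i.e. $k_{1,i}/s>1+k_{3,i}$, so that $C^*_{\xi,i}(y_n)=1+o(\ve_n)\to1$; and $\gamma=o(\ve_n)$, i.e. $k_{2,i}(1/2-1/(s\wedge4))>k_{3,i}$. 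This is a finite system of linear inequalities in the three exponents, easily seen to be feasible for every $s>2$; these are the constants in the statement. With such a choice, $\bar u_{\ve_n}(\gamma)\,C^*_{\xi,i}(y_n)=1+O(\ve_n+\gamma)+o(\ve_n)\le1+3\ve_n$ and $1+4c_1(s)(1+\ve_n)\gamma\le1+\ve_n$ for all $n$ large. The first assertion then follows from the first inequality of Theorem~\ref{t_example_1} with $y=y_n$: since $[\cdot]_+$ is nondecreasing in the subtracted term and $\breve U^{(i)}_\xi(w)\ge0$, the coefficient $\bar u_{\ve_n}(\gamma)C^*_{\xi,i}(y_n)$ may be replaced by $1+3\ve_n$, and the resulting right-hand side is $\le[\ln n]^{c}\,n^{q/2}e^{-y_n/2}$, uniformly over $f\in\cF$; because $y_n/\ln n\to\infty$ this is $o(n^{-\ell})$ for every $\ell>0$. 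For the second assertion, the second inequality of Theorem~\ref{t_example_1} together with $1+4c_1(s)(1+\ve_n)\gamma\le1+\ve_n$ gives $\bE[\sup_{w\in\cW_0}\breve U^{(i)}_\xi(w)]^q\le[(1+\ve_n)\sup_{w\in\cW_0}\bar U^{(i)}_\xi(w,f)]^q+R_n^{(i)}(\cW_0)$ with $R_n^{(i)}(\cW_0):=\tilde T^{(i)}_{2,\ve_n}(1+A_\cH)^{2i}(1+B_\cH)\,n^{q(s-2)/(2s)}e^{-y_*^{(i)}/2}$, which depends neither on $\cW_0$ nor on $f$; since $y_*^{(i)}/\ln n\to\infty$, we get $n^\ell R_n^{(i)}(\cW_0)\to0$ for every $\ell>0$, uniformly over $\cW_0\subseteq\cW^{(i)}$ and $f\in\cF$.

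The only real difficulty is the bookkeeping in the preceding paragraph: one must exhibit a single admissible triple of exponents for which $y_n$ can be pushed above $\ln n$ while $\gamma$ and the $C^*_{\xi,i}$-error stay below $\ve_n$ and $y_n$ stays below $y_*^{(i)}$, and one must check that the mild $\ve_n^{-1}$-type growth of the constants $\tilde T^{(i)}_{j,\ve_n}$ and the polylogarithmic prefactor $(1+A_\cH)^{2i}(1+B_\cH)$ are absorbed by the super-polynomially small factors $e^{-y_n/2}$ and $e^{-y_*^{(i)}/2}$. Uniformity over $f\in\cF$ and over $\cW_0\subseteq\cW^{(i)}$ is then automatic from the structure of Theorem~\ref{t_example_1}.
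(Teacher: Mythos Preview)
Your approach is correct and is exactly what the paper intends: the corollary is stated as an immediate asymptotic consequence of Theorem~\ref{t_example_1}, and the paper does not give a separate proof beyond remarking that the constants $k_{j,i}$ are easily derived from that theorem. Your bookkeeping---choosing $y_n$ to grow like a power of $\ln n$ larger than one, then calibrating $k_{1,i},k_{2,i},k_{3,i}$ so that $y_n\le y_*^{(i)}$, $C^*_{\xi,i}(y_n)-1=o(\ve_n)$, and $\gamma=o(\ve_n)$---is the right mechanism.

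One small inaccuracy worth flagging: the constants $\tilde T^{(i)}_{1,\ve}$ and $\tilde T^{(i)}_{2,\ve}$ are not ``at worst polynomially in $\ve^{-1}$'' as you write. Inspecting their explicit form in the proof of Theorem~\ref{t_example_1}, they contain a factor $\exp\{i(8k_{*,i}\theta_i/\ve)^{\beta_\cK}\}$ (and $L_{*,i}^{(\ve)}(\beta)$ has similar dependence), which is stretched-exponential in $\ve^{-1}$. With $\ve_n\asymp[\ln n]^{-k_{3,i}}$ this becomes $\exp\{c[\ln n]^{k_{3,i}\beta_\cK}\}$. Since $\beta_\cK<1$, this is still absorbed by your $e^{-y_n/2}\asymp e^{-c'[\ln n]^2}$ provided $k_{3,i}\beta_\cK<2$, which you can (and should) add to your list of linear constraints on the exponents. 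The conclusion is unaffected, but the sentence describing the growth of these constants should be corrected.
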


We remark that
explicit expressions for the constants $k_{1,i}$ and $k_{2,i}$,
$i=1,2$,
are easily derived from Theorem \ref{t_example_1}.
%
\section{Uniform bounds for norms of regression-type processes}
\label{sec:proc-eta}
In this section,
we use Proposition \ref{l_uniform-2} in order to
derive uniform bounds for the family
$\|\eta_w\|_{s,\tau}$, \mbox{$w\in\cW$}; we recall that
\[
\eta_w(t)=\sum_{i=1}^n w(t, X_i)\varepsilon_i,
\]
see (\ref{eq:processes}).
First. we verify Assumption \ref{fixed_theta}
by establishing an analogue of Theorem~\ref{fixed_w1} for
a fixed weight function $w\in\cW$
[see Theorem \ref{th:eta_fixed_w} below].
It turns out that the corresponding inequality depends heavily
on the tail probability of the random variable $\e$.
In other words, we prove that Assumption \ref{fixed_theta} is fulfilled with
function $g$ that is determined by the rate at
which the tail probability of $\e$ decreases.
Next, under Assumptions \ref{assW} and \ref{assL},
we derive uniform bounds using Corollary \ref{cor:prop2_new}; this
leads to
an analogue of Theorem \ref{t:consec1_new}
for the regression-type processes.
%
\subsection{Probability bounds for fixed weight function}
\label{sec:proc-eta-fixed}
%
We consider two types of moment conditions on the distribution
of $\e$.
\renewcommand{\theassE}{(E)}
\begin{assE}\label{assE}
The distribution of $\e$ is symmetric, and
one of the following two conditions is fulfilled:
\begin{enumerate}[(E2)]
\item[(E1)]\hypertarget{assE1} there exist
constants $\alpha>0$, $v>0$ and $b>0$ such that
\[
\bP\{|\e|\geq x\}\leq v\exp\{-b x^\alpha\}\qquad \forall x>0,
\]
\item[(E2)]\hypertarget{assE2}
there exist constants $p\geq[s\vee2]$ and $P>0$ such that
\[
\bE|\e|^p\leq P.
\]
\end{enumerate}
\end{assE}

Let
$\sigma_{\e}^2:=\bE\e^2$ and
$e_s:=(\bE|\e|^s)^{1/s}$. For any
$w\in\cW$ define
\begin{eqnarray*}
\varrho_s(w,f)&:=&\cases{
\sigma_\e\bigl\{
\sqrt{n} \Sigma_s(w,f)
\wedge4n^{1/s}M_s(w)
\bigr\}, &\quad
$s<2$,
\vspace*{2pt}\cr
\sigma_\e\sqrt{n} M_2(w), &\quad
$s=2$,
\vspace*{2pt}\cr
c_1(s)\bigl[ \sigma_\e\sqrt{n} \Sigma_s(w,f)
+ 2n^{1/s} e_sM_s(w)\bigr], &\quad$s>2$,}
\\
\varpi^2_s(w,f)&:=&\cases{
M^2_{s}(w)[(6\sigma_\e^2+8)n +96\sigma_\e n^{1/s}],
&\quad$s<2$,\vspace*{2pt}\cr
6\sigma_\e^2nM^2_{1,\tau,\nu^\prime}(w)+ 24\sigma_\e\sqrt{n}M^2_2(w),
&\quad$s=2$,}
\end{eqnarray*}
and if $ s> 2$ then we set
\begin{eqnarray*}
\varpi^2_s(w,f)&:=& 6c_3(s) \bigl[\sigma_\e^2n
M^2_{{2s}/({s+2}),\tau,\nu^\prime}(w)\\
&&\hspace*{30.42pt}{}+
4\sigma_\e\sqrt{n} \Sigma_s(w,f) M_s(w)
+ 8e_sn^{1/s}M^2_s(w)\bigr].
\end{eqnarray*}
In the above formulas, we use notation introduced in the beginning of
Section~\ref{sec:process-xi}; the formulas should be compared with
(\ref{eq:rho}) and (\ref{eq:omega-s>2}).

The next theorem is the analogue of Theorem \ref{fixed_w1}
for the regression-type processes.
\begin{theorem}
\label{th:eta_fixed_w}
\begin{enumerate}[(ii)]
\item[(i)]
Suppose that Assumption \textup{\hyperlink{assE1}{(E1)}} holds, and for $x>0$ define
the function
%
%
\begin{eqnarray}\label{eq:G}
G_1(x) &:=&
(1+nv)g_{\alpha,b}(x), \nonumber\\[-8pt]\\[-8pt]
g_{\alpha,b}(x)&:=&
\cases{
\exp\bigl\{-|x| \wedge|b^{1/\alpha} x|^{\alpha/(2+\alpha)}\bigr\},&\quad$s<2$,
\cr
\exp\bigl\{-|x| \wedge|b^{1/\alpha} x|^{\alpha/(1+\alpha)}\bigr\},
&\quad$s\geq2$.}\nonumber
\end{eqnarray}
Then for all $s\in[1,\infty)$ and $z>0$ one has
\[
\bP\{\|\eta_w\|_{s,\tau}\geq\varrho_s(w,f)+z \}\leq
G_1\biggl(
\frac{z^2}{({1}/{3})\varpi_s^2(w, f) + ({4}/{3}) c_*(s) M_s(w) z}\biggr),
\]
where $c_*(\cdot)$ is given in (\ref{eq:c-*}).
\item[(ii)] Suppose that
Assumption \textup{\hyperlink{assE2}{(E2)}} holds and for $x>0$ define the function
\[
G_2(x):=(1+nP)\times\cases{
(x^{-1}p\ln[1+p^{-1}x])^{p/2}, &\quad$s<2$,\cr
(x^{-1}p\ln[1+p^{-1}x])^{p}, &\quad$s\geq2$.}
\]
Then for all $s\in[1,\infty)$ and $z>0$ one has
\[
\bP\{\|\eta_w\|_{s,\tau}\geq\varrho_s(w)+z \}\leq
G_2\biggl(
\frac{z^2}{({1}/{3})\varpi_s^2(w, f) + ({4}/{3}) c_*(s) M_s(w) z}\biggr).
\]
\end{enumerate}
\end{theorem}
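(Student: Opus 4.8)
The argument follows the template of the proof of Theorem~\ref{fixed_w1}, the new ingredient being a truncation of the multipliers $\e_i$, which is what produces the particular shape of $G_1$ and $G_2$. First I would use the $\bL_s$--$\bL_{s'}$ duality ($s'=s/(s-1)$) together with Assumption~(A1) to write $\|\eta_w\|_{s,\tau}=\sup_{g}\sum_{i=1}^{n}\langle w(\cdot,X_i),g\rangle\,\e_i$, the supremum running over a countable dense subset of the unit ball of $\bL_{s'}(\tau)$; this makes every supremum measurable and displays $\eta_w$ as a sum of i.i.d.\ centred symmetric random functions (symmetry of $\e$, inherited by any truncation at a symmetric level, is what guarantees centring throughout).

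Next I would show $\bE\|\eta_w\|_{s,\tau}\le\varrho_s(w,f)$. For $s>2$ this comes from the Rosenthal inequality applied to $\sum_i w(t,X_i)\e_i$ for each fixed $t$ -- producing the terms $\sigma_\e\sqrt n\,(\bE w(t,X)^2)^{1/2}$ and $e_s n^{1/s}(\bE|w(t,X)|^s)^{1/s}$ -- followed by Fubini, Minkowski's integral inequality, and the integral--operator estimates of Lemma~\ref{folland} to rewrite the resulting integrals through $\Sigma_s(w,f)$ and $M_s(w)$. For $s=2$ it is a plain second--moment computation; for $s\in[1,2)$ it follows from Jensen ($\bE\|\eta_w\|_{s,\tau}\le(\int(\bE\eta_w(t)^2)^{s/2}\tau(\rd t))^{1/s}=\sigma_\e\sqrt n\,\Sigma_s(w,f)$) and from the von Bahr--Esseen inequality ($\bE\|\eta_w\|_{s,\tau}^{s}\le 2\sum_i\int\bE|w(t,X_i)\e_i|^{s}\tau(\rd t)\le 2n\,e_s^{s}M_s^{s}(w)$, using $\int f=1$), together with $e_s\le\sigma_\e$ when $s<2$. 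This reproduces $\varrho_s(w,f)$ with $\sigma_\e$ and $e_s$ in place of the deterministic constants of Theorem~\ref{fixed_w1}.

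The heart of the proof is the concentration step. I would fix a level $T>0$, split $\e_i=\e_i'+\e_i''$ with $\e_i'=\e_i\mathbf{1}\{|\e_i|\le T\}$ and $\eta_w=\eta_w'+\eta_w''$ accordingly, and note that $\eta_w'=\xi_{\tilde w}$ for the centred empirical process built from the augmented design $(X_i,\e_i)$ and weight $\tilde w\big(t,(x,e)\big)=w(t,x)\,e\,\mathbf{1}\{|e|\le T\}$. On $\{\max_{i\le n}|\e_i|\le T\}$ one has $\eta_w=\eta_w'$, whence
\[
\bP\{\|\eta_w\|_{s,\tau}\ge\varrho_s(w,f)+z\}\le\bP\Big\{\max_{i\le n}|\e_i|>T\Big\}+\bP\{\|\eta_w'\|_{s,\tau}\ge\varrho_s(w,f)+z\}.
\]
Under~(E1) the first term is at most $nv\,e^{-bT^{\alpha}}$ (union bound), and under~(E2) at most $nP\,T^{-p}$ (Markov). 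For the second term I would run the proof of Theorem~\ref{fixed_w1} for $\xi_{\tilde w}$ but keeping the variance sharp: the conditional second moment of a summand is $\sigma_\e^{2}$ times the corresponding quantity for $\xi_w$ (so $\Sigma_s$ and $M_{\cdot,\tau,\nu^\prime}$ pick up $\sigma_\e$, $e_s$, producing exactly $\varpi_s^{2}(w,f)$), while the sup--bound entering the \emph{linear} Bernstein term is $M_s(\tilde w)\le T M_s(w)$ when $s\ge2$ and is absent when $s<2$ ($c_*(s)=0$); centring at $\varrho_s(w,f)$ is legitimate because $\varrho_s(w,f)\ge\bE\|\eta_w\|_{s,\tau}\ge\bE\|\eta_w'\|_{s,\tau}-\bE\|\eta_w''\|_{s,\tau}$ and $\bE\|\eta_w''\|_{s,\tau}$ is negligible for large $T$. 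This yields, for $s\ge2$,
\[
\bP\{\|\eta_w'\|_{s,\tau}\ge\varrho_s(w,f)+z\}\le\exp\Big\{-\tfrac{z^{2}}{\tfrac13\varpi_s^{2}(w,f)+\tfrac43 c_*(s)\,T\,M_s(w)\,z}\Big\},
\]
and for $s<2$ the purely Gaussian bound $\exp\{-3z^{2}/\varpi_s^{2}(w,f)\}$.

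Finally I would optimise over $T$. Writing $x=z^{2}\big/\big(\tfrac13\varpi_s^{2}(w,f)+\tfrac43 c_*(s)M_s(w)z\big)$, balancing the truncation--tail bound against the concentration bound shows that when $x$ is small no truncation is needed and the bound is of order $e^{-x}$, whereas when $x$ is large the optimal $T$ equalises the concentration exponent (of order $T^{-1}$ times the linear scale) with the truncation exponent, giving $e^{-(b^{1/\alpha}x)^{\alpha/(1+\alpha)}}$ under~(E1) for $s\ge2$, $e^{-(b^{1/\alpha}x)^{\alpha/(2+\alpha)}}$ for $s<2$, and the polynomial factor $\big(x^{-1}p\ln(1+p^{-1}x)\big)^{p}$ under~(E2) (the logarithm being the signature of optimising $T^{-p}+e^{-c z/(TM_s)}$ over $T$). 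Taking the minimum of the two regimes reproduces the $x\wedge(\cdot)$ structure of $g_{\alpha,b}$, and the two contributions add to the prefactors $1+nv$, $1+nP$. The main obstacle is precisely this last step: since the $\e_i$ are unbounded they cannot be fed into Bousquet's inequality directly, so truncation is unavoidable, and the delicate point is to carry out the optimisation over $T$ so that all $T$-dependent pieces collapse \emph{exactly} into $G_1$ and $G_2$ -- including the cut--off $\wedge|x|$ and the logarithmic factor -- while the $T$-free parts match the $\sigma_\e$-- and $e_s$--decorated versions of $\varrho_s$ and $\varpi_s^{2}$.
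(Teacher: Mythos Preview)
Your overall strategy—truncate $\e$, observe that symmetry preserves centring, feed the truncated process into the Bousquet/Theorem~\ref{fixed_w1} machinery, then optimise the truncation level—is exactly the paper's approach. The centring step is in fact simpler than you indicate: since every moment of $\e'=\e\mathbf{1}\{|\e|\le T\}$ is dominated by the corresponding moment of $\e$, the Rosenthal/von~Bahr--Esseen bounds applied directly to $\eta_w'$ already give $\bE\|\eta_w'\|_{s,\tau}\le\varrho_s(w,f)$; no detour through $\bE\|\eta_w''\|_{s,\tau}$ is needed.

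There is, however, a genuine gap in your treatment of the case $s<2$. You claim that because $c_*(s)=0$ the concentration bound for $\eta_w'$ is the ``purely Gaussian'' $\exp\{-3z^{2}/\varpi_s^{2}(w,f)\}$, independent of $T$. This is not correct, and it contradicts your own final optimisation (a $T$-free concentration bound would let you send $T\to\infty$ and produce only $e^{-x}$, never the $\alpha/(2+\alpha)$ exponent). The point is that Bousquet's inequality applied to $\xi_{\tilde w}$ has denominator $2n\sigma^{2}+4b\,\bE Y+\tfrac{2}{3}bz$, and the envelope $b\le 2T M_s(w)$ enters all three pieces: the $4b\,\bE Y$ term carries a factor $T$, and for $s<2$ the $\tfrac{2}{3}bz$ term is removed by the trivial bound $z\le 2nT M_s(w)$, which costs a further factor $T$. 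Collecting these, the denominator for the truncated process is bounded by $(1\vee T)^{2}\cdot\tfrac{1}{3}\varpi_s^{2}(w,f)$, so the exponent is $(1\vee T)^{-2}\Upsilon_s(w,f,z)$, not $\Upsilon_s$ itself. Balancing $(1\vee T)^{-2}\Upsilon_s$ against the truncation tail $bT^{\alpha}$ (under (E1)) then gives $T\sim(b^{-1}\Upsilon_s)^{1/(2+\alpha)}$ and the exponent $(b^{1/\alpha}\Upsilon_s)^{\alpha/(2+\alpha)}$; under (E2) the same $T^{-2}$ is what produces the power $p/2$ instead of $p$. For $s\ge2$ the $T$-dependence is $(1\vee T)^{-1}$ (the $\tfrac{2}{3}bz$ term is kept as the linear Bernstein term and $4b\,\bE Y$ contributes one factor of $T$), which yields the $\alpha/(1+\alpha)$ exponent. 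Once you correct this, your argument matches the paper's proof line for line.
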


\subsection{Uniform bound}
\label{sec:proc-eta_uniformW}
Theorem \ref{th:eta_fixed_w}
guarantees
that Assumption \ref{fixed_theta} holds with function $g$ being either $G_1$
or $G_2$.
This result is the basis for derivation of uniform bounds,
and the
general machinery presented in the previous sections can be fully
applied here.
In this section, we restrict ourselves only with
uniform bounds over the classes of weights depending on the difference
of arguments. In other words, under
Assumptions \ref{assW}, \ref{assL} and \hyperlink{assE1}{(E1)} we prove
an analogue of
Theorem \ref{t:consec1_new}
for the regression-type processes.

A natural assumption in the regression model where the process
$\{\eta_w, w\in\cW\}$ appears is that
the design variable $X$ is
distributed on a bounded interval of $\bR^{d}$, that is,
the density $f$ is compactly supported.
This will be assumed throughout this section.

Let $\cI\in\bR^{d}$ be a bounded interval,
$\cT=\cX=\cI$, and let $\tau=\nu=\operatorname{mes}$ be the Lebesgue measure.
For the sake of brevity, we write $\alpha_*=\alpha_1^{-1}\alpha
_2^{-1/2}$ where~$\alpha_1$ and $\alpha_2$ appear in Assumption \hyperlink{assW2}{(W2)}.
Define
\begin{eqnarray*}
\mathrm{a}&:=&
\max\bigl(\sigma_\e\sqrt{\operatorname{mes}(\cI)}, c_1(s)[\sigma_\e\mathrm
{f}_\infty^{1/2} + 2e_s\alpha_*]\bigr),\\
\mathrm{c}_n&:=&\tfrac{4}{3}c_*(s)\alpha_*n^{-1/s};
\\
\mathrm{b}^{2}_{n} &:=& \cases{
\bigl[2\sigma_\e^2+\frac{8}{3} +32\sigma_\e n^{1/s-1}\bigr]
\mu_*^{2/s-1}, &\quad
$s<2$,\vspace*{2pt}\cr
2\mathrm{f}^{2}_\infty\mu_*+8n^{-1/2},
&\quad
$s=2$,\vspace*{2pt}\cr
2c_3(s)\mathrm{f}^{2}_\infty[\sigma^{2}_\e\mu_*^{2/s}+
(4\sigma_\e\alpha_*+8e_s\alpha_*^{2}) n^{-1/s}], &\quad
$s> 2$.}
\end{eqnarray*}
%
%
%
\begin{theorem}
\label{t:proc-eta-assW}
Let Assumptions \ref{assW} and \textup{\hyperlink{assE1}{(E1)}} hold. Suppose
$f\in\cF$, and assume that (\ref{eq:L-s>2})
is valid for all $s\geq1$.
Let Assumption \textup{(W4)} be fulfilled with
$\beta<\alpha/(2+\alpha)$, if $s<2$, and with
$\beta<\alpha/(1+\alpha)$ if $s\geq2$.
Then for all $s\geq1$, $q\geq1$ and $y>1$ one has
\begin{eqnarray*}
&&\bE\sup_{w\in\cW}
\bigl[\|\eta_w\|_{s}-\mathrm{a}u_{\ve}
\bigl(1+2\sqrt{y}\mathrm{b}_{n}+2y\mathrm{c}_n\bigr)\sqrt{n}
\|w\|_{2}\bigr]_+^q\\
&&\qquad\leq T_{n,\ve}\bigl[1+2\sqrt{y}\mathrm{b}_{n}+
2y\mathrm{c}_n\bigr]^{q}[g_{\alpha,b}(y)]^{1/4},
\end{eqnarray*}
where $u_{\ve}=2^{\ve}(1+\ve)$,
$g_{\alpha,b}(\cdot)$ is defined in (\ref{eq:G}),
and the explicit expression of the constant
$T_{n,\ve}$ is given in the beginning of the proof of the theorem.
\end{theorem}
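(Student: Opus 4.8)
The plan is to realise Theorem~\ref{t:proc-eta-assW} as a particular instance of the general scheme of Section~\ref{sec:key}, following the same route as the proof of Theorem~\ref{t:consec1_new} but with Theorem~\ref{th:eta_fixed_w}(i) replacing Theorem~\ref{fixed_w1}. First I would verify Assumption~\ref{fixed_theta} for the family $\{\|\eta_w\|_{s},\,w\in\cW\}$: Theorem~\ref{th:eta_fixed_w}(i) supplies it with $g=G_{1}$ --- which, for fixed $n$, decreases monotonically to $0$ since $G_{1}=(1+nv)g_{\alpha,b}$ and $g_{\alpha,b}$ defined in (\ref{eq:G}) is non-increasing and vanishes at infinity --- and with $U(w)=\varrho_{s}(w,f)$, $A^{2}(w)=\tfrac13\varpi_{s}^{2}(w,f)$, $B(w)=\tfrac43 c_{*}(s)M_{s}(w)$, on the space of $(\mT\times\mX)$-measurable $w$ with $M_{s}(w)<\infty$, which contains $\cW$ together with all differences $w_{1}-w_{2}$, $w_{i}\in\cW$. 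Assumption~\ref{parameter} holds by the parametrisation (\ref{eq:W-V})--(\ref{eq:W-new}), and, by Remark~\ref{rem:after-prop1}, it suffices that $w\mapsto\|\eta_{w}\|_{s}$ be $\bP$-almost surely continuous on $\phi[\bZ]$ in the metric $\mathrm{d}$; this follows from $\big|\|\eta_{w_{1}}\|_{s}-\|\eta_{w_{2}}\|_{s}\big|\le\|\eta_{w_{1}-w_{2}}\|_{s}\le\|w_{1}-w_{2}\|_{s}\sum_{i=1}^{n}|\e_{i}|$ combined with (\ref{eq:L-s>2}) for $s\le2$ (after an elementary Hölder step using (W1)) and with Lemma~\ref{lem:before_th-random-case} for $s>2$, so Assumption~(A2) is not needed.

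The main step is to replace the implicit quantities $\varrho_{s},\varpi_{s}^{2},M_{s}$ by explicit majorants proportional to $\sqrt n\,\|w\|_{2}$. For difference-type weights $M_{p}(w)=\|w\|_{p}$; by (\ref{eq:sigg}) (valid for $s\ge2$), $\Sigma_{s}(w,f)\le\|w\|_{2}\|\sqrt f\|_{s}$ and $M_{p,\tau,\nu'}(w)\le\mathrm{f}_{\infty}^{1/p}\|w\|_{p}$, while for $s<2$ the support of $t\mapsto\int w^{2}(t-x)f(x)\,\rd x$ is contained in $\cT=\cI$, which localises $\Sigma_{s}(w,f)$ and produces the $\sqrt{\mathrm{mes}(\cI)}$ factor. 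Combining these with the two $\bL_{p}$-norm comparisons furnished by (W2)--(W3) --- $\|w\|_{p}\le\|w\|_{2}\,\mu_{*}^{1/p-1/2}$ for $p<2$ (Hölder on $\mathrm{supp}(w)$, of measure $\le\mu_{*}$) and $n^{1/s}\|w\|_{s}\le\alpha_{*}\sqrt n\,\|w\|_{2}$ for $s>2$ (the pointwise bound $\|w\|_{\infty}\le\alpha_{*}\|w\|_{2}\,\mathrm{mes}(\mathrm{supp}\,w)^{-1/2}$ from (W2), followed by $\mathrm{mes}(\mathrm{supp}\,w)\ge\mu/n\ge1/n$ from (W3)) --- and using $f\in\cF$, one checks separately for $s<2$, $s=2$ and $s>2$ that $\varrho_{s}(w,f)\le\mathrm{a}\sqrt n\,\|w\|_{2}=:\bar U(w)$, $A(w)\le\mathrm{a}\,\mathrm{b}_{n}\sqrt n\,\|w\|_{2}$ and $B(w)\le\mathrm{a}\,\mathrm{c}_{n}\sqrt n\,\|w\|_{2}$ (using $\mathrm{a}\ge1$, which may be assumed without loss of generality); the factors $\tfrac13$ in $A^{2}$ and $\tfrac43$ in $B$ are precisely what turn the coefficients of $\varpi_{s}^{2}$ and of $c_{*}(s)M_{s}$ into those defining $\mathrm{b}_{n}^{2}$ and $\mathrm{c}_{n}$ in the display preceding the theorem. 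This reconstruction of $\mathrm{a},\mathrm{b}_{n},\mathrm{c}_{n}$ is where essentially all the work lies.

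Now take $U:=\bar U$. Every slice $\bZ_{a}=\{\zeta:a/2<\bar U(\phi[\zeta])\le a\}$ [cf. (\ref{eq:Z-a})] lies inside the $\bL_{2}$-ball $\bZ_{2}(a/\mathrm{a})$ of (\ref{eq:Z-ss}), so (\ref{eq:L-s>2}) with $b=a/\mathrm{a}$ gives $\sqrt n\,\|\phi[\zeta_{1}]-\phi[\zeta_{2}]\|_{2}\le(a/\mathrm{a})\,\mathrm{d}(\zeta_{1},\zeta_{2})$ on $\bZ_{a}$; feeding this into (\ref{eq2:assuption_parameter})--(\ref{eq2:assuption_parameter-3}) and (\ref{eq:definitions-new}) yields $\kappa_{\bar U}(a)\le a$ (so condition (\ref{eq:<a}) holds for the given metric, with no rescaling needed), $\Lambda_{A}\le\mathrm{b}_{n}$ and $\Lambda_{B}\le\mathrm{c}_{n}$, whence $C^{*}(y)\le1+2\sqrt y\,\mathrm{b}_{n}+2y\,\mathrm{c}_{n}$ and $R=\sup_{w\in\cW}\bar U(w)=\mathrm{a}\,\bar{\mathrm{w}}_{2}$. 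Corollary~\ref{cor:prop2_new} then applies with $g=G_{1}$ and $u_{\ve}=2^{\ve}(1+\ve)$, once $L^{(\ve)}_{G_{1}}<\infty$ has been checked: since the entropy condition gives $\ln N_{\bZ,\mathrm{d}}(\delta)\le\delta^{-\beta}+C_{\bZ}(\beta)$ and $g_{\alpha,b}(x)$ decays like $\exp\{-c\,x^{\alpha/(2+\alpha)}\}$ for $s<2$ and like $\exp\{-c\,x^{\alpha/(1+\alpha)}\}$ for $s\ge2$, the series in (\ref{eq:L-g}) converges exactly under the stated restriction on $\beta$.

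It remains to bring the moment bound of Corollary~\ref{cor:prop2_new} into the asserted shape. For $y>1$ I would estimate
\[
J^{(\ve)}_{G_{1}}(y)=q\int_{1}^{\infty}(x-1)^{q-1}\Big[G_{1}(yx)+L^{(\ve)}_{G_{1}}\sqrt{G_{1}(yx)}\Big]\,\rd x
\]
by writing $g_{\alpha,b}(yx)=g_{\alpha,b}(yx)^{3/4}\,g_{\alpha,b}(yx)^{1/4}$ and using monotonicity (with $x\ge1$, $y\ge1$) to extract the factor $g_{\alpha,b}(y)^{1/4}$, the remaining integrals $\int_{1}^{\infty}(x-1)^{q-1}g_{\alpha,b}(x)^{3/4}\,\rd x$ and $\int_{1}^{\infty}(x-1)^{q-1}g_{\alpha,b}(x)^{1/4}\,\rd x$ being finite by the super-polynomial decay of $g_{\alpha,b}$; this gives $J^{(\ve)}_{G_{1}}(y)\le c_{n,q}\,[g_{\alpha,b}(y)]^{1/4}$, and collecting $N_{\bZ,\mathrm{d}}(\ve/8)$, $[2^{2\ve}\mathrm{a}\,\bar{\mathrm{w}}_{2}(1+\ve)]^{q}$, $[2^{q\ve}-1]^{-1}$ and $c_{n,q}$ into $T_{n,\ve}$ finishes the proof. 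The principal obstacle is the middle step --- reconciling the implicit variance- and envelope-type quantities $\varrho_{s},\varpi_{s}^{2},M_{s}$ with the explicit constants $\mathrm{a},\mathrm{b}_{n},\mathrm{c}_{n}$ uniformly over $w\in\cW$ and $f\in\cF$ --- which requires patient, case-by-case juggling of $\bL_{p}$-norms and of the support localisation of $\Sigma_{s}(w,f)$.
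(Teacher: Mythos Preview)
Your proposal is correct and follows essentially the same route as the paper: verify Assumption~\ref{fixed_theta} via Theorem~\ref{th:eta_fixed_w}(i) with $g=G_1$, replace $\varrho_s,\varpi_s^2,M_s$ by the explicit majorants $\mathrm{a}\sqrt n\|w\|_2$, $\mathrm{b}_n^2 n\|w\|_2^2$, $\mathrm{c}_n\sqrt n\|w\|_2$ using (W1)--(W3) and $f\in\cF$, check $\kappa_U(a)\le a$ and $\Lambda_A\le\mathrm{b}_n$, $\Lambda_B\le\mathrm{c}_n$ through (\ref{eq:L-s>2}), then apply Corollary~\ref{cor:prop2_new} and extract $[g_{\alpha,b}(y)]^{1/4}$ from $J^{(\ve)}_{G_1}(y)$ by monotonicity. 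The paper's proof is exactly this, only terser: it sets $U_\eta=\mathrm{a}\sqrt n\|w\|_2$, $A_\eta=\mathrm{b}_n\sqrt n\|w\|_2$, $B_\eta=\mathrm{c}_n\sqrt n\|w\|_2$ directly (without your extra factor $\mathrm{a}$ in $A,B$, which is harmless but unnecessary) and dispatches the remaining verifications in one line.
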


The following asymptotic result is an immediate
consequence of Theorem~\ref{t:proc-eta-assW}.
\begin{corollary}
\label{cor:concec-eta}
Let the assumptions of Theorem \ref{t:proc-eta-assW} hold.
For any $\alpha>0$ there exist a universal constant
$\mathrm{c}=\mathrm{c}(\alpha)>0$ such that
if $\mu_*\asymp[\ln n]^{-\mathrm{c}}$
then for all $s\geq1$, $\ve\in(0,1)$
and for all $\ell>0$
\[
\lim_{n\to\infty}n^{\ell}\sup_{f\in\cF} \bE\sup_{w\in\cW}
\bigl[\|\eta_w\|_{s}-
(1+\ve)\mathrm{a}\sqrt{n}\|w\|_{2}\bigr]_+^q=0.
\]
\end{corollary}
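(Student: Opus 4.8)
The statement follows from Theorem~\ref{t:proc-eta-assW} by the same calibration as in (\ref{eq1:introduction-new})--(\ref{eq2:introduction-new}): one lets the parameter $y$ grow with $n$ while an auxiliary accuracy parameter $\ve'$ tends to $0$, so that the multiplicative inflation factor $u_{\ve'}\big(1+2\sqrt{y}\,\mathrm{b}_n+2y\,\mathrm{c}_n\big)$ collapses to $1$ and the right-hand side of the theorem still decays faster than any negative power of $n$. Fix $s\ge1$, $q\ge1$, $\ve\in(0,1)$ and $\ell>0$. I would first record the elementary asymptotics of the quantities entering Theorem~\ref{t:proc-eta-assW}. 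Since $\mathrm{a}$, $\sigma_\e$, $e_s$, $\alpha_*$, $\mathrm{f}_\infty$ and $\mathrm{mes}(\cI)$ do not depend on $n$, the displays defining $\mathrm{b}_n,\mathrm{c}_n$ give $\mathrm{c}_n\asymp n^{-1/s}$ and $\mathrm{b}_n\le c_0\big(\mu_*^{\kappa}+n^{-1/(2s)}\big)$ with $c_0=c_0(s,\alpha_*,\dots)$ and $\kappa=\kappa(s)>0$ equal to $\tfrac{1}{s}-\tfrac{1}{2}$ for $s<2$ and to $\tfrac{1}{s}$ for $s\ge2$; hence, under $\mu_*\asymp[\ln n]^{-\mathrm{c}}$, the term $n^{-1/(2s)}$ is negligible and $\mathrm{b}_n\le 2c_0[\ln n]^{-\mathrm{c}\kappa}$ for all $n$ large. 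Second, by (\ref{eq:G}) there are $b_0>0$ and $p=p(s,\alpha)\in(0,1)$ (namely $p=\alpha/(2+\alpha)$ for $s<2$ and $p=\alpha/(1+\alpha)$ for $s\ge2$) with $g_{\alpha,b}(y)\le\exp\{-b_0 y^{p}\}$ for $y$ large; and the constant $T_{n,\ve'}$ produced in the proof of Theorem~\ref{t:proc-eta-assW} grows, for fixed $\ve'$, at most polynomially in $n$ and depends on $f$ only through $\mathrm{f}_\infty$, so that theorem's bound is uniform over $f\in\cF$.

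Now I would calibrate. Given $\ve$, pick $\ve'=\ve'(\ve)\in(0,1]$ with $u_{\ve'}=2^{\ve'}(1+\ve')<1+\ve/2$. Given $\ell$, set $y_n:=(C_\ell\ln n)^{1/p}$ with $C_\ell$ a constant large enough that $n^{\ell}T_{n,\ve'}[g_{\alpha,b}(y_n)]^{1/4}\le n^{-1}$ for all $n$ large; this is possible precisely because $g_{\alpha,b}$ decays sub-exponentially while $T_{n,\ve'}$ grows only polynomially. Then $\sqrt{y_n}\asymp[\ln n]^{1/(2p)}$ and $y_n\asymp[\ln n]^{1/p}$, so
\[
\sqrt{y_n}\,\mathrm{b}_n\ \lesssim\ [\ln n]^{1/(2p)-\mathrm{c}\kappa}+[\ln n]^{1/(2p)}n^{-1/(2s)},\qquad y_n\,\mathrm{c}_n\ \asymp\ [\ln n]^{1/p}n^{-1/s}.
\]
Choosing the exponent $\mathrm{c}$ large enough that $\mathrm{c}\,\kappa>1/(2p)$ forces both $\sqrt{y_n}\,\mathrm{b}_n\to0$ and $y_n\,\mathrm{c}_n\to0$, hence for all $n$ large $u_{\ve'}\big(1+2\sqrt{y_n}\mathrm{b}_n+2y_n\mathrm{c}_n\big)\le(1+\ve/2)(1+\ve/3)\le1+\ve$. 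For such $n$ and every $w\in\cW$ the pointwise bound $\big[\|\eta_w\|_{s}-(1+\ve)\mathrm{a}\sqrt{n}\|w\|_2\big]_+\le\big[\|\eta_w\|_{s}-\mathrm{a}\,u_{\ve'}(1+2\sqrt{y_n}\mathrm{b}_n+2y_n\mathrm{c}_n)\sqrt{n}\|w\|_2\big]_+$ holds; raising to the power $q$, taking $\sup_{w\in\cW}$, then $\bE$, and invoking Theorem~\ref{t:proc-eta-assW} with the parameters $\ve'$ and $y_n$ yields, uniformly in $f\in\cF$,
\[
\bE\sup_{w\in\cW}\big[\|\eta_w\|_{s}-(1+\ve)\mathrm{a}\sqrt{n}\|w\|_2\big]_+^q\ \le\ T_{n,\ve'}\big(1+2\sqrt{y_n}\mathrm{b}_n+2y_n\mathrm{c}_n\big)^{q}\big[g_{\alpha,b}(y_n)\big]^{1/4};
\]
since the middle factor is eventually $\le 2^{q}$, multiplying by $n^{\ell}$ leaves at most $2^{q}n^{-1}\to0$, which is the assertion.

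The only genuinely delicate step is the joint choice of $y_n$ and $\mathrm{c}$: $y_n$ must grow at least like $(\ln n)^{1/p}$ for the sub-exponential factor $[g_{\alpha,b}(y_n)]^{1/4}$ to override the polynomial $n^{\ell}T_{n,\ve'}$, yet $\sqrt{y_n}\asymp(\ln n)^{1/(2p)}$ must still be dominated by $\mathrm{b}_n^{-1}\asymp\mu_*^{-\kappa}$, which is exactly why the hypothesis is the polylogarithmic decay $\mu_*\asymp[\ln n]^{-\mathrm{c}}$ with $\mathrm{c}$ tied to $\alpha$ through the tail exponent $p$ (and to the fixed $s$ through $\kappa$): a polynomial decay of $\mu_*$ would make the calibration automatic, whereas a decay slower than every power of $\ln n$ would be too weak. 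All remaining ingredients — the polynomial-in-$n$ bound on $T_{n,\ve'}$, its uniformity over $\cF$, and the passage from the pointwise domination to the inequality between the two quantities $\bE\sup_w[\,\cdot\,]_+^q$ — are routine and are already contained in the proof of Theorem~\ref{t:proc-eta-assW}.
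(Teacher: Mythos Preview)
Your argument is correct and is exactly the calibration the paper has in mind: the paper gives no proof beyond ``easily derived from Theorem~\ref{t:proc-eta-assW},'' and the derivation is precisely to let $y=y_n\asymp(\ln n)^{1/p}$ so that $[g_{\alpha,b}(y_n)]^{1/4}$ kills the polynomial factor $n^\ell T_{n,\ve'}$, while choosing $\mathrm{c}$ large enough that $\sqrt{y_n}\,\mathrm{b}_n\to 0$. Your identification of the exponents $p=p(s,\alpha)$ and $\kappa=\kappa(s)$ and the resulting threshold $\mathrm{c}\kappa>1/(2p)$ is the right computation, and your parenthetical remark that $\mathrm{c}$ is tied to $s$ through $\kappa$ (not only to $\alpha$) is in fact a sharper reading than the corollary's own notation $\mathrm{c}(\alpha)$ suggests.
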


The explicit expression for $\mathrm{c}(\alpha)$ is easily
derived from Theorem \ref{t:proc-eta-assW}.

%
\section{\texorpdfstring{Proofs of Propositions \protect\ref{l_uniform} and \protect\ref{l_uniform-2}}
{Proofs of Propositions 1 and 2}}
\label{sec:proofs-prop}
\subsection{\texorpdfstring{Proof of Proposition \protect\ref{l_uniform}}{Proof of Proposition 1}}
Let $\cZ_k$, $k\in\bN$ be an
$\ve2^{-k-3}$-net of $\cZ$, and let $z_k(\zeta)$, $\zeta\in\cZ$
denote the element of $\cZ_k$ closest to
$\zeta$ in the metric $\mathrm{d}$.

The continuity of the mapping $\zeta\mapsto\xi_{\phi[\zeta]}$
guarantees that $\mathrm{P}$-almost
surely the following relation holds for any $\zeta\in\cZ$:
%
%
\begin{equation}
\label{chaining}
\xi_{\phi[\zeta]}=\xi_{\phi[\zeta^{(0)}]}+\sum_{k=0}^{\infty}\bigl[
\xi_{\phi[z_{k+1}(\zeta)]}-\xi_{\phi[z_{k}(\zeta)]}\bigr],
\end{equation}
where $\zeta^{(0)}$ is an arbitrary fixed element of $\cZ$ and
$z_0(\zeta)=\zeta^{(0)}$, $\forall\zeta\in\cZ$.

Note also that independently of $\zeta$ for all $k\geq0$
%
%
\begin{equation}
\label{p2}
\mathrm{d}(z_{k+1}(\zeta),z_{k}(\zeta))\leq\ve2^{-k-2} .
\end{equation}
We get from sub-additivity of $\Psi$, (\ref{chaining}) and (\ref{p2})
that for any $\zeta\in\cZ$
%
%
\begin{eqnarray}\label{p3}\qquad
\Psi\bigl(\xi_{\phi[\zeta]}\bigr)&\leq&
\Psi\bigl(\xi_{\phi[\zeta^{(0)}]}\bigr)+ \frac{\pi^2}{6}\sum_{k=0}^\infty
p_k\Psi\bigl(\xi_{\phi[z_{k+1}(\phi)]}-\xi_{\phi[z_k(\phi)]}\bigr)(k+1)^2
\nonumber\\[-8pt]\\[-8pt]
&\leq&
\Psi\bigl(\xi_{\phi[\zeta^{(0)}]}
\bigr)+
\frac{\pi^2}{6}\sup_{k\geq
0} \mathop{\sup_{(z,z^\prime)\in\cZ_{k+1}\times\cZ_k:}}_{
\mathrm{d}(z,z^\prime)\leq\ve2^{-k-2}}
(k+1)^2\Psi\bigl(\xi_{\phi[z]}-\xi_{\phi[z^\prime]}\bigr),
\nonumber
\end{eqnarray}
where $p_k:=6/(\pi^2 (k+1)^{2})$ and $\sum_{k=0}^\infty p_k=1$.
Since $\xi_{\bullet}$ is linear, $\xi_{\phi[z]}-\xi_{\phi[z^\prime]}=\xi
_{\phi[z]-\phi[z^\prime]}$ for all $z,z^\prime\in\bZ$,
and we obtain from (\ref{p3}) and the triangle inequality for probabilities
that
%
%
\begin{eqnarray}\qquad
\label{p4}
&&\mathrm{P}\Bigl\{\sup_{\zeta\in\cZ}\Psi\bigl(\xi_{\phi[\zeta]}\bigr)\geq(1+\ve
)[\varkappa_U(Z)+C^{*}(y,\cZ)]\Bigr\} \nonumber\\
&&\qquad\leq
\mathrm{P}
\bigl\{\Psi\bigl(\xi_{\phi[\zeta^{(0)}]}\bigr)\geq
\varkappa_U(Z)+C^{*}(y,\cZ)\bigr\} \nonumber\\[-8pt]\\[-8pt]
&&\qquad\quad{}+ \sum_{k=0}^{\infty} \mathop{\sum_{(z,z^\prime)\in\cZ_{k+1}\times
\cZ
_k:}}_{\mathrm{d}(z,z^\prime)\leq\ve2^{-k-2}}
\mathrm{P}\biggl\{\Psi\bigl(\xi_{\phi[z]-\phi[z^\prime]}\bigr)\geq\frac{6\ve[\varkappa
_U(Z)+C^{*}(y,\cZ)]}{\pi^2(k+1)^2}\biggr\}\nonumber\\
&&\qquad=:I_1+I_2.\nonumber
\end{eqnarray}
In view of (\ref{eq2:assuption_parameter}) and because
$\zeta^{(0)}\in\cZ$, we have that
$U(\phi[\zeta^{(0)}])\leq\varkappa_U(\cZ)$.
Therefore,
we get from Assumption \ref{fixed_theta}(i)
and monotonicity of the function $g$ that for any $y> 0$
%
%
\begin{eqnarray}\label{p5}
I_1 &\leq& \mathrm{P}
\bigl\{\Psi\bigl(\xi_{\phi[\zeta^{(0)}]}\bigr)-U\bigl(\phi\bigl[\zeta^{(0)}\bigr]\bigr)
\geq C^{*}(y,\cZ)\bigr\}
\nonumber\\
&\leq& g\biggl(
\frac{[C^{*}(y,\cZ)]^{2}}{A^2(\phi[\zeta^{(0)}])+
B(\phi[\zeta^{(0)}])C^{*}(y,\cZ)}
\biggr)\\
&\leq& g\biggl(
\frac{[C^{*}(y,\cZ)]^{2}}{\Lambda^{2}_A(\cZ)+
\Lambda_B(\cZ)C^{*}(y,\cZ)}
\biggr)
\leq g(y).
\nonumber
\end{eqnarray}
To order to get the last inequality, we have used monotonicity of $g$
and that for any $y>0$
\[
\frac{[C^{*}(y,\cZ)]^{2}}{\Lambda^{2}_A(\cZ)+
\Lambda_B(\cZ)C^{*}(y,\cZ)}=
\frac{[\sqrt{y}\Lambda_A(Z)+y\Lambda_B(Z)]^2}
{\Lambda_A^2(Z)+\Lambda_B(Z)[\sqrt{y}\Lambda_A(Z)+y\Lambda_B(Z)]} \geq
y.\vspace*{3pt}
\]

By (\ref{eq2:assuption_parameter}),
if $z,z^\prime\in\cZ$ and $\mathrm{d}(z,z^\prime)\leq\ve2^{-k-2}$ then\vspace*{3pt}
\[
U(\phi[z]-\phi[z^\prime])\leq\ve2^{-k-2}\varkappa_U(\cZ),\vspace*{3pt}
\]
and, therefore, for any $y\geq0$
\begin{eqnarray*}
&& \mathrm{P}\biggl\{\Psi\bigl(\xi_{\phi[z]-\phi[z^\prime]}\bigr)\geq\frac{6\ve
[\varkappa_U(Z)+C^{*}(y,\cZ)]}
{\pi^2(k+1)^2}\biggr\}
\\[1pt]
&&\qquad \leq\mathrm{P}\biggl\{\Psi\bigl(\xi_{\phi[z]-\phi[z^\prime]}\bigr)-
U(\phi[z]-\phi[z^\prime])\\[1pt]
&&\qquad\hspace*{22.6pt}
\geq\frac{6\ve[\varkappa_U(Z)+C^{*}(y,\cZ)]}{\pi^2(k+1)^2}-\varkappa
_U(\cZ) \ve2^{-k-2}\biggr\}\\[1pt]
&&\qquad \leq\mathrm{P}\biggl\{\Psi\bigl(\xi_{\phi[z]-\phi[z^\prime]}\bigr)-
U(\phi[z]-\phi[z^\prime])
\geq\frac{9\ve C^{*}(y,\cZ)}{16(k+1)^2}\biggr\}.
\end{eqnarray*}
Here we took into account that $\min_{k\geq0}
[6\pi^{-2}(k+1)^{-2} -2^{-k-2}]>0$ and $9/16<(6/\pi^{2})$.
Putting $C_k=\frac{9\ve C^{*}(y,\cZ)}{16(k+1)^2}$ and applying
Assumption \ref{fixed_theta}(i),
we obtain for any $z,z^\prime\in\cZ_{k+1}\times\cZ_k$ satisfying
$\mathrm{d}(z,z^\prime)\leq\ve2^{-k-2}$:
\begin{eqnarray*}
&& \mathrm{P}\biggl\{\Psi\bigl(\xi_{\phi[z]-\phi[z^\prime]}\bigr)\geq\frac{6\ve
[\varkappa_U(Z)+C^{*}(y,\cZ)]}
{\pi^2(k+1)^2}\biggr\}
\\[1pt]
&&\qquad \leq
g\biggl(\frac{C_k^{2}}{A^2(\phi[z]-\phi[z^\prime])+
B(\phi[z]-\phi[z^\prime])C_k}
\biggr)\nonumber\\[1pt]
&&\qquad \leq g\biggl(
\frac{C_k^{2}}{[\Lambda_A(\cZ)\ve2^{-k-2}]^{2}+
[\Lambda_B(\cZ)\ve2^{-k-2}]C_k}
\biggr)\\[1pt]
&&\qquad\leq g\biggl(
\frac{\tilde{C}_k^{2}}{\Lambda_A^2(\cZ)+
\Lambda_B(\cZ)\tilde{C}_k}
\biggr),\nonumber
\end{eqnarray*}
where we denoted
$\tilde{C}_k= C_k 2^{k+2}$.
Taking into account that $9(k+1)^{-2}2^{k-2}\geq1$ for any $k\geq0$,
and by definition of $C^{*}(y,\cZ)$,
we obtain for any $y>0$ that\looseness=1
\[
\frac{\tilde{C}_k^{2}}{\Lambda_A^2(\cZ)+
\Lambda_B(\cZ)\tilde{C}_k} \geq9y(k+1)^{-2}2^{k-2}.\vadjust{\goodbreak}
\]\looseness=0
Hence, for any $z,z^\prime\in\cZ_{k+1}\times\cZ_k$ satisfying
$\mathrm{d}(z,z^\prime)\leq\ve2^{-k-2}$ one has
%
%
\begin{equation}
\label{p1000}
\mathrm{P}\biggl\{\Psi\bigl(\xi_{\phi[z]-\phi[z^\prime]}\bigr)\geq\frac{6\ve
[\varkappa_U(Z)+C^{*}(y,\cZ)]}
{\pi^2(k+1)^2}\biggr\}\leq g\bigl(9y 2^{k-2}(k+1)^{-2}
\bigr).\hspace*{-40pt}\vadjust{\goodbreak}
\end{equation}
Noting that the right-hand side of (\ref{p1000}) does not depend on
$z,z^{\prime}$ we get
%
%
\begin{equation}
\label{p6}
I_2\leq\sum_{k=0}^{\infty}\{N_{\cZ,\mathrm{d}}
(\ve2^{-k-1})\}^{2}
g\bigl(9y 2^{k-2}(k+1)^{-2} \bigr).
\end{equation}
The theorem statement
follows now from (\ref{p4}), (\ref{p5}) and (\ref{p6}).

%
\vspace*{-3pt}\subsection{\texorpdfstring{Proof of Proposition \protect\ref{l_uniform-2}}{Proof of Proposition 2}}
$\!\!\!$Let $Z_l$, $l\,{=}\,1, \ldots, N_{\bZ,\mathrm{d}}(\ve/8)$ be $\mathrm{d}$-balls
of~\mbox{radius} $\ve/8$ 
forming a minimal covering of the set $\bZ$.
For any $0\leq j\leq[\ve^{-1}\log_2(R/r)-1]_+$
[without loss of generality, we assume that
$\ve^{-1}\log_2(R/r)$ is integer], let $\delta_j=r2^{\ve j}$,
and put
\[
\tilde{\bZ}_{\delta_{j+1}} =\{\zeta\in\bZ\dvtx\delta_j <
U(\phi[\zeta])\leq\delta_{j+1} \}.
\]
Note that
$\tilde{\bZ}_{\delta_j}\!\subseteq\!\bZ_{\delta_{j}}$ for all $j$ because $\epsilon\!\in\!(0,1]$; recall
that $\bZ_{a}$ is defined in~(\ref{eq:Z-a}).

We have $Z_l=\bigcup_{j=0}^{[\ve^{-1}\log_2(R/r)-1]_+} \{Z_l\cap\tilde
{\bZ}_{\delta_{j+1}}\}$
for any $l=1, \ldots, N_{\cZ,\mathrm{d}}(\ve/8)$.
Therefore,
for any $y>0$,
%
%
\begin{equation}\hspace*{28pt}
\label{p1}
\Psi_{u_\ve}^*(y, Z_l)\leq
\sup_{j=0,\ldots, [\ve^{-1}\log_2(R/r)-1]_+}
\Bigl[ \sup_{\zeta\in Z_l\cap\tilde{\bZ}_{\delta_{j+1}}}
\Psi\bigl(\xi_{\phi[\zeta]}\bigr)-
u_\ve C^*(y)\delta_{j}\Bigr].
\end{equation}
Let $0\leq j\leq[\ve^{-1}\log_2(R/r)-1]_+$ be fixed;
then using the definition of $\Lambda_A$ and $\Lambda_B$ [see
(\ref{eq:assumption_parameter-2}) and (\ref{eq2:assuption_parameter-3})]
and the fact that $\tilde{\bZ}_{\delta_{j+1}}\subseteq\bZ_{\delta_{j+1}}$
we have that
\begin{eqnarray*}
C^*(y)&\geq& 1+\delta^{-1}_{j+1}\bigl[2\sqrt{y}\Lambda_A(\bZ_{\delta
_{j+1}})+2y\Lambda_B(\bZ_{\delta_{j+1}})\bigr]
\\
&\geq&
1+\delta^{-1}_{j}\bigl[\sqrt{y}\Lambda_A(\bZ_{\delta_{j+1}})+y\Lambda_B(\bZ
_{\delta_{j+1}})\bigr]\\
&\geq& 1+\delta^{-1}_{j}\bigl[\sqrt{y}\Lambda_A(
\tilde{\bZ}_{\delta_{j+1}})+y\Lambda_B(\tilde{\bZ}_{\delta_{j+1}})\bigr].
\end{eqnarray*}
Therefore
\begin{eqnarray*}
C^*(y)\delta_{j}&\geq&\delta_{j}+\bigl[\sqrt{y}\Lambda_A(
\tilde{\bZ}_{\delta_{j+1}})+y\Lambda_B(
\tilde{\bZ}_{\delta_{j+1}})\bigr]\\
&\geq&2^{-\ve}\varkappa_U(
\tilde{\bZ}_{\delta_{j+1}})+
C^{*}(y,\tilde{\bZ}_{\delta_{j+1}}),
\end{eqnarray*}
since by the premise of the proposition
$\delta_j=2^{-\epsilon}\delta_{j+1}\geq
2^{-\epsilon}\varkappa_U(\bZ_{\delta_{j+1}})\geq
2^{-\epsilon}\times\varkappa_U(\tilde{\bZ}_{\delta_{j+1}})$.
Note also that the definition of $C^{*}(\cdot,\cdot)$ implies that
$C^{*}(\cdot,\cZ_1)\leq C^{*}(\cdot,\cZ_2)$ whenever
$\cZ_1\subseteq\cZ_2$.
Thus, we have for
any $0\leq j\leq[\ve^{-1}\log_2(R/r)-1]_+$ and any $l=1, \ldots,
N_{\bZ,\mathrm{d}}(\ve/8)$
%
%
\begin{equation}
\label{eq:p-new1}
u_\ve C^*(y)\delta_{j}
\geq(1+\ve)[\varkappa_U(Z_l\cap\tilde{\bZ}_{\delta_{j+1}})+
C^{*}(y,Z_l\cap\tilde{\bZ}_{\delta_{j+1}})].
\end{equation}
Taking into account (\ref{p1}), we obtain
\begin{eqnarray*}
&&\mathrm{P}\{\Psi_{u_\ve}^*(y, Z_l)\geq0\}\\[-2pt]
&&\qquad\leq
\sum_{j=0}^{[\ve^{-1}\log_2(R/r)-1]_+} \mathrm{P}\Bigl\{\sup_{\zeta\in
Z_l\cap\tilde{\bZ}_{\delta_{j+1}}}
\Psi\bigl(\xi_{\phi[\zeta]}\bigr)\geq(1+\ve)[\varkappa_U(Z_l\cap
\tilde{\bZ}_{\delta_{j+1}})\\[-2pt]
&&\qquad\quad\hspace*{215pt}{}+C^{*}(y,Z_l\cap
\tilde{\bZ}_{\delta_{j+1}})]\Bigr\}.
\end{eqnarray*}
Applying Proposition \ref{l_uniform}
for the sets $Z_l\cap\tilde{\bZ}_{\delta_{j+1}}$, we get for any $y>0$
\begin{eqnarray*}
\mathrm{P}\{\Psi_{u_\ve}^*(y, Z_l)\geq0\} &\leq&\sum_{j=0}^{[\ve
^{-1}\log_2(R/r)-1]_+}
\cL^{(\epsilon)}_{g, \bZ_{\delta_{j+1}}}(y)\\[-2pt]
&=&
\sum_{j=0}^{[\ve^{-1}\log_2(R/r)-1]_+}
\cL^{(\ve)}_{g}\bigl(y, r2^{\ve(j+1)}\bigr).
\end{eqnarray*}
It remains to note that the right-hand side of the last inequality does
not depend on~$l$; thus, we come to the first assertion of the proposition.

Now we derive the bound for the moments of $\Psi^*_{u_\ve}(y, \bZ)$.
We have from~(\ref{p1}) with $y>0$ that for any $q\geq1$
%
%
\begin{eqnarray}
\label{p10}\qquad
&&\mathrm{E} \Bigl(\sup_{\zeta\in\bZ}\bigl\{\Psi\bigl(\xi_{\phi[\zeta]}\bigr)-
u_\ve C^*(y)U(\phi[\zeta])\bigr\}\Bigr)^q_+\nonumber\\
&&\qquad\leq
\sum_{l=1}^{N_{\bZ,\mathrm{d}}(\ve/8)} \sum_{j=0}^{[\ve^{-1}\log_2(R/r)-1]_+}
\mathrm{E}
\Bigl( \sup_{\zeta\in Z_l\cap\tilde{\bZ}_{\delta_{j+1}}}
\bigl\{\Psi\bigl(\xi_{\phi[\zeta]}\bigr)-u_{\ve}C^*(y)\delta_{j}
\bigr\}\Bigr)^q_+\\
&&\qquad=: \sum_{l=1}^{N_{\bZ,\mathrm{d}}
(\ve/8)} \sum_{j=0}^{[\ve^{-1}\log_2(R/r)-1]_+} E_j(l).\nonumber
\end{eqnarray}
For $l=1,\ldots, N_{\bZ,\mathrm{d}}(\ve/8)$ and
$0\leq j\leq[\epsilon^{-1}\log_2(R/r)-1]_+$ we have
%
%
\begin{eqnarray}
\label{p11}
E_j(l)&=&q\int_{u_{\ve}C^*(y)\delta_{j}}^{\infty}
[x-u_{\ve}C^*(y)\delta_{j}]^{q-1}\nonumber\\[-2pt]
&&\hspace*{49.2pt}{}\times
\mathrm{P} \Bigl\{\sup_{\zeta\in Z_l\cap\tilde{\bZ}_{\delta_{j+1}}}
\Psi\bigl(\xi_{\phi[\zeta]}\bigr)\geq x
\Bigr\}\,{\rd x}\nonumber\\[-1pt]
&=&[u_{\ve} C^*(y)]^q\delta^q_{j} q\nonumber\\[-2pt]
&&{}\times\int_{1}^{
\infty}(z-1)^{q-1}
\mathrm{P} \Bigl\{\sup_{\zeta\in Z_l\cap\tilde{\bZ}_{\delta_{j+1}}}
\Psi\bigl(\xi_{\phi[\zeta]}\bigr)\geq z u_\ve C^*(y)\delta_{j} \Bigr\}\,{\rd z}
\\[-2pt]
&\leq&[u_{\ve} C^*(y)]^q\delta^q_{j} q\nonumber\\[-2pt]
&&{}\times\int_{1}^{\infty}(z-1)^{q-1}
\mathrm{P} \Bigl\{\sup_{\zeta\in Z_l\cap\tilde{\bZ}_{\delta_{j+1}}}\Psi\bigl(\xi
_{\phi[\zeta]}\bigr)\geq
u_\ve C^*(yz)\delta_{j} \Bigr\}\,{\rd z}\nonumber\vadjust{\goodbreak}\\[-2pt]
&\leq&[u_{\ve}C^*(y)]^q\delta^q_{j} q\int_{1}^{\infty}(z-1)^{q-1}\cL
^{(\ve)}_g\bigl(yz,r2^{\ve(j+1)}\bigr)
\,\rd z.\nonumber
\end{eqnarray}
Here the third line follows from
$zC^{*}(y)\geq C^{*}(yz)$ for any $z\geq1$, and the last line is a
consequence of (\ref{eq:p-new1}) and the probability bound
established above.

The second statement of the theorem follows now from
(\ref{p10}) and (\ref{p11}) since the right-hand side in (\ref{p11})
does not depend on $l$.

%
\section{\texorpdfstring{Proof of Theorem \protect\ref{fixed_w1}}{Proof of Theorem 1}}
\label{sec:proof-fixed_w}
\subsection{Preliminaries}
For convenience
in this section,
we present
some well-known results that will be repeatedly used in the proofs.
\subsubsection*{Empirical processes}
Let $\cF$ be a countable set of functions $f\dvtx\cX\to\bR$.
Suppose that $\bE f(X)=0$, $\|f\|_\infty\leq b$, $\forall f\in\cF$ and put
\[
Y=\sup_{f\in\cF}\sum_{i=1}^{n}f(X_i),\qquad
\sigma^2=\sup_{f\in\cF}\bE[f(X)]^2.
\]
\begin{lemma}
\label{bousquet}
For any $x\geq0$
\[
\bP\{Y-\bE Y\geq x\}\leq
\exp\biggl\{-\frac{x^2}{2n\sigma^2 + 4b\bE Y + ({2}/{3})b x}\biggr\}.
\]
\end{lemma}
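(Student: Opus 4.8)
The plan is to prove this by the entropy (Herbst) method, in the form developed by \cite{massart} and sharpened by \cite{bousquet}. \emph{Step 1 (reduction to a finite class).} Since $\cF$ is countable, I would write it as an increasing union $\cF=\bigcup_{m}\cF_m$ of finite sets and set $Y_m=\sup_{f\in\cF_m}\sum_{i=1}^n f(X_i)$. Then $|Y_m|\le nb$, $Y_m\uparrow Y$ pointwise, $\bE Y_m\uparrow\bE Y$ by monotone convergence, and $\sigma_m^2:=\sup_{f\in\cF_m}\bE[f(X)]^2\le\sigma^2$. As the right-hand side of the asserted inequality is nondecreasing in $\bE Y$ and in $\sigma^2$, it suffices to prove the bound for finite $\cF$ and then let $m\to\infty$. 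From now on $\cF$ is finite, $Z:=Y=\max_{f\in\cF}\sum_{i=1}^n f(X_i)$ is a measurable function of the independent coordinates $X_1,\dots,X_n$, and $f^\star$ denotes a maximizer, measurably selected by a fixed tie-breaking rule.

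\emph{Step 2 (differential inequality for the log-mgf).} Put $\psi(\lambda)=\log\bE\,e^{\lambda(Z-\bE Z)}$ for $0<\lambda<3/b$, and for each $i$ let $Z_i:=\max_{f\in\cF}\sum_{j\ne i}f(X_j)$, which is free of $X_i$. From $Z=\sum_i f^\star(X_i)$ one gets $Z-Z_i\le f^\star(X_i)\le b$ and $\sum_i(Z-Z_i)\le Z$, and $\bE_i[Z-Z_i]\ge0$ where $\bE_i$ denotes conditional expectation given $(X_j)_{j\ne i}$ (this uses $\bE f(X)=0$); a resampling computation, again using $\bE f(X)=0$ and $\|f\|_\infty\le b$, gives the second-order control $\sum_i\bE(Z-Z_i)_+^2\le n\sigma^2+2b\,\bE Z$. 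I would feed these into the tensorization inequality for entropy of a function of independent variables,
\[
\mathrm{Ent}\!\big(e^{\lambda Z}\big)\;\le\;\sum_{i=1}^n\bE\Big[\mathrm{Ent}^{(i)}\!\big(e^{\lambda Z}\big)\Big],
\]
and bound each conditional entropy by means of the elementary inequalities $e^{-u}-1+u\le\tfrac12 u^2$ for $u\ge0$ and $e^{u}-1-u\le\tfrac12 u^2e^{u}$, applied with $u=\lambda|Z-Z_i|\le\lambda b$. This produces a Bernstein-type differential inequality for $\psi$, whose integration from $0$ yields
\[
\psi(\lambda)\;\le\;\frac{\big(n\sigma^2+2b\,\bE Z\big)\lambda^2}{2\,(1-b\lambda/3)},\qquad 0<\lambda<\frac{3}{b}.
\]

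\emph{Step 3 (Chernoff optimization).} By Markov's inequality, $\bP\{Z-\bE Z\ge x\}\le\exp\!\big(-\sup_{0<\lambda<3/b}[\lambda x-\psi(\lambda)]\big)$. Writing $v:=n\sigma^2+2b\,\bE Z$ and $c:=b/3$, the standard lower bound for the Legendre transform of the Bernstein moment generating function, $\sup_{0<\lambda<1/c}\big[\lambda x-\tfrac{v\lambda^2}{2(1-c\lambda)}\big]\ge\tfrac{x^2}{2(v+cx)}$, then gives
\[
\bP\{Z-\bE Z\ge x\}\;\le\;\exp\!\Big(-\frac{x^2}{2n\sigma^2+4b\,\bE Z+\tfrac23 bx}\Big),
\]
which is the claim, since $Z=Y$.

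The hard part will be Step 2: obtaining the Bernstein differential inequality with the sharp constants of \cite{bousquet} — in particular the coefficient $2b\,\bE Z$ and the factor $1/3$ in the denominator — requires careful bookkeeping of the per-coordinate entropy estimate, because the negative part $Z-Z_i<0$ forces one to carry along factors $e^{\lambda b}$ and absorb them into the $1/(1-b\lambda/3)$ term. A cruder use of the entropy method, or the Ledoux--Talagrand convex-distance inequality, would give a bound of the same shape but with worse numerical constants, which would be insufficient for the calibrations (\ref{eq1:introduction-new})--(\ref{eq2:introduction-new}) exploited later in the paper.
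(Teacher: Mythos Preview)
Your sketch is sound in outline, but it takes a very different route from the paper. The paper does not prove this lemma from first principles at all: it simply invokes Bousquet's Bennett-type concentration inequality for suprema of empirical processes (so that with $v=n\sigma^2+2b\,\bE Y$ one has $\bP\{Y-\bE Y\ge x\}\le\exp\{-(v/b^2)\,h(bx/v)\}$, $h(u)=(1+u)\log(1+u)-u$), and then applies the elementary inequality $h(u)\ge u^2/\big(2(1+u/3)\big)$ to pass from Bennett to Bernstein form. That is the entire argument in the paper --- two lines, with the heavy lifting outsourced to \cite{bousquet}.

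What you propose instead is to \emph{reprove} Bousquet's theorem via entropy tensorization and the Herbst argument, and then do the Chernoff optimization. This is certainly a legitimate and self-contained route, and your Steps~1 and~3 are clean. The delicate point, which you correctly flag, is Step~2: the variance surrogate $v=n\sigma^2+2b\,\bE Z$ with the precise factor $2$ and the denominator factor $b/3$ do not fall out of a naive bounded-differences estimate; they require Bousquet's specific treatment of the self-bounding structure (via the inequalities $(Z-Z_i)_+\le f^\star(X_i)_+$ and a careful decoupling/resampling step) together with the exponential inequality $\phi(\lambda b)\le (\lambda b)^2/\big(2(1-\lambda b/3)\big)$ for $\phi(u)=e^u-u-1$. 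Your one-line claim ``$\sum_i\bE(Z-Z_i)_+^2\le n\sigma^2+2b\,\bE Z$'' is morally the right object but is not quite the quantity that enters the entropy bound, and getting from there to the stated differential inequality for $\psi$ with the exact constants is the nontrivial content of \cite{bousquet}. So your proposal is correct as a plan, but it amounts to reconstructing the cited reference rather than what the paper actually does; the paper's approach buys brevity, yours buys self-containment.
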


The statement of the lemma
is an immediate consequence of the
the Bennett inequality for empirical processes
[see \citet{bousquet}] and the standard
arguments allowing to derive the Bernstein inequality from the Bennett
inequality.

\subsubsection*{Inequalities for sums of independent random variables}
We recall the well-known
Rosenthal and Bahr--Esseen [see \citet{bahr}]
bounds on the moments of sums of independent random variables.
\begin{lemma}
\label{l_rozen}
Let $Y_1,\ldots, Y_n$ be independent random variables,
$\bE Y_i=0$, $i=1,\ldots, n$.
Then
\begin{eqnarray*}
\bE\Biggl|\sum_{i=1}^n Y_i\Biggr|^p &\leq&[c_1(p)]^p
\Biggl\{ \sum_{i=1}^n \bE|Y_i|^p
+ \Biggl(\sum_{i=1}^n \bE Y_i^2\Biggr)^{p/2}\Biggr\},\qquad p>2;\\
\bE\Biggl|\sum_{i=1}^n Y_i\Biggr|^p &\leq&
2\sum_{i=1}^n \bE|Y_i|^p,\qquad p\in[1,2),
\end{eqnarray*}
where\vadjust{\goodbreak} $c_1(p)=15p/\ln p$.
\end{lemma}

The constant
$c_1(p)=15p/\ln p$ in the Rosenthal inequality
is obtained by symmetrization of the inequality
of Theorem 4.1 in \citet{Johnson}.

\subsubsection*{Norms of integral operators}
The next statement presents inequalities for norms
of integral operators.
\begin{lemma}\label{folland}
Let $(\cT,\mT,\tau)$ and $(\cX,\mX,\chi)$
be $\sigma$-finite spaces, $w$ be a $(\mT\times\mX)$-measurable
function on $\cT\times\cX$, and let
\[
M_{p,\tau,\chi}(w):=
\sup_{x\in\cX} \|w(\cdot, x)\|_{p,\tau} \vee
\sup_{t\in\cT} \|w(t,\cdot)\|_{p,\chi}.
\]
If $R\in\bL_p(\cX,\chi)$ and
$\cI_R(t):=\int w(t,x)R(x)\chi(\rd x)$
then
the following statements hold:
\begin{enumerate}[(a)]
\item[(a)]
For any $p\in[1,\infty]$
%
%
\begin{equation}\label{eq:folland-1}
\|\cI_R\|_{p,\tau} \leq
M_{1,\tau,\chi}(w)\|R\|_{p,\chi}.
\end{equation}
\item[(b)]
For any $1<p< r<\infty$
%
%
\begin{equation}\label{eq:folland-2}
\|\cI_R\|_{r,\tau} \leq c_2(p) M_{q,\tau,\chi}(w)
\|R\|_{p,\chi},
\end{equation}
where
$\frac{1}{q}=1+\frac{1}{r}-\frac{1}{p}$,
and $c_2(p)$ is a numerical constant independent of~$w$.
\end{enumerate}
\end{lemma}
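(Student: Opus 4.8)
The plan is to prove the two parts separately: (a) is the classical Schur test, and (b) is the three-factor H\"older argument underlying the generalized Young inequality. Throughout, all the relevant integrands are nonnegative, so Tonelli's theorem may be used to interchange the order of integration with no a priori integrability, and the asserted $\tau$-a.e.\ absolute convergence of $\cI_R$ drops out once the right-hand side is seen to be finite (if the relevant kernel norm is infinite the asserted bound is vacuous).

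For part (a), the case $p=\infty$ is immediate from $|\cI_R(t)|\le\|R\|_\infty\int|w(t,x)|\chi(\rd x)\le M_{1,\tau,\chi}(w)\|R\|_\infty$, and the case $p=1$ follows by writing $\|\cI_R\|_{1,\tau}\le\int\!\int|w(t,x)||R(x)|\chi(\rd x)\tau(\rd t)$, applying Tonelli to integrate in $t$ first, and bounding $\sup_x\|w(\cdot,x)\|_{1,\tau}\le M_{1,\tau,\chi}(w)$. For $1<p<\infty$ I would use the factorization $|w(t,x)||R(x)|=|w(t,x)|^{1/p'}\bigl(|w(t,x)|^{1/p}|R(x)|\bigr)$ with $1/p+1/p'=1$, apply H\"older in $x$ to estimate $|\cI_R(t)|$, raise to the power $p$, integrate in $t$, and invoke Tonelli together with $\sup_t\|w(t,\cdot)\|_{1,\chi}\le M_{1,\tau,\chi}(w)$ and $\sup_x\|w(\cdot,x)\|_{1,\tau}\le M_{1,\tau,\chi}(w)$; the two exponents of $M_{1,\tau,\chi}(w)$ add up to $p/p'+1=p$, which yields $\|\cI_R\|_{p,\tau}\le M_{1,\tau,\chi}(w)\|R\|_{p,\chi}$.

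For part (b), assuming $1<p<r<\infty$, I would first note that the defining relation $1/q=1+1/r-1/p$ puts $q$ in $(1,\infty)$, so that $\|w(t,\cdot)\|_{q,\chi}$ and $\|w(\cdot,x)\|_{q,\tau}$ are the right quantities (and may be assumed finite). The key step is the three-factor decomposition $|w(t,x)||R(x)|=|w(t,x)|^{q/p'}\cdot\bigl(|w(t,x)|^{q/r}|R(x)|^{p/r}\bigr)\cdot|R(x)|^{1-p/r}$, to which I would apply H\"older in $x$ with the three exponents $p'$, $r$ and $\gamma:=(1/p-1/r)^{-1}$ (all lying in $(1,\infty)$, with reciprocals summing to $1$); the requirement that the two $w$-factors recombine to $|w(t,x)|$, i.e.\ $q/p'+q/r=1$, is exactly the hypothesis on $q$. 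This gives $|\cI_R(t)|\le\|w(t,\cdot)\|_{q,\chi}^{q/p'}\bigl(\int|w(t,x)|^q|R(x)|^p\chi(\rd x)\bigr)^{1/r}\|R\|_{p,\chi}^{1-p/r}$. Then I would raise this to the power $r$, integrate in $t$, apply Tonelli to the middle factor, and use $\|w(t,\cdot)\|_{q,\chi}\le M_{q,\tau,\chi}(w)$ and $\|w(\cdot,x)\|_{q,\tau}\le M_{q,\tau,\chi}(w)$; the total power of $M_{q,\tau,\chi}(w)$ is $q(1/p'+1/r)=1$ and that of $\|R\|_{p,\chi}$ is $1$, so $\|\cI_R\|_{r,\tau}\le M_{q,\tau,\chi}(w)\|R\|_{p,\chi}$, i.e.\ one may take $c_2(p)=1$. (An absolute constant $c_2(p)\ge1$ is only needed if one wishes to relax the range of exponents, in which case one interpolates the two endpoint estimates $\bL_1\to\bL_r$ and $\bL_{p'}\to\bL_\infty$ via the Riesz--Thorin theorem.)

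The only delicate point is the exponent bookkeeping in part (b): the three H\"older exponents must be chosen so that $w$ enters at the single exponent $q$ in both of its slices $w(t,\cdot)$ and $w(\cdot,x)$ and so that $R$ enters only at exponent $p$; solving these constraints pins the exponents down as $p'$, $r$ and $(1/p-1/r)^{-1}$, this is where the relation $1/q=1+1/r-1/p$ is used, and the remaining work is the routine check that the powers of $M_{q,\tau,\chi}(w)$ telescope to $1$.
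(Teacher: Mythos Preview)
Your proof is correct. The paper does not actually prove this lemma; it simply cites \cite[Theorems~6.18 and~6.36]{folland} and moves on. Your argument for part~(a) is the standard Schur test and coincides with Folland's Theorem~6.18. For part~(b) you give the direct three-factor H\"older argument, and your exponent bookkeeping checks out: the H\"older triple $(p',r,(1/p-1/r)^{-1})$ is admissible, the two $w$-exponents recombine to $1$ precisely because $1/q=1/p'+1/r$, and the final powers of $M_{q,\tau,\chi}(w)$ and $\|R\|_{p,\chi}$ each collapse to $1$.

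One point worth highlighting: your argument yields $c_2(p)=1$ in full generality, which is sharper than what the paper records. The paper keeps an unspecified constant $c_2(p)$ (presumably inherited from an interpolation proof in the cited reference) and only remarks afterward that $c_2(p)=1$ in the special convolution case $w(t,x)=w(t-x)$ on $\bR^d$. Your direct approach shows this extra constant is never needed. The closing parenthetical about Riesz--Thorin is a little off-target, though: the exponent range $1<p<r<\infty$ is already the full range compatible with $q\in(1,\infty)$, so there is nothing to relax, and your H\"older proof already covers it with constant $1$.
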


The statements of the lemma can be found in
\citet{folland}, Theorems~6.18 and 6.36.

Note that if
$\chi=\nu^\prime:=f\nu$ then $M_{p,\tau,\chi}(w)=M_p(w)$, $\forall w$ [see
(\ref{eq:M-s})].
If $\cT=\cX=\bR^d$, $\tau$ and $\chi$ are the Lebesgue measures, and if
$w(t,x)$ depends on the difference $t-x$ only,
then $c_2(p)=1$, and (\ref{eq:folland-2}) is the well-known
Young inequality.

%

\subsection{\texorpdfstring{Proof of Theorem \protect\ref{fixed_w1}}{Proof of Theorem 1}}
We begin with two technical lemmas; their
proofs are given in the \hyperref[app]{Appendix}.\vspace*{-1pt}
\begin{lemma}
\label{l_countable_xi}
Let $\bB_{{s}/({s-1})}$ be the unit ball in $\bL_{{s}/({s-1})}(\cT
,\tau)$,
and suppose that
Assumption \ref{assumptionA1} hold.
Then, there exists a
\textit{countable} set $\mL\subset\bB_{{s}/({s-1})}$ such that
\[
\|\xi_w\|_{s,\tau}=\sup_{l\in\mL}\int l(t)\xi_{w}(t)\tau(\rd t).\vspace*{-1pt}
\]
\end{lemma}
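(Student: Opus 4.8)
My plan is to deduce the identity from the $\bL_s$--$\bL_{s/(s-1)}$ duality formula $\|g\|_{s,\tau}=\sup_{l\in\bB_{s/(s-1)}}\int lg\,\rd\tau$ (interpreting $s/(s-1)=\infty$ when $s=1$), using Assumption~(A1) to confine the random function $\xi_w$ to one fixed \emph{separable} subspace of $\bL_s(\cT,\tau)$. This last point is the crux: for $s>1$ the dual $\bL_{s/(s-1)}(\cT,\tau)$ is separable and one could simply take a countable dense subset of its unit ball, but for $s=1$ the dual $\bL_\infty(\cT,\tau)$ need not be separable, so the countable set $\mL$ must be tailored to the range of $\xi_w$ rather than to the whole ball $\bB_{\infty}$.

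Fix $w$ and let $\bar\cX=\{\bar x_1,\bar x_2,\dots\}$ be the countable set of Assumption~(A1). I would first set
\[
\mathcal{M}:=\overline{\mathrm{span}}\Big(\{w(\cdot,\bar x):\bar x\in\bar\cX\}\cup\{\bE\,w(\cdot,X)\}\Big)\subseteq\bL_s(\cT,\tau),
\]
the closure being taken in $\|\cdot\|_{s,\tau}$; here $\bE\,w(\cdot,X)\in\bL_s(\cT,\tau)$ by Minkowski's integral inequality, since $\|\bE\,w(\cdot,X)\|_{s,\tau}\le\int\|w(\cdot,x)\|_{s,\tau}f(x)\,\nu(\rd x)\le M_s(w)<\infty$. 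By Assumption~(A1) every $w(\cdot,x)$, $x\in\cX$, is an $\bL_s$-limit of elements $w(\cdot,\bar x)$ and therefore lies in $\mathcal{M}$; hence $\xi_w(\omega,\cdot)=\sum_{i=1}^n w(\cdot,X_i(\omega))-n\,\bE\,w(\cdot,X)$ belongs to $\mathcal{M}$ for \emph{every} $\omega\in\Omega$. Being the closed linear span of a countable set, $\mathcal{M}$ is separable (the finite rational linear combinations of the generators form a countable dense subset); fix a countable dense $\{g_j\}_{j\ge1}\subset\mathcal{M}$.

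Next I would build $\mL$ from the duality map on $\mathcal{M}$. For each $j$ with $g_j\neq0$ pick $l_j\in\bL_{s/(s-1)}(\cT,\tau)$ with $\|l_j\|_{s/(s-1),\tau}\le1$ and $\int l_j g_j\,\rd\tau=\|g_j\|_{s,\tau}$ --- concretely $l_j=|g_j|^{s-1}\sign(g_j)\,\|g_j\|_{s,\tau}^{1-s}$ for $1<s<\infty$, and $l_j=\sign(g_j)$ for $s=1$ --- and set $l_j=0$ if $g_j=0$; then $\mL:=\{l_j:j\ge1\}\subset\bB_{s/(s-1)}$ is countable. For any $g\in\mathcal{M}$ one has $\int l_jg\,\rd\tau\le\|g\|_{s,\tau}$ for all $j$ by H\"older's inequality, while choosing $g_{j_k}\to g$ in $\bL_s$ gives
\[
\int l_{j_k}g\,\rd\tau=\|g_{j_k}\|_{s,\tau}+\int l_{j_k}\big(g-g_{j_k}\big)\rd\tau\ \ge\ \|g_{j_k}\|_{s,\tau}-\|g-g_{j_k}\|_{s,\tau}\ \xrightarrow[k\to\infty]{}\ \|g\|_{s,\tau},
\]
so $\sup_{l\in\mL}\int lg\,\rd\tau=\|g\|_{s,\tau}$. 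Applying this with $g=\xi_w(\omega,\cdot)\in\mathcal{M}$ for each fixed $\omega$ yields the asserted identity; joint $(\mT\times\mA)$-measurability of $(t,\omega)\mapsto\xi_w(t,\omega)$ together with Fubini's theorem guarantee that each $\int l(t)\xi_w(t)\,\tau(\rd t)$, $l\in\mL$, is a genuine random variable, whence the countable supremum over $\mL$ is measurable as well.

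The step I expect to require the most care is exactly this separability reduction: one must avoid appealing to separability of the dual $\bL_{s/(s-1)}(\cT,\tau)$, which fails at $s=1$, and it is Assumption~(A1) --- rather than mere membership $w(\cdot,x)\in\bL_s$ --- that forces $\xi_w$ to take values in the single separable subspace $\mathcal{M}$, thereby making a countable $\mL$ available simultaneously for all $s\in[1,\infty)$.
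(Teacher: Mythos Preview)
Your proof is correct and follows essentially the same approach as the paper: both build $\mL$ as the countable collection of norming functionals $|g|^{s-1}\sign(g)/\|g\|_{s,\tau}^{s-1}$ attached to a countable family of functions that, by Assumption~(A1), is $\bL_s$-dense in the range of $\xi_w$, and then conclude by an $\varepsilon$-approximation argument. The only cosmetic difference is that the paper indexes this family directly by $n$-tuples $\bar{x}^{(n)}\in\bar\cX^{(n)}$ (taking $g=\xi(\cdot,\bar{x}^{(n)})$), whereas you first pass to the separable closed span $\mathcal{M}$ and then pick any countable dense subset---the underlying mechanism is identical.
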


\begin{lemma}
\label{lepski}
Let
$\bar{w}(t,x)=w(t,x)-\bE w(t,X)$; then for all $p\geq1$ one has:
\begin{enumerate}[(a)]
\item[(a)]
$
\|\bar{w}(\cdot,x)\|_{p,\tau}\leq2\sup_{x\in\cX}\|w(\cdot,x)\|_{p,\tau}.
$
\item[(b)]
$M_{p}(\bar{w})\leq2 M_{p}(w)$.
\end{enumerate}
\end{lemma}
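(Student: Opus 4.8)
The plan is to deduce both inequalities from the triangle inequality for the $\bL_p$-norm together with Jensen's inequality, the essential point being that the centering $\bar w(t,x)=w(t,x)-\bE w(t,X)$ averages $w(t,\cdot)$ against the \emph{probability} measure $\nu'(\rd x)=f(x)\nu(\rd x)$, the law of $X$ (recall $\int f\,\rd\nu=1$, so $\nu'(\cX)=1$); recall also $M_p=M_{p,\tau,\nu'}$ as recorded in the text.

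For part~(a) I would write $\bE w(\cdot,X)=\int w(\cdot,y)\,\nu'(\rd y)$ and apply the triangle inequality in $\bL_p(\cT,\tau)$ to get $\|\bar w(\cdot,x)\|_{p,\tau}\le\|w(\cdot,x)\|_{p,\tau}+\big\|\int w(\cdot,y)\,\nu'(\rd y)\big\|_{p,\tau}$; Minkowski's integral inequality together with $\nu'(\cX)=1$ then bounds the second term by $\int\|w(\cdot,y)\|_{p,\tau}\,\nu'(\rd y)\le\sup_{y\in\cX}\|w(\cdot,y)\|_{p,\tau}$, and adding the two contributions gives (a). Here Assumption~(A1) is needed only to guarantee that $x\mapsto\|w(\cdot,x)\|_{p,\tau}$ is a legitimate measurable map and that the suprema may be taken over the countable dense set $\bar\cX$, so that all the integrals above make sense; under the hypothesis $M_p(w)<\infty$ everything is finite, and all interchanges of order of integration are justified by Tonelli since the integrands are nonnegative.

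For part~(b), $\sup_x\|\bar w(\cdot,x)\|_{p,\tau}$ is already bounded by $2M_p(w)$ via~(a), so it only remains to control $\sup_t\|\bar w(t,\cdot)\|_{p,\nu'}$, the half of $M_p(\bar w)$ in which one integrates over $\cX$ against $\nu'$ — and this is the one step where the structure of the problem genuinely matters. Writing $\bar w(t,x)=\int\big[w(t,x)-w(t,y)\big]\,\nu'(\rd y)$ and applying Minkowski's integral inequality once more, $\|\bar w(t,\cdot)\|_{p,\nu'}\le\int\|w(t,\cdot)-w(t,y)\|_{p,\nu'}\,\nu'(\rd y)\le\|w(t,\cdot)\|_{p,\nu'}+\int|w(t,y)|\,\nu'(\rd y)\le 2\|w(t,\cdot)\|_{p,\nu'}$, the last inequality being Jensen (or Hölder) for the probability measure $\nu'$. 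Taking the supremum over $t$ and combining with~(a) gives $M_p(\bar w)\le 2M_p(w)$. The statement being otherwise elementary, the only subtlety — the thing one must not overlook — is precisely this: a constant function (the centering $\bE w(t,X)$) has finite $\bL_p$-norm over $\cX$ and is absorbed into $2\|w(t,\cdot)\|_{p,\nu'}$ only because the integration there is against a finite (probability) measure.
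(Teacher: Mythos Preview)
Your proof is correct and follows essentially the same approach as the paper: both parts reduce to the triangle inequality together with the fact that $\nu'$ is a probability measure, so that Jensen (or H\"older) absorbs the centering term. The paper's version is slightly more compressed---for (b) it simply invokes the inequality $\bE|\eta-\bE\eta|^p\le 2^p\,\bE|\eta|^p$ directly rather than passing through the Minkowski integral representation---but the content is the same.
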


We break the proof
of Theorem \ref{fixed_w1} into several steps.

\subsubsection*{Step 1: Reduction to empirical process}
We obtain from Lemma \ref{l_countable_xi}
\begin{eqnarray*}
\|\xi_w\|_{s,\tau}&=&\sup_{l\in\mL}\int l(t)\xi_{w}(t)\tau(\rd t)\\
&=&\sup_{l\in\mL}\sum_{i=1}^{n}\int l(t)\bar{w}(t,X_i)\tau(\rd
t)\\
&=&\sup_{\lambda\in\Lambda}\sum_{i=1}^{n}\lambda(X_i),
\end{eqnarray*}
where
\[
\Lambda=\biggl\{\lambda\dvtx\cX\to\bR\dvtx\lambda(x)=\int l(t)\bar{w}(t,x)\tau
(\rd
t), l\in\mL\biggr\}.
\]
Thus,
%
%
\begin{equation}
\label{duality}
\|\xi_w\|_{s,\tau}=\sup_{\lambda\in\Lambda}\sum_{i=1}^{n}\lambda(X_i)=:Y
\end{equation}
and, obviously, $\bE\lambda(X)=0$.
The idea now is to apply Lemma \ref{bousquet} to the random variable
$Y$.

\subsubsection*{Step 2: Some upper bounds}
In order to apply
Lemma \ref{bousquet}, we need to bound from above the following quantities:
(i)
$\bE Y$;
(ii)
$b:=\sup_{\lambda\in\Lambda}\|\lambda\|_\infty$;
and (iii)
$\sigma^2:=\sup_{\lambda\in\Lambda}\bE\lambda^2(X)$.

(i) \textit{Upper bound for
$\bE Y$.}
Applying the H\"older inequality, we get from (\ref{duality})
\[
\bE\Biggl[\sup_{\lambda\in\Lambda}\sum_{i=1}^{n}\lambda(X_i)\Biggr]=\bE\|\xi_w\|
_{s,\tau}\leq
[\bE\|\xi_w\|^s_{s,\tau}]^{{1}/{s}}=\biggl[\int\bE|\xi_w(t)|^s\tau(\rd
t)\biggr]^{{1}/{s}}.
\]

If $s\in[1,2]$, then for all $t\in\cT$
\[
\bE|\xi_w(t)|^s\leq[\bE|\xi_w(t)|^2]^{{s}/{2}}
\leq[n\bE w^2(t,X)]^{{s}/{2}}=\biggl[n\int w^2(t,x)
f(x)\nu(\rd x)\biggr]^{{s}/{2}}.
\]
Thus, we have for all $s\in[1,2]$
%
%
\begin{equation}
\label{s_less_2_1}
\bE Y=\bE\Biggl[\sup_{\lambda\in\Lambda}\sum_{i=1}^{n}\lambda(X_i)\Biggr]\leq\sqrt{n}
\Sigma_s(w,f).
\end{equation}

Note that the same quantity can be bounded from above in a different way.
Indeed, in view of the Barh--Esseen inequality (the
second statement of Lem\-ma~\ref{rosen})
\[
\bE|\xi_w(t)|^s
\leq2n\bE|\bar{w}(t,X)|^s= 2^{1+s}n \bE|w(t,X)|^s
\]
and we obtain for all $s\in[1,2]$
%
%
\begin{equation}
\label{s_less_2_2}
\bE Y=\bE\Biggl[\sup_{\lambda\in\Lambda}\sum_{i=1}^{n}\lambda(X_i)\Biggr]\leq
2^{1+1/s}n^{1/s}M_s(w).
\end{equation}
We get finally from (\ref{s_less_2_1}) and (\ref{s_less_2_2})
%
%
\begin{equation}
\label{s_less_2}
\bE Y\leq\bigl\{\sqrt{n} \Sigma_s(w,f)\bigr\}\wedge\{4n^{1/s}M_s(w)\}.
\end{equation}

If $s=2$, we obtain a bound independent of $f$: indeed, in this case
%
%
\begin{eqnarray}
\label{s=2}
\bE Y&=&\bE\Biggl[\sup_{\lambda\in\Lambda}\sum_{i=1}^{n}\lambda(X_i)\Biggr]\leq\sqrt{n}
\biggl[\int\!\!\int w^2(t,x)f(x)\nu(\rd x)\tau(\rd t)\biggr]^{{1}/{2}}\nonumber\\[-8pt]\\[-8pt]
&\leq&\sqrt{n} M_2(w).\nonumber
\end{eqnarray}

If $s>2$, then applying the Rosenthal inequality
(the first assertion of Lem\-ma~\ref{rosen}) to $\xi_w(t)$,
which is a sum of i.i.d. random variables for any $t\in\cT$,
we get
\[
[\bE(|\xi_w(t)|^s)]^{{1}/{s}}\leq c_1(s)[(n \bE w^2(t,X))^{{s}/{2}}
+n \bE|\bar{w}(t,X)|^s
]^{{1}/{s}}
\]
and, therefore,
%
%
\begin{eqnarray}
\label{rosen}
&&\bE\Biggl[\sup_{\lambda\in\Lambda}\sum_{i=1}^{n}\lambda(X_i)\Biggr]\nonumber\\
&&\qquad\leq c_1(s)\biggl\{
\sqrt{n}
\biggl[\int\biggl(
\int w^2(t,x)f(x)\nu(\rd x)\biggr)^{{s}/{2}}\tau(\rd
t)\biggr]^{{1}/{s}}\\
&&\qquad\quad\hspace*{28.3pt}{}+ 2n^{1/s}\biggl[\int\!\!
\int|w(t,x)|^sf(x)\nu(\rd x)
\tau(\rd t)\biggr]^{{1}/{s}}\biggr\}.\nonumber
\end{eqnarray}
To get the last inequality
we have used that
$\bE|\bar{w}(t,X)|^s\leq2^s \bE|w(t,X)|^s$, for all $s\geq1$.

It is evident that the second integral on the right-hand side of
(\ref{rosen}) does not exceed $M_s(w)$.
Moreover,
since
$(\bE w^2(t,X))^{{s}/{2}}\leq\bE|w(t,X)|^{s}$, $s\geq2$,
the following bound is true
$
\Sigma_s(w,f)\leq M_s(w).
$
We conclude that $\bE Y<\infty$ whenever $M_s(w)<\infty$, and
%
%
\begin{equation}
\label{c1}
\bE Y=\bE\Biggl[\sup_{\lambda\in\Lambda}
\sum_{i=1}^{n}\lambda(X_i)\Biggr]\leq c_1(s)\bigl\{
\sqrt{n} \Sigma_s(w,f) + 2n^{1/s}M_s(w)\bigr\}.
\end{equation}

%

(ii) \textit{Upper bound for $b=\sup_{\lambda\in\Lambda}\|\lambda\|
_\infty$.}
Taking into account that $ l\in\mL\subset\bB_{{s}/({s-1})}$
(Lemma \ref{l_countable_xi})
and applying the
H\"older inequality, we get for any $x\in\cX$
\[
|\lambda(x)|\leq\biggl[\int|w(t,x)-\bE w(t,X)|^s\tau(\rd t)\biggr]^{{1}/{s}}
= \|\bar{w}(\cdot,x)\|_{s,\tau}.
\]
Therefore, in view of Lemma \ref{lepski}(a)
%
%
\begin{equation}
\label{supnorm}
b=\|\lambda\|_\infty\leq2\sup_{x\in\cX}\|w(\cdot,x)\|_{s,\tau}\leq2M_s(w).
\end{equation}

(iii)
\textit{Upper bound on the} ``\textit{dual}'' \textit{variance $\sigma^2$.}
Since $\bE\lambda(X)=0$, we have
\begin{eqnarray*}
\sigma^2 &=&\sup_{\lambda\in\Lambda}\int\lambda^2(x)f(x)\nu(\rd x)\\
&=&
\sup_{ l\in\mL}
\int\biggl[\int\bar{w}(t,x)l(t)\tau(\rd t)\biggr]^2 f(x) \nu(\rd x)\\
&\leq&\sup_{ l\in\bB_{{s}/({s-1})}}
\int\biggl[\int\bar{w}(t,x)l(t)\tau(\rd t)\biggr]^2f(x)\nu(\rd x)\\
&\leq&\sup_{ l\in\bB_{{s}/({s-1})}}
\int\biggl[\int w(t,x)l(t)\tau(\rd t)\biggr]^2f(x)\nu(\rd x).
\end{eqnarray*}
The expression on the right-hand side is bounded differently depending
on the
value of $s$.

If $s\in[1,2)$, then applying the H\"older inequality to the inner integral
in the previous expression we obtain
%
%
\begin{eqnarray}
\label{sigma1}
\sigma^2 &\leq&\int\biggl[\int|w(t,x)|^s\tau(\rd t)\biggr]^{{2}/{s}}f(x)
\nu(\rd x)\nonumber\\[-8pt]\\[-8pt]
&\leq&\sup_{x\in\cX}
\|w(\cdot,x)\|^2_{s,\tau}\leq M^2_{s}(w).\nonumber
\end{eqnarray}
We remark also that the bound given by (\ref{sigma1}) remains true
for all $s\geq1$. This shows, in particular, that
$\sigma$ is always bounded whenever $M_s(w)<\infty$.

If $s=2$, then we apply inequality (\ref{eq:folland-1}) of Lemma \ref{folland}
with $p=2$ and $ \chi(\rd x)=\nu^\prime(\rd x) = f(x)\nu(\rd x)$
to the integral operator $\cI_l(x)=\int w(t,x)l(t)\tau(\rd t)$.
This leads to the following bound
%
%
\begin{equation}
\label{sigma2}
\sigma^2\leq M^2_{1,\tau,\nu^\prime}(w).
\end{equation}

If $s>2$, then we apply inequality (\ref{eq:folland-2})
of Lemma \ref{folland}
with $r=2$, $p=\frac{s}{s-1}$, $q=\frac{2s}{s+2}$ and $\chi=\nu^\prime$
to the integral operator $\cI_l(x)=\int w(t,x)l(t)\tau(\rd t)$. This yields
%
%
\begin{equation}
\label{sigma3}\quad
\sigma^2\leq c_2\bigl(s/(s-1)\bigr) M^2_{q,\tau,\nu^\prime}(w)=c_2\bigl(s/(s-1)\bigr)
M^2_{2s/(s+2),\tau,\nu^\prime}(w).
\end{equation}

\subsubsection*{Step 3: Application of Lemma \protect\ref{bousquet}}

$\!\!\!$1. \textit{Case $s\in[1,2)$.}
Here we
have from~(\ref{s_less_2}), (\ref{supnorm}) and (\ref{sigma1})
\begin{eqnarray*}
\bE Y&\leq&\bigl\{\sqrt{n}
\Sigma_s(w,f)\bigr\}\wedge\{4n^{1/s}M_s(w)\}=:\rho_s(w,f),\\
b&\leq&2M_s(w),\qquad \sigma^2 \leq M^2_{s}(w).
\end{eqnarray*}
Therefore applying Lemma \ref{bousquet}, we have
for all $z>0$
%
%
\begin{eqnarray}\label{eq:PP}
&&\bP\{\|\xi_w\|_{s,\tau}\geq\rho_s(w,f)+z \}\nonumber\\[-8pt]\\[-8pt]
&&\qquad\leq
\exp\biggl\{-\frac{z^2}{2M^{2}_s(w)[n+16n^{1/s}]+[4M_s(w)z/3]}
\biggr\},\nonumber
\end{eqnarray}
where we have used
(\ref{s_less_2_2}) in the denominator of the expression inside of the exponent.

To get the result of the theorem, we note that
the following trivial upper bound follows from
the triangle inequality and the statement (a) of
Lemma~\ref{lepski}:
\[
\|\xi_w\|_{s,\tau}\leq2n M_s(w)\qquad \forall s\geq1.
\]
Thus, the probability in (\ref{eq:PP})
is equal to zero if $z> 2n M_s(w)$; hence,
we can replace $z$
by $ 2n M_s(w)$ in the denominator of the expression
on the right-hand side.
This leads to the statement of the theorem for
$s\in[1,2)$.

2. \textit{Case $s=2$.}
We have from (\ref{s=2}), (\ref{supnorm}) and (\ref{sigma2})
\[
\bE Y\leq\sqrt{n}M_2(w),\qquad
b\leq2M_2(w),\qquad \sigma^2
\leq M^2_{1,\tau,\nu^\prime}(w).
\]
Thus, for all $z>0$
\begin{eqnarray*}
&&\bP\bigl\{\|\xi_w\|_{2,\tau}\geq\sqrt{n} M_2(w)+z \bigr\}\\
&&\qquad\leq
\exp\biggl\{-\frac{z^2}{2[ nM^2_{1,\tau,\nu^\prime}(w)+
4\sqrt{n} M^2_2(w)+({2}/{3})M_2(w)z]}
\biggr\},
\end{eqnarray*}
and the statement of Theorem \ref{fixed_w1} is established for $s=2$.


3. \textit{Case $s>2$.}
We have from (\ref{c1}), (\ref{supnorm}) and (\ref{sigma3})
\begin{eqnarray*}
\bE Y&\leq &c_1(s)\bigl[ \sqrt{n} \Sigma_s(w,f)
+ 2n^{1/s}M_s(w)\bigr],\\
b&\leq&2M_s(w);\qquad
\sigma^2
\leq c_2\bigl(s/(s-1)\bigr) M^2_{2s/(s+2),\tau,\nu^\prime}(w).
\end{eqnarray*}
Thus, for any $z>0$ we get
\begin{eqnarray*}
&&\bP\bigl\{\|\xi_w\|_{s,\tau} \geq c_1(s)\bigl[
\sqrt{n} \Sigma_s(w,f)
+ 2n^{1/s}M_s(w)\bigr]+z\bigr\}\\
&&\qquad\leq
\exp\bigl\{-{z^2 }\bigl(2c_3(s)
\bigl[n M^2_{{2s}/({s+2}),\tau,\nu^\prime}(w)+
4\sqrt{n}\Sigma_s(w,f) M_s(w)\\
&&\hspace*{172.4pt}{}+ 8n^{1/s}M^2_s(w)
+\tfrac{2}{3}M_s(w)z\bigr]\bigr)^{-1}
\bigr\},
\end{eqnarray*}
where $c_3(s)$ is given in (\ref{eq:c-*}). This completes the proof
of the theorem for the case of $s>2$.

We conclude by establishing the inequalities in (\ref{eq:sigg}).
In order to derive the first inequality, we
apply (\ref{eq:folland-1}) of Lemma \ref{folland}
with $p=s/2>1$, $\chi=\nu$ to the
integral operator $\cI_f(t):=\int w^2(t,x)f(x)\nu(\rd x)$. This yields
\[
\biggl[\int\biggl(
\int w^2(t,x)f(x)\nu(\rd x)\biggr)^{s/2}\tau(\rd t)\biggr]^{1/s}\leq M_2(w)\bigl\|\sqrt
{f}\bigr\|_{s,\nu},
\]
as claimed.
The second inequality in (\ref{eq:sigg}) follows straightforwardly
from the definition
of $M_{p,\tau, \nu^\prime}$ and $M_p$.\vadjust{\goodbreak}

%
\section{\texorpdfstring{Proofs of Theorem \protect\ref{t_random} and Corollary \protect\ref{cor:th_random}}
{Proofs of Theorem 3 and Corollary 4}}
%
\subsection{\texorpdfstring{Proof of Theorem \protect\ref{t_random}}{Proof of Theorem 3}}
First, we specify the constants appearing in the statement of the theorem:
\begin{eqnarray*}
T_{1,\ve}&:=&
\biggl(\frac{2^{q(\ve+1)}}{2^{q\epsilon}-1} \Gamma(q+1)+1\biggr)
N_{\bZ,\mathrm{d}}(\ve/8)
(2u_\ve R_\xi)^q
[1\vee\log_2 (R_\xi/r_\xi)]
\bigl[1+L^{(\ve)}_{\exp}\bigr],\\
T_{2,\ve}&:=&[c_1(s)+2]^{q}N_{\bZ,\mathrm{d}}(\ve/8)
[1\vee\log_2(R_\xi/r_\xi)] \bigl[1+L^{(\ve)}_{\exp}\bigr].
\end{eqnarray*}

Recall that in view of (\ref{eq:W}), any $w\in\cW$ is represented as
$w=\phi[\zeta]$ for some \mbox{$\zeta\in\bZ$}.
For every $0\leq j\leq[\log_2(R_\xi/r_\xi)-1]_+$
[without loss of generality, we assume that $\log_2(R_\xi/r_\xi)$
is an integer number], put $\delta_j=r2^{j+1}$,
and define the random events
\[
\cA:=\bigcap_{j=0}^{[\log_2(R_\xi/r_\xi)-1]_+}\cA_j,\qquad \cA_j:=\Bigl\{
\sup_{\zeta\in\bZ_{\delta_j}}\bigl\|\xi_{\phi^2[\zeta]}\bigr\|_{s/2,\tau}\leq
[2(1+\ve)\gamma\delta_j]^{2}\Bigr\}.
\]

(i)
The following trivial inequality holds:
\begin{eqnarray*}
&&\sup_{\zeta\in\bZ}\bigl\{\bigl\|\xi_{\phi[\zeta]}\bigr\|_{s,\tau}-
\bar{u}_\ve(\gamma) C_\xi^*(y)\hat{U}_\xi(\phi[\zeta])\bigr\}_+\\
&&\qquad \leq
\sup_{\zeta\in\bZ}\bigl\{\bigl\|\xi_{\phi[\zeta]}\bigr\|_{s,\tau}-
u_\ve C_\xi^*(y) U_\xi(\phi[\zeta],f)\bigr\}_+\\
&&\qquad\quad{} +
u_\ve C_\xi^*(y)\sup_{\zeta\in\bZ}U_\xi(\phi[\zeta],f).
\end{eqnarray*}
Therefore,
%
%
\begin{eqnarray}
\label{q3}
&&\bE
\sup_{\zeta\in\bZ}\bigl\{\bigl\|\xi_{\phi[\zeta]}\bigr\|_{s,\tau}-
\bar{u}_\ve(\gamma) C_\xi^*(y)\hat{U}_\xi(\phi[\zeta])\bigr\}^q_+
\nonumber
\\
&&\qquad \leq\bE\Bigl[\sup_{\zeta\in\bZ}
\bigl\{\bigl\|\xi_{\phi[\zeta]}\bigr\|_{s,\tau}-\bar{u}_\ve(\gamma)
C_\xi^*(y)\hat{U}_\xi(\phi[\zeta])\bigr\}^q_+ {\mathbf1}(\cA)\Bigr]
\nonumber\\[-8pt]\\[-8pt]
&&\qquad\quad{} + 2^{q-1}\bE
\sup_{\zeta\in\bZ}\bigl\{\bigl\|\xi_{\phi[\zeta]}\bigr\|_{s,\tau}-
u_\ve C_\xi^*(y)U_\xi(\phi[\zeta],f)\bigr\}^q_+\nonumber\\
&&\qquad\quad{}+ 2^{q-1}[u_\ve C_\xi^*(y)R_\xi]^q
\sum_{j=0}^{[\log_2(R_\xi/r_\xi)-1]_+}\bP\{\bar{\cA}_j\},
\nonumber
\end{eqnarray}
where $\bar{\cA}_j$ denotes the event complementary
to $\cA_j$, and ${\mathbf1}(\cA)$ is the indicator of the event $\cA$.
The second term on the right-hand side is bounded using Theorem \ref{t_uniform};
our current goal is to bound the first and the third terms.

Note that, if the event $\cA$ occurs then for every $\zeta\in\bZ$
%
%
\begin{eqnarray}
\label{q2}
&&U_\xi(\phi[\zeta],f)[1+4c_1(s)(1+\ve)\gamma]\nonumber\\[-8pt]\\[-8pt]
&&\qquad \geq
\hat{U}_\xi(\phi[\zeta]) \geq U_\xi(\phi[\zeta],f)
[1-4c_1(s)(1+\ve)\gamma].\nonumber
\end{eqnarray}
Indeed, in view of (\ref{eq:U-xi-f}), (\ref{eq:rho-hat-2}) and
(\ref{eq:r1}) we get
%
%
\begin{eqnarray}
\label{eq:proof_theorem_random_new1}
\hat{U}_\xi(\phi[\zeta])&\geq& U_\xi(\phi[\zeta],f)-
|\hat{U}_\xi(\phi[\zeta])-
U_\xi(\phi[\zeta],f)|\nonumber\\
&=& U_\xi(\phi[\zeta],f)-
c_1(s)\sqrt{n} |
\hat{\Sigma}_s(\phi[\zeta])
- \Sigma_s(\phi[\zeta],f)|
\\
&\geq& U_\xi(\phi[\zeta],f)-
c_1(s)\sqrt{\bigl\|\xi_{\phi^2[\zeta]}\bigr\|_{s/2,\tau}} .\nonumber
\end{eqnarray}
Let $\zeta\in\bZ$ be fixed. Since $\bZ_{\delta_j}$,
$j=0,\ldots, [\log_2(R_\xi/r_\xi)-1]_+$, defined in (\ref{eq:Z-aa}),
form the partition of $\bZ$,
there exists $j_*$ such that
$\zeta\in\bZ_{\delta_{j_*}}$.
Because $\zeta\in\bZ_{\delta_{j_*}}$ implies $U_\xi(\phi[\zeta],f)\geq
\delta_{j_*}/2=\delta_{j_*-1}$,
we obtain from (\ref{eq:proof_theorem_random_new1}) on the event $\cA$
that
%
%
\begin{eqnarray}
\label{eq:proof_theorem_random_new2}
\hat{U}_\xi(\phi[\zeta])&\geq& U_\xi(\phi[\zeta],f)-2c_1(s)(1+\ve)\gamma
\delta_{j_*}\nonumber\\[-8pt]\\[-8pt]
&\geq& U_\xi(\phi[\zeta],f)[1-4c_1(s)(1+\ve)\gamma].\nonumber
\end{eqnarray}
Thus, the right-hand side inequality in (\ref{q2}) is proved.
Similarly,
we have from (\ref{eq:proof_theorem_random_new1}) and (\ref
{eq:proof_theorem_random_new2}) that
\begin{eqnarray*}
\hat{U}_\xi(\phi[\zeta])&\leq& U_\xi(\phi[\zeta],f)+
|\hat{U}_\xi(\phi[\zeta])-U_\xi(\phi[\zeta],f)|
\\
&\leq& U(\phi[\zeta])+
c_1(s)\sqrt{\bigl\|\xi_{\phi^2[\zeta]}\bigr\|_{s/2,\tau}}\\
&\leq& U_\xi(\phi[\zeta],f)[1+4c_1(s)(1+\ve)\gamma].
\end{eqnarray*}
Thus, (\ref{q2}) is proved.

Using the right-hand side inequality in (\ref{q2})
and applying
Theorem \ref{t_uniform}, we obtain
%
%
\begin{eqnarray}\quad
\label{q4}
&&\bE\Bigl[\sup_{\zeta\in\bZ}
\bigl\{\bigl\|\xi_{\phi[\zeta]}\bigr\|_{s,\tau}-
\bar{u}_\ve(\gamma) C_\xi^*(y)\hat{U}_\xi(\phi[\zeta])\bigr\}^q_+
{\mathbf1}(\cA)\Bigr]
\nonumber\\
&&\qquad
\leq\bE\sup_{\zeta\in\bZ}
\bigl\{\bigl\|\xi_{\phi[\zeta]}\bigr\|_{s,\tau}-
u_\ve C_\xi^*(y)U_\xi(\phi[\zeta],f)\bigr\}^q_+
\\
&&\qquad \leq
\frac{2^{q(\ve+1)}u_\ve^q}{2^{q\ve}-1} \Gamma(q+1)
N_{\bZ,\mathrm{d}}(\ve/8) [R_\xi
C_\xi^*(1)]^q
\bigl[1+L_{\exp}^{(\ve)}\bigr]\exp\{-y/2\}.\nonumber
\end{eqnarray}

Now we bound the probability
$\bP\{\bar\mathcal{A}_j\}$.
Let $Z_l$, $l=1, \ldots, N_{\bZ, \mathrm{d}}(\ve/8)$ be
a~minimal covering of
$\bZ$ by balls of radius $\ve/8$ in the metric $\mathrm{d}$.
By definition of $\mathcal{A}_j$, we have
%
%
\begin{equation}
\label{q5}
\bP\{\bar{\cA}_j\}
\leq
\sum_{l=1}^{N_{\bZ,\mathrm{d}}(\ve/8)}
\bP\Bigl\{\sup_{\zeta\in Z_l\cap\bZ_{\delta_j}}
\bigl\|\xi_{\phi^2[\zeta]}\bigr\|_{s/2,\tau} \geq[2(1+\ve)\gamma\delta_j]^{2}
\Bigr\}.
\end{equation}
Note that
\[
[2\gamma\delta_j]^{2} \geq\varkappa_{\tilde{U}}(\bZ_{\delta_j})+
\delta_j^{2}\bigl[\sqrt{y_{\gamma}}\lambda_{\tilde{A}}
+y_{\gamma}\lambda_{\tilde{B}}\bigr]
\geq\varkappa_{\tilde{U}}(Z_l\cap\bZ_{\delta_j})+\delta^{2}_j\bigl[\sqrt
{y}\lambda_{\tilde{A}}
+y\lambda_{\tilde{B}}\bigr];
\]
here the first inequality follows from the condition
$\varkappa_{\tilde{U}}(\bZ_a)=\varkappa_{\tilde{U}}(a)\leq(\gamma a)^2$,
$\forall a\in[r_\xi,R_\xi]$
and from definition
of $y_\gamma$; the second inequality holds by
the inclusion $Z_l\cap\bZ_{\delta_j}\subseteq\bZ_{\delta_j}$ and
because $y\leq y_\gamma$.
Furthermore, by (\ref{eq:definitions-tilde-new}) and
by the above inclusion
\begin{eqnarray*}
\lambda_{\tilde{A}} &\geq&\delta_j^{-2}\Lambda_{\tilde{A}}(\bZ_{\delta_j})
\geq\delta_j^{-2}\Lambda_{\tilde{A}}(\bZ_{\delta_j}\cap Z_l),\\
\lambda_{\tilde{B}} &\geq&\delta_j^{-2}\Lambda_{\tilde{B}}(\bZ_{\delta_j})
\geq\delta_j^{-2}\Lambda_{\tilde{B}}(\bZ_{\delta_j}\cap Z_l),
\end{eqnarray*}
which leads to
\begin{eqnarray*}
[2\gamma\delta_j]^{2} &\geq& \varkappa_{\tilde{U}}
(Z_l\cap\bZ_{\delta_j}) + \sqrt{y}\Lambda_{\tilde{A}}(\bZ_{\delta
_j}\cap Z_l)
+y\Lambda_{\tilde{B}}(\bZ_{\delta_j}\cap Z_l)
\\
&=& \varkappa_{\tilde{U}}
(Z_l\cap\bZ_{\delta_j}) + \tilde{C}_*(y, Z_l\cap\bZ_{\delta_j}),
\end{eqnarray*}
where $\tilde{C}_*(y, \cdot):=\sqrt{y}\Lambda_{\tilde{A}}(\cdot)
+y\Lambda_{\tilde{B}}(\cdot)$ [cf. (\ref{eq:C-*})].

Hence, applying Proposition \ref{l_uniform}, we obtain from (\ref{q5})
that
%
%
\begin{eqnarray} \label{q55}\qquad
\bP\{\bar{\cA}_j\}
&\leq&
\sum_{l=1}^{N_{\bZ,\mathrm{d}}(\ve/8)}
\bP\Bigl\{\sup_{\zeta\in Z_l\cap\bZ_{\delta_j}}
\bigl\|\xi_{\phi^2[\zeta]}\bigr\|_{s/2,\tau}\nonumber\\
&&\hspace*{49.3pt}
\geq(1+\ve)[\varkappa_{\tilde{U}}(Z_l\cap\bZ_{\delta_j})+
\tilde{C}^{*}(y,Z_l\cap\bZ_{\delta_j})]
\Bigr\}\nonumber\\[-8pt]\\[-8pt]
&\leq&
N_{\bZ,\mathrm{d}}(\ve/8)\Biggl[
\exp\{-y\}
+\sum_{k=0}^{\infty}\exp\{2\cE_{\cZ,\mathrm{d}}(\ve2^{-k})
-9y2^{k-3} k^{-2}\}\Biggr]
\nonumber\\
&\leq& N_{\bZ,\mathrm{d}}
(\ve/8)
\bigl[1+L^{(\ve)}_{\exp}\bigr]\exp\{-y/2\},\nonumber
\end{eqnarray}
where we have used that $y\geq1$.

Finally, combining (\ref{q3}), (\ref{q4}), the bound of
Theorem \ref{t_uniform}, and (\ref{q55})
we come to the first assertion of the theorem.
Here we also used that
$C_\xi^{*}(1)\leq C_\xi(y)$ because
$y\geq1$.

(ii)
In order to prove the second statement,
we note first the following nonrandom bound: since
$\hat{\Sigma}_s(w)\leq M_s(w)$
for all $w\in\cW$ and
$s>2$,
\[
\hat{U}_s(w) \leq M_s(w)
\bigl[c_1(s)\sqrt{n}+2n^{1/s}\bigr]
\leq[c_1(s)+2]\sqrt{n}M_s(w)\qquad \forall w\in\cW.
\]
Next, the left-hand side inequality in (\ref{q2}) implies that
for any subset~\mbox{$\cW_0\!\subseteq\!\cW$}
\[
\cA\subseteq
\Bigl\{\sup_{w\in\cW_0}\hat{U}_\xi(w)<
[1+4c_1(s)(1+\ve)\gamma]\sup_{w\in\cW_0}
U_\xi(w,f)\Bigr\} =: \cA_0.
\]
Therefore $\bP(\bar{\cA}_0)\leq\bP(\bar{\cA})$ and
\[
\bE\{[\hat{U}(w)]^q {\mathbf1}(\bar{\cA}_0)\} \leq
[c_1(s)+2]^q \bigl[\sqrt{n}M_s(w)\bigr]^q \bP(\bar{\cA}).
\]
Using (\ref{q55}) with $y=y_\gamma$,
and definition of the event $\cA$, we complete
the proof.

\subsection{\texorpdfstring{Proof of Corollary \protect\ref{cor:th_random}}{Proof of Corollary 4}}
First, as in (\ref{q2}),
we need to bound $\breve{U}_\xi(w):=\max\{\hat{U}_\xi(w), \sqrt
{n}M_2(w)\}$ from above and
from below in terms
of $\bar{U}_\xi(w,f):=\max\{U_\xi(w,f), \sqrt{n}M_2(w)\}$.
Such bounds
are easily derived
from the following trivial fact:
for any positive $A,B$, $C$ and any $\delta\in(0,1)$
\[
A(1+\delta)\geq B\geq A(1-\delta)
\quad\Rightarrow\quad
[A\vee C](1+\delta)\geq[B\vee C]\geq[A\vee C](1-\delta).
\]
Next, (\ref{q4}) remains valid because, by construction,
$U_\xi(w,f)\leq\bar{U}_\xi(w,f)$ and the assumptions, allowing to apply
Theorem \ref{t_uniform} are imposed now on $\bar{U}_\xi(w,f)$ instead of
$U_\xi(w,f)$.
The computations leading to (\ref{q55})
remain also unchanged
if $U_\xi(w,f)$ is replaced
by $\bar{U}_\xi(w,f)$.
Note that now $\lambda_{\tilde{A}}$ and $\lambda_{\tilde{B}}$
are defined via $\bar{U}_\xi(w,f)$.

\section{\texorpdfstring{Proofs of Theorems \protect\ref{t:consec1_new}, \protect\ref{t_random_case}}{Proofs of Theorems 4, 5}}

\subsection{\texorpdfstring{Proof of Theorem \protect\ref{t:consec1_new}}{Proof of Theorem 4}}
The proof is based on an application of Theorem \ref{t_uniform}.

Put
\begin{eqnarray*}
T_{3,\ve}&:=&
\frac{2^{q(\ve+1)}u_\ve^q}{2^{q\ve}-1}
\Gamma(q+1)
N_{\bZ,\mathrm{d}}(\ve/8)
\bigl[1+L^{(\ve)}_{\exp}\bigr][4\bar{\mathrm{w}}_{s}
(1+ 4 n^{1/2-1/s})]^q;\\
T_{4,\ve}&:=&
\frac{2^{q(\ve+1)}u_\ve^q}{2^{q\ve}-1}
\Gamma(q+1)
N_{\bZ,\mathrm{d}}(\ve/8)\bigl[1+L^{(\ve)}_{\exp}\bigr]\\
&&\hspace*{0pt}{}\times
\bar{\mathrm{w}}^q_{2}
\bigl\{1+2\sqrt{2\mu_*\mathrm{f}_\infty^{2}+8n^{-1/2}} +
(8/3)n^{-1/2}\bigr\}^q.
\end{eqnarray*}

We have
$M_p(w)=\|w\|_p$ for all $w\in\cV$ and $p\geq1$, and
(\ref{eq:U-xi-f})
yields
%
%
\begin{equation}\label{eq:rho1}
U_\xi(w,f)=\cases{
4n^{1/s}\|w\|_s, &\quad
$s\in[1,2)$,\cr
\sqrt{n}\|w\|_2, &\quad$s=2$.}
\end{equation}
Therefore, in view of (\ref{eq:r-R-xi})
%
%
\begin{equation}\label{eq:R-xixi}\quad
r_\xi=\cases{
4n^{1/s}\underline{\mathrm{w}}_s, &\quad$s\in[1,2)$,\cr
\sqrt{n} \underline{\mathrm{w}}_2, &\quad$s=2$,}\qquad
R_\xi=\cases{
4n^{1/s} \bar{\mathrm{w}}_s, &\quad$s\in[1,2)$,\cr
\sqrt{n} \bar{\mathrm{w}}_2, &\quad$s=2$.}
\end{equation}
It follows from (\ref{eq:mu-*}), the H\"older inequality
and the formulas
for $A_\xi^2(w)$ and $B_\xi(w)$
immediately after (\ref{eq:U-xi-f})
that
%
%
\begin{eqnarray}\label{eq:A-B}
A_\xi^2(w)&\leq&\cases{
37n\|w\|^{2}_s,&\quad
$s\in[1,2)$,
\vspace*{2pt}\cr
\bigl[2\mathrm{f}_\infty^2 n\mu_* + 8\sqrt{n}\bigr]\|w\|^{2}_2, &\quad
$s=2$,}\nonumber\\[-8pt]\\[-8pt]
B_\xi(w)&=&\cases{
0, &\quad
$s\in[1,2)$,
\cr
\frac{4}{3}\|w\|_2,&\quad$s=2$.}\nonumber
\end{eqnarray}
In order to apply Theorem \ref{t_uniform},
we need to check that $\varkappa_{U_\xi}(a)\leq a$ for all $a\in[r_\xi
,R_\xi]$.

Let $s\,{\in}\,[1,2)$;
here $\bZ_a\,{=}\,\{\zeta\dvtx
a/2\,{<}\,4n^{1/s}\|\phi[\zeta]\|_s\,{=}\,4n^{1/2}\|w\|_s\,{\leq}\,a\}$; see~(\ref{eq:Z-aa}).
By (\ref{eq:rho1}),
Assumption \ref{assL} and because $\bZ_a\subseteq\bZ_{s}(a/4)$ we have
\[
\sup_{\zeta_1,\zeta_2\in\bZ_a}
\frac{U_\xi(\phi[\zeta_1]-\phi[\zeta_2],f)}{\mathrm{d}(\zeta_1,\zeta_2)}
\leq\sup_{\zeta_1,\zeta_2\in\bZ_s(a/4)}
\frac{4n^{1/s}\|\phi[\zeta_1]-\phi[\zeta_2]\|_s}{\mathrm{d}(\zeta
_1,\zeta_2)} \leq a.
\]
If $s=2$, then $\bZ_a=\{\zeta\dvtx a/2<\sqrt{n}\|\phi[\zeta]\|_2=\sqrt
{n} \|
w\|_2\leq a\}$, and
again by Assumption \ref{assL}
$
\sup_{\zeta_1,\zeta_2\in\bZ_a} [\sqrt{n}\|\phi[\zeta_1]-\phi[\zeta_2]\|_2/
\mathrm{d}(\zeta_1,\zeta_2)]\leq a$.
Thus, $\varkappa_{U_\xi}(a)\leq a$ for all $a\in[r_\xi,R_\xi]$,
and Theorem \ref{t_uniform} can be applied.
To this end, we should compute the quantities
$\Lambda_{A_\xi}$ and $\Lambda_{B_\xi}$ [see (\ref{eq:assumption_parameter-2}),
(\ref{eq2:assuption_parameter-3}) and (\ref{eq:definitions-new})].

For $s\in[1,2)$, we have by
(\ref{eq:A-B}), definition of $\bZ_a$ and Assumption \ref{assL} that
\begin{eqnarray*}
\sup_{\zeta\in\bZ_a} A_\xi(\phi[\zeta])&=&
\sup_{\zeta\in\bZ_a} \sqrt{37n} \|\phi[\zeta]\|_s =\frac{\sqrt{37}}{4}
a n^{1/2-1/s},
\\
\sup_{\zeta_1,\zeta_2\in\bZ_a} \frac{A_\xi(\phi[\zeta_1]-\phi[\zeta_2])}
{\mathrm{d}(\zeta_1,\zeta_2)} &\leq&
\sup_{\zeta_1,\zeta_2\in\bZ_s(a/4)}\frac{\sqrt{37} n^{1/2}\|\phi[\zeta_1]-
\phi[\zeta_2]\|_s}{\mathrm{d}(\zeta_1,\zeta_2)}\\
&\leq&\frac{\sqrt{37}}{4}
a n^{1/2-1/s}.
\end{eqnarray*}
Similarly, if $s=2$ then $\sup_{\zeta\in\bZ_a} A_\xi(\phi[\zeta])\leq
a(2\mathrm{f}_\infty^2 \mu_*+8n^{-1/2})^{1/2}$ and
\begin{eqnarray*}
\sup_{\zeta_1,\zeta_2\in\bZ_a} \frac{A_\xi(\phi[\zeta_1]-\phi[\zeta_2])}
{\mathrm{d}(\zeta_1,\zeta_2)} &\leq&
\sup_{\zeta_1,\zeta_2\in\bZ_2(a)} \bigl[2\mathrm{f}_\infty^2n \mu_*+8\sqrt{n}\bigr]^{1/2}
\frac{\|\phi[\zeta_1]-\phi[\zeta_2]\|_2}
{\mathrm{d}(\zeta_1,\zeta_2)}
\\
&\leq& a\biggl(2\mathrm{f}_\infty^2 \mu_*+\frac{8}{\sqrt{n}}\biggr)^{1/2}.
\end{eqnarray*}
These computations and similar computations for $\Lambda_{B_\xi}$
yield
\begin{eqnarray*}
\Lambda_{A_\xi} &\leq&
\cases{
\frac{\sqrt{37}}{4} n^{1/2-1/s}, &\quad$s\in
[1,2)$,\vspace*{2pt}\cr
[2\mathrm{f}_\infty^2 \mu_* + 8n^{-1/2}
]^{1/2}, &\quad$s=2$,}\\
\Lambda_{B_\xi}&=&\cases{
0, &\quad$s\in[1,2)$,\vspace*{2pt}\cr
\frac{4}{3} n^{-1/2}, &\quad$s=2$.}
\end{eqnarray*}
Recall that $C_\xi^*(y)=1+2\sqrt{y}\Lambda_{A_\xi}+2y\Lambda_{B_\xi}$
[see (\ref{eq:definitions-new-new})].
Therefore if for arbitrary $z>0$, we set
\[
y=\cases{
\dfrac{4}{37} n^{(2/s)-1} z^2,&\quad$s\in[1,2)$,\vspace*{2pt}\cr
\dfrac{z^2}{8} [\mathrm{f}_\infty^2 \mu_* + 4n^{-1/2}]^{-1}, &\quad$s=2$,}
\]
then we get
$C^*_\xi(y)=1+z$ if $s\in[1,2)$ and
\[
C^*_\xi(y)= 1+z + \frac{z^2}{3\sqrt{n}[\mathrm{f}_\infty^2\mu
_*+4n^{-1/2}]} \leq
1+ z+ \frac{z^2}{12},
\]
if $s=2$.
Then the statements (i) and (ii) follow by application of
the moment bound of
Theorem \ref{t_uniform}.
Observe that
$C_\xi(1)=1+ \frac{\sqrt{37}}{2}n^{-1/2-1/s}$ for $s\in[1,2)$,
and $C_\xi(1)=1+2[2\mathrm{f}_\infty^2 \mu_* + 8n^{-1/2}
]^{1/2}+\frac{4}{3}n^{-1/2}$ for $s=2$;
$R_\xi$ is given in (\ref{eq:R-xixi}). These expressions
along with the moment bound of Theorem~\ref{t_uniform} lead to the formulas
for $T_{1,\ve}$ and $T_{2,\ve}$ given in the beginning of the proof.

%
\subsection{\texorpdfstring{Proof of Theorem \protect\ref{t_random_case}}{Proof of Theorem 5}}
\label{sec:proof-th-5}
First, we specify the constants appearing in the statement of the theorem.
Put $\alpha_*:=\alpha_1^{-1}\alpha_2^{-1/2}$ where $\alpha_1$ and
$\alpha_2$
appear in Assumption~\hyperlink{assW2}{(W2)}; then
%
%
\begin{equation}\label{eq:vartheta1-2}
\vartheta_1:= [148 \alpha_*^4]^{-1} ,\qquad
\vartheta_2:= 5\sqrt{2} c_1(s/2) \mathrm{f}_\infty\alpha_*^2 C_{s/2}.
\end{equation}
Define also
%
%
\begin{eqnarray}\label{eq:vartheta}
k_*&:=&8\alpha_*^2 c_1(s) [C_s\vee C_{s/2}\vee1],\nonumber\\[-8pt]\\[-8pt]
L^{(\ve)}_*(\beta) &:=& \sum_{k=1}^\infty
\exp\{ 2^{1+k\beta/m} (k_*^{-1}\epsilon)^{-\beta/m} -
(9/16) 2^{k} k^{-2}\},\nonumber
\end{eqnarray}
and note that $L^{(\ve)}_*(\beta)<\infty$ because $\beta<m$.
If we set
$I_\ve(q):= 2^{q(\ve+1)}[2^{q\epsilon}-1]^{-1}\Gamma(q+1)+1$,
then
the constants
$T_{5,\ve}$ and $T_{6,\ve}$ appearing in the statement of the theorem
are given by
%
%
\begin{eqnarray}
\label{eq:T-5}
T_{5,\ve}&:=&
I_\ve(q)
(2u_\ve k_*\bar{\mathrm{w}}_2)^q
N_{\bZ,\mathrm{d}}([k_*^{-1}\ve/8]^{1/m})\nonumber\\[-8pt]\\[-8pt]
&&{}\times
\log_2 \biggl(\frac{k_*\bar{\mathrm{w}}_2}{\underline{\mathrm{w}}_2}\biggr)
\bigl[1+L^{(\ve)}_*(\beta) \exp\{2C_\bZ(\beta)\}\bigr],\nonumber
\\
\label{eq:T-6}
T_{6,\ve}&:=&[c_1(s)+2]^q(\alpha_*\bar{\mathrm{w}}_2)^q
N_{\bZ,\mathrm{d}}([k_*^{-1}\ve/8]^{1/m})\nonumber\\[-8pt]\\[-8pt]
&&{}\times\log_2 \biggl(\frac{k_*\bar{\mathrm{w}}_2}{\underline{\mathrm{w}}_2}\biggr)
\bigl[1+L^{(\ve)}_*(\beta) \exp\{2C_\bZ(\beta)\}\bigr].\nonumber
\end{eqnarray}

The proof is based on application of Theorem \ref{t_random} and
Corollary \ref{cor:th_random}.
These results will be utilized with a distance $\mathrm{d}_*$ on
$\mathfrak{Z}$ which is related
to the original distance $\mathrm{d}$, and specified below.
In order to apply Theorem \ref{t_random}, we need to verify its conditions
and to compute the quantities $\Lambda_{A_\xi}$,
$\Lambda_{B_\xi}$, $\lambda_{\tilde{A}}$, $\lambda_{\tilde{B}}$
and~$y_\gamma$. These computations are routine and tedious.

We break the proof into steps.

0$^0$. \textit{Auxiliary results.}
We begin with preliminary results that will be used in the subsequent proof.
\begin{lemma}
\label{lem:tech_new1}
Let (\ref{eq:W-V}) hold and
Assumptions \textup{\hyperlink{assW2}{(W2)}} and \textup{\hyperlink{assW3}{(W3)}} be
satisfied; then
for all $w\in\cW$ and
$1\leq p < q\leq\infty$
one has
\[
[n^{1/q}M_q(w)]\leq\alpha^{-1}_1\alpha_2^{-1/p}\mu^{1/q-1/p}[n^{1/p}M_p(w)].
\]
\end{lemma}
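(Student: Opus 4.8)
The plan is to reduce the claim to a pointwise statement about the $d$-variate function $w\in\cV$ and then combine Assumptions~(W2) and~(W3) through an elementary interpolation argument. \emph{Step~0 (reduction).} In this section $\cX=\cT=\bR^d$ and $\tau=\nu$ is the Lebesgue measure, and any weight in $\cW$ has the form $w(t-x)$ with $w\in\cV$ (and is identified with the $d$-variate function $w$); translation invariance of the $\bL_p$-norm therefore gives $\sup_{x}\|w(\cdot-x)\|_p=\sup_{t}\|w(t-\cdot)\|_p=\|w\|_p$, whence $M_p(w)=\|w\|_p$ for every $p\ge1$. Thus it suffices to prove, for each $w\in\cV$ and $1\le p<q\le\infty$,
\[
n^{1/q}\|w\|_q\ \le\ \alpha_1^{-1}\alpha_2^{-1/p}\,\mu^{1/q-1/p}\,n^{1/p}\|w\|_p .
\]

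\emph{Step~1 (bounding $\|w\|_\infty$ by $\|w\|_p$).} Put $A_w:=\{x\in\bR^d:|w(x)|\ge\alpha_1\|w\|_\infty\}$. Assumption~(W2) gives $\mathrm{mes}(A_w)\ge\alpha_2\,\mathrm{mes}\{\mathrm{supp}(w)\}$, so $\|w\|_p^p\ge\int_{A_w}|w|^p\ge\alpha_1^p\alpha_2\,\mathrm{mes}\{\mathrm{supp}(w)\}\,\|w\|_\infty^p$, i.e.\ $\|w\|_\infty\le\alpha_1^{-1}\alpha_2^{-1/p}\,[\mathrm{mes}\{\mathrm{supp}(w)\}]^{-1/p}\|w\|_p$. \emph{Step~2 (interpolation and use of (W3)).} For $p<q<\infty$ the inequality $\|w\|_q\le\|w\|_p^{p/q}\|w\|_\infty^{1-p/q}$ (from $\int|w|^q\le\|w\|_\infty^{q-p}\|w\|_p^p$), combined with Step~1 and the identity $(1-p/q)/p=1/p-1/q$, yields
\[
\|w\|_q\ \le\ \alpha_1^{-(1-p/q)}\alpha_2^{-(1/p-1/q)}\,[\mathrm{mes}\{\mathrm{supp}(w)\}]^{-(1/p-1/q)}\,\|w\|_p ;
\]
since $\alpha_1,\alpha_2\in(0,1)$ with $1-p/q\le1$ and $1/p-1/q\le1/p$, the prefactor is at most $\alpha_1^{-1}\alpha_2^{-1/p}$, and the case $q=\infty$ is Step~1 itself. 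Finally Assumption~(W3) gives $\mathrm{mes}\{\mathrm{supp}(w)\}\ge\mu/n$, hence $[\mathrm{mes}\{\mathrm{supp}(w)\}]^{-(1/p-1/q)}\le(\mu/n)^{-(1/p-1/q)}=\mu^{1/q-1/p}\,n^{1/p-1/q}$; multiplying the resulting bound by $n^{1/q}$ gives exactly the desired inequality.

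The argument is entirely elementary, and I do not foresee a genuine obstacle. The only mildly delicate points are the reduction in Step~0 (checking that the $(2d)$-variate weight really contributes only through $\|w\|_p$) and, in Step~2, the replacement of the ``natural'' exponents $1-p/q$ and $1/p-1/q$ on $\alpha_1,\alpha_2$ by the cruder but $q$-independent exponents $1$ and $1/p$ stated in the lemma --- legitimate precisely because $\alpha_1,\alpha_2<1$.
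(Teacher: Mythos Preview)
Your proof is correct and follows essentially the same approach as the paper: both use Assumption~(W2) to get the lower bound $\|w\|_p\ge\alpha_1\alpha_2^{1/p}\|w\|_\infty[\mathrm{mes}\{\mathrm{supp}(w)\}]^{1/p}$ and then invoke (W3). The only difference is that the paper bounds $\|w\|_q$ directly by $\|w\|_\infty[\mathrm{mes}\{\mathrm{supp}(w)\}]^{1/q}$, which yields the stated constants $\alpha_1^{-1}\alpha_2^{-1/p}$ in one stroke, whereas your interpolation $\|w\|_q\le\|w\|_p^{p/q}\|w\|_\infty^{1-p/q}$ first produces the sharper exponents $\alpha_1^{-(1-p/q)}\alpha_2^{-(1/p-1/q)}$ and then weakens them via $\alpha_1,\alpha_2\in(0,1)$.
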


\begin{pf}
Recall that under (\ref{eq:W-V}), $M_p(w)=\|w\|_p$ for all $p\geq1$.
In view of Assumption \hyperlink{assW2}{(W2)} for any $w\in\cV$, we have
\[
\alpha_1\alpha_2^{1/p}\|w\|_\infty
[\operatorname{mes}\{\operatorname{supp}(w)\} ]^{1/p} \leq\|w\|_p \leq\|
w\|_\infty
[\operatorname{mes}\{\operatorname{supp}(w)\}]^{1/p}.
\]
This inequality, together with Assumption \hyperlink{assW3}{(W3)}, yields
\begin{eqnarray*}
n^{1/q}\|w\|_q &\leq& n^{1/q}\|w\|_\infty
[\operatorname{mes}\{\operatorname{supp}(w)\}]^{1/q}
\\
&=& \frac{n^{1/p}
\|w\|_\infty[\operatorname{mes}\{\operatorname{supp}(w)\}
]^{1/p}}
{[n \operatorname{mes}\{\operatorname{supp}(w)\}]^{1/p-1/q}} \\
&\leq&
\alpha^{-1}_1\alpha_2^{-1/p}\mu^{1/q-1/p}[n^{1/p}\|w\|_p].
\end{eqnarray*}
\upqed\end{pf}

Our next lemma demonstrates that there exists
a real number $m_p\in(0,1]$ such that
(\ref{eq:before-t-random-case}) holds.
\begin{lemma}\label{lem:before_th-random-case}
Let Assumptions \ref{assW} and \ref{assL} hold;
then for any $p\geq2$, the inequality (\ref{eq:before-t-random-case})
is valid
with
$m_p=2/p$ and $C_p=(2\alpha_*)^{1-2/p}\mu^{1/p-1/2}$, that is,
\[
\sup_{b\in[\underline{\mathrm{w}}_{2},\bar{\mathrm{w}}_{2}]}
b^{-1}\sup_{\zeta_1,\zeta_2\in\bZ_2(b)}
\frac{n^{1/p}\|\phi[\zeta_1]-\phi[\zeta_2]\|_p}{
[\mathrm{d}(\zeta_1,\zeta_2)]^{2/p}}
\leq(2\alpha_*)^{1-2/p}\mu^{1/p-1/2}.
\]
\end{lemma}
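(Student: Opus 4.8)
\emph{Proof plan.} The plan is to interpolate the $\bL_p$-norm between the $\bL_2$-norm and the $\bL_\infty$-norm, and then to control the $\bL_2$-factor by Assumption~(L) and the $\bL_\infty$-factor by Assumptions~(W2)--(W3). Fix $p\ge 2$ (finite), $b\in[\underline{\mathrm{w}}_2,\bar{\mathrm{w}}_2]$ and $\zeta_1,\zeta_2\in\bZ_2(b)$, and set $g:=\phi[\zeta_1]-\phi[\zeta_2]$. Since $p\ge 2$, writing $\int|g|^p=\int|g|^{p-2}|g|^2\le\|g\|_\infty^{p-2}\|g\|_2^2$ gives the elementary interpolation inequality $\|g\|_p\le\|g\|_\infty^{1-2/p}\|g\|_2^{2/p}$. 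It then remains to bound the two factors.

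For the $\bL_\infty$-factor, apply Lemma~\ref{lem:tech_new1} with the exponents $2$ and $\infty$ to each $\phi[\zeta_i]\in\cV$ (recall $M_p(w)=\|w\|_p$ here); this yields $\|\phi[\zeta_i]\|_\infty\le \alpha_*\,\mu^{-1/2}\sqrt{n}\,\|\phi[\zeta_i]\|_2$. Since $\zeta_i\in\bZ_2(b)$ means $\sqrt{n}\,\|\phi[\zeta_i]\|_2\le b$, the triangle inequality gives $\|g\|_\infty\le 2\alpha_*\,\mu^{-1/2}\,b$. For the $\bL_2$-factor, Assumption~(L) in the form (\ref{eq:L-s>2}), applied with the same $b$, gives $\sqrt{n}\,\|g\|_2\le b\,\mathrm{d}(\zeta_1,\zeta_2)$, i.e. $\|g\|_2\le n^{-1/2}b\,\mathrm{d}(\zeta_1,\zeta_2)$.

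Combining the three bounds, $n^{1/p}\|g\|_p\le n^{1/p}\big(2\alpha_*\mu^{-1/2}b\big)^{1-2/p}\big(n^{-1/2}b\,\mathrm{d}(\zeta_1,\zeta_2)\big)^{2/p}$. The powers of $n$ cancel ($n^{1/p}n^{-1/p}=1$), the powers of $b$ combine to $b^{1-2/p}b^{2/p}=b$, and $\mu^{-(1-2/p)/2}=\mu^{1/p-1/2}$, so that $n^{1/p}\|g\|_p\le(2\alpha_*)^{1-2/p}\mu^{1/p-1/2}\,b\,[\mathrm{d}(\zeta_1,\zeta_2)]^{2/p}$. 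Dividing by $b$ and taking the supremum over $\zeta_1,\zeta_2\in\bZ_2(b)$ and over $b\in[\underline{\mathrm{w}}_2,\bar{\mathrm{w}}_2]$ yields the assertion with $m_p=2/p\in(0,1]$ and $C_p=(2\alpha_*)^{1-2/p}\mu^{1/p-1/2}<\infty$. There is no genuine obstacle here; the only point deserving care is that Assumptions~(W2)--(W3) (via Lemma~\ref{lem:tech_new1}) are used for the individual functions $\phi[\zeta_i]\in\cV$ and \emph{not} for the difference $g$, which is handled solely through the triangle inequality for the $\bL_\infty$-norm; the assumption $p$ finite is what makes $m_p=2/p$ lie in $(0,1]$.
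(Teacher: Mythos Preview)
Your proof is correct and follows essentially the same approach as the paper: interpolate $\|\phi[\zeta_1]-\phi[\zeta_2]\|_p$ between $\bL_\infty$ and $\bL_2$, bound the $\bL_\infty$-part via Lemma~\ref{lem:tech_new1} (with $q=\infty$, $p=2$) and the triangle inequality, and control the $\bL_2$-part by Assumption~(L). The only cosmetic difference is that the paper writes the interpolation directly as $(\|\phi[\zeta_1]\|_\infty+\|\phi[\zeta_2]\|_\infty)^{1-2/p}\|\phi[\zeta_1]-\phi[\zeta_2]\|_2^{2/p}$, which is exactly your bound after applying the triangle inequality to $\|g\|_\infty$.
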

\begin{pf}
We obviously have for any $p>2$
\[
\|\phi[\zeta_1]-\phi[\zeta_2]\|_p\leq(\|\phi[\zeta_1]\|_\infty+\|
\phi[\zeta_2]\|_\infty
)^{1-2/p}
(\|\phi[\zeta_1]-\phi[\zeta_2]\|_2)^{2/p}.
\]
Applying Lemma \ref{lem:tech_new1} with $q=\infty$ and $p=2$,
we have that
$\sup_{\zeta\in\bZ_2(b)}\|\phi[\zeta]\|_\infty\leq b\alpha_*\mu^{-1/2}$
for all $b\in[\underline{\mathrm{w}}_{2},\bar{\mathrm{w}}_{2}]$.
Then in view of Assumption \ref{assL}
\[
\sup_{\zeta_1,\zeta_1\in\bZ_2(b)}\frac{n^{1/p}
\|\phi[\zeta_1]-\phi[\zeta_2]\|_p}
{[\mathrm{d}(\zeta_1,\zeta_2)]^{2/p}}\leq
b(2\alpha_*)^{1-2/p}\mu^{1/m-1/2}\qquad \forall b\in
[\underline{\mathrm{w}}_2, \bar{\mathrm{w}}_2],
\]
as claimed.
\end{pf}
\begin{lemma}\label{lem:phi-norms}
Let Assumptions \ref{assW} and \ref{assL} hold; then
for any $\zeta\in\bZ_a$
%
%
\begin{eqnarray}
\label{eq:000}
\sqrt{n} \|\phi[\zeta]\|_p &\leq&
\mu_*^{1/p - 1/2}a\qquad \forall p\in[1,2),
\\
\label{eq:001}
n^{1/p} \|\phi[\zeta]\|_p &\leq&
\alpha_*\mu^{1/p-1/2} a\qquad \forall p>2,
\\
\label{eq:002}
\sqrt{n} \|\phi^2[\zeta]\|_p
&\leq&\alpha_*\mu^{-1/2}\mu_*^{1/p-1/2}a^2\qquad
\forall p\in[1,2).
\end{eqnarray}
\end{lemma}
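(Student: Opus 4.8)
I would establish the three bounds in the order stated, deriving all of them from the single a priori estimate that is built into the slices $\bZ_a$. Recall that the slices used in this proof are defined (Corollary~\ref{cor:th_random}) so that $\zeta\in\bZ_a$ entails $\bar{U}_\xi(\phi[\zeta])\le a$, and since $\bar{U}_\xi(w)=\max\{U_\xi(w,f),\sqrt n\|w\|_2\}\ge\sqrt n\|w\|_2$, this gives the basic bound $\sqrt n\|\phi[\zeta]\|_2\le a$ for every $\zeta\in\bZ_a$. Writing $w:=\phi[\zeta]$, recall also that in this section $M_p(w)=\|w\|_p$, that $\mathrm{mes}\{\mathrm{supp}(w)\}\le\mu_*$ by~(W1), and that $\mathrm{supp}(w^2)=\mathrm{supp}(w)$. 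The only two tools needed are H\"older's inequality on the finite-measure support of $w$ and Lemma~\ref{lem:tech_new1}.

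For (\ref{eq:000}) with $p\in[1,2)$, I would apply H\"older's inequality with the conjugate exponents $2/p$ and $2/(2-p)$ over $\mathrm{supp}(w)$ to obtain $\|w\|_p\le[\mathrm{mes}\{\mathrm{supp}(w)\}]^{1/p-1/2}\|w\|_2\le\mu_*^{1/p-1/2}\|w\|_2$, then multiply by $\sqrt n$ and insert $\sqrt n\|w\|_2\le a$. For (\ref{eq:001}) with $p>2$, I would simply invoke Lemma~\ref{lem:tech_new1} with the exponent pair $(2,p)$, which yields $n^{1/p}\|w\|_p\le\alpha_1^{-1}\alpha_2^{-1/2}\mu^{1/p-1/2}\bigl[\sqrt n\|w\|_2\bigr]=\alpha_*\mu^{1/p-1/2}\sqrt n\|w\|_2\le\alpha_*\mu^{1/p-1/2}a$, where $\alpha_*=\alpha_1^{-1}\alpha_2^{-1/2}$.

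For (\ref{eq:002}) with $p\in[1,2)$, I would first peel off the support factor exactly as in (\ref{eq:000}): $\|w^2\|_p\le\mu_*^{1/p-1/2}\|w^2\|_2$. Next use the elementary bound $\|w^2\|_2^2=\int|w|^4\le\|w\|_\infty^2\int|w|^2$, i.e., $\|w^2\|_2\le\|w\|_\infty\|w\|_2$, together with Lemma~\ref{lem:tech_new1} applied with the pair $(2,\infty)$, which gives $\|w\|_\infty\le\alpha_*\mu^{-1/2}\sqrt n\|w\|_2$. Chaining these estimates produces
\[
\sqrt n\|w^2\|_p\le\mu_*^{1/p-1/2}\,\sqrt n\,\|w\|_\infty\|w\|_2\le\alpha_*\mu^{-1/2}\mu_*^{1/p-1/2}\bigl(\sqrt n\|w\|_2\bigr)^2\le\alpha_*\mu^{-1/2}\mu_*^{1/p-1/2}a^2 ,
\]
which is the claimed inequality.

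There is no genuine obstacle; the lemma is essentially bookkeeping. The one thing to get right is the organization of (\ref{eq:002}): the external factor $\sqrt n$ must be paired with the $\sqrt n$ coming out of the $\bL_\infty$ bound on $w$ so that, together with the two factors of $\|w\|_2$, they assemble into $(\sqrt n\|w\|_2)^2\le a^2$. Routing the estimate instead through $\|w\|_4$ (bounding $\|w^2\|_2=\|w\|_4^2$ by Lemma~\ref{lem:tech_new1} directly) would produce $\alpha_*^2$ rather than $\alpha_*$ and hence miss the stated constant; similarly one should not replace $\mu$ by $\mu_*$ prematurely, since the two quantities enter with different exponents.
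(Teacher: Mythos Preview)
Your proof is correct and follows essentially the same route as the paper: H\"older on the support for (\ref{eq:000}), Lemma~\ref{lem:tech_new1} with the pair $(2,p)$ for (\ref{eq:001}), and for (\ref{eq:002}) the combination of H\"older with the $\bL_\infty$ bound $\|w\|_\infty\le\alpha_*\mu^{-1/2}\sqrt n\|w\|_2$ from Lemma~\ref{lem:tech_new1}. The only cosmetic difference in (\ref{eq:002}) is the order of operations: the paper first writes $\|\phi^2[\zeta]\|_p\le\|\phi[\zeta]\|_\infty\|\phi[\zeta]\|_p$ and then invokes (\ref{eq:000}) on the second factor, whereas you first pass from $\|w^2\|_p$ to $\|w^2\|_2$ via H\"older and then split $\|w^2\|_2\le\|w\|_\infty\|w\|_2$; the ingredients and the resulting constant are identical.
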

\begin{pf}
By the H\"older inequality
$\|\phi[\zeta]\|_p \leq\mu_*^{1/p-1/2} \|\phi[\zeta]\|_2$; then (\ref{eq:000})
holds by definition of $\bZ_a$.
Inequality (\ref{eq:001}) follows
Lemma \ref{lem:tech_new1}.
In order to prove (\ref{eq:002}), we write
$\|\phi^2[\zeta]\|_p \leq\|\phi[\zeta]\|_\infty\|\phi[\zeta]\|_p$,
note that
by Lemma \ref{lem:tech_new1}
$\|\phi[\zeta]\|_\infty\leq\alpha_* \mu^{-1/2} \sqrt{n}\|\phi[\zeta]\|_2$
and use (\ref{eq:001}).
\end{pf}

1$^0$. \textit{Notation.} Now we establish some notation.
Recall that
$U_\xi(w,f)=c_1(s)[\sqrt{n} \Sigma_s(w,f) + 2n^{1/s}M_s(w)]$ and
$\bar{U}_\xi(w,f)$ is given by (\ref{eq:U-bar}).
It follows from the definition of $\bar{U}_\xi(\cdot,f)$, (\ref{eq:U-xi-f})
and (\ref{eq:sigg})
that
$\bar{U}_\xi(w,f)\geq\sqrt{n}\|w\|_2$ and
\[
\bar{U}_\xi(w,f) \leq c_1(s)
\bigl[
\mathrm{f}_\infty^{1/2-1/s}\sqrt{n}\|w\|_2 + 2n^{1/s}
\|w\|_s\bigr]
\leq c_1(s) \alpha_*[\mathrm{f}_\infty^{1/2}+2]\sqrt{n}\|w\|_2,
\]
where the last inequality is a consequence of Lemma \ref{lem:tech_new1}.
Therefore, we put
\[
r_\xi= \sqrt{n} \underline{\mathrm{w}}_2,\qquad R_\xi=c_1(s) \alpha_*[\mathrm
{f}_\infty^{1/2}+2] \sqrt{n}\bar{\mathrm{w}}_2,
\]
where $\underline{\mathrm{w}}_p$ and $\bar{\mathrm{w}}_p$ are defined
in (\ref{eq:w-p}). Recall also that
$\bZ_a=\{\zeta\dvtx a/2< \bar{U}_\xi(\phi[\zeta],f)\leq a\}$.
By definition of $\bar{U}_\xi(w,f)$ and by
the fact that $M_p(w)=\|w\|_p$ for all $p\geq1$,
we have that
$\bZ_a\subseteq\bZ_2(a)$ for all $a\in[r_\xi, R_\xi]$;
see (\ref{eq:Z-ss}). Define the distance
%
%
\begin{equation}
\label{eq:distance}
\mathrm{d}_*(\zeta_1,\zeta_2)= k_*\times
\cases{
\mathrm{d}(\zeta_1,\zeta_2) \vee[\mathrm{d}(\zeta_1,\zeta_2)]^{m_s},
&\quad
$s\in[1,4)$,\cr
\mathrm{d}(\zeta_1,\zeta_2) \vee[\mathrm{d}(\zeta_1,\zeta_2)]^{m_s}\cr
\qquad\vee\,
[\mathrm{d}(\zeta_1,\zeta_2)]^{m_{s/2}}, &\quad$s>4$,}
\end{equation}
where $k_*$ is given in (\ref{eq:vartheta}).
Note that $\mathrm{d}_*(\cdot,\cdot)$ is indeed a distance
because by definition $m_p\leq1$ for all $p\geq2$.

2$^0$. \textit{Verification of condition} (\ref{eq:condition-kappa}).
It follows from definition of $\bar{U}_\xi(\cdot,f)$, (\ref{eq:U-xi-f})
and (\ref{eq:sigg})
that
%
%
\begin{eqnarray}
\label{eq:new4}
\sqrt{n}\|\phi[\zeta]\|_2 &\leq&\bar{U}_\xi(\phi[\zeta],f) \nonumber\\[-8pt]\\[-8pt]
&\leq& c_1(s)\bigl[
\mathrm{f}_\infty^{1/2-1/s}\sqrt{n}\|\phi[\zeta]\|_2 + 2n^{1/s}
\|\phi[\zeta]\|_s\bigr].\nonumber
\end{eqnarray}
Therefore, by (\ref{eq:new4}), Assumption \ref{assL}
and (\ref{eq:before-t-random-case}) for any $\zeta_1,\zeta_2\in\bZ_a$
\begin{eqnarray*}
&&\bar{U}_\xi(\phi[\zeta_1]-\phi[\zeta_2],f) \\
&&\qquad\leq c_1(s)
\bigl[\mathrm{f}_\infty^{1/2-1/s}\sqrt{n}\|\phi[\zeta_1]-\phi[\zeta_2]\|_2 +
2n^{1/s}
\|\phi[\zeta_1]-\phi[\zeta_2]\|_s\bigr]
\\
&&\qquad\leq c_1(s)[\mathrm{f}_\infty^{1/2} + 2C_s] a \{\mathrm{d}(\zeta
_1,\zeta_2)
\vee[\mathrm{d}(\zeta_1,\zeta_2)]^{m_s}\}.
\end{eqnarray*}
Thus
\[
\sup_{\zeta_1, \zeta_2}
\frac{\bar{U}_\xi(\phi[\zeta_1]-\phi[\zeta_2],f)}{\mathrm{d}_*(\zeta_1,
\zeta_2)} \leq a\qquad \forall a\in[r_\xi,R_\xi],
\]
and (\ref{eq:condition-kappa}) is valid,
because
$k_* \geq
c_1(s)[\mathrm{f}_\infty^{1/2} + 2C_s]$;
see (\ref{eq:vartheta}) and (\ref{eq:distance}).

3$^0$.
\textit{Computation of $\varkappa_{\tilde{U}}$ and verification of}
(\ref{eq:condition-kappa-tild}).

We start with bounds on $\sup_{\zeta\in\bZ_a}\tilde{U}(\phi^2[\zeta])$.
Recall that
%
%
\begin{equation}\label{eq:U-tilde}
\tilde{U}(\phi^{2}[\zeta])
=
\cases{
4n^{2/s}M_{s/2}(\phi^{2}[\zeta]), &\quad$s\in(2,4)$,
\cr
c_1(s/2)\bigl[ \mathrm{f}_{\infty}^{1/2} \sqrt{n}
M_2(\phi^{2}[\zeta])\cr
\hspace*{37.13pt}+\, 2n^{2/s}M_{s/2}(\phi^{2}[\zeta])\bigr],
&\quad$s\geq4$.}\vadjust{\goodbreak}
\end{equation}
By (\ref{eq:001}), for any $\zeta\in\bZ_a$,
%
%
\begin{eqnarray}\label{eq:0022}\hspace*{28pt}
n^{2/s}\|\phi^2[\zeta]\|_{s/2} &=& (n^{1/s}\|\phi[\zeta]\|_s)^2 \leq
\alpha_*^{2}\mu^{(2/s)-1}
n \|\phi[\zeta]\|_2^2 \nonumber\\[-8pt]\\[-8pt]
&\leq&\alpha_*^{2}\mu^{(2/s)-1}a^2\qquad \forall s>2,\nonumber
\\
\label{eq:0023}
\sqrt{n} \|\phi^2[\zeta]\|_2 &=& (n^{1/4} \|\phi[\zeta]\|_4)^2 \leq
\alpha_*^{2}\mu^{-1/2}n\|\phi[\zeta]\|_2^2 \leq\alpha_*^{2}\mu^{-1/2}a^2.
\end{eqnarray}
Substituting these bounds in the expression for $\tilde{U}(\phi^2[\zeta])$
and taking into account that $\mu\geq1$ in view of \hyperlink
{assW3}{(W3)} we obtain
for all $s>2$
%
%
\begin{equation}\label{eq:vartheta-2}\quad
\sup_{\zeta\in\bZ_a} \tilde{U}(\phi^2[\zeta])
\leq k_1 \mu^{{2}/({s\wedge4})-1} a^2,\qquad
k_1:=4\alpha_*^2 c_1(s/2) [\mathrm{f}_\infty^{1/2}+2].
\end{equation}

Now we establish bounds on $\tilde{U}(\phi^2[\zeta_1]-\phi^2[\zeta
_2])$, $\zeta_1,\zeta_2\in\bZ_a$.

(a) First, we consider the case $s\in(2,4)$.
By the H\"older and triangle inequalities, we have
\begin{eqnarray*}
&&n^{2/s}\|\phi^2[\zeta_1]-\phi^2[\zeta_2]\|_{s/2} \\
&&\qquad\leq
n^{2/s-1/2}\bigl[\|\phi[\zeta_1]\|_{2s/(4-s)} + \|\phi[\zeta_2]\|_{2s/(4-s)}\bigr]
\sqrt{n}\|\phi[\zeta_1]-\phi[\zeta_2]\|_2.
\end{eqnarray*}
Noting that $2s/(4-s)>2$ and
applying (\ref{eq:001}),
we have
\[
n^{2/s-1/2}\|\phi[\zeta]\|_{2s/(4-s)} \leq\alpha_*\mu^{2/s-1} \sqrt{n}
\|\phi[\zeta]\|_2 \leq\alpha_*\mu^{2/s-1} a\qquad
\forall\zeta\in\bZ_a.
\]
Then using Assumption \ref{assL} we get
%
%
\begin{equation}\label{eq:phi2-2<s<4}\qquad
n^{2/s}\|\phi^2[\zeta_1]-\phi^2[\zeta_2]\|_{s/2} \leq
2\alpha_*\mu^{2/s-1} a^2 \mathrm{d}(\zeta_1,\zeta_2)\qquad
\forall\zeta_1,\zeta_2\in\bZ_a.
\end{equation}
This along with (\ref{eq:U-tilde}) implies that
for $s\in(2,4)$
%
%
\begin{equation}\label{eq:U-tilde-s<4}
\tilde{U}(\phi^2[\zeta_1]-\phi^2[\zeta_2]) \leq8
\alpha_*
\mu^{2/s-1} a^2
\mathrm{d}(\zeta_1,\zeta_2)\qquad \forall\zeta_1,\zeta_2\in\bZ_a.
\end{equation}

(b) Now assume that $s\geq4$.
We have for $\zeta_1,\zeta_2\in\bZ_a$
%
%
\begin{eqnarray}\label{eq:phi2-phi2-2}\hspace*{38pt}
\sqrt{n}\|\phi^2[\zeta_1]-\phi^2[\zeta_2]\|_2 &\leq&
\bigl[\|\phi[\zeta_1]\|_\infty+\|\phi[\zeta_2]\|_\infty\bigr]
\sqrt{n}\|\phi[\zeta_1]-\phi[\zeta_2]\|_2
\nonumber\\[-8pt]\\[-8pt]
&\leq& 2\alpha_*\mu^{-1/2} a^2 \mathrm{d}(\zeta_1,\zeta_2),
\nonumber
\end{eqnarray}
where we used Assumption \ref{assL}, and (\ref{eq:001}) with $p=\infty$.
Furthermore, we have
for all $\zeta_1,\zeta_2\in\bZ$
%
%
\begin{eqnarray}\label{eq:phi2-phi2-s/2}
&&
n^{2/s} \|\phi^2[\zeta_1]-\phi^2[\zeta_2]\|_{s/2}\nonumber\\
&&\qquad\leq \bigl[\|\phi[\zeta_1]\|_\infty+ \|\phi[\zeta_2]\|_\infty\bigr]
n^{2/s} \|\phi[\zeta_1]-\phi[\zeta_2]\|_{s/2}
\\
&&\qquad\leq 2C_{s/2}\alpha_*\mu^{-1/2} a^2 \{\mathrm{d}(\zeta_1,\zeta_2)\}^{m_{s/2}},
\nonumber
\end{eqnarray}
where we have used
(\ref{eq:001}) with $p=\infty$ and the definition of $m_p$ [see
(\ref{eq:before-t-random-case})].
These inequalities lead to the following bound: for all $s>4$
\begin{eqnarray*}
\tilde{U}(\phi^2[\zeta_1]-\phi^2[\zeta_2])
&\leq&2\alpha_*
c_1(s/2) [\mathrm{f}^{1/2}_\infty+2C_{s/2}]\\
&&{}\times \mu^{-1/2} a^2
\{ \mathrm{d}(\zeta_1,\zeta_2) \vee[
\mathrm{d}(\zeta_1,\zeta_2)]^{m_{s/2}}\}.
\end{eqnarray*}
Combining this with (\ref{eq:U-tilde-s<4}),
we obtain that for all $s>2$
%
%
\begin{equation}\label{eq:U-tild-phi-phi}
\tilde{U}(\phi^2[\zeta_1]-\phi^2[\zeta_2])
\leq k_2
\mu^{{2}/({s\wedge4})-1} a^2 \{ \mathrm{d}(\zeta_1,\zeta_2) \vee
\mathrm{d}^{m_{s/2}}(\zeta_1,\zeta_2)\},
\end{equation}
where $k_2:= 8\alpha_* c_1(s/2)[\mathrm{f}_\infty^{1/2}+2C_{s/2}]$.
Now using (\ref{eq:vartheta-2}) and (\ref{eq:U-tild-phi-phi}),
we obtain
\[
\varkappa_{\tilde{U}}(a) = \sup_{\zeta\in\bZ_a}
\frac{\tilde{U}(\phi^2[\zeta_1]-\phi^2[\zeta_2])}{\mathrm{d}_*(\zeta
_1,\zeta_2)}
\vee\sup_{\zeta\in\bZ_a} \tilde{U}(\phi^2[\zeta])
\leq\mu^{{2}/({s\wedge4})-1} a^2,
\]
and the last bound holds because $k_*\geq k_1\vee k_2$
[see (\ref{eq:vartheta})].
Thus, the condition~(\ref{eq:condition-kappa-tild}) is valid with
%
%
\begin{equation}\label{eq:gamma}
\gamma=\mu^{{1}/({s\wedge4})- {1}/{2}}.
\end{equation}
Note that condition of the theorem $\mu>[64c^2_1(s)]^{s\wedge
4/(s\wedge4-2)}$
ensures that $\gamma<[4c_1(1+\epsilon)]^{-1}$ for any $\epsilon\in
(0,1)$ as required
in Theorem \ref{t_random}.\vspace*{1pt}

4$^0$.
\textit{Bounding $\Lambda_{A_\xi}$ and $\Lambda_{B_\xi}$.}
By the formula for $A_\xi^2(w)$ given
immediately after
(\ref{eq:U-xi-f}), and by (\ref{eq:000}) and (\ref{eq:001}),
we have for $\zeta\in\bZ_a$
\begin{eqnarray*}
A_\xi^2(\phi[\zeta]) &\leq& 2c_1(s)\mathrm{f}_\infty^2
\bigl[ n \|\phi[\zeta]\|_{2s/(s+2)}^2+4\sqrt{n}\|\phi[\zeta]\|_2
\|\phi[\zeta]\|_s+8n^{1/s}\|\phi[\zeta]\|_s^2\bigr]
\\
&\leq& 2c_1(s)\mathrm{f}_\infty^2 a^2[\mu_*^{2/s}
+ 12\alpha_*^2 n^{-1/s}] \leq24 \alpha_*^2
c_1(s)\mathrm{f}_\infty^2 a^2 [\mu_*^{2/s}+n^{-1/s}].
\end{eqnarray*}
Here we have used that $\mu\geq1$, $\alpha_*\geq1$ and
we write $c_1(s)$ instead of $c_3(s)$
in the definition of $A^2(\cdot)$ because for functions $w(t,x)$
depending on $t-x$ only
the constant $c_2(s)$ equals one
[see (\ref{eq:c-*}) and remark after Lemma \ref{folland} in Section
\ref{sec:proof-fixed_w}]. Thus,
\[
\sup_{\zeta\in\bZ_a} A_\xi(\phi[\zeta]) \leq5\sqrt{c_1(s)}
\alpha_* \mathrm{f}_\infty
a \bigl[ \mu_*^{1/s} + n^{-1/(2s)}\bigr].
\]
In order to bound $A_\xi^2(\phi[\zeta_1]-\phi[\zeta_2])$, we note that
for all $\zeta_1,\zeta_2\in\bZ_a$:
\begin{itemize}
\item by the H\"older inequality and by Assumption \ref{assL},
$\sqrt{n}\|\phi[\zeta_1]\!-\!\phi[\zeta_2]\|_{2s/(s+2)}\leq a \mu_*^{1/s}
\mathrm{d}(\zeta_1,\zeta_2)$;
\item
by Assumption \ref{assL}, $\sqrt{n}\|\phi[\zeta_1]-\phi[\zeta_2]\|
_2\leq a
\mathrm{d}(\zeta_1,\zeta_2)$;
\item by (\ref{eq:before-t-random-case}),
$n^{1/s}\|\phi[\zeta_1]-\phi[\zeta_2]\|_s\leq C_s a [\mathrm{d}(\zeta_1,
\zeta_2)]^{m_s}$.
\end{itemize}
Therefore,
\[
\sup_{\zeta_1,\zeta_2\in\bZ_a}
\frac{A_\xi(\phi[\zeta_1]-\phi[\zeta_2])}{\mathrm{d_*}(\zeta_1,\zeta_2)}
\leq5\sqrt{c_1(s)}\mathrm{f}_\infty(C_s\vee1) a
\bigl[\mu_*^{1/s} + n^{-1/(2s)}\bigr]
\]
and
$
\Lambda_{A_\xi}\leq5\sqrt{c_1(s)}\alpha_* \mathrm{f}_\infty
(C_s\vee1)
[\mu_*^{1/s} + n^{-1/(2s)}]$.
Similarly, since $B_\xi(\phi[\zeta])=\frac{4}{3}c_1(s)\|\phi[\zeta]\|
_s$, we have
by (\ref{eq:001})
that
$\Lambda_{B_\xi} \leq\frac{4}{3}c_1(s) (C_s\vee1) \alpha_* n^{-1/s}$.
Thus, we have shown that
\[
\Lambda_{A_\xi} \leq k_3 \bigl[\mu_*^{1/s} + n^{-1/(2s)}\bigr],\qquad
\Lambda_{B_\xi} \leq k_3 n^{-1/s},\qquad
k_3:= 5c_1(s)\alpha_* \mathrm{f}_\infty(C_s\vee1).
\]
These bounds on $\Lambda_{A_\xi}$ and $\Lambda_{B_\xi}$ lead to the definition
of $C_\xi^*(y)$ in (\ref{eq:C-*-def}) [see also (\ref{eq:definitions-new-new})].
Note that $\vartheta_0$ in (\ref{eq:C-*-def}) satisfies $\vartheta_0=k_3$.

5$^0$. \textit{Computation of $\lambda_{\tilde{A}}$, $\lambda_{\tilde{B}}$
and $y_\gamma$.}

(i) First,\vspace*{1pt} consider the case $s\in(2,4)$. Recall that in this case
$\tilde{A}^2(\phi^2[\zeta])=37 n\|\phi^2[\zeta]\|^2_{s/2}=37 n \|\phi
[\zeta]\|_s^4$
and $\tilde{B}(\phi^2[\zeta])=0$.
Hence, by (\ref{eq:001})
\[
\sup_{\zeta\in\bZ_a} \tilde{A}(\phi^2[\zeta]) = \sup_{\zeta\in\bZ_a}
\sqrt{37n}\|\phi[\zeta]\|^2_s \leq\sqrt{37}\alpha_*^2\mu^{2/s-1}
n^{1/2-2/s} a^2.
\]
It follows from (\ref{eq:phi2-2<s<4}) that for any $\zeta_1,\zeta_2\in
\bZ_a$
\[
\sqrt{37n}\|\phi^2[\zeta_1]-\phi^2[\zeta_2]\|_{s/2} \leq
2\sqrt{37} \alpha_* \mu^{2/s-1} n^{1/2-2/s} a^2 \mathrm{d}(\zeta_1,\zeta_2).
\]
Combining these results, we obtain that
$\lambda_{\tilde{A}} \leq
2\sqrt{37} \alpha_*^2\mu^{2/s-1} n^{1/2-2/s}$ and $\lambda_{\tilde{B}}=0$
which, in turn, by (\ref{eq:y-gamma}) and (\ref{eq:gamma}) implies that
\[
y_\gamma=\gamma^4 \lambda^{-2}_{\tilde{A}} \geq
\bigl(2\sqrt{37}\alpha_*^2\bigr)^{-2} n^{4/s-1}=:y_*.
\]
This explains the definition of the
constant $\vartheta_1$ in (\ref{eq:vartheta1-2}).

(ii) Now let $s\geq4$; here recall that
\begin{eqnarray*}
\tilde{A}^2(\phi^2[\zeta])&=&
2c_1(s/2)\mathrm{f}_\infty^2 \bigl[n\|\phi^2[\zeta]\|_{2s/(s+4)}^2
+ 4\sqrt{n} \|\phi^2[\zeta]\|_2\|\phi^2[\zeta]\|_{s/2}\\
&&\hspace*{169.19pt}{} + 8 n^{2/s}\|\phi^2[\zeta]\|_{s/2}^2\bigr].
\end{eqnarray*}
Observing that for $\zeta\in\bZ_a$:
\begin{enumerate}[(a)]
\item[(a)] $\sqrt{n}\|\phi^2[\zeta]\|_{2s/(s+4)} \leq\alpha_*\mu_*^{2/s}a^2$
by (\ref{eq:002}) and $\mu\geq1$;
\item[(b)] $n^{1/s}\|\phi^2[\zeta]\|_{s/2}\leq\alpha_*^2 n^{-1/s} a^2$
by (\ref{eq:0022});
\item[(c)] $\sqrt{n}\|\phi^2[\zeta]\|_2\|\phi^2[\zeta]\|_{s/2}
\leq\alpha_*^2 \mu^{-1/2}n^{-2/s} a^4$
by (\ref{eq:0023}) and (b),
\end{enumerate}
we obtain
%
%
\begin{equation}\label{eq:A-t-1}
\sup_{\zeta\in\bZ_a} \tilde{A}(\phi[\zeta]) \leq
5\sqrt{c_1(s/2)} \mathrm{f}_\infty\alpha_* a [\mu_*^{2/s} + n^{-1/s}].
\end{equation}
Similarly, for $\zeta_1,\zeta_2\in\bZ_a$ we have
\begin{eqnarray*}
\sqrt{n}\|\phi^2[\zeta_1]-\phi^2[\zeta_2]\|_{2s/(s+4)} &\leq&\mu_*^{2/s}
\sqrt{n}\|\phi^2[\zeta_1]-\phi^2[\zeta_2]\|_2 \\
&\leq&
2\alpha_*\mu_*^{2/s} a^2 \mathrm{d}(\zeta_1,\zeta_2),
\\
n^{1/s}\|\phi^2[\zeta_1]-\phi^2[\zeta_2]\|_{s/2} &\leq& 2C_{s/2}\alpha_*
n^{-1/s} a^2\{\mathrm{d}(\zeta_1,\zeta_2)\}^{m_{s/2}},
\end{eqnarray*}
\begin{eqnarray*}
&&\bigl[\sqrt{n} \|\phi^2[\zeta_1]-\phi^2[\zeta_2]\|_2
\|\phi^2[\zeta_1]-\phi^2[\zeta_2]\|_{s/2}\bigr]^{1/2} \\
&&\qquad\leq
2\sqrt{C_{s/2}}\alpha_* n^{-1/s} a^2 \{\mathrm{d}(\zeta_1,\zeta_2)
\vee[\mathrm{d}(\zeta_1,\zeta_2)]^{m_{s/2}}\},
\end{eqnarray*}
where the first line follows from
the H\"older inequality and (\ref{eq:phi2-phi2-2}); the second one follows
from
(\ref{eq:phi2-phi2-s/2}); and the third line follows from the two previous
inequalities.
This yields
\[
\sup_{\zeta_1,\zeta_2\in\bZ_a}
\frac{\tilde{A}(\phi^2[\zeta_1]-\phi^2[\zeta_2])}{\mathrm{d}_*(\zeta
_1,\zeta_2)}
\leq5\mathrm{f}_\infty\sqrt{2 c_1(s/2)} \alpha_*C_{s/2}
a^2 [\mu_*^{2/s} + n^{-1/s}].
\]
Combining the last inequality with (\ref{eq:A-t-1}), we obtain
\[
\lambda_{\tilde{A}} \leq k_4 [\mu_*^{2/s}+n^{-1/s}],\qquad
k_4:= 5\mathrm{f}_\infty\sqrt{2c_1(s/2)} \alpha_* C_{s/2}.
\]

Now in order to bound $\lambda_{\tilde{B}}$ we recall
that $\tilde{B}(\phi^2[\zeta])=\frac{4}{3}c_1(s/2)\|\phi^2[\zeta]\|_{s/2}$.
Then
(\ref{eq:0022}) gives
$\sup_{\zeta\in\bZ_a}
\tilde{B}(\phi^2[\zeta])\leq\frac{4}{3}c_1(s/2)\alpha_*^2
n^{-2/s}a^2$. This along\break with~(\ref{eq:phi2-phi2-s/2}) leads to
\[
\lambda_{\tilde{B}} \leq k_5 n^{-2/s},\qquad
k_5:= \tfrac{8}{3} c_1(s/2)\alpha_*^2 C_{s/2}.
\]
Combining these results with (\ref{eq:y-gamma}) and taking into account
that, by (\ref{eq:gamma}), $\gamma=\mu^{-1/4}\leq1$ for $s\geq4$, we have
\[
\mu^{-1/4}=\sqrt{y_\gamma} \lambda_{\tilde{A}} + y_\gamma\lambda_{\tilde{B}}
\leq
\bigl[\sqrt{y_\gamma}+y_\gamma\bigr] (k_4\vee k_5)[\mu_*^{2/s}+n^{-1/s}],
\]
and an elementary calculation shows that
\[
y_\gamma\geq\mu^{-1/2} (k_4\vee k_5)^{-2}
[\mu_*^{2/s}+n^{-1/s}]^{-2}=:y_*.
\]
This inequality yields the constant $\vartheta_2$ appearing
in (\ref{eq:vartheta1-2}).

6$^0$. \textit{Application of Theorem} \ref{t_random}.
In order to apply Theorem \ref{t_random} with the distance
$\mathrm{d}_*(\cdot,\cdot)$ given in (\ref{eq:distance}), we need to
compute the quantity
\[
L^{(\ve)}_{\exp}=\sum_{k=1}^{\infty}\exp\{2\cE_{\bZ,\mathrm{d}_*}
(\ve2^{-k})-(9/16) 2^{k} k^{-2}\}.
\]
Note that the entropy number $\cE_{\bZ,\mathrm{d}_*}(\cdot)=\ln\{N_{\bZ
,\mathrm{d}_*}(\cdot)\}$ is computed with respect
to the distance $\mathrm{d}_*$.
Therefore, we first express
the entropy $\cE_{\bZ,\mathrm{d}_*}(\cdot)$ in terms of
the original distance $\mathrm{d}$
and then, using Assumption \hyperlink{assW4}{(W4)}, we derive a bound for
$L^{(\ve)}_{\exp}$.

By the definition of the distance $\mathrm{d}_*$,
for all $\delta\in(0,1)$ and $\zeta_1,\zeta_2\in\bZ$,
\[
\mathrm{d}(\zeta_1,\zeta_2) \leq[k_*^{-1}\delta]^{1/m}
\quad\Rightarrow\quad
\mathrm{d}_*(\zeta_1,\zeta_2)\leq\delta,
\]
where $m:=1\wedge m_{s}$ if $s\in(2,4)$ and $m:=1\wedge m_s\wedge m_{s/2}$
if $s\geq4$.
Therefore,
$N_{\bZ,\mathrm{d}_*}(\delta) \leq N_{\bZ,\mathrm{d}}
([k_*^{-1}\delta]^{1/m})$.
In view of Assumption \hyperlink{assW4}{(W4)}, this yields
\begin{eqnarray*}
\sup_{\delta\in(0,1)}\{\cE_{\bZ,\mathrm{d}_*}(\delta)- [k_*^{-1}
\delta]^{-\beta/m}\}
&\leq& \sup_{\delta\in(0,1)}
\{\cE_{\bZ,\mathrm{d}}
([k_*^{-1}\delta]^{1/m})
-[k_*^{-1}\delta]^{-\beta/m}
\}\\
&\leq&\sup_{x\in(0,1)}\{\cE_{\bZ,\mathrm{d}}(x)
-x^{-\beta}\}=C_{\bZ}(\beta).
\end{eqnarray*}
Thus, we obtain that
\begin{eqnarray*}
L^{(\ve)}_{\exp} &\leq&
\exp\{ 2C_{\bZ}(\beta)\}
\sum_{k=1}^\infty
\exp\{ 2^{1+k\beta/m} (k_*^{-1}\epsilon)^{-\beta/m} -
(9/16) 2^{k} k^{-2}\}\\
&=&\exp\{ 2C_{\bZ}(\beta)\}
L_*^{(\ve)}(\beta).
\end{eqnarray*}

Now the result of the theorem follows from
the bounds of Theorem \ref{t_random}.
The constants $T_{5,\ve}$ and $T_{6,\ve}$ given in the
beginning of the proof are obtained from the expressions for
$T_{1,\ve}$ and $T_{2,\ve}$ and bounds of Theorem \ref{t_random}.
In particular, we used that in view of Lemma \ref{lem:tech_new1}
$\sqrt{n}\sup_{w\in\cV_0}\|w\|_s \leq n^{({s-2})/({2s})} \alpha_*\mu^{1/s-1/2}
\bar{\mathrm{w}}_2$.

%
\section{\texorpdfstring{Proof of Theorem \protect\ref{t_example_1}}{Proof of Theorem 7}}
\label{proof:theorem7}
The proof is based on verification of conditions
and application of Theorem \ref{t_random_case}.
First, we establish auxiliary results
that provide the basis for verification of Assumptions \ref{assL} and
\ref{assW}.
Then, based on these results, we show that all conditions of
Theorem \ref{t_random_case} are fulfilled.
This will yield the required result.

Let $K$ and $K^\prime$ be any functions satisfying Assumptions
\hyperlink{assK1}{(K1)}
and \hyperlink{assK2}{(K2)},
and let $h,h^\prime$ be given vectors from $\cH$. Let
$\zeta=(K,h)$, $\zeta^{\prime}=(K^{\prime},h^{\prime})$,
and recall that $\phi_1[\zeta]$ is the mapping $(K,h)\mapsto n^{-1}K_h$.

Similarly,
if $K,Q,K^\prime,Q^\prime$
are any functions satisfying Assumptions \hyperlink{assK1}{(K1)} and
\hyperlink{assK2}{(K2)}, and if
$h,h^\prime,\mh,\mh^\prime$ are vectors from $\cH$ then
$z=[(K,h),(Q,\mh)]$,
$z^{\prime}=[(K^{\prime},h^{\prime}),
(Q^{\prime},\mh^{\prime})]$, and
$\phi_2[z]$ is the mapping
$[(K,h),(Q,\mh)] \mapsto n^{-1}(K_h\ast Q_{\mh})$.

1$^0$. \textit{Auxiliary results.}
We begin with
auxiliary results about properties of the mappings
$\phi_1[\zeta]$ and $\phi_2[z]$. The proofs of these results
are given in the \hyperref[app]{Appendix}.

Define
the function
%
%
\begin{equation}\label{eq:D}
D(x):= e^{dx}\bigl[x+
\tfrac{1}{2}L_\cK\sqrt{d}
(e^x-1)+\mathrm{k}_\infty(e^{dx}-1)\bigr],\qquad
x\geq0,
\end{equation}
and put
%
%
\begin{equation}\label{eq:theta-1-2}
\theta_1:=[\mathrm{k}_\infty/\mathrm{k}_1]D^{\prime}(2),\qquad \theta
_2:=2^{2d+2}\mathrm{k}^4_\infty\mathrm{k}^{-2}_1 D^{\prime}(4),
\end{equation}
where $D^{\prime}$ is the first derivative of the function $D$.

The next lemma states that Assumption \ref{assL} is fulfilled for
the mappings $\zeta\mapsto\phi_1[\zeta]$ and $z\mapsto\phi_2[z]$.
\begin{lemma}\label{lem:as-L}
Let Assumption \ref{assK} hold, and $s\geq1$. If the sets $\bZ^{(i)}$, $i=1,2$,
are equipped with the distances $\mathrm{d}^{(i)}_{\theta_i}(\cdot,\cdot
)$ then
Assumption \ref{assL} is valid for the mappings $\zeta\mapsto\phi
_1[\zeta]$
and $z\mapsto\phi_2[z]$.
\end{lemma}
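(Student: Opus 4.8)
The plan is to deduce Assumption~(L) for the two mappings from a single Lipschitz-type estimate on rescaled kernels, the proof of which is the genuine work and would be placed in the Appendix. Recall that, with $p=s$ if $s\in[1,2)$ and $p=2$ if $s\ge 2$, Assumption~(L) requires
\[
 n^{1/p}\big\|\phi_i[\zeta_1]-\phi_i[\zeta_2]\big\|_p \;\le\; b\,\mathrm{d}^{(i)}_{\theta_i}(\zeta_1,\zeta_2),\qquad \zeta_1,\zeta_2\in\bZ^{(i)}_p(b),\ \ b\in[\underline{\mathrm{w}}_p,\bar{\mathrm{w}}_p],
\]
where $\bZ^{(i)}_p(b)=\{\zeta\in\bZ^{(i)}:n^{1/p}\|\phi_i[\zeta]\|_p\le b\}$. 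The mechanism is always the same: split the perturbation of the weight into a change of kernel and a change of bandwidth, bound each so that a factor $(nV_\bullet)^{1/p-1}$ appears on its face, and then absorb that factor using the defining inequality of $\bZ^{(i)}_p(b)$.

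Take $i=1$, so $\phi_1[(K,h)]=n^{-1}K_h$. The scaling identity $\|K_h\|_p=V_h^{1/p-1}\|K\|_p$ gives $n^{1/p}\|\phi_1[(K,h)]\|_p=(nV_h)^{1/p-1}\|K\|_p$; since $\mathrm{supp}(K)\subseteq[-1/2,1/2]^d$ has measure at most one and $|\int K|\ge\mathrm{k}_1$ by~(K2), one has $\|K\|_p\ge\|K\|_1\ge\mathrm{k}_1$ for all $p\in[1,2]$, so $(K,h)\in\bZ^{(1)}_p(b)$ forces $(nV_h)^{1/p-1}\le b/\mathrm{k}_1$, and likewise $(nV_{h'})^{1/p-1}\le b/\mathrm{k}_1$ for the second point. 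Decomposing $K_h-K'_{h'}=(K_h-K'_h)+(K'_h-K'_{h'})$, the first summand satisfies $\|K_h-K'_h\|_p=V_h^{1/p-1}\|K-K'\|_p\le V_h^{1/p-1}\|K-K'\|_\infty$ (common support in the unit cube). For the second, the substitution $u=hv$ gives the identity
\[
 \|K'_h-K'_{h'}\|_p \;=\; V_h^{1/p-1}\,\big\|\,K'(\cdot)-(V_h/V_{h'})\,K'\big((h/h')\cdot\big)\big\|_p,
\]
and I would bound the right-hand factor by $D\big(\Delta_\cH(h,h')\big)$ when $\Delta_\cH(h,h')\le 2$, using the H\"older-exponent-one regularity of $K'$ from~(K1) together with the support rescaling (this is where the function $D$ of~\eqref{eq:D} arises, its terms reflecting the Lipschitz increment, the Jacobian mismatch $V_h/V_{h'}-1$, and the enlargement of the support), and by the crude bound $\|K'_h\|_p+\|K'_{h'}\|_p\le\mathrm{k}_\infty(V_h^{1/p-1}+V_{h'}^{1/p-1})$ otherwise. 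Using $D(x)\le xD'(2)$ for $x\le 2$ (as $D(0)=0$ and $D'$ is increasing) and multiplying through by $n^{1/p-1}$, one gets, for $\zeta_1=(K,h),\zeta_2=(K',h')\in\bZ^{(1)}_p(b)$,
\[
 n^{1/p}\big\|\phi_1[\zeta_1]-\phi_1[\zeta_2]\big\|_p \;\le\; \frac{b\,\mathrm{k}_\infty D'(2)}{\mathrm{k}_1}\,\max\big\{\|K-K'\|_\infty,\,\Delta_\cH(h,h')\big\}\;=\;b\,\mathrm{d}^{(1)}_{\theta_1}(\zeta_1,\zeta_2),
\]
by the definitions of $\theta_1$ in~\eqref{eq:theta-1-2} (up to recomputing the absolute constant) and of the distance $\mathrm{d}^{(1)}_{\theta_1}$. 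This is precisely Assumption~(L) for $\phi_1$.

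For $i=2$, where $\phi_2[z]=n^{-1}(K_h\ast Q_\mh)$, the same scheme is applied one level higher. Young's inequality gives $\|K_h\ast Q_\mh\|_p\le\|K_h\|_p\|Q_\mh\|_1\le\mathrm{k}_\infty\|K_h\|_p$, and $\int(K_h\ast Q_\mh)=\big(\int K\big)\big(\int Q\big)$ together with $\mathrm{supp}(K_h\ast Q_\mh)\subseteq[-1,1]^d$ (here $\max_j h_j^{\max}\le 1$ is used) gives $\|K_h\ast Q_\mh\|_p\ge 2^{-d}\mathrm{k}_1^2$; these two bounds let one read off the constraint on $nV_h$ and $nV_\mh$ from $z\in\bZ^{(2)}_p(b)$. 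Splitting $K_h\ast Q_\mh-K'_{h'}\ast Q'_{\mh'}=(K_h-K'_{h'})\ast Q_\mh+K'_{h'}\ast(Q_\mh-Q'_{\mh'})$, applying Young and then the single-kernel estimate of the previous paragraph to each factor, and collecting constants, yields Assumption~(L) for $\phi_2$ with the distance $\mathrm{d}^{(2)}_{\theta_2}$; the scale doubling produced by convolving two compactly supported kernels is exactly what replaces $D'(2)$ by $D'(4)$ and introduces the extra powers of $\mathrm{k}_\infty$ and $\mathrm{k}_1^{-1}$ in $\theta_2$ of~\eqref{eq:theta-1-2}. I expect the main obstacle to be the bandwidth-perturbation estimate $\|K'_h-K'_{h'}\|_p\le V_h^{1/p-1}D\big(\Delta_\cH(h,h')\big)$ for $\Delta_\cH(h,h')\le 2$: it is the only step that genuinely uses kernel smoothness, and the change of variables must be carried out carefully enough to expose the factor $V_h^{1/p-1}$ cleanly (so it cancels against $(nV_h)^{1/p-1}\le b/\mathrm{k}_1$) while correctly tracking the coordinate-wise structure of $\Delta_\cH$ in the Lipschitz and Jacobian terms.
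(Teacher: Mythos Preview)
Your plan is essentially the same as the paper's: split the perturbation, bound each piece by a factor of the form $(V_\bullet)^{1/p-1}$ times $D(\mathrm{d}^{(i)}_1)$, then absorb that factor using the membership constraint $\zeta\in\bZ^{(i)}_p(b)$; the paper just factors this through two auxiliary lemmas (Lemma~\ref{lem:tech1} for the $\bL_p$ Lipschitz bounds on $\phi_1,\phi_2$, and Lemma~\ref{lem:tech204} for norm bounds on $\phi_2$) rather than doing it inline. For $\phi_1$ your argument is correct and matches the paper's proof line for line.

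For $\phi_2$ there is a concrete slip. The lower bound you write, $\|K_h*Q_{\mh}\|_p\ge 2^{-d}\mathrm{k}_1^2$, comes from the crude support inclusion $\mathrm{supp}(K_h*Q_{\mh})\subseteq[-1,1]^d$ and is a \emph{constant}: it carries no dependence on $h,\mh$, so combining it with $n^{1/p-1}\|K_h*Q_{\mh}\|_p\le b$ gives nothing about $nV_h$ or $nV_{\mh}$. What you need (and what the paper proves as Lemma~\ref{lem:tech204}(ii)) is the bandwidth-scaled version
\[
\|K_h*Q_{\mh}\|_p\;\ge\;2^{d(1/p-1)}\,\mathrm{k}_1^2\,(V_{h\vee\mh})^{1/p-1},
\]
obtained by using the finer support bound $\mathrm{supp}(K_h*Q_{\mh})\subseteq\prod_i[-(h_i+\mh_i)/2,(h_i+\mh_i)/2]$ of measure $\le 2^dV_{h\vee\mh}$. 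Only this yields the constraint $(nV_{h\vee\mh})^{1/p-1}\le 2^{d/2}\mathrm{k}_1^{-2}b$ that closes the loop. There is also a second subtlety: using only one direction of Young, $\|(K_h-K'_{h'})*Q_{\mh}\|_p\le\mathrm{k}_\infty\|K_h-K'_{h'}\|_p$, leaves you with a factor $(V_{h\vee h'})^{1/p-1}$, whereas the constraint controls $(V_{h\vee\mh})^{1/p-1}$; since $V_{h\vee h'}$ can be much smaller than $V_{h\vee\mh}$ (think of $\mh$ large in coordinates where $h,h'$ are small), this does not match. The paper handles this by applying Young \emph{both} ways---once with $\|K_h\|_1\|Q_{\mh}-Q'_{\mh'}\|_p$, once with $\|K_h\|_p\|Q_{\mh}-Q'_{\mh'}\|_1$---and taking the better of the two, which produces the factor $\big[(V_{h\vee h'})\vee(V_{\mh\vee\mh'})\big]^{1/p-1}$ in~(\ref{eq:phi2-2}). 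Your outline should incorporate this two-sided use of Young; otherwise the $\phi_2$ case does not close.
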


The next three statements provide a basis for verification of
Assumption~\ref{assW}.
For any $h, h^\prime\in\cH$,
let
$h\vee h^\prime=(h_1\vee h_1^\prime, \ldots, h_d\vee h_d^\prime)$
and $h\wedge h^\prime=
(h_1\wedge h_1^\prime,\ldots, h_d\wedge h_d^\prime)$.
\begin{lemma}
\label{lem:tech1}
Let Assumptions \textup{\hyperlink{assK1}{(K1)}} and
\textup{\hyperlink{assK2}{(K2)}}
hold; then for any \mbox{$p\!\in\![1,\infty]$}
%
%
\begin{eqnarray}
\label{eq:phi1-p-norm}
\|\phi_1[\zeta]\|_p &=& n^{-1}
V_h^{-1+1/p}\|K\|_p\qquad \forall\zeta\in\bZ^{(1)},
\\
\label{eq:phi1-1}
\|\phi_1[\zeta]-\phi_1[\zeta^\prime]\|_p
&\leq&
n^{-1} (V_{h \vee h^\prime})^{-1+1/p}
D\bigl(\mathrm{d}^{(1)}_1(\zeta,\zeta^\prime)\bigr)\qquad \forall
\zeta,\zeta^\prime\in\bZ^{(1)},\hspace*{-25pt}
\\
\label{eq:phi2-2}
\|\phi_2[z]-\phi_2[z^\prime]\|_p
&\leq&
2n^{-1}\mathrm{k}_\infty
[(V_{h\vee
h^\prime})\vee(V_{\mh\vee\mh^\prime})]^{-1+1/p}\nonumber\\[-8pt]\\[-8pt]
&&{}\times D
\bigl(2\mathrm{d}^{(2)}_1(z,z^\prime)\bigr)\qquad
\forall z,z^\prime\in\bZ^{(2)}.\nonumber
\end{eqnarray}
\end{lemma}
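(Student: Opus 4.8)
\emph{Proof plan.} Part (\ref{eq:phi1-p-norm}) is obtained directly from the change of variables $u=hv$: since $K_h(hv)=V_h^{-1}K(v)$ and $\rd u=V_h\,\rd v$, one gets $\|K_h\|_p^p=V_h^{1-p}\|K\|_p^p$, hence $\|\phi_1[\zeta]\|_p=n^{-1}\|K_h\|_p=n^{-1}V_h^{-1+1/p}\|K\|_p$. The real work is in (\ref{eq:phi1-1}), and then (\ref{eq:phi2-2}) follows from (\ref{eq:phi1-p-norm})--(\ref{eq:phi1-1}) by Young's and interpolation inequalities.

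For (\ref{eq:phi1-1}) the idea is to rescale both kernels to the common ``envelope'' bandwidth $a:=h\vee h^\prime$. Writing $\delta:=\mathrm{d}^{(1)}_1(\zeta,\zeta^\prime)$, $s_i:=a_i/h_i$, $r_i:=a_i/h^\prime_i$, $S:=\prod_i s_i$ and $R:=\prod_i r_i$, one has $1\le s_i,r_i\le e^{\Delta_\cH(h,h^\prime)}\le e^{\delta}$ and, for each coordinate, $\min(s_i,r_i)=1$; the change of variables $u=av$ then gives
\[
\|\phi_1[\zeta]-\phi_1[\zeta^\prime]\|_p=n^{-1}V_a^{-1+1/p}\,\big\|\,S\,K(\cdot\, s)-R\,K^\prime(\cdot\, r)\,\big\|_p ,
\]
where $K(\cdot\, s)$ denotes the function $v\mapsto K(v_1 s_1,\dots,v_d s_d)$. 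I would then decompose
\[
S\,K(vs)-R\,K^\prime(vr)=S\big[K(vs)-K^\prime(vs)\big]+S\big[K^\prime(vs)-K^\prime(vr)\big]+(S-R)\,K^\prime(vr),
\]
and bound the three summands using: $\|K-K^\prime\|_\infty\le\delta$ and $S\le e^{d\delta}$ for the first; the Lipschitz property of Assumption~(K1), together with $|vs-vr|\le(e^{\delta}-1)|v|$ and $|v|\le\frac12\sqrt d$ on the common support $[-1/2,1/2]^d$, for the second; and $\|K^\prime\|_\infty\le\mathrm{k}_\infty$ together with $|S-R|\le e^{d\delta}-1$ for the third. Since all three integrands are supported in a set of Lebesgue measure one, passing to $\bL_p$-norms over these supports and adding up produces exactly the three terms of $D(\delta)$, which proves (\ref{eq:phi1-1}).

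For (\ref{eq:phi2-2}) I would start from the splitting
\[
K_h\ast Q_\mh-K^\prime_{h^\prime}\ast Q^\prime_{\mh^\prime}=\big(K_h-K^\prime_{h^\prime}\big)\ast Q_\mh+K^\prime_{h^\prime}\ast\big(Q_\mh-Q^\prime_{\mh^\prime}\big),
\]
and estimate each convolution in $\bL_1$ and in $\bL_\infty$ via Young's inequality in both forms $\|f\ast g\|_r\le\|f\|_1\|g\|_r$ and $\|f\ast g\|_r\le\|f\|_r\|g\|_1$, using parts (\ref{eq:phi1-p-norm})--(\ref{eq:phi1-1}), the bounds $\|Q_\mh\|_1=\|Q\|_1\le\mathrm{k}_\infty$ and $\|K^\prime_{h^\prime}\|_1\le\mathrm{k}_\infty$ from Assumption~(K2), monotonicity of $D$ together with $\mathrm{d}^{(1)}_1\le\mathrm{d}^{(2)}_1(z,z^\prime)=:\delta$, and the elementary inequalities $V^{-1}_\mh,V^{-1}_{\mh^\prime}\le e^{d\delta}V^{-1}_{\mh\vee\mh^\prime}$ (and their analogue for $h$). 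This yields $\|K_h\ast Q_\mh-K^\prime_{h^\prime}\ast Q^\prime_{\mh^\prime}\|_1\le 2\mathrm{k}_\infty D(\delta)$ and $\|K_h\ast Q_\mh-K^\prime_{h^\prime}\ast Q^\prime_{\mh^\prime}\|_\infty\le 2\mathrm{k}_\infty e^{d\delta}\big[V_{h\vee h^\prime}\vee V_{\mh\vee\mh^\prime}\big]^{-1}D(\delta)$. Interpolating through $\|f\|_p\le\|f\|_1^{1/p}\|f\|_\infty^{1-1/p}$, using $e^{d\delta}D(\delta)\le D(2\delta)$ (checked directly from the explicit form (\ref{eq:D}) of $D$, since $D(2x)=e^{2dx}[2x+\frac12 L_\cK\sqrt d(e^{2x}-1)+\mathrm{k}_\infty(e^{2dx}-1)]$), and multiplying by $n^{-1}$, gives (\ref{eq:phi2-2}).

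The one genuinely delicate point is (\ref{eq:phi1-1}): only Lipschitz (not $C^1$) regularity of the kernels is available, so the scaling error has to be controlled through finite differences, and the bookkeeping of the common support and of the bandwidth ratios must be carried out precisely enough that the three error terms reassemble into the function $D$. Parts (\ref{eq:phi1-p-norm}) and (\ref{eq:phi2-2}) are then routine; in (\ref{eq:phi2-2}) the only subtlety is the direction of the elementary bounds relating $V_h$ to $V_{h\vee h^\prime}$, which is why the factor $e^{d\delta}$ appears and is absorbed into $D\big(2\,\mathrm{d}^{(2)}_1\big)$.
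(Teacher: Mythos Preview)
Your proof is correct. For (\ref{eq:phi1-p-norm}) and (\ref{eq:phi1-1}) your argument is essentially the paper's: both perform a three-term decomposition (kernel difference, dilation difference, volume-ratio difference) after a change of variables to the common envelope bandwidth; the paper splits at the intermediate kernel $K'_h$ before rescaling, you rescale first and then split, but the bookkeeping is the same and both reassemble into $D$.

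For (\ref{eq:phi2-2}) you take a genuinely different route. The paper works directly in $\bL_p$: for each of the two convolution pieces it applies Young's inequality in both forms $\|f\ast g\|_p\le\|f\|_1\|g\|_p$ and $\|f\ast g\|_p\le\|f\|_p\|g\|_1$, feeds in (\ref{eq:phi1-p-norm})--(\ref{eq:phi1-1}), and then takes the minimum of the two resulting bounds to produce the factor $\big[V_{h\vee h^\prime}\vee V_{\mh\vee\mh^\prime}\big]^{-1+1/p}$; the extra $e^{d\delta}$ coming from $V_h^{-1+1/p}\le e^{d\delta}V_{h\vee h^\prime}^{-1+1/p}$ is absorbed into $D(2\delta)$ by monotonicity. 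You instead establish the $\bL_1$ and $\bL_\infty$ endpoints separately (again via Young in two forms for the $\bL_\infty$ case) and interpolate via $\|f\|_p\le\|f\|_1^{1/p}\|f\|_\infty^{1-1/p}$, using the pointwise inequality $e^{d\delta}D(\delta)\le D(2\delta)$ to absorb the exponential factor. Both approaches are equally elementary and yield the identical bound; the paper's is a one-step argument for each $p$, while yours has the minor advantage that the two Young forms are only needed at the $\bL_\infty$ endpoint.
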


Observe that Lemma \ref{lem:tech1} implies that Assumption \ref
{assumptionA2} of Section \ref{sec:uniform} is fulfilled for the mappings
$\zeta\mapsto\phi_1[\zeta]$ and $z\mapsto\phi_2[z]$.
\begin{lemma}
\label{lem:tech2}
Let $w\in\bH_d(1,P)$ with some $P>0$,
and let $\tilde{x}\in\bR^d$ be a point such that
$w(\tilde{x})=\|w\|_\infty>0$; then
\[
\biggl\{x\in\bR^{d}\dvtx|w(x)|\geq\frac{1}{2} \|w\|_\infty\biggr\}
\supseteq
\bigotimes_{i=1}^{d}\biggl[\tilde{x}_i-\frac{\|w\|_\infty}{2P\sqrt{d}},
\tilde{x}_i+
\frac{\|w\|_\infty}{2P\sqrt{d}}\biggr].
\]
\end{lemma}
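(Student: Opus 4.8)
\textbf{Proof plan for Lemma~\ref{lem:tech2}.}
The plan is to show the pointwise inclusion directly: I would fix an arbitrary point $x$ lying in the axis-parallel box
$\bigotimes_{i=1}^{d}\big[\tilde{x}_i-\tfrac{\|w\|_\infty}{2P\sqrt{d}},\,\tilde{x}_i+\tfrac{\|w\|_\infty}{2P\sqrt{d}}\big]$ and verify that $|w(x)|\geq \tfrac12\|w\|_\infty$. This amounts to a one-line membership argument rather than anything requiring a covering or measure-theoretic input, so no serious obstacle is expected; the only thing to be careful about is converting the coordinate-wise ($\ell_\infty$) constraint defining the box into the Euclidean ($\ell_2$) bound that the Hölder class $\bH_d(1,P)$ controls.

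First I would bound the Euclidean distance from $x$ to $\tilde{x}$. Since $|x_i-\tilde{x}_i|\leq \tfrac{\|w\|_\infty}{2P\sqrt{d}}$ for every $i=1,\dots,d$, summing squares gives
\[
|x-\tilde{x}|=\Big(\sum_{i=1}^{d}(x_i-\tilde{x}_i)^2\Big)^{1/2}\;\leq\;\Big(d\cdot\frac{\|w\|_\infty^2}{4P^2 d}\Big)^{1/2}=\frac{\|w\|_\infty}{2P}.
\]
This is the key step where the factor $\sqrt{d}$ in the denominator of the box half-width is exactly what is needed to absorb the $\sqrt{d}$ coming from the $\ell_\infty$-to-$\ell_2$ comparison.

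Then I would apply the Lipschitz property of $w$: because $w\in\bH_d(1,P)$, we have $|w(x)-w(\tilde{x})|\leq P\,|x-\tilde{x}|\leq P\cdot\tfrac{\|w\|_\infty}{2P}=\tfrac12\|w\|_\infty$. Combining this with the hypothesis $w(\tilde{x})=\|w\|_\infty$ yields
\[
|w(x)|\;\geq\; w(x)\;\geq\; w(\tilde{x})-|w(x)-w(\tilde{x})|\;\geq\;\|w\|_\infty-\tfrac12\|w\|_\infty=\tfrac12\|w\|_\infty .
\]
Hence $x$ belongs to the super-level set $\{x:|w(x)|\geq\tfrac12\|w\|_\infty\}$, and since $x$ was an arbitrary point of the box, the claimed inclusion follows. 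As $x$ ranged over the whole box this completes the argument; there is essentially no hard part beyond the dimensional bookkeeping in the distance estimate above.
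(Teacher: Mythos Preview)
Your proof is correct and follows exactly the same approach as the paper's: bound $|x-\tilde x|\le \|w\|_\infty/(2P)$ from the $\ell_\infty$ box constraint, then apply the Lipschitz estimate and the reverse triangle inequality. You have merely made explicit the $\ell_\infty$-to-$\ell_2$ distance computation that the paper leaves implicit.
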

\begin{lemma}
\label{lem:tech204}
Under Assumption \ref{assK} for any $p\geq1$:
\begin{eqnarray*}
&&\mbox{\hphantom{ii}\textup{(i)}\quad} \|\phi_2[z]\|_p\leq2^{d/p}
\mathrm{k}^{2}_{\infty} n^{-1}
(V_{h\vee\mh})^{-1+1/p},\\
&&\mbox{\hphantom{i}(\textup{ii})\quad} \|\phi_2[z]\|_p\geq2^{{d(1-p)}/{p}}
\mathrm{k}^{2}_{1}
n^{-1}(V_{h\vee\mh})^{-1+1/p},\\
&&\mbox{(\textup{iii})\quad} \operatorname{mes}\{\operatorname{supp}
(\phi_2[z])\}\geq(V_{h\vee\mh}) \biggl[\frac{\mathrm{k}^{2}_{1}}{2^{d+1}\sqrt
{d}L_\cK
\mathrm{k}_{\infty}}\biggr]^{d},
\\
&&\mbox{\hspace*{1.2pt}(\textup{iv})\quad} \operatorname{mes}
\biggl\{t\dvtx\phi_2[z](t)\geq\frac{1}{2}\|\phi_2[z]
\|_\infty\biggr\}\geq\biggl[\frac{\mathrm{k}^{2}_{1}}{2^{d+2}\sqrt{d}
L_\cK
\mathrm{k}_{\infty}}\biggr]^{d}
\operatorname{mes}\{\operatorname{supp}(\phi_2[z])\}.
\end{eqnarray*}
\end{lemma}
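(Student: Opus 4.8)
The plan is to reduce all four estimates to two elementary facts about the single function $g:=K_h\ast Q_\mh$ (so that $\phi_2[z]=n^{-1}g$), and to invoke Lemma~\ref{lem:tech2} for the two statements about sublevel sets. Throughout write $\ell:=h\vee\mh$ (coordinatewise), so $V_\ell=V_{h\vee\mh}$. First I would record the \emph{support bound}: by Assumption~(K1), $\mathrm{supp}(K_h)\subseteq h\cdot[-\tfrac12,\tfrac12]^d$ and $\mathrm{supp}(Q_\mh)\subseteq\mh\cdot[-\tfrac12,\tfrac12]^d$, so $\mathrm{supp}(g)$ lies in the Minkowski sum $(h+\mh)\cdot[-\tfrac12,\tfrac12]^d$, whence $\mathrm{mes}\{\mathrm{supp}(g)\}\le\prod_i(h_i+\mh_i)\le 2^dV_\ell$. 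Second, the \emph{overlap bound}: for each fixed $t$ the integrand $x\mapsto K_h(t-x)Q_\mh(x)$ is supported on a box whose $i$-th edge has length at most $h_i\wedge\mh_i$, so
\[
 |g(t)|\ \le\ \|K_h\|_\infty\,\|Q_\mh\|_\infty\,\prod_i(h_i\wedge\mh_i)\ \le\ \mathrm{k}_\infty^2\,\frac{V_{h\wedge\mh}}{V_hV_\mh}\ =\ \frac{\mathrm{k}_\infty^2}{V_\ell},
\]
using $\|K\|_\infty\le\mathrm{k}_\infty$ (Assumption~(K2)) and $V_hV_\mh=V_\ell V_{h\wedge\mh}$.

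From these two facts (i) is immediate: $\|g\|_p^p\le\|g\|_\infty^p\,\mathrm{mes}\{\mathrm{supp}(g)\}\le 2^d\mathrm{k}_\infty^{2p}V_\ell^{1-p}$, so $\|\phi_2[z]\|_p=n^{-1}\|g\|_p\le 2^{d/p}\mathrm{k}_\infty^2\,n^{-1}V_\ell^{-1+1/p}$. For (ii) I would combine $\|g\|_1\ge|\int g|=|\int K|\,|\int Q|\ge\mathrm{k}_1^2$ (Assumption~(K2), and $\int K_h=\int K$) with H\"older's inequality, $\|g\|_1\le\|g\|_p\,(\mathrm{mes}\{\mathrm{supp}(g)\})^{1-1/p}\le\|g\|_p\,(2^dV_\ell)^{1-1/p}$, which rearranges to the claimed lower bound after dividing by $n$. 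For (iii) the same two inequalities already give $\mathrm{k}_1^2\le\|g\|_1\le\|g\|_\infty\,\mathrm{mes}\{\mathrm{supp}(g)\}$, hence $\mathrm{mes}\{\mathrm{supp}(g)\}\ge\mathrm{k}_1^2V_\ell/\mathrm{k}_\infty^2$, which (since $\mathrm{k}_1\le1\le\mathrm{k}_\infty$ and the remaining numerical and Lipschitz factors are $\ge1$) dominates the displayed constant; alternatively (iii) falls out of (iv), since the box constructed there lies inside $\mathrm{supp}(g)$.

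The substantive part is (iv). Here I would first bound the partial derivatives of $g$: for each $i$, differentiating the convolution in whichever of $K_h,Q_\mh$ has the \emph{larger} bandwidth in coordinate $i$, and rerunning the overlap argument with $\partial_iK$ (resp.\ $\partial_iQ$) in place of $K$ (resp.\ $Q$), using $\|\partial_iK\|_\infty\le L_\cK$, gives
\[
 \|\partial_i g\|_\infty\ \le\ L_\cK\,\mathrm{k}_\infty\,V_\ell^{-1}\,(h_i\vee\mh_i)^{-1}.
\]
Consequently the anisotropically rescaled function $\widehat g(s):=g(\ell_1s_1,\dots,\ell_ds_d)$ obeys $\|\partial_i\widehat g\|_\infty\le L_\cK\mathrm{k}_\infty V_\ell^{-1}$, i.e.\ $\widehat g\in\bH_d\big(1,\sqrt d\,L_\cK\mathrm{k}_\infty V_\ell^{-1}\big)$ — a Lipschitz constant free of the bandwidths once $V_\ell^{-1}$ is factored out — while $\|\widehat g\|_\infty=\|g\|_\infty=:M$. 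Next I would note $M\ge|\int g|/\mathrm{mes}\{\mathrm{supp}(g)\}\ge\mathrm{k}_1^2/(2^dV_\ell)$, so $M$ is comparable to the overlap upper bound $\mathrm{k}_\infty^2/V_\ell$. Applying Lemma~\ref{lem:tech2} to $\widehat g$ then produces a cube, centred at a maximiser of $|\widehat g|$, of half-side $\asymp MV_\ell/(L_\cK\mathrm{k}_\infty)$ on which $|\widehat g|\ge M/2$; undoing the rescaling multiplies its measure by $V_\ell$, and dividing by $\mathrm{mes}\{\mathrm{supp}(g)\}\le 2^dV_\ell$ yields the asserted constant after bookkeeping the powers of $2$ and $d$.

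I expect the main obstacle to be exactly this per-coordinate gradient estimate together with the rescaling bookkeeping: one must see that the ``effective width'' of $K_h\ast Q_\mh$ in direction $i$ is $h_i\vee\mh_i$ (neither $h_i$ alone nor $h_i\wedge\mh_i$), which is what forces the correct factor $V_{h\vee\mh}^{-1}$ in $\|\partial_i g\|_\infty$ and hence makes the rescaled Lipschitz constant bandwidth-free; without this, Lemma~\ref{lem:tech2} would only yield a cube far too small. A minor point to attend to is that Lemma~\ref{lem:tech2} naturally controls $\{|\widehat g|\ge\tfrac12\|\widehat g\|_\infty\}$, which is precisely the form needed downstream (Assumption~(W2) concerns $|w|$), so one takes $\tilde t$ to be a maximiser of $|g|$ and no sign normalisation of $K,Q$ is required. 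Everything else — the Minkowski-sum volume, the constants in (i)–(iii), and the exact numerical constant in (iv) — is routine.
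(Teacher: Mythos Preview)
Your approach is correct and largely parallel to the paper's, but the routes diverge in an interesting way. For (i)--(ii) you work directly with the Minkowski--sum support bound and the pointwise overlap bound, whereas the paper first performs a change of variables to write $(K_h\ast Q_\mh)(t)=V_{h\vee\mh}^{-1}F(t/(h\vee\mh))$ for an explicit function $F$ supported in $[-1,1]^d$ with $\|F\|_\infty\le\mathrm{k}_\infty^2$ and $|\int F|\ge\mathrm{k}_1^2$; both arguments yield exactly the stated constants. Your ``simple'' derivation of (iii) from $\mathrm{k}_1^2\le\|g\|_1\le\|g\|_\infty\,\mathrm{mes}\{\mathrm{supp}(g)\}$ is cleaner than the paper's, though to claim it dominates the displayed constant one needs $2^{d+1}\sqrt{d}L_\cK\ge 1$, which is automatic only once you know $L_\cK$ is not tiny; the fallback via (iv) is safe.

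The one substantive difference is in the Lipschitz constant entering (iv). Your per--coordinate estimate $\|\partial_i\widehat g\|_\infty\le L_\cK\mathrm{k}_\infty V_\ell^{-1}$ is exactly right, and you then pass to the isotropic Lipschitz constant $\sqrt{d}\,L_\cK\mathrm{k}_\infty V_\ell^{-1}$. The paper gets the dimension--free constant $2L_\cK\mathrm{k}_\infty$ for $F$ (equivalently $2L_\cK\mathrm{k}_\infty V_\ell^{-1}$ for $\widehat g$) by a structural observation your derivative bounds do not capture: in the representation of $F$, varying $t_j$ for $j\in\cJ$ moves only the argument of $Q$, and for $j\notin\cJ$ only that of $K$, so the increment $F(x)-F(y)$ splits into two pieces, each controlled by the isotropic Lipschitz hypothesis on a complementary block of coordinates, whence $|F(x)-F(y)|\le L_\cK\mathrm{k}_\infty\big(|(x-y)_{\cJ^c}|+|(x-y)_{\cJ}|\big)\le 2L_\cK\mathrm{k}_\infty|x-y|$. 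After Lemma~\ref{lem:tech2} this produces the factor $2\sqrt{d}$ in the denominators of (iii)--(iv), whereas your route produces $d$; your bound is in fact stronger for $d\le 4$ but weaker for $d\ge 5$, so it does not recover the exact constants stated in all dimensions. If you want the constants as written, insert the paper's split increment argument in place of the crude $\sqrt{d}$ passage; otherwise your proof yields the lemma with $2^{d+1}d$ in place of $2^{d+2}\sqrt{d}$, which is equally serviceable downstream.
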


2$^0$. \textit{Verification of conditions of Theorem} \ref{t_random_case}.
We check Assumption \ref{assW} for the classes of weights
$\cW^{(1)}$ and $\cW^{(2)}$ given
by the parametrization $\phi_1[\zeta]$
and $\phi_2[z]$.

First, we note that \hyperlink{assW1}{(W1)} is fulfilled
both for $\phi_1[\zeta]$ and $\phi_2[z]$ in view of Assumption
\hyperlink{assK1}{(K1)}.
Furthermore, Assumptions \hyperlink{assK1}{(K1)} and \hyperlink{assK2}{(K2)}
together with Lem\-ma~\ref{lem:tech2}
imply \hyperlink{assW2}{(W2)} for $\phi_1[\zeta]$ with
%
%
\begin{equation}\label{eq:alpha-001}
\alpha_1=\frac{1}{2},\qquad \alpha_2=\alpha_{2,1}:=
\biggl[\frac{\mathrm{k}_1}{L_\cK\sqrt{d}}\biggr]^d,
\end{equation}
while the statement (iv) of Lemma \ref{lem:tech204}
yield \hyperlink{assW2}{(W2)} for $\phi_2[z]$ with the constants
%
%
\begin{equation}\label{eq:alpha-002}
\alpha_1 =\frac{1}{2},\qquad
\alpha_2=\alpha_{2,2}:=\biggl[\frac{\mathrm{k}^{2}_{1}}{2^{d+2}\sqrt{d}L_\cK
\mathrm{k}_\infty}\biggr]^d.
\end{equation}
Clearly, $\operatorname{mes}\{\operatorname{supp}(\phi_1[\zeta])\}\geq
V_{h^{\min}}$; hence
the condition
\[
nV_{h^{\min}} > [64 c^2_1(s)]^{({s\wedge4})/({s\wedge4-2})}\vadjust{\goodbreak}
\]
implies \hyperlink{assW3}{(W3)} for $\phi_1[\zeta]$ with $\mu
=nV_{h^{\min}}$.
It follows from the statement (iii) of Lemma \ref{lem:tech204}
that Assumption \hyperlink{assW3}{(W3)} holds
for $\phi_2[z]$ with $\mu=nV_{h^{\min}}$ if
\[
nV_{h^{\min}} > \alpha_{2,2}^{-1}
[64 c^2_1(s)]^{({s\wedge4})/({s\wedge4-2})}.
\]

Finally, a standard calculation shows that
if $\cE_{\cH}(\cdot)$ is the entropy number of the set
$\cH$ measured in the distance $\Delta_\cH$ [see (\ref{eq:distance-Delta})]
then for any $\delta\in(0,1]$
%
%
\begin{equation}\label{eq:entropy}
\cE_{\cH}(\delta) \leq d\ln(3/\delta)+
\sum_{i=1}^{d}
(\ln\ln[h^{\max}_i/h^{\min}_i])_+.
\end{equation}
This result together with \hyperlink{assK3}{(K3)} guarantees
that Assumption \hyperlink{assW4}{(W4)} is fulfilled for the both
parametrizations.

Now we compute the quantities $m_p$ and $C_p$ appearing in (\ref
{eq:before-t-random-case}). Although Lemma \ref{lem:before_th-random-case}
shows that we always can set $m_p=2/p$, it turns out that
under Assumption \ref{assK} we can put $m_p=1$ for all $p\geq2$
both for $\phi_1[\zeta]$
and for $\phi_2[z]$. This leads to weaker conditions on the entropy $\cE
_\cK(\cdot)$
(see formulation of Theorem \ref{t_random_case}).

First, consider the mapping $\phi_1[\zeta]$; here following
(\ref{eq:Z-ss}), we set
\[
\bZ_2^{(1)}(b):=
\{\zeta=(K,h)\dvtx n^{1/2} \|\phi_1[\zeta]\|_2 \leq b\}=
\{\zeta=(K,h)\dvtx(nV_h)^{-1/2} \|K\|_2\leq b\}
\]
for $b\in[\underline{\mathrm{w}}^{(1)}_2,\bar{\mathrm{w}}^{(1)}_2]$
where by (\ref{eq:w-p})
%
%
\begin{equation}\label{eq:w-21}
\underline{\mathrm{w}}^{(1)}_{2}
\geq\mathrm{k}_1(nV_{h^{\max}})^{-1/2},\qquad
\bar{\mathrm{w}}^{(1)}_{2} \leq\mathrm{k}_\infty
(nV_{h^{\min}})^{-1/2}.
\end{equation}
By (\ref{eq:phi1-1})
of Lemma \ref{lem:tech1}, we have for any $p\geq2$
and $\zeta_1=(K,h)$, $\zeta_2=(K^\prime, h^\prime)$
\[
n^{1/p} \|\phi_1[\zeta_1]-\phi_1[\zeta_2]\|_p \leq
(n V_{h\vee h^\prime})^{-1+1/p} D\bigl(\mathrm{d}_1^{(1)}(\zeta_1,\zeta_2)\bigr),
\]
and if $\zeta_1, \zeta_2 \in\bZ_2^{(1)}(b)$ are such that
$\mathrm{d}_1^{(1)}(\zeta_1,\zeta_2)\leq2$ then by definition of~$\bZ_2^{(1)}(b)$
\[
n^{1/p} \|\phi_1[\zeta_1]-\phi_1[\zeta_2]\|_p \leq
[b/\mathrm{k}_1]^{2-2/p} D^\prime(2) \mathrm{d}_1^{(1)}(\zeta_1,\zeta_2),
\]
where we have used that
$\|K\|_2\geq\|K\|_1\geq\mathrm{k}_1$ for all $K\in\cK$,
$D(0)=0$, and~$D$ is monotone
increasing. If
$\zeta_1,\zeta_2\in\bZ_2^{(1)}(b)$ and
$\mathrm{d}_1^{(1)}(\zeta_1,\zeta_2)>2$, then by the triangle
inequality,
and (\ref{eq:phi1-p-norm}) of Lemma \ref{lem:tech1}
\begin{eqnarray*}
n^{1/p}\|\phi_1[\zeta_1] - \phi_1[\zeta_2]\|_p
&\leq& n^{1/p}\|\phi_1[\zeta_1]\|_p + n^{1/p}\|\phi_1[\zeta_2]\|_p
\\
&\leq& 2 \mathrm{k}_\infty(nV_h)^{-1+1/p} \leq
\mathrm{k}_\infty[b/\mathrm{k}_1]^{2-2/p}
\mathrm{d}_1^{(1)}(\zeta_1,\zeta_2).
\end{eqnarray*}
These inequalities show that
if $\bZ^{(1)}$ is equipped with
the distance
$\mathrm{d}^{(1)}_{\theta_1}(\cdot,\cdot)$ [see (\ref{eq:theta-1-2}) for
definition of $\theta_1$]
then
(\ref{eq:before-t-random-case}) holds with
%
%
\begin{equation}\label{eq:m-p-C-p-1}
m_p=1,\qquad C_p=\theta_1^{-1}
[\mathrm{k}_\infty/\mathrm{k}_1]^{2-2/p}D^\prime(2)\leq1,
\end{equation}
because $nV_{h^{\min}}\geq1$ (which implies $b\leq\mathrm{k}_\infty$).

Now consider the mapping $\phi_2[z]$; following (\ref{eq:Z-ss})
we have here
\[
\bZ_2^{(2)}(b):=\{z=[(K,h),(Q,\mh)]\dvtx n^{1/2}\|\phi_2[z]\|_2\leq b\},\qquad
b\in\bigl[\underline{\mathrm{w}}_2^{(2)},
\bar{\mathrm{w}}_2^{(2)}\bigr],\vadjust{\goodbreak}
\]
where
by the statements (i) and (ii) of
Lemma \ref{lem:tech204}
%
%
\begin{equation}
\label{eq:w-22}
2^{-d/2} \mathrm{k}^{2}_{1}
(nV_{h^{\max}})^{-1/2} \leq
\underline{\mathrm{w}}^{(2)}_{2},\qquad
\bar{\mathrm{w}}^{(2)}_{2} \leq2^{d/2}
\mathrm{k}^{2}_{\infty}
(nV_{h^{\min}})^{-1/2}.
\end{equation}
Note that
if $z=[(K,h),(Q,\mh)]\in\bZ^{(2)}_2(b)$ then by the statement (ii)
of Lem\-ma~\ref{lem:tech204} we have
$(nV_{h\vee\mh})^{-1}\leq2^{d}
\mathrm{k}^{-4}_{1}b^2$. By this fact and
by (\ref{eq:phi2-2}) of Lemma~\ref{lem:tech1}, we have
for $z_1=[(K,h),(Q,\mh)], z_2=[(K^\prime,h^\prime), (Q^\prime,\mh^\prime
)]\in
\bZ_2^{(2)}$ such that $\mathrm{d}_1^{(2)}(z_1,\allowbreak z_2)\leq2$
\begin{eqnarray*}
n^{1/p} \|\phi_2[z_1]-\phi_2[z_2]\|_p
&\leq&
2\mathrm{k}_\infty
[(nV_{h\vee h^\prime})\vee(nV_{\mh\vee\mh^\prime})]^{-1+1/p}D
\bigl(2\mathrm{d}^{(2)}_1(z_1,z_2)\bigr)
\\
&\leq& 2^{d+2-d/p} \mathrm{k}_\infty[b/\mathrm{k}_1^{2}]^{2-2/p}
D^\prime(4) \mathrm{d}_1^{(2)}(z_1,z_2).
\end{eqnarray*}
If $\mathrm{d}_1^{(2)}(z_1,z_2)>2$, then
using the triangle inequality and Lemma \ref{lem:tech204}(i) we have
\[
n^{1/p} \|\phi_2[z_1]-\phi_2[z_2]\|_p \leq
2^{d+1}\mathrm{k}_\infty^2 [b/\mathrm{k}_1^2]^{2-2/p}
\leq2^{d}\mathrm{k}_\infty^2 [b/\mathrm{k}_1^2]^{2-2/p}
\mathrm{d}_1^{(2)}(z_1,z_2).
\]
Combining these inequalities, we observe that
if $\bZ^{(2)}$ is equipped with the distance
$\mathrm{d}_{\theta_2}^{(2)}(\cdot,\cdot)$ [see (\ref{eq:theta-1-2})]
then
(\ref{eq:before-t-random-case}) holds with
%
%
\begin{equation}\label{eq:m-p-C-p-2}
m_p=1,\qquad C_p=\theta_2^{-1} 2^{2d+2-{3d}/{p}} \mathrm{k}_\infty^{2}
[\mathrm{k}_\infty^2/\mathrm{k}_1]^{2-2/p}D^\prime(4) \leq1.
\end{equation}
We have used that $b \leq2^{d/2}\mathrm{k}_\infty^2$ because
$nV_{h^{\min}}\geq1$.
Thus (\ref{eq:m-p-C-p-1}) and (\ref{eq:m-p-C-p-2}) show that
$m=1$ and the condition $\beta<m$ of Theorem \ref{t_random_case}
holds if in Assumption \hyperlink{assK3}{(K3)} $\beta_{\cK}<1$.

3$^0$. \textit{Application of Theorem} \ref{t_random_case}.
First, note that $\vartheta_0^{(i)}$, $i=1,2$, defined in~(\ref{eq:vartheta0-1-2})
satisfy
\[
\vartheta_{0}^{(i)}:=5c_1(s) \mathrm{f}_\infty\alpha_{*,i} ,\qquad
\alpha_{*,i}:=2/\sqrt{\alpha_{2,i}},\qquad
i=1,2,
\]
where $\alpha_{2,i}$, $i=1,2$, are given in (\ref{eq:alpha-001})
and (\ref{eq:alpha-002}).
This is in accordance with
the definition of the constant $\vartheta_0$ in
(\ref{eq:C-*-def}) for the parametrizations $\phi_1[\zeta]$ and $\phi_2[z]$.
Then the definition of $C_{\xi,i}^*(y)$ in (\ref{eq:C-*-i})
corresponds to (\ref{eq:C-*-def}).
Following (\ref{eq:vartheta1-2}), we put
\[
\vartheta_{1}^{(i)}:= \alpha_{*,i}^{-4}/148,\qquad
\vartheta_{2}^{(i)}:=5\sqrt{2} c_1(s/2) \mathrm{f}_\infty
\alpha_{*,i}^2,\qquad i=1,2.
\]
Then the formula for $y_*^{(i)}$ appearing in the statement of the
theorem is a~version of (\ref{eq:y-**}).

Now we need to specify the constants $T_{5,\ve}$ and $T_{6,\ve}$;
see (\ref{eq:T-5}), (\ref{eq:T-6}).

Following (\ref{eq:vartheta}), we set
for $i=1,2$
\[
k_{*,i}:=8c_1(s) \alpha_{*,i}^2,\qquad
L_{*,i}^{(\ve)}(\beta):=\sum_{k=1}^\infty
\exp\{ 2^{1+k\beta} (k_{*,i}^{-1}\epsilon)^{-\beta} -
(9/16) 2^{k} k^{-2}\}.
\]

In view of (\ref{eq:entropy})
and Assumption \hyperlink{assK3}{(K3)}, we obtain for any $\beta\in
(\beta_\cK, 1)$
that
\begin{eqnarray*}
C_{\bZ^{(1)}}(\beta) &=& \sup_{\delta\in(0,1)}\bigl\{\cE_{\bZ^{(1)},
\mathrm{d}_{\theta_1}^{(1)}} (\delta)
-\delta^{-\beta}\bigr\}\\
&\leq& C_\cK+ C_{\beta,d}+
\sum_{i=1}^{d}(\ln\ln[h^{\max}_i/h^{\min}_i])_+ ,\\
C_{\bZ^{(2)}}(\beta)&=&\sup_{\delta\in(0,1)}
\bigl\{\cE_{\bZ^{(2)},\mathrm{d}_{\theta_2}^{(2)}}(\delta)
-\delta^{-\beta}\bigr\} \\
&\leq& 2C_\cK+2C_{\beta,d}+
2\sum_{i=1}^{d}
(\ln\ln[h^{\max}_i/h^{\min}_i])_+,
\end{eqnarray*}
where we have taken into account that $\theta_1\geq1$, $\theta_2\geq1$
and denoted
\[
C_{\beta,d}:=
\sup_{\delta\in(0,1]}[d\ln(3/\delta)+
\delta^{-\beta_\cK}-\delta^{-\beta}],\qquad \beta\in(\beta_\cK,1).
\]
Therefore for $i=1,2$
\[
L_{*,i}^{(\ve)}(\beta)\exp\{2 C_{\bZ^{(i)}}(\beta)\} \leq
[1+A_\cH]^i
\exp\{2iC_\cK\} \inf_{\beta\in(\beta_\cK,1)}
\bigl[L_{*,i}^{(\ve)}(\beta) \exp\{2iC_{\beta,d}\}\bigr],
\]
and, by Assumption \hyperlink{assK3}{(K3)}, (\ref{eq:entropy}) and (\ref
{eq:AH-BH})
\[
N_{\bZ^{(i)}, \mathrm{d}_{\theta_i}^{(i)}}(k_{*,i}^{-1}\ve/8)
\leq
[1+A_\cH]^{i}
[24k_{*,i}\theta_i/\ve]^{di}
\exp\biggl\{i\biggl(\frac{8k_{*,i}\theta_i}{\ve}\biggr)^{\beta_\cK}\biggr\}
\exp\{i C_{\cK}\}.
\]
Finally,
substituting these bounds
in (\ref{eq:T-5}) and
using (\ref{eq:w-21}), (\ref{eq:w-22}) and (\ref{eq:AH-BH})
we have that
\[
T^{(i)}_{5,\ve} \leq(1+A_\cH)^{2i}(1+B_\cH) \tilde{T}_{1}^{(i)},
\]
where
\begin{eqnarray*}
\tilde{T}_{1,\ve}^{(i)} &:=&
I_\ve(q) (2^{1+d/2}u_\ve k_{*,i}\mathrm{k}_\infty^2)^q
[24k_{*,i}\theta_i\ve^{-1}]^{di}
\exp\biggl\{i\biggl(\frac{8k_{*,i}\theta_i}{\ve}\biggr)^{\beta_\cK}\biggr\}
\exp\{3iC_\cK\}
\\
&&\hspace*{0pt}{} \times
\log_2\biggl(\frac{2^d\mathrm{k}_\infty^2 k_{*,i}}{\mathrm{k}_1^2}
\biggr)
\Bigl\{1+\inf_{\beta\in(\beta_\cK,1)}
\bigl[L_{*,i}^{(\ve)}(\beta) \exp\{2iC_{\beta,d}\}\bigr]\Bigr\},\qquad
i=1,2.
\end{eqnarray*}
This leads to the first statement of the theorem.
The second statement of the theorem follows
substitution of the above bounds in (\ref{eq:T-6}) which gives
$T_{6,\ve}^{(i)}\leq(1+A_\cH)^{2i}(1+B_\cH) \tilde{T}_{2}^{(i)}$, where
\begin{eqnarray*}
\tilde{T}_{2,\ve}^{(i)} &:=&
[c_1(s)+2]^q
(
2^{d/2}\alpha_{*,i}\mathrm{k}_\infty^2)^q
[24k_{*,i}\theta_i\ve^{-1}]^{di}
\exp\biggl\{i\biggl(\frac{8k_{*,i}\theta_i}{\ve}\biggr)^{\beta_\cK}\biggr\}
\exp\{3iC_\cK\}
\\
&&{} \times
\log_2\biggl(\frac{2^d\mathrm{k}_\infty^2 k_{*,i}}{\mathrm{k}_1^2}
\biggr)
\Bigl\{1+\inf_{\beta\in(\beta_\cK,1)}
\bigl[L_{*,i}^{(\ve)}(\beta) \exp\{2iC_{\beta,d}\}\bigr]\Bigr\},\qquad
i=1,2.
\end{eqnarray*}

%
\section{\texorpdfstring{Proofs of Theorems \protect\ref{th:eta_fixed_w} and \protect\ref{t:proc-eta-assW}}
{Proofs of Theorems 8 and 9}}
\label{sec:proofs-eta}
\subsection{\texorpdfstring{Proof of Theorem \protect\ref{th:eta_fixed_w}}{Proof of Theorem 8}}
Let $X'=(X,\e)$, and let $X'_i$, $i=1,\ldots, n$
be independent copies of $X'$.
For any $l>0$, $x'=(x,u)\in\cX\times\bR$ and $t\in\cT$ define
the function
\[
w^{(l)}(t,x')=w(t,x)u {\mathbf1}_{[-l,l]}(u).
\]
With this notation, we note that on the event
$\{{\max_{i=1,\ldots,n}}|\e_i|\leq l\}$
\[
\eta_w(t)= \sum_{i=1}^n w(t, X_i)\e_i= \sum_{i=1}^n w^{(l)}(t,
X_i^\prime)
=\xi_{w^{(l)}}(t),
\]
and the last equality holds because
$\bE w^{(l)}(t,X')=0$, for all $t\in\cT$ and $l>0$ because
the distribution of $\e$ is symmetric.
Therefore for any $z>0$,
\[
\bP\{\|\eta_w\|_{s,\tau} \geq z \}
\leq\bP\bigl\{\bigl\|\xi_{w^{(l)}}\bigr\|_{s,\tau}\geq z \bigr\} + n\bP\{|\e|> l \}.
\]
If Assumption \hyperlink{assE1}{(E1)} is fulfilled, then for any $z>0$
%
%
\begin{equation}
\label{b1}
\bP\{\|\eta_w\|_{s,\tau}\geq z \} \leq\bP\bigl\{\bigl\|\xi_{w^{(l)}}\bigr\|_{s,\tau
}\geq z \bigr\} +
n v\exp\{- b l^\alpha\}.
\end{equation}
If Assumption \hyperlink{assE2}{(E2)} is fulfilled, then for any $z>0$
%
%
\begin{equation}
\label{b2}
\bP\{\|\eta_w\|_{s,\tau}\geq z \} \leq\bP\bigl\{\bigl\|\xi_{w^{(l)}}\bigr\|_{s,\tau
}\geq z \bigr\}+ n P l^{-p}.
\end{equation}
In order to bound
the first term on the right-hand side of (\ref{b1}) and (\ref{b2}),
we repeat the steps in the proof
of Theorem \ref{fixed_w1} with $w$ replaced by $w^{(l)}$
and optimize with respect to the truncation level $l$.

For any $z>0$, we define
\[
\Upsilon_s(w,f,z)=
\frac{z^2}{({1}/{3})\varpi_s^2(w, f) + ({4}/{3})c_*(s) M_s(w) z} ,
\]
where $c_*(s)$ is given in (\ref{eq:c-*}).

First, consider the case $s\geq2$.
Using the same reasoning as in the proof of
Theorem \ref{fixed_w1}, we have the following upper bound:
for all $z>0$
%
%
\begin{equation}
\label{b3}
\bP\bigl\{\bigl\|\xi_{w^{(l)}}\bigr\|_{s,\tau}\geq\varrho_s(w,f)+z\bigr\}
\leq\exp\{-[1\vee l]^{-1}\Upsilon_s(w,f,z)\}.
\end{equation}
Under Assumption \hyperlink{assE1}{(E1)}, if we set
\[
l=\cases{
[b^{-1}\Upsilon_s(w,f,z)]^{{1}/{\alpha}}, &\quad
$b^{-1}\Upsilon_s(w,f,z)<1$,\cr
[b^{-1}\Upsilon_s(w,f,z)]^{{1}/({1+\alpha})}, &\quad$b^{-1}\Upsilon
_s(w,f,z)\geq
1$,}
\]
then it follows
from (\ref{b1}) and (\ref{b3}) that
\[
\bP\{\|\eta_w\|_{s,\tau}\geq\varrho_s(w,f)+z \}\leq G^{(1)}(\Upsilon_s(w,f,z)).
\]
Thus, the first statement of the theorem is proved if $s\geq2$.

If Assumption \hyperlink{assE2}{(E2)} is fulfilled then we choose
\[
l=\frac{\Upsilon_s(w,f,z)}{p\ln(1+p^{-1}\Upsilon_s(w,f,z))}
\]
and note that $l\geq1$ for any value of $\Upsilon_s(w,f,z)$.
Then
(\ref{b2}) and (\ref{b3})
imply that
\begin{eqnarray*}
\bP\{\|\eta_w\|_{s,\tau}\geq\varrho_s(w,f)+z \}&\leq&
\biggl[\frac{1}{(1+p^{-1}\Upsilon_s(w,f,z))}\biggr]^p\\
&&{}
+ nP \biggl[\frac{p\ln(1+p^{-1}\Upsilon_s(w,f,z))}{\Upsilon_s(w,f,z)}\biggr]^p.
\end{eqnarray*}
Using the trivial inequality $(1+u)^{-1}\leq u^{-1}\ln(1+u), u\geq0$
we get
\[
\bP\{\|\eta_w\|_{s,\tau}\geq\varrho_s(w,f)+z \}\leq
[1+nP]\biggl[
\frac{p\ln(1+p^{-1}\Upsilon_s(w,f,z))}{\Upsilon_s(w,f,z)}\biggr]^p
\]
and, therefore, the second statement of the theorem is proved for the
case $s\geq2$.

If $s<2$, then
we have similarly to (\ref{b3}) that
for all $z>0$
\[
\bP\bigl\{\bigl\|\xi_{w^{(l)}}\bigr\|_{s,\tau}\geq\varrho_s(w,f)+z\bigr\}\leq\exp\{
-[1\vee l]^{-2}\Upsilon_s(w,f,z)\}.
\]
The same computations as in the case $s\geq2$ lead to
the statement of the theorem when $s<2$.

%
\subsection{\texorpdfstring{Proof of Theorem \protect\ref{t:proc-eta-assW}}{Proof of Theorem 9}}
Put
\begin{eqnarray*}
L^{(\ve)}_{\alpha,b}&:=&
\sum_{k=1}^{\infty}\exp\{\ve^{-\beta} 2^{\beta k+1}\}
\sqrt{g_{\alpha,b}(9\cdot2^{k-3} k^{-2})},\\
J^{(\ve)}_{\alpha,b}&:=&q\int
_{1}^\infty(x-1)^{q-1}[g_{\alpha,b}(x)]^{{1}/{4}}\,
\rd x,\\
T_{n,\ve}&:=&(1+nv)[2^{2\ve}(1+\ve)\mathrm{a}\bar{\mathrm{w}}_2
]^q
[2^{q\ve}-1]^{-1}\exp\{C_\bZ(\beta)+(8/\ve)^{\beta}\}\\
&&{}\times
\bigl(1+\exp\{2C_\bZ(\beta)\}
L^{(\ve)}_{\alpha,b}\bigr)J^{(\ve)}_{\alpha,b}.
\end{eqnarray*}
We note
that $L^{(\ve)}_{\alpha,b}<\infty$ since
$\beta<\alpha/(2+\alpha)$ if $s<2$,
and $\beta<\alpha/(1+\alpha)$ if \mbox{$s\geq2$}.
Note also that
the quantity $J^{(\ve)}_{g}(\cdot)$ in the second inequality of
Corollary~\ref{cor:prop2_new} admits the following bound if $g=G_1$:
\[
J^{(\ve)}_{g}(z)\leq(1+nv)[g_{\alpha,b}(z)]^{1/4}
\bigl(1+L^{(\ve)}_{\alpha,b}\bigr)J^{(\ve)}_{\alpha,b},\qquad z>0.
\]
If for any $\zeta\in\bZ$, we let
\begin{eqnarray*}
U_{\eta}(\phi[\zeta])&=&\mathrm{a}
\sqrt{n}\|\phi[\zeta]\|_2,\qquad
A_{\eta}(\phi[\zeta])=\mathrm{b}_{n}\sqrt{n}
\|\phi[\zeta]\|_2,\\
B_\eta(\phi[\zeta])&=&\mathrm{c}_n\sqrt{n}\|\phi[\zeta]\|_2,
\end{eqnarray*}
then we have for $f\in\cF$
\begin{eqnarray*}
\varrho_s(\phi[\zeta],f)&\leq& U_{\eta}(\phi[\zeta]),\qquad
\tfrac{1}{3}\varpi_s^2(\phi[\zeta],f)\leq
A^2_{\eta}(\phi[\zeta]),\\
\tfrac{4}{3}c_*(s)M_s(\phi[\zeta])
&\leq& B_\eta(\phi[\zeta]).
\end{eqnarray*}
Thus, in view of Theorem \ref{th:eta_fixed_w},
Assumption \ref{fixed_theta} holds with
$U=U_\eta$, $A=A_\eta$, $B=B_\eta$ and $g=G^{(1)}$.
Then standard computations show that
$\Lambda_{A_\eta}= \mathrm{b}_{n}$ and
$\Lambda_{B_\eta}= \mathrm{c}_n$.
The assertion of the theorem follows now from
Corollary \ref{cor:prop2_new}.

\begin{appendix}\label{app}
\section*{Appendix}

\subsection*{\texorpdfstring{Proof of Lemma \protect\ref{l_countable_xi}}{Proof of Lemma 4}}
Let
\[
\cX^{(n)}={\underbrace{\cX\times\cdots\times\cX}_{n\mbox{-}\mathrm{times}}},\qquad
\bar{\cX}{}^{(n)}={\underbrace{\bar{\cX}\times\cdots\times
\bar{\cX}}_{n\mbox{-}\mathrm{times}}}.
\]
Obviously, $\bar{\cX}{}^{(n)}$ is a countable dense subset of
$\cX^{(n)}$.
For any $x^{(n)}\in\cX^{(n)}$ and $t\in\cT$,
put
\[
\xi\bigl(t,x^{(n)}\bigr)=\sum_{i=1}^{n}[w(t,x_i)-\bE w(t,X)],
\]
and let
\[
\mL=\biggl\{
l_{\bar{x}^{(n)}}\dvtx\cT\to\bR\dvtx l_{\bar{x}^{(n)}}(t)=
\frac{|\xi(t,\bar{x}^{(n)})|^{s-1}
\sign{[\xi(t,\bar{x}^{(n)})]}}
{\|\xi(\cdot,\bar{x}^{(n)})\|_{s,\tau}^{s-1}}, \bar{x}^{(n)}\in\bar{\cX}{}^{(n)}
\biggr\}.
\]
Note that $\mL$ is countable and $\mL\subset\bB_{{s}/({s-1})}$
since, obviously
\[
\|l_{\bar{x}^{(n)}}\|_{{s}/({s-1}),\tau}=1\qquad
\forall\bar{x}^{(n)}\in\bar{\cX}{}^{(n)}.
\]
Note that $\xi_w(\cdot)=\xi(\cdot,X^{(n)}), X^{(n)}=(X_1,\ldots, X_n)$,
and therefore, in order to prove the assertion of the lemma
it is sufficient to show that
%
%
\setcounter{equation}{0}
\begin{equation}
\label{eq:app1}
\bigl\|\xi\bigl(\cdot,x^{(n)}\bigr)\bigr\|_{s,\tau}=\sup_{l\in\mL}\int l(t)\xi
\bigl(t,x^{(n)}\bigr)\tau(\rd t)\qquad \forall x^{(n)}\in\cX^{(n)}.
\end{equation}
First, let us note that Assumption \ref{assumptionA1} implies that
for every
$\e>0$ and every $x^{(n)}\in\cX^{(n)}$
there exists $\bar{x}^{(n)}\in\bar{\cX}{}^{(n)}$ such that
%
%
\begin{equation}
\label{eq:app2}
\bigl\|\xi\bigl(\cdot,x^{(n)}\bigr)-\xi\bigl(\cdot,\bar{x}^{(n)}\bigr)
\bigr\|_{s,\tau}\leq\e.
\end{equation}
Taking into account that $\mL\subset\bB_{{s}/({s-1})}$ and using the
H\"older inequality, we obtain from (\ref{eq:app2}) that
%
%
\begin{equation}
\label{eq:app3}
\biggl|\sup_{l\in\mL}\int l(t)\xi\bigl(t,{x}^{(n)}\bigr)\tau(\rd t)-\sup_{l\in\mL}\int
l(t)\xi\bigl(t,\bar{x}^{(n)}\bigr)\tau(\rd t)\biggr|\leq\e.
\end{equation}
Obviously
\[
\bigl\|\xi\bigl(\cdot,\bar{x}^{(n)}\bigr)\bigr\|_{s,\tau}=\int l_{\bar{x}^{(n)}}(t)\xi
\bigl(t,\bar{x}^{(n)}\bigr)\tau(\rd t).\vadjust{\goodbreak}
\]
It implies in view of the duality argument that
%
%
\begin{equation}
\label{eq:app4}
\bigl\|\xi\bigl(\cdot,\bar{x}^{(n)}\bigr)\bigr\|_{s,\tau}=\sup_{l\in\mL}\int l(t)\xi\bigl(t,\bar
{x}^{(n)}\bigr)\tau(\rd t).
\end{equation}
Using the triangle inequality, we obtain from (\ref{eq:app2}), (\ref
{eq:app3}) and (\ref{eq:app4}) that
for every $\e>0$ and every $x^{(n)}\in\cX^{(n)}$
\[
\biggl|\bigl\|\xi\bigl(\cdot,x^{(n)}\bigr)\bigr\|_{s,\tau}-\sup_{l\in\mL}\int l(t)\xi
\bigl(t,{x}^{(n)}\bigr)\tau(\rd t)\biggr|\leq2\e,
\]
which
completes the proof of (\ref{eq:app1}) because
$\e>0$ can be chosen arbitrary small.

\subsection*{Proof of Lemma \protect\ref{lepski}}
First, note that for any $p\geq1$ and $x\in\cX$
\begin{eqnarray*}
\|\bar{w}(\cdot,x)\|_{p,\tau}&\leq&2^{1-{1}/{p}}\biggl[\int|w(t,x)|^p\tau
(\rd t)
+\int\bE|w(t,X)|^p\tau(\rd t)\biggr]^{{1}/{p}}\\
&\leq&2\sup_{x\in\cX}\|
w(\cdot,x)\|_{p,\tau}.
\end{eqnarray*}
Here, we have used the triangle inequality.
Next, for any $p\geq1$ and $t\in\cT$,
\begin{eqnarray*}
&&\biggl[\int|\bar{w}(t,x)|^p f(x) \nu(\rd x)\biggr]^{1/p} \\
&&\qquad:=
[\bE|\bar{w}(t,X)|^p]^{1/p}
\leq2[\bE|w(t,X)|^p]^{1/p}\\
&&\qquad=:2
\biggl[\int|w(t,x)|^p f(x) \nu(\rd x)\biggr]^{1/p} .
\end{eqnarray*}
Here, we used that $\bE|\eta-\bE\eta|^p\leq2^p \bE|\eta|^p$.
Combining both inequalities, we have
\[
M_{p}(\bar{w})\leq2 M_{p}(w),
\]
and the second statement of the lemma is proved.

%
%

\subsection*{Proof of Lemma \protect\ref{lem:as-L}}

1$^0$.
First, we establish statement of the lemma for the mapping
$\zeta\mapsto\phi_1[\zeta]$.
For any $s\geq1$ let $\mathfrak{s}:=s\wedge2$.
Following
(\ref{eq:w-p}) and~(\ref{eq:Z-ss})
and in view of (\ref{eq:phi1-p-norm}), we have
\begin{eqnarray*}
\underline{\mathrm{w}}^{(1)}_{\mathfrak{s}}
&\geq&\mathrm{k}_1(nV_{h^{\max}})^{1/\mathfrak{s} -1},\qquad
\bar{\mathrm{w}}^{(1)}_{\mathfrak{s}} \leq\mathrm{k}_\infty
(nV_{h^{\min}})^{1/\mathfrak{s} - 1},
\\
\bZ^{(1)}_\mathfrak{s}(b):\!&=&\bigl\{\zeta=(K,h)\in\bZ^{(1)}\dvtx
(nV_h)^{1/\mathfrak{s}-1}\|K\|_{\mathfrak{s}}\leq b\bigr\},\qquad
b\in\bigl[\underline{\mathrm{w}}^{(1)}_{\mathfrak{s}},
\bar{\mathrm{w}}^{(1)}_{\mathfrak{s}}\bigr].
\end{eqnarray*}
We note that if $\zeta=(K,h)\in\bZ^{(1)}_\mathfrak{s}(b)$ then
%
%
\begin{equation}
\label{eq:xxxx1}
(nV_h)^{1/\mathfrak{s}-1}\leq\mathrm{k}^{-1}_1 b.
\end{equation}

Let $\zeta_1,\zeta_2\in\bZ^{(1)}_\mathfrak{s}(b)$ be such that $\mathrm
{d}^{(1)}_1(\zeta_1,\zeta_2)\leq2$.
Applying (\ref{eq:phi1-1})
with $p=\mathfrak{s}$ and using (\ref{eq:xxxx1}), we get
%
%
\begin{equation}
\label{eq1:new-t8}
n^{1/\ms}\|\phi_1[\zeta_1]-\phi_1[\zeta_2]\|_{\ms}\leq\mathrm
{k}^{-1}_1 b D^{\prime}(2)\mathrm{d}^{(1)}_1(\zeta_1,\zeta_2) =
b \mathrm{d}^{(1)}_{\theta_1}(\zeta_1,\zeta_2).
\end{equation}
Here we have taken into account that
$D^{\prime}(2)={\sup_{x\in[0,2]}} |D^\prime(x)|$, where
the function $D(\cdot)$ is given in (\ref{eq:D}).
If $\zeta_1,\zeta_2\in\bZ^{(1)}_\ms(b)$ are such that $\mathrm
{d}^{(1)}_1(\zeta_1,\zeta_2)>2$, then by the triangle inequality
%
%
\begin{equation}
\label{eq2:new-t8}
n^{1/\ms}\|\phi_1[\zeta_1]-\phi_1[\zeta_2]\|_{\ms}\leq2b\leq b\mathrm
{d}^{(1)}_1(\zeta_1,\zeta_2)\leq b\mathrm{d}^{(1)}_{\theta_1}(\zeta
_1,\zeta_2).
\end{equation}
Thus,
(\ref{eq1:new-t8}) and (\ref{eq2:new-t8}) imply that
that Assumption \ref{assL} holds if $\bZ^{(1)}$ is equipped with the distance
$\mathrm{d}^{(1)}_{\theta_1}$, where we recall that
$\theta_1=\mathrm{k}_\infty\mathrm{k}^{-1}_1D^{\prime}(2)\geq1$
[see (\ref{eq:theta-1-2})].

2$^0$. Now we prove the statement of the lemma for the mapping
$z\mapsto\phi_2[z]$.
By the statements (i) and (ii) of
Lemma \ref{lem:tech204} applied with $p=\ms$ we have
\[
2^{{d(1-\ms)}/{\ms}}\mathrm{k}^{2}_{1}
(nV_{h^{\max}})^{1/\mathfrak{s}-1} \leq
\underline{\mathrm{w}}^{(2)}_{\ms},\qquad
\bar{\mathrm{w}}^{(2)}_{\ms} \leq2^{d/\ms}
\mathrm{k}^{2}_{\infty}
(nV_{h^{\min}})^{1/\mathfrak{s} -1}.
\]
Recall that
\[
\bZ^{(2)}_\ms(b):=\bigl\{z=[(K,h),(Q,\mh)]\in\bZ^{(2)}\dvtx n^{1/\ms}\|\phi
_2[z]\|_\ms\leq b\bigr\},\qquad
b\in\bigl[\underline{\mathrm{w}}^{(2)}_{\ms},\bar{\mathrm{w}}^{(2)}_{\ms}\bigr].
\]
If $z=[(K,h),(Q,\mh)]\in\bZ^{(2)}_\ms(b)$ then by the statement (ii)
of Lemma \ref{lem:tech204}
%
%
\begin{equation}
\label{eq0:new-t9}
(n V_{h\vee\mh})^{1/\ms-1} \leq2^{{d(\ms-1)}/{\ms}} \mathrm
{k}^{-2}_{1}b \leq2^{d/2}
\mathrm{k}^{-2}_{1}b.
\end{equation}

Let $z_1, z_2\in\bZ^{(2)}_\ms(b)$ be such that $\mathrm
{d}^{(2)}_1(z_1,z_2)\leq2$.
Applying (\ref{eq:phi2-2})
with $p=\ms$ and using (\ref{eq0:new-t9}), we obtain
%
%
\begin{equation}
\label{eq1:new-t9}
n^{1/\ms}\|\phi_2[z_1]-\phi_2[z_2]\|_{\ms}
\leq b 2^{2+d/2}\mathrm{k}_\infty\mathrm{k}^{-2}_1
D^{\prime}(4) \mathrm{d}^{(2)}_1(z_1,z_2).
\end{equation}
If $z_1,z_2\in\bZ^{(2)}_\ms(b)$ are such that
$\mathrm{d}^{(2)}_1(z_1,z_2)>2$, then we have by the triangle inequality
%
%
\begin{equation}
\label{eq2:new-t9}
n^{1/\ms}
\|
\phi_2[z_1]-\phi_2[z_2]\|_{\ms}\leq2b\leq b \mathrm{d}^{(2)}_1(z_1,z_2).
\end{equation}
Thus, (\ref{eq1:new-t9}) and (\ref{eq2:new-t9})
imply that Assumption \ref{assL} is valid provided
that~$\bZ^{(2)}$ is equipped with the distance
$\mathrm{d}^{(2)}_{\theta_2}(\cdot,\cdot)$, where
$\theta_2=2^{2d+2}\mathrm{k}_\infty^4 \mathrm{k}_1^{-2}
D^\prime(4)\geq\break
2^{({d+4})/{2}}\times\mathrm{k}_\infty
\mathrm{k}^{-2}_1 D^{\prime}(4)\geq1$
[see (\ref{eq:theta-1-2})].\vspace*{-3pt}

\subsection*{Proof of Lemma \protect\ref{lem:tech1}}

1$^0$.
Inequality (\ref{eq:phi1-p-norm}) is immediate.
We start with the proof of (\ref{eq:phi1-1}).

Since the required bound is symmetric in $h$
and $h^\prime$, without loss of generality
we will assume that $V_h\geq V_{h^\prime}$.
By the triangle inequality in view of Assumption~\hyperlink
{assK1}{(K1)}, we get
\begin{eqnarray}
\label{eq1:proof_lemma9}\quad
\|K_h- K^\prime_{h^\prime}\|_p &\leq&\|K_h- K^\prime_h\|_p +
\|K^\prime_h - K^\prime_{h^\prime}\|_p\nonumber\\
&\leq& V_h^{-1+1/p}
\|K- K^\prime\|_p+\|K^\prime_h - K^\prime_{h^\prime}\|_p \nonumber\\
&\leq&V_h^{-1+1/p}\biggl[ \| K- K^\prime\|_\infty
+\mathrm{k}_\infty\biggl(\frac{V_h}{V_{h^\prime}}-1\biggr)\biggr]\nonumber\\
&&{} +
V^{-1}_{h^{\prime}}\|K^\prime(\cdot/h) - K^\prime(\cdot/h^\prime)\|
_p\\
&\leq&(V_{h\vee h^{\prime}})^{-1+1/p}\biggl[\frac{V_{h\vee h^{\prime
}}}{V_{h\wedge h^{\prime}}}\biggr]\biggl[ \| K- K^\prime\|_\infty
+\mathrm{k}_\infty\biggl(\frac{V_{h\vee h^{\prime}}}{V_{h\wedge h^{\prime
}}}-1\biggr)\biggr]\nonumber\\
&&{} +
(V_{h\vee h^{\prime}})^{-1+1/p}\biggl[\frac{V_{h\vee h^{\prime}}}{V_{h\wedge
h^{\prime}}}\biggr]\nonumber\\
&&\hspace*{11pt}{}\times\|K^\prime(\cdot[h\vee h^{\prime}]/h) - K^\prime(\cdot
[h\vee h^{\prime}]/ h^\prime)\|_p,\nonumber
\end{eqnarray}
where $h\wedge h^{\prime}=(h_1\wedge h_1^{\prime},\ldots, h_d\wedge
h_d^{\prime})$.
The second term of the last inequality is obtained using the evident
change-of-variables $t\mapsto t/[h\vee h^\prime]$ (the division
is understood in the coordinate-wise sense).

Note that all coordinates of the vectors
$[h\vee h^\prime]/h$ and $[h\vee h^\prime]/h^{\prime}$ are greater or equal
to $1$. Therefore,
in view of Assumption \hyperlink{assK1}{(K1)}
the integration (or supremum if $p=\infty$) over the whole $\bR^d$ in
$\|K^\prime(\cdot[h\vee h^{\prime}]/h) - K^\prime(\cdot[h\vee h^{\prime
}]/ h^\prime)\|_p$ can be
replaced by the integration (supremum) over the support of $K^\prime$.
Together with Assumption \hyperlink{assK1}{(K1)}, this yields
%
%
\begin{eqnarray}
\label{eq4:proof_lemma9}\qquad
&&\|K^\prime(\cdot[h\vee h^{\prime}]/h) - K^\prime(\cdot[h\vee h^{\prime
}]/ h^\prime)\|_p\nonumber\\[-8pt]\\[-8pt]
&&\qquad\leq L_\cK\sqrt{\frac{1}{4}
\sum_{j=1}^{d}
\biggl[\frac{h_j\vee h_j^\prime}{h_j\wedge h_j^\prime}-1\biggr]^2}
\leq 2^{-1}L_\cK\sqrt{d}
\bigl(\exp\{\Delta_\cH(h,h^\prime)\}-1\bigr).\nonumber
\end{eqnarray}
Noting that
$V_{h\vee h^{\prime}}/V_{h\wedge h^{\prime}}\leq\exp\{d \Delta_\cH
(h,h^\prime)\}$
we obtain from (\ref{eq1:proof_lemma9}) and
(\ref{eq4:proof_lemma9})
that
%
%
\begin{eqnarray}\label{eq6:proof_lemma9}\qquad
&&\|K_{h}-K^\prime_{h^\prime}\|_p
\nonumber\\
&&\qquad \leq
(V_{h\vee{h^\prime}})^{({1-p})/{p}}
e^{d\Delta_\cH(h,h^\prime)}\biggl[
\|K-K^\prime\|_\infty +
\mathrm{k}_\infty\bigl(e^{d\Delta_\cH(h,h^\prime)}-1\bigr)\\
&&\qquad\quad\hspace*{157.6pt}{}
+ \frac{L_\cK\sqrt{d}}{2}
\bigl(e^{\Delta_\cH(h,h^\prime)}-1\bigr)\biggr].
\nonumber
\end{eqnarray}
Then (\ref{eq:phi1-1}) follows
from the last inequality and the
monotonicity of the function $D(\cdot)$.

2$^0$. Now we turn to the proof of (\ref{eq:phi2-2}).
Recall that $z=[(K,h),(Q,\mh)]$ and $z^\prime=[(K^\prime, h^\prime),
(Q^\prime, \mh^\prime)]$.
For brevity, we also write $\zeta_K=(K,h)$ and $\zeta_Q=(Q,\mh)$ with evident
changes in notation for $\zeta^\prime_K$ and $\zeta^\prime_Q$.

By the triangle inequality, we have
%
%
\begin{eqnarray}\label{eq:ineq-00}
\|K_h* Q_{\mh} - K^\prime_{h^\prime}*Q^\prime_{\mh^\prime}\|_p &\leq&
\|K_h*Q_\mh- K_h*Q^\prime_{\mh^\prime}\|_p \nonumber\\[-8pt]\\[-8pt]
&&{}+ \| K_h*Q^\prime_{\mh
^\prime}
- K^\prime_{h^\prime}*Q^\prime_{\mh^\prime}\|_p.\nonumber
\end{eqnarray}
Using the Young inequality (the first statement of Lemma \ref
{folland}), Assumption~\hyperlink{assK1}{(K1)} and (\ref
{eq6:proof_lemma9}) we obtain
\begin{eqnarray*}
\|K_h*Q_\mh- K_h*Q^\prime_{\mh^\prime}\|_p &\leq&\|K_h\|_1
\|Q_\mh- Q^\prime_{\mh^\prime}\|_p\\
&\leq&\mathrm{k}_\infty(V_{\mh\vee\mh^\prime})^{-1+1/p}
D\bigl(\mathrm{d}_1^{(1)}(\zeta_Q,\zeta^\prime_Q)\bigr).
\end{eqnarray*}
On the other hand, applying the Young inequality and
(\ref{eq6:proof_lemma9}) with $p=1$, we have
\begin{eqnarray*}
\| K_h* Q_\mh- K_h*Q^\prime_{\mh^\prime}\|_p &\leq& \|K_h\|_p
\|Q_\mh- Q^\prime_{\mh^\prime}\|_1
\leq\mathrm{k}_\infty V_h^{-1+1/p}
D\bigl(\mathrm{d}_1^{(1)}(\zeta_Q,\zeta^\prime_Q)\bigr)
\\
&\leq& \mathrm{k}_\infty(V_{h\vee h^\prime})^{-1+1/p}
\exp\{d \Delta_\cH(h,h^\prime)\}
D\bigl(\mathrm{d}_1^{(1)}(\zeta_Q,\zeta_Q^\prime) \bigr)
\\
&\leq& \mathrm{k}_\infty(V_{h\vee h^\prime})^{-1+1/p}
D\bigl(2\mathrm{d}_1^{(2)}(z,z^\prime) \bigr),
\end{eqnarray*}
where we have used the definition of $\Delta_\cH(\cdot,\cdot)$ and
monotonicity of the function $D(\cdot)$.
Combining the last two inequalities, we have
\[
\|K_h*Q_\mh- K_h*Q^\prime_{\mh^\prime}\|_p
\leq\mathrm{k}_\infty
[(V_{h\vee h^\prime})\vee(V_{\mh\vee\mh^\prime})]^{-1+1/p}D\bigl(2\mathrm{
d}^{(2)}_1(z,z^\prime)\bigr).
\]

Repeating the previous computations, we obtain the same bound for the
second term on the right-hand side of (\ref{eq:ineq-00}), namely,
\[
\|K_h*Q^\prime_{\mh^\prime} - K^\prime_{h^\prime}*
Q^\prime_{\mh^\prime}\|_p
\leq\mathrm{k}_\infty
[(V_{h\vee h^\prime})\vee(V_{\mh\vee\mh^\prime})]^{-1+1/p}
D\bigl(2\mathrm{d}^{(2)}_1(z,z^\prime)\bigr).
\]
Thus, we finally get
\[
\|K_h*Q_{\mh} - K^\prime_{h^\prime}*Q^\prime_{\mh^\prime}\|_p\leq
2\mathrm{k}_\infty
[(V_{h\vee h^\prime})\vee(V_{\mh\vee\mh^\prime})]^{-1+1/p}D\bigl(2
\mathrm{d}^{(2)}_1(z,z^\prime)\bigr),
\]
as claimed.

%

\subsection*{Proof of Lemma \protect\ref{lem:tech2}}
If $w\in\bH_d(1,P)$,
then for any
\[
x\in\bigotimes_{i=1}^{d}\biggl[\tilde{x}_i- \frac{\|w\|_\infty}{2P\sqrt{d}},
\tilde{x}_i+
\frac{\|w\|_\infty}{2P\sqrt{d}}\biggr]
\]
we have by the triangle inequality
\[
|w(x)| \geq|w(\tilde{x})|-
|w(x)-w(\tilde{x})| \geq\|w\|_\infty
- P|x-\tilde{x}| \geq
\tfrac{1}{2}\|w\|_\infty.
\]
This completes the proof.

\subsection*{Proof of Lemma \protect\ref{lem:tech204}}
Recall that
\[
\phi_2[z](t)= (K_h*Q_\mh)(t)=\int K_h(t-y)Q_\mh(y)\,\rd y,\qquad
t\in\bR^{d}.
\]

1$^0$.
Let $\cJ$ denote the set of indexes $j\in\{1,\ldots,d\}$
such that $h_j\leq\mh_j$:
\[
\cJ:=\{j\in(1,\ldots, d)\dvtx h_j\leq\mh_j\}.\vadjust{\goodbreak}
\]
Given two arbitrary vectors $u, v\in\bR^d$, let
$\Delta[u,v]$ and $\delta[u,v]$ denote the vectors in $\bR^d$
with the coordinates
\[
\Delta_j[u,v]=\cases{
u_j, &\quad$j\in\cJ$,\cr
v_j, &\quad$j\notin\cJ$,}\qquad
\delta_j[u,v]=\cases{
u_j, &\quad$j\notin\cJ$,\cr
v_j, &\quad$j\in\cJ$.}
\]
With this notation, we can write
\[
(K_h*Q_\mh)(t) = \frac{1}{V_hV_\mh}\int K\biggl(\Delta\biggl[\frac{t-v}{h},\frac{v}{h}\biggr]\biggr)
Q\biggl(\delta\biggl[\frac{t-v}{\mh},\frac{v}{\mh}\biggr]\biggr)\,\rd
v,\qquad
t\in\bR^{d}.
\]
Then changing the variables
$v\mapsto u=(t-v)/(h\wedge h^\prime)$
and setting for brevity $\eta=(h\wedge\mh)/(h\vee\mh)$ (as usual,
all operations are understood in the coordinate-wise sense),
we come
to the formula
%
%
\begin{eqnarray}\label{eq1:proof_lem-new}\qquad
&&(K_h*Q_\mh)(t) \nonumber\\
&&\qquad= \frac{V_{h\wedge\mh}}{V_hV_\mh}
\int K\bigl(\Delta[u, t/(h\vee\mh) - \eta u]\bigr)
Q\bigl(\delta[t/(h\vee\mh) -\eta u, u]\bigr)\,\rd u
\\
&&\qquad= \frac{1}{V_{h\vee\mh}} F\biggl(\frac{t}{h\vee\mh}\biggr),
\nonumber
\end{eqnarray}
where we have denoted
%
%
\begin{equation}
\label{eq100:proof_lem-new}
F(t):=\int K(\Delta[u, t- \eta u])
Q(\delta[t-\eta u, u])\,\rd u,\qquad
t\in\bR^d.
\end{equation}

Now we note some
properties of the function $F$ that will be useful in the sequel.
First, Assumption~\hyperlink{assK1}{(K1)} implies that
the integration over $\bR^d$ in~(\ref{eq100:proof_lem-new})
can be replaced by the integration over $[-1/2,1/2]^d$.
Indeed, if at least one of the coordinates of $u$ lies outside
the interval $[-1/2,1/2]$ then, in view of~\hyperlink{assK1}{(K1)}, one
of the functions
$K$ or $Q$ vanishes.
This fact along with Assumption~\hyperlink{assK2}{(K2)} and~(\ref
{eq100:proof_lem-new})
imply that
$\|F\|_\infty\leq\mathrm{k}^{2}_{\infty}$;
in addition,
%
%
\begin{equation}\label{eq4:proof_lem-new}
\operatorname{supp}(F)\subseteq[-1,1]^{d}.
\end{equation}
Taking into account these facts and using
(\ref{eq1:proof_lem-new}),
we obtain
\[
\|K_h*Q_\mh\|_p \leq(V_{h\vee\mh})^{-1+1/p}\|F\|_p \leq
2^{d/p} \mathrm{k}^{2}_{\infty}(V_{h\vee\mh})^{-1+1/p},
\]
and the statement (i) of the lemma is proved.

To get the assertion (ii) of the lemma, we note that
\begin{eqnarray*}
\biggl|\int F(t) \,\rd t \biggr| &=& \biggl|\int\!\!
\int K(\Delta[u, t- \eta u])
Q(\delta[t-\eta u, u])\,\rd u
\,\rd t\biggr|
\\
&=& \biggl| \int K(x) \,\rd x\biggr| \biggl|\int Q(x)\,\rd x\biggr| \geq
\mathrm{k}_1^2.
\end{eqnarray*}
The second equality follows from the fact that functions $K$ and $Q$
are integrated over $t$
and over $u$ over disjoint sets of components; and\vadjust{\goodbreak} the last inequality
is a consequence of \hyperlink{assK2}{(K2)}.
Therefore, invoking (\ref{eq4:proof_lem-new}) we have
\begin{eqnarray*}
\|G\|_p&=&(V_{h\vee\mh})^{-1+1/p}\|F\|_p\geq(2^{d}V_{h\vee\mh
})^{-1+1/p}\|F\|_1\\
&\geq&2^{{d(1-p)}/{p}} \mathrm{k}^{2}_{1}(V_{h\vee
\mh})^{-1+1/p},
\end{eqnarray*}
as claimed in the statement (ii) of the lemma.

2$^0$. Now
we turn to the proof of the statements (iii) and (iv) of the lemma.
The idea in the proof of these statements is to show that
$F$ satisfies the Lipschitz condition and then to apply
Lemma \ref{lem:tech2}.

By (\ref{eq100:proof_lem-new})
for any $x,y\in\bR^{d}$, we have
%
%
\begin{eqnarray}\label{eq6:proof_lem-new}\qquad
|F(x)-F(y)|&\leq& \mathrm{k}_{\infty}
\sup_{u\in[-{1}/{2},{1}/{2}]^{d}}
|K(\Delta[u, x- \eta u])
-
K(\Delta[u, y- \eta u])
|
\nonumber
\\
&&{}
+ \mathrm{k}_\infty\sup_{u\in[-{1}/{2},{1}/{2}]^{d}}
|Q(\delta[x-\eta u, u])- Q(\delta[y-\eta u, u])|
\nonumber\\[-8pt]\\[-8pt]
&\leq&L_\cK\mathrm{k}_{\infty}
\Biggl\{\sqrt{\sum_{j\notin\cJ}(x_j-y_j)^{2}}+
\sqrt{\sum_{j\in\cJ}(x_j-y_j)^{2}}\Biggr\} \nonumber\\
&\leq&
2L_\cK\mathrm{k}_{\infty}|x-y|.
\nonumber
\end{eqnarray}
The obtained inequality means that $F\in\bH_d(1,P)$ with
$P=2L_\cK\mathrm{k}_{\infty}$;
moreover, (\ref{eq4:proof_lem-new}) implies that
%
%
\begin{equation}
\label{eq7:proof_lem-new}
\|F\|_\infty\geq2^{-d} \mathrm{k}^{2}_{1}.
\end{equation}
Applying Lemma \ref{lem:tech2} and using (\ref{eq6:proof_lem-new}),
we obtain
\[
\biggl\{x\in\bR^{d}\dvtx F(x)\geq\frac{1}{2}\|F\|_\infty\biggr\} \supseteq
\bigotimes_{i=1}^{d}\biggl[\tilde{x}_i-\frac{\|F\|_\infty}{2P\sqrt{d}}, \tilde
{x}_i+\frac{\|F\|_\infty}{2P\sqrt{d}}\biggr],
\]
where, recall, $F(\tilde{x})=\|F\|_\infty$.
Using (\ref{eq7:proof_lem-new}), we obviously deduce from
(\ref{eq1:proof_lem-new})
that
\begin{eqnarray*}
&&\biggl\{x\dvtx(K_h*Q_\mh)(x)\geq\frac{1}{2}
\|K_h*Q_\mh\|_\infty\biggr\} \\
&&\qquad\supseteq
\bigotimes_{i=1}^{d}
\biggl[\tilde{x}_i (h\vee\mh)_i -\frac{\mathrm{k}^{2}_{1}
(h\vee\mh)_i }{2^{d+1}P\sqrt{d}},
\tilde{x}_i(h\vee\mh)_i+
\frac{\mathrm{k}^{2}_{1}(h\vee\mh)_i}{2^{d+1}P\sqrt{d}}\biggr],
\end{eqnarray*}
which implies that
%
%
\begin{equation}
\label{eq8:proof_lem-new}\hspace*{28pt}
\operatorname{mes}\biggl\{x\dvtx(K_h*Q_\mh)(x)\geq\frac{1}{2}
\|K_h*Q_\mh\|_\infty\biggr\}
\geq V_{h\vee\mh} \biggl[\frac{\mathrm{k}^{2}_{1}}{2^{d+1}\sqrt{d}
L_\cK\mathrm{k}_{\infty}}\biggr]^{d}.
\end{equation}
Then the statement (iii)
of the lemma follows because
\[
\operatorname{mes}\{\operatorname{supp}(K_h*Q_\mh)\}
\geq\operatorname{mes}\bigl\{x\dvtx(K_h*Q_\mh)(x)\geq
\tfrac{1}{2}\|K_h*Q_\mh\|_\infty\bigr\}.
\]

It remains to note that (\ref{eq4:proof_lem-new}) implies that $\mathrm
{mes}\{\operatorname{supp}(K_h*Q_\mh)\}\leq2^{d} V_{h\vee\mh}$.
Therefore by (\ref{eq8:proof_lem-new}),
\begin{eqnarray*}
&&\operatorname{mes}\biggl\{x\dvtx(K_h*Q_\mh)(x)\geq\frac{1}{2}
\|K_h*Q_\mh\|_\infty\biggr\}\\
&&\qquad\geq\biggl[\frac{\mathrm{k}^{2}_{1}}{2^{d+2}\sqrt{d}L_\cK
\mathrm{k}_{\infty}}\biggr]^{d}\operatorname{mes}\{\operatorname
{supp}(K_h*Q_\mh)
\}.
\end{eqnarray*}
This completes the proof of the lemma.
\end{appendix}

\section*{Acknowledgments}
We thank two anonymous referees for useful comments that
led to significant improvements in the presentation.

%

%
\printaddresses

\end{document}